\newcommand{\cE}{\ensuremath{\mathcal E}}
\newcommand{\cG}{\ensuremath{\mathcal G}}
\newcommand{\cH}{\ensuremath{\mathcal H}}
\newcommand{\cL}{\ensuremath{\mathcal L}}
\newcommand{\cP}{\ensuremath{\mathcal P}}
\newcommand{\cS}{\ensuremath{\mathcal S}}
\newcommand{\cW}{\ensuremath{\mathcal W}}
\newcommand{\eps}{\varepsilon}
\renewcommand{\phi}{\varphi}
\renewcommand{\rho}{\varrho}
\DeclareMathOperator*{\E}{\mathbb{E}}
\DeclareMathOperator*{\N}{\mathbb{N}}
\DeclareMathOperator*{\R}{\mathbb{R}}
\DeclarePairedDelimiter\ceil{\lceil}{\rceil}
\DeclarePairedDelimiter\floor{\lfloor}{\rfloor}
\DeclarePairedDelimiter\paren{\lparen}{\rparen} % awesome [\big] parenthesis
\DeclarePairedDelimiter\set{\lbrace}{\rbrace}
\let\setminus=\smallsetminus
\newcommand{\osref}[2]{%
  \setlength\abovedisplayskip{5pt plus 2pt minus 2pt}
  \setlength\abovedisplayshortskip{5pt plus 2pt minus 2pt}
  \ensuremath{\overset{\text{#1}}{#2}}
}
\newcommand{\Gnp}{G_{n, p}}
\newcommand{\cGEk}{\ensuremath{\mathcal{G}_{\mathrm{exp}}^k}}
\newcommand{\Fabs}{\ensuremath{F_{\mathrm{abs}}}}
\newcommand{\Fsw}{\ensuremath{F_{\mathrm{sw}}}}
\newcommand{\Fconn}{\ensuremath{F_{\mathrm{conn}}}}
\newcommand{\Bin}{\ensuremath{\mathrm{Bin}}}
\newcommand*{\bfm}[1]{\ensuremath{\mathbf{#1}}}
\colorlet{RoyalRed}{red!70!black}
\definecolor{RoyalAzure}{rgb}{0.0, 0.22, 0.66}
\declaretheorem[parent=section]{theorem}
\declaretheorem[sibling=theorem]{lemma}
\declaretheorem[sibling=theorem]{proposition}
\declaretheorem[sibling=theorem]{claim}
\declaretheorem[sibling=theorem,style=definition]{definition}
\setlist{itemsep=0.1em, topsep=0.1em, parsep=0.1em, partopsep=0.1em}
\newlength{\bibitemsep}\setlength{\bibitemsep}{0.5pt}
\newlength{\bibparskip}\setlength{\bibparskip}{0.5pt}
\let\oldthebibliography\thebibliography
\renewcommand\thebibliography[1]{%
  \oldthebibliography{#1}%
  \setlength{\parskip}{\bibitemsep}%
  \setlength{\itemsep}{\bibparskip}%
}
\newcounter{propcnt} % counter
\newlist{alphenum}{enumerate}{1}
\setlist[alphenum,1]{%
  label=\normalfont{\bfseries{(\Alph{propcnt}{\arabic*})}},
  ref=\normalfont{(\Alph{propcnt}{\arabic*})},
  leftmargin = \parindent+3.5em,
}
\title{Blow-up lemma for cycles in sparse random graphs}
\author{
  Milo\v{s} Truji\'{c}\thanks{Institute of Theoretical Computer Science, ETH
  Z\"{u}rich, 8092 Z\"{u}rich, Switzerland. Email:
  \texttt{mtrujic@inf.ethz.ch}.}
}
\date{}
\begin{document}
\maketitle

\begin{abstract}
  In a recent work, Allen, B\"{o}ttcher, H\`{a}n, Kohayakawa, and Person
  provided a first general analogue of the blow-up lemma applicable to sparse
  (pseudo)random graphs thus generalising the classic tool of Koml\'{o}s,
  S\'{a}rk\"{o}zy, and Szemer\'{e}di. Roughly speaking, they showed that with
  high probability in the random graph $\Gnp$ for $p \geq C(\log
  n/n)^{1/\Delta}$, sparse regular pairs behave similarly as complete bipartite
  graphs with respect to embedding a spanning graph $H$ with $\Delta(H) \leq
  \Delta$. However, this is typically only optimal when $\Delta \in \{2,3\}$ and
  $H$ either contains a triangle ($\Delta = 2$) or many copies of $K_4$ ($\Delta
  = 3$). We go beyond this barrier for the first time and present a sparse
  blow-up lemma for cycles $C_{2k-1}, C_{2k}$, for all $k \geq 2$, and densities
  $p \geq Cn^{-(k-1)/k}$, which is in a way best possible. As an application of
  our blow-up lemma we fully resolve a question of Nenadov and \v{S}kori\'{c}
  regarding resilience of cycle factors in sparse random graphs.
\end{abstract}

\addtocounter{propcnt}{21}

\section{Introduction}

Problems concerning embedding a spanning graph $H$ into a host graph $G$ under
various conditions have always been among the most challenging topics to study
in extremal combinatorics. One of the strongest tools in this area is certainly
the \emph{blow-up lemma} of Koml\'{o}s, S\'{a}rk\"{o}zy, and
Szemer\'{e}di~\cite{komlos1997blow}. It led to several deep and beautiful
results, some gems including spanning trees~\cite{komlos1995proof,
komlos2001spanning}, powers of Hamilton cycles~\cite{komlos1998proof},
$H$-factors~\cite{komlos2001proof}, bounded degree
subgraphs~\cite{bottcher2009proof}, and many more. We refer an interested
reader to great surveys and gentle introduction into using the blow-up lemma
and related tools~\cite{komlos1996szemeredi, kuhn2009embedding,
rodl2010regularity}.

In order to apply it the host graph $G$ is required to be highly structured and
dense, in a sense that it contains $\Omega(n^2)$ edges, which is perhaps its
main drawback. A natural next step is to ask whether this powerful tool can be
`transferred' to a sparse setting, in which the host graph has only $o(n^2)$
edges. Arguably the most interesting and thoroughly studied instances of such
graphs are (pseudo)random graphs, notably the binomial Erd\H{o}s-R\'{e}nyi
random graph\footnote{$\Gnp$ stands for the probability distribution over all
graphs on vertex set $[n] := \{1, \dotsc, n\}$ where each edge is present with
probability $p := p(n) \in (0, 1)$ independently.} $\Gnp$.
(see~\cite{conlon2014combinatorial} for an overview of some influential research
regarding transference of combinatorial results to a sparse random setting).

In context of a sparse blow-up lemma, the host graph $G$ would ideally be given
as a collection of \emph{sparse regular pairs}. For $p \in [0, 1]$ and $\eps > 0$
a pair of sets $(V_1,V_2)$ is $(\eps,p)$-\emph{regular} (in a graph $G$) if for
every $V_i' \subseteq V_i$, $i \in \{1, 2\}$, with $|V_i'| \geq \eps|V_i|$, the
density $d(V_1',V_2')$ of edges between $V_1'$ and $V_2'$ in $G$ is such that
\[
  |d(V_1,V_2) - d(V_1',V_2')| \leq \eps p.
\]
However, this basic notion of regularity is not sufficient for embedding a
spanning graph $H$ even for $p = 1$, as $(\eps,1)$-regular pairs can have
isolated vertices. An $(\eps,p)$-regular pair $(V_1,V_2)$ is said to be {\em
$(\eps,\alpha,p)$-super-regular} if additionally every $v \in V_i$ satisfies
$\deg_G(v,V_{3-i}) \geq (1-\eps)|V_{3-i}|\alpha p$, for $i \in \{1,2\}$. Then
the original blow-up lemma~\cite{komlos1997blow} says $H$ can be embedded into a
certain collection of $(\eps,\alpha,p)$-super-regular pairs. Rather
unfortunately, it is known that this cannot be adapted in a straightforward way
to a setting in which $p$ is an arbitrary decreasing function of the number of
vertices of $H$---for instance, there are graphs on vertex set $V_1 \cup V_2
\cup V_3$ where each $(V_i,V_j)$ is $(\eps,1,p)$-super-regular, but contain no
triangles (see~\cite{gerke2005sparse, kohayakawa2003regular}). Hence, further
strengthening is needed.

One such strengthening was proposed by Balogh, Lee, and
Samotij~\cite{balogh2012corradi} in a work on triangle factors in subgraphs of
random graphs. In a graph $G$, sets $\{V_i\}_{i \geq 2}$ which are pairwise
$(\eps,\alpha,p)$-super-regular are said to have the \emph{regularity
inheritance property} if for every $V_i,V_j,V_k$ and $v \in V_i$, the pair
$\big(N_G(v,V_j),N_G(v,V_k)\big)$ is $(\eps,p)$-regular of density at least
$d(V_j,V_k) - \eps p$, inheriting regularity from the pair $(V_j,V_k)$. Using
this definition they proved that with high probability\footnote{A property is
said to hold with high probability (w.h.p.\ for short) if the probability for it
tends to $1$ as $n \to \infty$.} for $p \gg (\log n/n)^{1/2}$ every subgraph $G$
of $\Gnp$ on sets $\{V_i\}_{i \in [3]}$ of linear size which are pairwise
$(\eps,\alpha,p)$-super-regular and have the regularity inheritance property,
contains a disjoint collection of triangles covering all of its vertices. This
result can be considered as the first real blow-up type statement for sparse
graphs.

Allen, B\"{o}ttcher, H\`{a}n, Kohayakawa, and Person~\cite{allen2016blow}
recently established several sought-after variants of a general blow-up lemma
for sparse random and pseudorandom graphs together with many relevant
applications. Simply put, they showed\footnote{We are not completely true to
word when presenting this result due to sheer load of technicalities involved.
The result is much more general and specific than presented here, but we
highlight all the main points and provide no further details.} that for every
$\Delta \geq 2$, w.h.p.\ in the random graph $\Gamma \sim \Gnp$, if $p \gg
(\log n/n)^{1/\Delta}$, any $r$-colourable graph $H$ on $n$ vertices with
$\Delta(H) \leq \Delta$ and colour classes $X_1 \cup \dotsb \cup X_r$, can be
found as a subgraph of every graph $G \subseteq \Gamma$ on vertex set
$\{V_i\}_{i \in [r]}$, with $|V_i| = |X_i|$, where every $(V_i, V_j)$ is
$(\eps,\alpha,p)$-super-regular and $\{V_i\}_{i \in [r]}$ have the regularity
inheritance property. This on one hand completes the quest for a `general
version' of the blow-up lemma applicable to sparse graphs, putting many results
concerning embedding large graphs into the random graph $\Gnp$ under a unified
framework, but on the other leaves a major question unresolved: how sparse can
the graph $G$ actually be?

The assumption $p \gg (\log n/n)^{1/\Delta}$ poses a both `natural' and
`technical' barrier. The former is reflected in the fact that at this point the
random graph allows for a `vertex-by-vertex' type of embedding schemes as
typically every set of at most $\Delta$ vertices has a large common
neighbourhood. The latter, and arguably more difficult to surpass, is related
to the regularity inheritance property. It is known (see \cite{gerke2007small})
that in an $(\eps,p)$-regular pair most sets of size $\Omega(1/p)$ inherit
regularity. Consequently, regularity inheritance can only be established if the
density $p$ is such that $|N_G(v, V_i)| \gg 1/p$, and as typically
$|N_\Gamma(v, V_i)| \approx np$, this forces $p \gg n^{-1/2}$. That being said,
the sparse blow-up lemma of \cite{allen2016blow} is optimal up to the log
factor when $\Delta = 2$ and $H$ contains a triangle, but also when $\Delta =
3$ and $H$ contains many copies of $K_4$ (for more precise details see
\cite[Section~7.2]{allen2016blow}). However, this lower bound on $p$ is
probably very far from the truth in the general case.

The main result of this paper is to break this barrier and show a variant of
the sparse blow-up lemma which is applicable at much lower densities. In order
to fully and precisely state our result we need a definition. A pair
$(V_1,V_2)$ is said to be {$(\eps,p)$-\emph{lower-regular} if for every $V_i'
\subseteq V_i$, with $|V_i'| \geq \eps|V_i|$, the density $d(V_1',V_2')$
satisfies $d(V_1',V_2') \geq d(V_1,V_2) - \eps p$. Let $\cGEk(C_t, n, \eps, p)$
denote the class of graphs whose vertex set is a disjoint union $V_1 \cup
\dotsb \cup V_t$, with all $V_i$ of size $n$, $(V_i,V_{i \pm 1})$ forms an
$(\eps, p)$-regular pair of density $(1\pm\eps)p$, and every $v \in V_i$
satisfies: $\deg_G(v, V_{i \pm 1}) = (1\pm\eps)np$, $|N_G^j(v, V_{i \pm j})|
\geq (1-\eps)(np)^j$ for every $j \in [k-1]$, and
\begin{itemize}
  \item if $t = 2k - 1$, $(N_G^{k-1}(v, V_{i+(k-1)}), N_G^{k-1}(v,
    V_{i-(k-1)}))$ is $(\eps,p)$-lower-regular;
  \item if $t = 2k$, $(N_G^{k-1}(v, V_{i+(k-1)}), V_{i+k})$ and $(N_G^{k-1}(v,
    V_{i-(k-1)}), V_{i-k})$ are $(\eps,p)$-lower-regular.
\end{itemize}

\begin{theorem}\label{thm:blow-up-lemma}
  Let $k \geq 2$ and $t \in \{2k-1, 2k\}$. For every $\alpha > 0$, there exists
  a positive $\eps$ with the following property. For every $\mu > 0$, there is a
  $C > 0$ such that if $p \geq Cn^{-(k-1)/k}$, then w.h.p.\ $\Gamma \sim \Gnp$
  satisfies the following. Every $G \subseteq \Gamma$ which belongs to
  $\cGEk(C_t, \tilde n, \eps, \alpha p)$, with $\tilde n \geq \mu n$, contains a
  disjoint collection of cycles $C_t$ covering all vertices of $G$.
\end{theorem}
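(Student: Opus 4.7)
The plan is to apply the \emph{absorption method} in its usual three-step form: (i) reserve a small absorbing gadget $A^*$, (ii) almost-cover the rest by a random/greedy packing of vertex-disjoint $C_t$'s, and (iii) use $A^*$ to finish. The key conceptual point making the threshold $p \geq C n^{-(k-1)/k}$ feasible is that at this density $(np)^{k-1} \gg 1/p$, which is exactly the regime in which an $(\eps,p)$-lower-regular pair built on iterated neighbourhoods $N_G^{k-1}(v, V_{i \pm (k-1)})$ still carries its expected number of edges. The hypothesis that $G \in \cGEk(C_t, \tilde n, \eps, \alpha p)$ is tailored precisely to supply this.

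\emph{Stage 1 (global absorber).} For every ``deficit configuration'' $R$ consisting of one vertex from each $V_i$, I would first produce polynomially many \emph{local absorbers}: constant-size sets $B = B(R) \subseteq V(G) \setminus R$ such that both $G[B]$ and $G[B \cup R]$ admit a perfect $C_t$-tiling respecting the partition. A standard recipe builds $B$ out of two $C_t$-tilings that agree except on a small ``rotating'' piece around the position of $R$, with the extra flexibility coming from auxiliary $C_t$'s found by the very same lower-regularity estimate used in Stage 2. A random-sampling argument (in the spirit of Rödl–Ruciński–Szemerédi, or the absorbers-of-absorbers trick) then combines these local absorbers into one global absorbing set $A^*$ of size $O(\mu n)$, which together with any leftover $L$ satisfying $|L \cap V_i| \leq \eta n$ for a suitable $\eta \ll \mu$ admits a $C_t$-tiling of $A^* \cup L$.

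\emph{Stage 2 (almost-perfect cover).} After excising $A^*$ together with a small random reservoir (which inherits the $\cGEk$ properties by a standard splitting argument), I would iteratively build disjoint copies of $C_t$ covering all but $o(\tilde n)$ vertices per part. To produce a single copy through a designated $v \in V_1$ in the $C_{2k-1}$ case, apply the lower-regularity of $\bigl(N_G^{k-1}(v,V_{k}), N_G^{k-1}(v,V_{k+1})\bigr)$: both sides have size at least $(1-\eps)(np)^{k-1}$, so the pair contains at least $\tfrac{1}{2}\alpha p \cdot (np)^{2(k-1)}$ edges, each closing a $C_{2k-1}$ through $v$ via two paths in the iterated neighbourhoods. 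The $C_{2k}$ case is analogous: pick $v \in V_1$ and a target $w \in V_{k+1}$ and use the two lower-regular pairs $\bigl(N_G^{k-1}(v,V_{k+(k-1)\,\text{mod}\,\cdot}), V_{k \pm k}\bigr)$ from the definition of $\cGEk$ to find matching intermediate paths. A nibble-type scheme or a carefully controlled random greedy (with Azuma/Kim--Vu concentration over $\Gamma$) produces a packing leaving only $o(\tilde n)$ vertices per part, balanced across the partition modulo an error hidden in the reservoir. Stage 3 then packages this residue into a balanced leftover $L$ and invokes $A^*$.

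\emph{Main obstacle.} Stage 1 is where the real work lies. Building absorbers at density $p = \Theta\bigl(n^{-(k-1)/k}\bigr)$ requires locating, for each deficit configuration, many copies of a fixed constant-size ``double-tileable'' subgraph in $\Gamma$, and then showing that their first- and second-moment behaviour permits a global random selection. This needs careful subgraph counts at the threshold: a Janson-type argument to rule out bad overlaps, together with the $\cGEk$ structural hypotheses (and in particular the regularity of the iterated neighbourhoods) to stand in for the vertex-by-vertex degree concentration that is simply unavailable this far below $n^{-1/2}$. By contrast, the covering stage is essentially forced by the definition of $\cGEk$ once Stage 1 has been set up, so it is the design and counting of cycle absorbers that I would expect to be the technical heart of the proof.
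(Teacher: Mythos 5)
Your high-level plan (reserve an absorbing structure, near-cover via $C_t$-packing, then finish) is the right skeleton and matches the paper's, and your heuristic that $(np)^{k-1} \gg 1/p$ is what makes the iterated-neighbourhood regularity usable is exactly the key quantitative insight. But you have correctly flagged Stage~1 as the technical heart, and there the proposal has several genuine gaps rather than just missing details.

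First, ``build $B$ out of two $C_t$-tilings that agree except on a small rotating piece'' is not enough. The paper has to design the absorber $\Fabs$ very specifically so that after contracting the depth-$(k-1)$ $C_t$-trees around each root $r_i$, the remaining graph $\Fconn$ has $m_2(\Fconn) \leq k/(k-1)$ (Proposition~\ref{prop:absorber-properties}\,\ref{abs-m2-density}); only then is it findable via Theorem~\ref{thm:klr} at density $p \geq Cn^{-(k-1)/k}$. A naive rotating gadget (Figure~\ref{fig:C4-dummy-absorber}) contains $C_4$, so its $2$-density is $\geq 3/2 > k/(k-1)$ for $k\geq 4$, and it does not exist in the host at the claimed threshold. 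The $C_t$-tree and ladder machinery of Section~\ref{sec:absorbing-method} is precisely there to keep the $2$-density under control, and it is not a ``standard recipe''.

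Second, your Stage~1 quietly assumes you can find an absorber rooted at a \emph{prescribed} $t$-tuple $R$ by a counting/Janson argument, but at this density the neighbourhoods of the $r_i$ have size $\approx np \ll 1/p$ and do not inherit regularity, so no local embedding argument works and KŁR itself cannot handle fixed vertices. The paper's workaround is a genuinely new idea you do not reproduce: expand each $r_i$ through a $C_t$-tree of depth $k$ until the $k$-th level has $\Omega(n)$ candidate vertices (so regularity becomes usable again), then \emph{contract} each such tree into a single auxiliary vertex and apply KŁR to find a canonical copy of the contracted graph $\Fconn$ in the contracted host $\tilde G$. The robustness-of-expansion lemmas (Lemmas~\ref{lem:robust-reg-expansion-large-sets}--\ref{lem:robust-reg-expansion-any-size-sets}) are what guarantee, level by level, that these $C_t$-trees survive the deletion of the already-used set $Q$.

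Third, even once you can construct a single absorber, you cannot greedily or randomly stack $\Theta(\tilde n)$ of them: at this density, after $\approx np$ steps you may exhaust the neighbourhood of some prescribed root. The paper instead finds all absorbers simultaneously by verifying Haxell's hypergraph matching condition (Theorem~\ref{thm:haxell-matching}), and it uses Montgomery's template graph (Lemma~\ref{lem:abs-template}) to weaken the required absorbing property from ``absorb any small leftover'' to ``absorb any balanced subset of a designated set $\cW$''; the leftover of the near-cover is first matched into $\cW$ (again via Haxell) and only the unused part of $\cW$ is absorbed. The strong ``RRS-style'' absorber you posit --- that $A^*\cup L$ is tileable for every small balanced $L$ --- is not something the paper attempts to build, and it is unclear it exists at this density. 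Your Stage~2 substitution of a nibble/martingale packing for the iterated KŁR-plus-slicing cover is a plausible but unnecessary change; that part is essentially interchangeable.
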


This is the first variant of the blow-up lemma, that the author is aware of, in
which the density $p$ is \emph{significantly smaller} than $n^{-1/2}$ (or
$n^{-1/\Delta}$ for that matter), making all the extremely convenient things
that come along regularity inheritance void. Importantly, we do not require $G$
to exhibit the regularity inheritance property among all pairs/triples of sets
in $\{V_i\}_{i \geq 2}$ but only expansion along the edges of $C_t$ as stated
above. This is a rather reasonable assumption, as w.h.p.\ the underlying random
graph $\Gnp$ behaves in a similar way.

The value $p \geq Cn^{-(k-1)/k}$ is optimal in the following way. Suppose $p =
o(n^{-(k-1)/k})$. Then w.h.p.\ in $\Gamma \sim \Gnp$ every set of size
$\eps(np)^{k-1}$ expands to at most $2\eps(np)^k = o(n)$ vertices, so no $G \in
\cGEk(C_{2k}, \tilde n, \eps, \alpha p)$ appears as a subgraph of $\Gamma$. It
may well be that imposing a different natural condition on top of regularity is
not sufficient to go below this bound. Perhaps the only room for improvement
regarding density would be requiring that every vertex of $G$ belongs to
$\Omega(n^{t-1}p^t)$ copies of $C_t$, or in other words, a positive fraction of
all copies it closes in $\Gnp$. Optimistically, under this assumption one can
hope to go all the way down to the natural bound $p \geq Cn^{-(t-2)/(t-1)}$, at
which point w.h.p.\ all copies of $C_t$ can be removed from $\Gnp$ by deleting a
tiny proportion of all edges and the regularity setting stops making sense.

Our proof is based on the absorbing method, which is discussed in great detail
in Section~\ref{sec:absorbing-method}. The theorem itself is then proven in
Section~\ref{sec:blow-up}. Akin to both \cite{balogh2012corradi} and
\cite{allen2016blow}, we showcase the usefulness of our blow-up lemma by
providing an optimal resilience result for the random graph $\Gnp$ with respect
to containing a $C_t$-factor\footnote{An $H$-factor in a graph $G$ is a
vertex-disjoint collection of copies of $H$ covering the whole vertex set of
$G$.}.

Resilience of (random) graphs has received a lot of attention lately, ever since
the paper of Sudakov and Vu~\cite{sudakov2008local} who first coined down the
term officially (even though implicitly it had been studied before, see
e.g.~\cite{alon2000universality}).

\begin{definition}
  Let $G$ be a graph and $\cP$ a monotone\footnote{A graph property is monotone
  if it is preserved under addition of edges.} graph property. We say that $G$
  is $\alpha$-\emph{resilient} with respect to $\cP$, for some $\alpha \in [0,
  1]$, if $G - H$ contains $\cP$ for every $H \subseteq G$ with $\deg_H(v) \leq
  \alpha \deg_G(v)$ for all $v \in V(G)$.
\end{definition}

This notion is in the literature known as \emph{local resilience}. Many of the
famous results in extremal combinatorics can be looked at through the lenses of
resilience. A prime example of those is Dirac's theorem~\cite{dirac1952some}:
every graph on $n$ vertices with minimum degree $\delta(G) \geq n/2$ contains a
Hamilton cycle. In other words, the complete graph on $n$ vertices $K_n$ is
$(1/2)$-resilient with respect to Hamiltonicity. Problems of this type have
recently been intensively studied in sparse random graphs by several groups of
researchers. Some of the most notable results include
Hamiltonicity~\cite{lee2012dirac, montgomery2019hamiltonicity,
nenadov2019resilience}, almost spanning trees~\cite{balogh2011local}, triangle
factors~\cite{balogh2012corradi}, powers of Hamilton
cycles~\cite{fischer2018triangle, vskoric2018local}, bounded degree spanning
subgraphs~\cite{allen2016blow, bottcher2013almost}; for more see the excellent
surveys~\cite{bottcher2017large, sudakov2017robustness} and references therein.

Huang, Lee, and Sudakov~\cite{huang2012bandwidth} were the first to study
resilience of dense random graphs, that is when $p$ is a fixed constant, with
respect to having an (almost-)$H$-factor, for general $H$. Later, as a
consequence of resolving the counting version of the infamous K{\L}R-conjecture,
Conlon, Gowers, Samotij, and Schacht~\cite{conlon2014klr} extended this for $p =
o(1)$. In both a leftover is present, namely the obtained collection of copies
of $H$ covers all but a small fraction of vertices---hence an almost-$H$-factor.
Most recently, Nenadov and \v{S}kori\'{c}~\cite{nenadov2020komlos} went even
further and precisely determined conditions under which the random graph $\Gnp$
is w.h.p.\ resilient with respect to (almost-)$H$-factors and the leftover one
cannot avoid. Among other things they posed a conjecture regarding $C_t$-factors
and highlighted it as one of the more challenging problems to resolve. As the
main application of our blow-up lemma we confirm their conjecture.

\begin{theorem}\label{thm:main-theorem-res}
  Let $k \geq 2$ and $t \in \{2k, 2k+1\}$. For every $\alpha > 0$, there exists
  a positive $C$ such that if $p \geq Cn^{-(k-1)/k}$, then w.h.p.\ $\Gamma \sim
  \Gnp$ is $(1/\chi(C_t)-\alpha)$-resilient with respect to containing a
  $C_t$-factor.
\end{theorem}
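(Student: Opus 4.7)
Let $G := \Gamma - H$. By the resilience hypothesis and a Chernoff bound for $\Gamma \sim \Gnp$, w.h.p.\ $\deg_G(v) \geq (1 - 1/\chi(C_t) + \alpha/2) n p$ for every vertex $v$. The plan is to apply Theorem~\ref{thm:blow-up-lemma} to blown-up copies of $C_t$ carved out of $G$ by a sparse regularity lemma. I will fix $\eps \ll \alpha$, smaller in particular than the $\eps$ returned by Theorem~\ref{thm:blow-up-lemma} on input $\alpha/4$, and apply the minimum-degree form of the sparse regularity lemma to $G$ to obtain a balanced equipartition $V(\Gamma) = V_0 \cup V_1 \cup \dotsb \cup V_M$ with $|V_0| \leq \eps n$, $|V_i| = \tilde n \geq \mu n$ for $i \geq 1$, and all but at most $\eps M^2$ pairs $(V_i, V_j)$ being $(\eps, p)$-regular in $G$.

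Next, build the reduced graph $R$ on $[M]$ where $ij \in E(R)$ iff $(V_i, V_j)$ is $(\eps, p)$-regular of density at least $(\alpha/4) p$ in $G$. A standard averaging argument combined with the min-degree of $G$ gives $\delta(R) \geq (1 - 1/\chi(C_t) + \alpha/3) M$. Since $\chi_{\mathrm{cr}}(C_{2k}) = 2$ and $\chi_{\mathrm{cr}}(C_{2k+1}) = 2 + 1/k < 3$ for $k \geq 2$, the K\"{u}hn--Osthus theorem on $H$-factors at the critical chromatic threshold supplies a $C_t$-factor $\cF$ of $R$, after trimming $O(1)$ clusters so that $t \mid M$. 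For each cycle $(V_{i_1}, \dotsc, V_{i_t}) \in \cF$, I will super-regularize, normalise the densities of consecutive pairs by the standard sub-pair trick, and discard the few vertices failing the expansion or $(k-1)$-th neighbourhood lower-regularity conditions of $\cGEk$; all of this loses only a vanishing fraction of vertices per cluster, since $\Gnp$ concentrates around the expected sizes of all $j$-th neighbourhoods for $j \leq k$ whenever $p \geq C n^{-(k-1)/k}$, and $G$ inherits enough of this structure from $\Gamma$ via its min-degree. Invoking Theorem~\ref{thm:blow-up-lemma} on each blown-up cycle then yields a $C_t$-factor of $\bigcup_{i \geq 1} V_i$ missing only a small exceptional set.

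The main obstacle will be re-integrating the $O(\eps n)$ leftover vertices (those in $V_0$ together with those discarded during cleaning) into the final $C_t$-factor. My plan is to reserve, \emph{before} applying the regularity lemma, a small absorbing structure $A \subseteq G$ with the property that $A$ together with any prescribed set of up to $\eps n$ leftover vertices admits a $C_t$-factor. At density $p = \Theta(n^{-(k-1)/k})$ this is delicate: each vertex of $\Gnp$ lies in only $\Theta(n^{1/k})$ copies of $C_t$, so the absorbers must be chosen carefully to survive the edge-deletions imposed by the adversary $H$. The construction will reuse the cycle-absorbers built in Section~\ref{sec:absorbing-method} for the blow-up lemma itself, and will rely on the resilience hypothesis to guarantee that a positive fraction of the relevant $\Gnp$-copies of $C_t$ survive in $G$. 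Once $A$ is in place, the rest of the argument is a routine blow-up-plus-absorption scheme; the probabilistic heart of the proof is the robustness of the absorber at this critical density.
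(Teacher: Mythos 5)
Your high-level plan (sparse regularity, a $C_t$-structure on the reduced graph, Theorem~\ref{thm:blow-up-lemma} on each blown-up cycle, absorb the leftover) is close in spirit to the standard template, but it diverges from the paper's proof in two places, and the second of those divergences hides a genuine gap.

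First, the paper does not take a $C_t$-factor of the reduced graph $R$ via K\"uhn--Osthus. For even $t$ it finds a \emph{Hamilton cycle} in $R$, pairs consecutive clusters into edges $e_i = \{2i-1, 2i\}$, splits each cluster into $t/2$ pieces alternately, and renormalises densities to land in $\cGEk(C_t, \tilde n, \gamma, \alpha q)$; for odd $t$ it uses the square of a Hamilton cycle and works with triangles. The Hamilton cycle (rather than a cycle \emph{factor} of $R$) is exploited in an essential way in the subsequent balancing: the paper ``slides'' divisibility defects along the $e_i$ in order and uses a map $\phi$ pairing each $i \in [2\ell]$ to a triangle in $R$, which requires connectivity of the reduced structure, not just a factorisation. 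Your plan with a $C_t$-factor of $R$ gives $\ell$ isolated blown-up cycles; you would have to rebalance and fix divisibility inside each $t$-tuple of clusters independently, and you never say how. This step is neither routine nor merely a bookkeeping nuisance: the blow-up lemma requires exactly equal cluster sizes, and the adversary controls where the discarded and $V_0$-covering vertices fall.

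Second, and more seriously, your leftover-handling is circular. You propose to reserve a global absorbing structure $A$ in $G$ \emph{before} the regularity lemma, and to ``reuse the cycle-absorbers built in Section~\ref{sec:absorbing-method}.'' But those absorbers are built inside a graph that already lies in $\cGEk(C_t, \cdot, \cdot, \cdot)$: the construction of a $\cW$-absorber (Lemma~\ref{lem:absorbing-lemma}) crucially applies Theorem~\ref{thm:klr} to a $\cG(\Fconn, \cdot, \cdot, \cdot)$ structure obtained \emph{from} the lower-regular, expanding blown-up cycle. There is no way to instantiate that argument in $G$ itself prior to the regularity decomposition. Moreover your heuristic --- ``rely on the resilience hypothesis to guarantee that a positive fraction of the $\Gnp$-copies of $C_t$ survive'' --- is not valid at this density: the paper even discusses that requiring every vertex to lie in $\Omega(n^{t-1}p^t)$ surviving copies of $C_t$ would be a genuinely stronger hypothesis that is \emph{not} implied by the $(1/\chi(C_t)-\alpha)$-resilience assumption. (Also, at $p = \Theta(n^{-(k-1)/k})$ each vertex of $\Gnp$ is in $\Theta(n^{t-1}p^t) = \Theta(n)$ copies of $C_{2k}$, not $\Theta(n^{1/k})$.) The paper avoids a global absorber entirely: it reserves a random slice $W_i^j$ inside each cluster, covers the garbage $V_0'$ directly with $t$-cycles via the minimum-degree covering lemma (Lemma~\ref{lem:gnp-cycles-from-min-deg}, which needs only the degree condition and Haxell-type expansion, not an absorber), and then lets the blow-up lemma's \emph{internal} absorber handle the remainder after a KŁR-based balancing pass. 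Finally, your claim that passing from $\cG$ to $\cGEk$ is automatic ``since $\Gnp$ concentrates around neighbourhood sizes and $G$ inherits enough via its min-degree'' undersells the issue: this is exactly what Propositions~\ref{prop:almost-all-expanders}--\ref{prop:gnp-reg-expanders} and Lemma~\ref{lem:gnp-all-reg-expander-properties} establish, and they are counting arguments over regular pairs, not consequences of the minimum degree of $G$.
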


This can be viewed as an extension of the result of Balogh, Lee, and
Samotij~\cite{balogh2012corradi} from triangles to longer cycles, and is an
improvement of the result of Allen, B\"{o}ttcher, Ehrenm\"{u}ller, and
Taraz~\cite{allen2020bandwidth} for all cycles of length at least four. (In the
latter, the result for $C_4$ and $C_5$ is already optimal up to the $(\log
n)^{1/2}$ factor in the density $p$, which we now get rid of.)

Our result is optimal in almost every aspect. Firstly, resilience value can be
seen to be the best possible for $C_{2k}$ by choosing a set of size
$n/2-1$ (for even $n$) and disconnecting it from the rest of the graph. As for
$C_{2k+1}$, it seems like the correct value should depend on the so-called
\emph{critical chromatic number} $\chi_\mathrm{cr}(H)$, defined as
\[
  \chi_\mathrm{cr}(H) = \frac{(\chi(H)-1)v(H)}{v(H)-\sigma(H)},
\]
where $\sigma(H)$ is the size of the smallest colour size in a colouring of $H$
with $\chi(H)$ colours (for more details on why this should be the correct
parameter, we refer the reader to \cite{komlos2000tiling, kuhn2009minimum}). In
particular, the resilience value for $C_{2k+1}$ in that case would be $k/(2k+1)$
which is significantly larger than $1/3$ for every $k \geq 2$. We believe that
the importance of obtaining such a result only for odd cycles does not outweigh
the technical difficulties one would face and do not pursue this direction
further.

Secondly, the density $p$ is asymptotically optimal. In order to see this,
assume $t = 5$ and let $v$ be an arbitrary vertex of $\Gamma$. Consider the
second neighbourhood of $v$, $N_\Gamma^2(v)$, and remove all of the edges with
both endpoints lying in it. Obviously, this prevents $v$ from being in a copy of
$C_5$ and moreover, the number of edges removed from any $u \in N_\Gamma^2(v)$
is roughly $(np)^2p$ (this requires proof, see~\cite{nenadov2020komlos}) which
is much smaller than $np$ if $p \ll n^{-1/2}$. This principle can be extended to
`isolate' more than just one vertex $v$ and works similarly for every $t \geq
3$; for more details and precise results for general $H$ we refer the reader
to~\cite{nenadov2020komlos}.

The proof of Theorem~\ref{thm:main-theorem-res} involves a standard argument
using the sparse regularity method and the blow-up lemma
(Theorem~\ref{thm:blow-up-lemma}) and is presented in
Section~\ref{sec:resilience}. There are some intricacies to it, but this is
nothing much out of the ordinary. We see it vaguely plausible that some of our
methods, specifically from the proof of the blow-up lemma, may be applied in
order to obtain a more general result regarding $H$-factors in random graphs
under certain restrictions.

\paragraph*{Notation.} We let $[n] := \{1, \dotsc, n\}$. For $x, y, \eps \in \R$
we write $x \in (y \pm \eps)$ to denote $y - \eps \leq x \leq y + \eps$. We use
standard asymptotic notation $o$, $O$, $\omega$, and $\Omega$, and use $f \ll g$
for $f = o(g)$ and $f \gg g$ for $f = \omega(g)$. Floors and ceilings are
suppressed whenever they are not crucial. If we write e.g.\ $D_{3.3}$, this is
to mean that the value $D$ is the one featured in the statement of
Lemma/Proposition/Claim~3.3. Let $G = (V, E)$ be a graph. For a vertex $v \in
V(G)$ and a set $X \subseteq V(G)$, we use $N_G^i(v, X)$ to denote the set of
vertices $x \in X$ for which there is a $vx$-path of length $i$ (consisting of
$i$ edges) in $G$; then $N_G(v, X)$ stands for $N_G^1(v, X)$. We use $N_G^i(v)$
to denote the \emph{$i$-th neighbourhood of $v$}, that is $N_G^i(v) := N_G^i(v,
V(G))$. Perhaps deviating from standard notation, we let $G - X$ be the graph
obtained from $G$ by removing a set of vertices $X$, and $G - \nabla(X)$ the
graph on the same vertex set as $G$ obtained by removing all edges with at least
one endpoint in $X$ from $G$. The $2$-density of a graph $H$, denoted by
$m_2(H)$, is defined as $m_2(H) := \max_{H' \subseteq H} (e(H')-1)/(v(H')-2)$,
where $H'$ ranges over all subgraphs with at least two edges. For a graph $H$ on
vertices $\{1,\dotsc,t\}$, $\cG(H, n, \eps, p)$ is the class of graphs $G$ whose
vertex set is a disjoint union $V_1 \cup \dotsb \cup V_t$, with all $V_i$ of
size $n$, and $(V_i,V_j)$ forms an $(\eps, p)$-regular pair of density
$(1\pm\eps)p$ if and only if $ij \in E(H)$, these being the only edges of $G$. A
\emph{canonical copy} of a graph $H$ in $G$ is a set $\{v_1,\dotsc,v_t\}$ for
which $v_i \in V_i$ for every $i \in [t]$ and $v_iv_j \in E(G)$ for every $ij
\in E(H)$.

\section{How to prove the blow-up lemma}\label{sec:absorbing-method}

Consider a subgraph $G \subseteq \Gamma$ of $\Gamma \sim \Gnp$ which also
belongs to $\cGEk(C_t, \tilde n, \eps, \alpha p) \subseteq \cG(C_t, \tilde n,
\eps, \alpha p)$. If aiming only to find a very large collection of $t$-cycles
in $G$, one could just employ the resolution of the \emph{K{\L}R-conjecture} in
random graphs, due to Saxton and Thomason~\cite{saxton2015hypergraph} and
independently Balogh, Morris, and Samotij~\cite{balogh2015independent} (see
\cite{conlon2014klr} for a statement most similar to the one tailored to random
graphs as below and \cite{nenadov2021new} for a new simplified proof).

\begin{theorem}[K{\L}R Conjecture]\label{thm:klr}
  For every graph $H$ and every $\alpha > 0$, there exists a positive constant
  $\eps$ with the following property. For every $\mu > 0$, there is a positive
  constant $C$ such that if $p \geq Cn^{-1/m_2(H)}$, then w.h.p.\ $\Gamma \sim
  \Gnp$ satisfies the following. Every $G \subseteq \Gamma$ which belongs to
  $\cG(H, \tilde n, \eps, \alpha p)$, with $\tilde n \geq \mu n$, contains a
  canonical copy of $H$.
\end{theorem}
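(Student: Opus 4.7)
The plan is to prove Theorem~\ref{thm:klr} via the hypergraph container method, following Conlon, Gowers, Samotij, and Schacht~\cite{conlon2014klr} and the streamlined proof of Nenadov~\cite{nenadov2021new}. Fix pairwise disjoint parts $V_1, \dotsc, V_t \subseteq [n]$ of size $\tilde n$, where $t = v(H)$. Consider the auxiliary $e(H)$-uniform hypergraph $\cH$ whose vertex set consists of all potential edges of a canonical configuration on $V_1 \cup \dotsb \cup V_t$ (so $|V(\cH)| = e(H)\tilde n^2$) and whose hyperedges are the edge sets of canonical copies of $H$; thus, every $H$-free canonical graph on $V_1, \dotsc, V_t$ is an independent set of $\cH$, and the task becomes bounding the probability that $\Gamma \sim \Gnp$ contains such an independent set that is, in addition, dense and regular on every pair.

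The first step is to verify the co-degree conditions required by the container theorem. The key computation is that for every subgraph $H' \subseteq H$ with $e(H') \geq 1$, the number of hyperedges of $\cH$ containing a fixed set of $e(H')$ edges that span a copy of $H'$ is at most $\tilde n^{v(H)-v(H')}$. Balancing these bounds across $H' \subseteq H$ identifies $\tau := c\tilde n^{-1/m_2(H)}$ as precisely the scale at which the required $j$-degree inequalities $\Delta_j(\cH) \lesssim \tau^{j-1} e(\cH)/|V(\cH)|$ hold for $j \geq 2$ --- no accident, since $m_2(H)$ is the parameter that governs the appearance threshold of $H$ in $\Gnp$. The container theorem then yields a family $\cF$ of subsets $F \subseteq V(\cH)$ such that (i) every $H$-free canonical graph is a subgraph of some $F \in \cF$, (ii) $\log|\cF| = O(\tau\tilde n^2\log\tilde n)$, and (iii) each $F$ contains at most $\eta \tilde n^{v(H)}$ canonical copies of $H$, with $\eta > 0$ arbitrarily small at the cost of enlarging the constant in $\tau$.

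Next I would combine the containers with a classical supersaturation lemma: for every $\alpha > 0$ there is a $\delta > 0$ such that any graph on $V_1, \dotsc, V_t$ with density at least $\alpha/2$ on every pair $(V_i, V_j)$ corresponding to $ij \in E(H)$ contains at least $\delta \tilde n^{v(H)}$ canonical copies of $H$. Choosing $\eta \ll \delta$, the contrapositive forces every $F \in \cF$ to have some pair $(V_i, V_j)$ of density strictly less than $\alpha/2$. Consequently, if a bad $H$-free $G \subseteq \Gamma$ belonging to $\cG(H, \tilde n, \eps, \alpha p)$ existed, $E(G)$ would lie inside some such container $F$, and the density/regularity condition on $G$ would force $|\Gamma \cap F \cap (V_i \times V_j)| \geq |E(G) \cap (V_i \times V_j)| \geq (1-\eps)\alpha p \tilde n^2$ on a pair where $|F \cap (V_i \times V_j)| < (\alpha/2)\tilde n^2$.

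Finally I would close with Chernoff and a union bound. For each such container $F$ and pair, $|\Gamma \cap F \cap (V_i \times V_j)|$ is $\Bin(|F\cap(V_i\times V_j)|, p)$ with mean $<(\alpha/2)p\tilde n^2$, so the probability it exceeds $(1-\eps)\alpha p\tilde n^2$ is at most $\exp(-\Omega(\alpha p\tilde n^2))$ provided $\eps < 1/4$. Summing over the at most $\exp(O(n))$ choices of partition and the $|\cF|\cdot e(H)$ choices of container and sparse pair yields total failure probability $\exp\bigl(O(n) + O(\tau \tilde n^2 \log \tilde n) - \Omega(\alpha p \tilde n^2)\bigr) = o(1)$ whenever $p \geq C n^{-1/m_2(H)}$ with $C = C(\alpha, \mu, H)$ sufficiently large. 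The residual $\log$ factor produced by the naive container bound is shaved by balanced supersaturation or by Nenadov's slicker counting, but this does not alter the overall structure. The main obstacle, as in any container argument, is establishing the co-degree conditions with the precise $m_2(H)$ dependence, since this is the step that converts a structural claim about regular pairs into a random-graph-threshold bound on $p$.
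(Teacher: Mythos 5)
Theorem~\ref{thm:klr} is not proved in this paper at all: it is imported as a black box, attributed to Saxton--Thomason and Balogh--Morris--Samotij, with the random-graph formulation taken from Conlon--Gowers--Samotij--Schacht~\cite{conlon2014klr} and the simplified proof of Nenadov~\cite{nenadov2021new}. So there is no in-paper argument to compare against; what you have written is a reconstruction of the standard container-method proof from those references. As a sketch it is faithful and essentially sound: the co-degree computation identifying $\tau \asymp \tilde n^{-1/m_2(H)}$, the partite supersaturation step forcing every container to be sparse (at the dense scale) on some pair $ij \in E(H)$, and the Chernoff-plus-union-bound conclusion are exactly the skeleton of the CGSS argument.

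One point deserves a correction rather than a hand-wave. The $\log$ factor you flag is not removed by balanced supersaturation (that device is used for \emph{counting} $H$-free graphs); in the KŁR application it is removed by running the union bound over \emph{fingerprints} rather than over containers: each bad $G \subseteq \Gamma$ has a fingerprint $S \subseteq E(G) \subseteq E(\Gamma)$ with $|S| \leq \tau e(H)\tilde n^2$ determining its container, and one bounds $\sum_S \Pr[S \subseteq \Gamma] \cdot \Pr[\text{$\Gamma$ dense on the sparse pair of } F(S)] \leq \sum_{s} \binom{e(H)\tilde n^2}{s} p^{s} e^{-\Omega(\alpha p \tilde n^2)}$, where the factor $p^{|S|}$ turns $(e\tilde n^2/s)^s$ into $(ep/\tau)^{\tau e(H)\tilde n^2} = e^{O(\tau \tilde n^2)}$ with no logarithm, which is beaten by $e^{-\Omega(\alpha p\tilde n^2)}$ once $C$ is large. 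With that repair the outline goes through at $p \geq Cn^{-1/m_2(H)}$ as claimed; without it your union bound as written only works for $p \gg n^{-1/m_2(H)}\log n$, which would not suffice for the applications in this paper, where the density is taken to be exactly of order $n^{-(k-1)/k} = n^{-1/m_2(C_t)}$ (for $\Fconn$, of order $n^{-1/m_2(\Fconn)}$ or larger).
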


The theorem above gives only one copy of a graph $H$, but combined with the
`slicing lemma' it easily gives $(1-o(1))\tilde n$ disjoint copies.

\begin{lemma}\label{lem:slicing-lemma}
  Let $0 < \eps_1 < \eps_2 \leq 1/2$, $p \in (0,1)$, and let $(V_1,V_2)$ be an
  $(\eps_1, p)$-regular pair. Then for every $V_i' \subseteq V_i$, $i \in [2]$,
  of size $|V_i'| \geq \eps_2|V_i|$, the pair $(V_1',V_2')$ is $(\eps_1/\eps_2,
  p)$-regular of density $d(V_1,V_2) \pm \eps_1p$.
\end{lemma}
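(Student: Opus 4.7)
The plan is to verify the two assertions (density and regularity for the sub-pair) by directly unwinding the definition of $(\eps_1,p)$-regularity twice and combining the two estimates by the triangle inequality. The only trick is tracking the thresholds so that the larger regularity constant $\eps_1/\eps_2$ remains admissible.

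First I would establish the density claim. Since $\eps_1 < \eps_2$, the hypothesis $|V_i'| \geq \eps_2|V_i|$ already meets the threshold $|V_i'| \geq \eps_1|V_i|$ required to apply $(\eps_1,p)$-regularity of the original pair $(V_1,V_2)$. Hence $|d(V_1',V_2') - d(V_1,V_2)| \leq \eps_1 p$, which is exactly the claim $d(V_1',V_2') = d(V_1,V_2) \pm \eps_1 p$.

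Next I would verify $(\eps_1/\eps_2,p)$-regularity of $(V_1',V_2')$. Pick arbitrary $W_i \subseteq V_i'$ with $|W_i| \geq (\eps_1/\eps_2)|V_i'|$. The key observation is that
\[
  |W_i| \geq \frac{\eps_1}{\eps_2}|V_i'| \geq \frac{\eps_1}{\eps_2}\cdot \eps_2 |V_i| = \eps_1 |V_i|,
\]
so the pair $(W_1,W_2)$ again qualifies for the original regularity condition, giving $|d(W_1,W_2) - d(V_1,V_2)| \leq \eps_1 p$. Combining this with the density estimate from the previous paragraph via the triangle inequality yields $|d(W_1,W_2) - d(V_1',V_2')| \leq 2\eps_1 p$.

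Finally, I would use the assumption $\eps_2 \leq 1/2$, which is precisely there to guarantee $2\eps_1 \leq \eps_1/\eps_2$, so that the bound $2\eps_1 p \leq (\eps_1/\eps_2)p$ witnesses the desired $(\eps_1/\eps_2,p)$-regularity. There is no real obstacle here; the statement is essentially a bookkeeping lemma, and the only thing to watch is that the constant $\eps_2 \leq 1/2$ supplies exactly the slack needed to absorb the factor of two coming from the triangle inequality.
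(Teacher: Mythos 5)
Your proof is correct and is the standard argument for the slicing lemma; the paper does not supply a proof of this lemma (it is cited as folklore), so there is nothing to compare against. Your bookkeeping — observing that $|W_i| \geq (\eps_1/\eps_2)|V_i'| \geq \eps_1|V_i|$, applying $(\eps_1,p)$-regularity twice, and using $\eps_2 \leq 1/2$ to absorb the factor of $2$ from the triangle inequality into $\eps_1/\eps_2$ — is exactly what is needed.
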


On a very abstract level, the proof strategy for Theorem~\ref{thm:blow-up-lemma}
is now very natural and simple: iteratively find copies of $C_t$ until there are
only some $\rho\tilde n$ vertices remaining uncovered in each $V_i$, and then do
something to cover those as well. This `something' is a brilliant technique very
widely used in a variety of settings nowadays---\emph{the absorbing method}.

The absorbing method has been a key ingredient of numerous results in extremal
combinatorics regarding finding spanning structures both in dense and sparse
regimes. At the heart of the method lies the following idea. One would like to
find a certain (usually highly structured) graph $A \subseteq G$ and a
designated set $W \subseteq V(A)$, which allow for a great deal of flexibility
when constructing the desired spanning graph $\cS$. Namely, no matter how we
manage to embed a fixed subgraph $\cS' \subseteq \cS$ into $G$ so that it covers
all $V(G) \setminus V(A)$ and potentially uses some vertices of $W$, the
leftover vertices $V(G) \setminus V(\cS')$ can be `absorbed' into a complete
embedding of $\cS$. Usually we have no control over which vertices of the
designated set $W$ are used in this partial embedding, so for this to work, the
method fully depends on a very careful choice of the graph $A$---it must be
capable of completing the embedding no matter which vertices of $W$ are already
used. This technique first explicitly appeared in the work on Hamilton cycles in
hypergraphs by R\"{o}dl, Ruci\'{n}ski, and Szemer\'{e}di~\cite{rodl2006dirac},
but was previously used implicitly in the works of Erd\H{o}s, Gy\'{a}rfas, and
Pyber~\cite{erdHos1991vertex}, and Krivelevich~\cite{krivelevich1997triangle}.
As of today, there is quite a substantial body of work in (random) graph theory
utilising the absorbing method (for some specific examples, see
e.g.~\cite{glock2021resolution, joos2019optimal, levitt2010avoid,
montgomery2019spanning, montgomery2020hamiltonicity, montgomery2021proof} and
for a very non-standard application we have drawn some inspiration from, a
recent result~\cite{ferber2020dirac}).

\subsection{Absorbers in sparse regular pairs}

Our goal is to find a graph $A$ and a designated set $W$ in a subgraph $G$ of
$\Gnp$ belonging to $\cGEk(C_t, \tilde n, \eps, \alpha p)$, which have the
capability of `absorbing' the leftover vertices remaining after applying
Theorem~\ref{thm:klr} to $G - V(A)$. The first step in a usual way of doing this
is to find many disjoint \emph{absorbers}. An $R$-absorber, in our context, is a
graph $F$ which is rooted at a set of vertices $R = \{r_1,\dotsc,r_t\}$ and is
such that both $F$ and $F - R$ have a $C_t$-factor. Then the graph $A$ is
obtained by constructing disjoint absorbers rooted on some \emph{strictly
prescribed} $t$-element sets $R \subseteq W$.

Of course, if we were just aiming to construct many disjoint absorbers rooted at
\emph{some sets} $R \subseteq W$, we could turn to Theorem~\ref{thm:klr}, as
long as the absorber $F$ is of constant size. However, for the absorbing
property to be established, it is absolutely necessary that the roots of the
absorbers are chosen in a certain way which makes it impossible to employ this
strategy---Theorem~\ref{thm:klr} has no power of embedding graphs for which some
vertices are already fixed. A cheap attempt at repairing this would be to take
one of the prescribed $t$-element sets $R = \{r_1,\dotsc,r_t\}$ and apply it
with their neighbourhoods $N_G(r_1),\dotsc,N_G(r_t)$. Unfortunately, by looking
at neighbourhoods the regularity between sets is lost, as typically a vertex $v
\in V(G)$ has neighbourhood of size $np \ll n$ for $p = o(1)$ which is not
sufficiently large to `inherit' the $(\eps,p)$-regularity. (Actually, sets of
size $1/p$ typically inherit regularity as well, see~\cite{gerke2007small}, but
this is still not enough when $p = o(n^{-1/2})$.)

A slightly less cheap attempt, and a natural extension of this, is to `expand'
the neighbourhoods of every $r_1,\dotsc,r_t$ some $\ell \geq 1$ times until
$|N_G^\ell(r_i)| \geq \delta\tilde n$, for some $\delta > \eps > 0$, and then
apply Theorem~\ref{thm:klr} with $N_G^\ell(r_i)$. Based on the density $p \gg
n^{-(k-1)/k}$, one expects this to happen when $(np)^\ell \gg n$, i.e.\ when
$\ell = k$. As a dummy example of how this works consider the following scenario
with $k = 2$ and an absorber $F$ for $C_4$ from
Figure~\ref{fig:C4-dummy-absorber}. For every $r_i$ find a set $S_i \subseteq
N_G^2(r_i)$ of size at least $\delta\tilde n$ (as $(np)^2 \gg n$ this is
feasible), so that for every $s_i \in S_i$ there is a copy of the graph on
Figure~\ref{fig:C4-dummy-switcher} between $r_i$ and $s_i$. These sets and
graphs are chosen to be disjoint for different $i$. As $S_i$'s are sufficiently
large and `inherit regularity', we can apply Theorem~\ref{thm:klr} with them to
find the $4$-cycle $s_1,\dotsc,s_4$ with $s_i \in S_i$ which then, due to the
special choice of sets $S_i$, completes a copy of $F$ in $G$. Of course, the
real graph $F$ is going to be much more complex as well as the whole procedure.
In order for this to work it is of utmost importance that an $R$-absorber $F$ is
`locally sparse', or in other words each $N_F^j(r_i)$, $r_i \in R$, is an
independent set for all $1 \leq j \leq k-1$. Otherwise, for reasons going along
the lines of what is said in previous paragraphs, we cannot ensure that an edge
with both endpoints in some $N_F^j(r_i)$ exists in $N_G^j(r_i)$.

\begin{figure}[!htbp]
  \captionsetup[subfigure]{textfont=scriptsize}
  \centering
  \begin{subfigure}{.4\textwidth}
    \centering
    \includegraphics[scale=0.7]{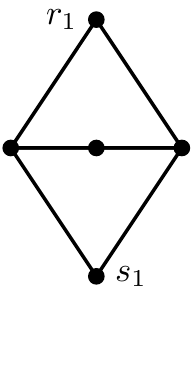}
    \caption{Dummy $C_4$-switcher}
    \label{fig:C4-dummy-switcher}
  \end{subfigure}%
  \begin{subfigure}{.6\textwidth}
    \centering
    \includegraphics[scale=0.7]{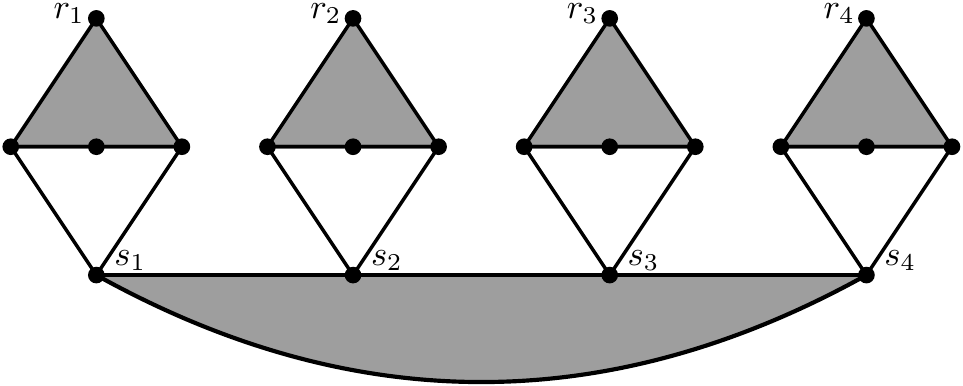}
    \caption{Dummy $C_4$-absorber}
    \label{fig:C4-dummy-absorber}
  \end{subfigure}%
  \caption{A dummy example of an absorber (right) and its building block (left)}
  \label{fig:C4-dummy}
\end{figure}

Lastly, let us mention that the graph $A$ cannot be built in $G$ by greedily
stacking disjoint $R$-absorbers. Namely, there are $\rho\tilde n$ vertices to
`absorb', and even in the best case with $R$-absorbers being of constant size,
there are also roughly $\rho\tilde n$ such graphs we need to `greedily stack'.
As $G$ is living in $\Gnp$ and has minimum degree roughly $np \ll n$, already
after $np$ iterations of using a greedy construction we potentially run out of
space: it can easily happen that the whole neighbourhood of some vertex $w \in
W$ which is prescribed to be the root is already taken. This is circumvented by
using Haxell's matching condition (see Theorem~\ref{thm:haxell-matching} below),
and all the absorbers are to be found in one fell swoop.

\subsection{Switchers, absorbers, and other graph definitions}

Before defining an $R$-absorber we break its structure into even smaller pieces
which we call \emph{switchers}. A \emph{switcher} with respect to a $C_t$-factor
(whenever we say `switcher' we mean `switcher with respect to a $C_t$-factor'),
is a graph $H$ which contains specified vertices $u$ and $v$ and is such that
both $H - v$ and $H - u$ have a $C_t$-factor. A construction that first comes to
mind is to take a path on $t-1$ vertices and connect its endpoints to both $u$
and $v$ (see Figure~\ref{fig:C4-dummy-switcher}). However, such a graph contains
$C_4$ as a subgraph, and as we plan on finding absorbers within sparse regular
pairs, and $m_2(C_4) = 3/2$, we should not hope to find anything that has $C_4$
as a subgraph at density $p = o(n^{-2/3})$, or whenever $k \geq 4$
(equivalently, $t \geq 7$). It turns out that finding a suitable construction as
above is easier said than done.

Let $T$ be a $(t-1)$-ary tree of depth $k$ rooted at a vertex $v$. Replace every
vertex of $T$ by a $t$-cycle $C_t$ and choose an arbitrary vertex from the cycle
on depth $0$ as the root and label it by $v$. Additionally, for every edge of
$T$, identify any two vertices belonging to two cycles corresponding to
endpoints of the edge in such a way that every vertex, other than $v$ and
$(t-1)^{k+1}$ vertices belonging to the cycles on depth $k$, belong to exactly
two cycles. We say that a graph obtained this way is a $C_t$-tree of depth $k$
rooted at $v$. We usually omit saying `of depth $k$' and `rooted at $v$' when
this is clear from the context. The definition of a $C_t$-tree has a much more
natural visual representation, as shown on Figure~\ref{fig:C3-tree}.

\begin{figure}[!htbp]
  \centering
  \includegraphics[scale=0.7]{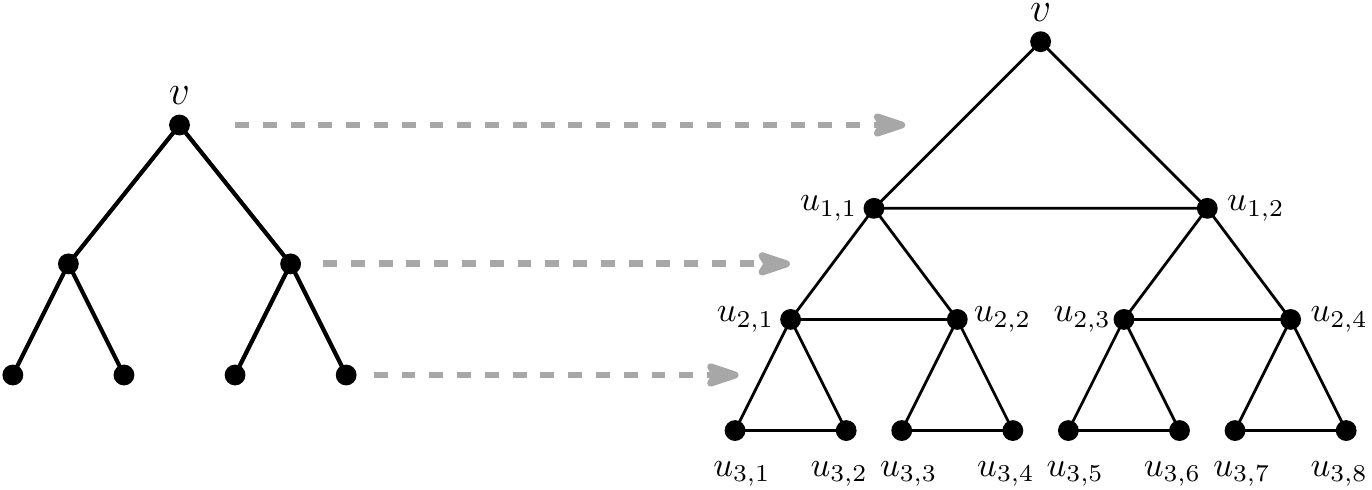}
  \caption{An example of a $C_3$-tree of depth $k = 2$.}
  \label{fig:C3-tree}
\end{figure}

One may think of the vertices of a $C_t$-tree as arranged on levels with $v$ as
the root, and level $i$ consisting of $(t-1)^i$ vertices split into groups of
$t-1$, each group closing a cycle with one (distinct) vertex on level $i-1$ (see
Figure~\ref{fig:C3-tree} again).

For ease of reference, which is used later in the proof, we label the vertices
of a $C_t$-tree:
\begin{itemize}
  \item (level $0$): vertex $v$ is considered to be the root and gets label
    $u_{0,1}$;
  \item (levels $1$ to $k+1$): vertices belonging to a cycle together with a
    vertex $u_{i,j}$, for some $0 \leq i \leq k$, $1 \leq j \leq (t-1)^i$, get
    labels $u_{i+1,(j-1)(t-1)+1}, u_{i+1,(j-1)(t-1)+2}, \dotsc,
    u_{i+1,(j-1)(t-1)+t-1}$ such that
    \[
      u_{i,j}, u_{i+1,(j-1)(t-1)+1}, \dotsc, u_{i+1,(j-1)(t-1)+t-1}
    \]
    is a $t$-cycle in a $C_t$-tree.
\end{itemize}

We next list several graphs which are used as gadgets in order to construct
switchers and combine them into an $R$-absorber.

An $(a, b)$-\textbf{ladder of length $\ell$}, for $\ell$ odd, is a graph $G$
defined as follows:
\begin{itemize}
  \item the vertex set of $G$ is
    \[
      V(G) = \bigcup_{\substack{i \in [\ell] \\ i \text{ odd}}} \{w_{i,1},
      \dotsc, w_{i,a}\} \cup \bigcup_{\substack{i \in [\ell] \\ i \text{ even}}}
      \{w_{i,1}, \dotsc, w_{i,b}\};
    \]
  \item $w_{1,1}, \dotsc, w_{\ell,1}$ and $w_{1,a}, w_{2,b}, \dotsc,
    w_{\ell-1,b}, w_{\ell,a}$
    are paths of length $\ell-1$;
  \item $w_{i,1}, \dotsc, w_{i,a}$ is a path for every odd $i$;
  \item $w_{i,1}, \dotsc, w_{i,b}$ is a path for every even $i$.
\end{itemize}
So this graph looks like a `ladder' where the `steps' are paths of two different
alternating lengths (see Figure~\ref{fig:ladder}).

\begin{figure}[!htbp]
  \centering
  \includegraphics[scale=0.7]{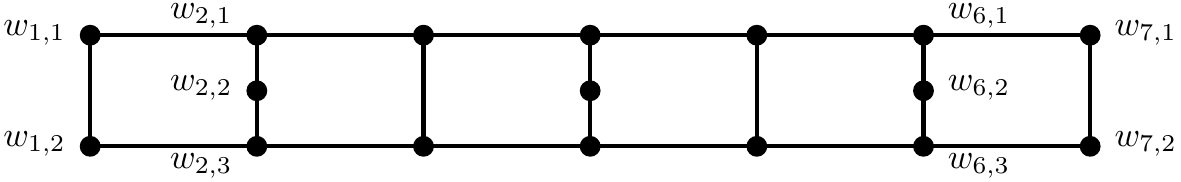}
  \caption{An example of a $(2,3)$-ladder of length $\ell = 7$.}
  \label{fig:ladder}
\end{figure}

Two $t$-cycles, $v_1, \dotsc, v_t$ and $u_1, \dotsc, u_t$, are said to be
$k$-\textbf{ladder-connected} if there exist an $(a_1,b_1)$-ladder and an
$(a_2,b_2)$-ladder, both of length $2k-1$, which are vertex-disjoint and such
that:
\begin{itemize}
  \item $a_1 = k-1$, $a_2 = t-k$, $b_i = t-a_i$, for $i \in \{1,2\}$;
  \item vertices $\{w_{1,j}\}_{j \in [a_1]}$ and $\{w_{1,j}\}_{j \in [a_2]}$ are
    identified with $v_2, \dotsc, v_k$ and $v_{k+1},\dotsc,v_t$;
  \item vertices $\{w_{2k-1,j}\}_{j \in [a_1]}$ and $\{w_{2k-1,j}\}_{j \in
    [a_2]}$ are identified with $u_2, \dotsc, u_k$ and $u_{k+1},\dotsc,u_t$;
\end{itemize}

Two $C_t$-trees of depth $k$ rooted at $v$ and $v'$ are said to be
$k$-\textbf{ladder-connected} if their respective cycles given by the vertices
on the $k$-th and $(k+1)$-st levels are all pairwise $k$-ladder-connected. That
is, for every $j \in [(t-1)^k]$, the two cycles
\[
  u_{k,j}, u_{k+1,(j-1)(t-1)+1}, \dotsc, u_{k+1,(j-1)(t-1)+t-1} \enskip
  \text{and} \enskip u_{k,j}', u_{k+1,(j-1)(t-1)+1}', \dotsc,
  u_{k+1,(j-1)(t-1)+t-1}'
\]
are $k$-ladder-connected. Finally, say that a graph obtained this way is a
$(v,v')$-\textbf{switcher} and denote it by $\Fsw$; indeed, it contains a
$C_t$-factor in both $\Fsw - v$ an $\Fsw - v'$, see
Figure~\ref{fig:C3-switcher}.

\begin{figure}[!htbp]
  \centering
  \begin{subfigure}{.49\linewidth}
    \centering
    \includegraphics[width=0.8\linewidth]{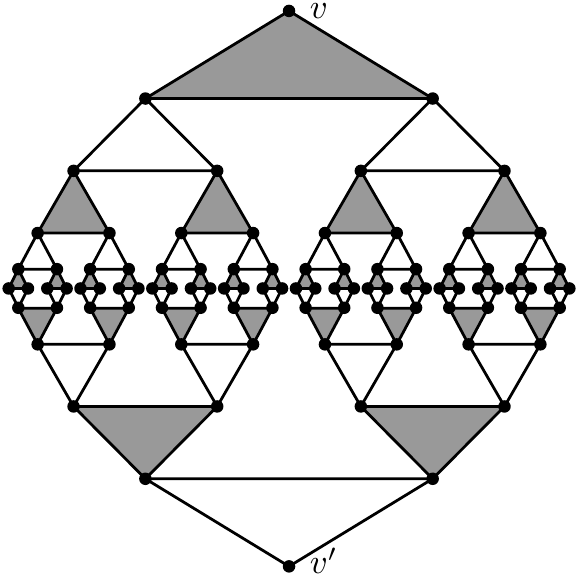}
  \end{subfigure}%
  \hfill
  \begin{subfigure}{.49\linewidth}
    \centering
    \includegraphics[width=0.8\linewidth]{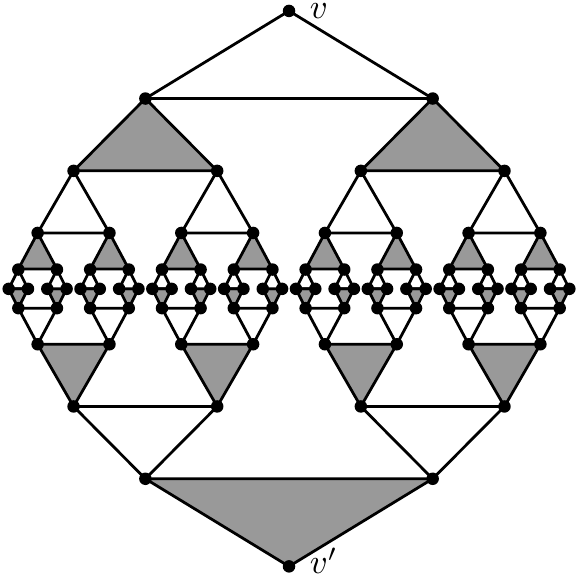}
  \end{subfigure}%
  \caption{An example of a $(v,v')$-switcher $\Fsw$ with two $C_3$-factors. The
  figure on the left represents a $C_3$-factor in $\Fsw - v$ and the
  figure on the right represents a $C_3$-factor in $\Fsw - v'$.}
  \label{fig:C3-switcher}
\end{figure}

An \textbf{$R$-absorber} $\Fabs$ for a set $R = \{r_1,\dotsc,r_t\}$ is a graph
which consists of a $t$-cycle $s_1,\dotsc,s_t$ and a collection of disjoint
$(s_i,r_i)$-switchers $\Fsw$ for every $i \in [t]$. We let $\Fconn$ denote the
graph obtained by contracting every $C_t$-tree of depth $k-1$ (not $k$!) rooted
at $r_i$ of $\Fabs$ individually into a vertex and $\Fabs^{-}$ the subgraph of
$\Fabs$ obtained by removing those $C_t$-trees. The proof of the following
proposition is rather straightforward (but tedious) and for cleaner exposition
we postpone it to the appendix.

\begin{proposition}\label{prop:absorber-properties}
  Let $k \geq 2$ and $t \in \{2k-1,2k\}$. Then the $R$-absorber $\Fabs$
  satisfies the following:
  \stepcounter{propcnt}
  \begin{alphenum}
    \item\label{abs-two-factors} both $\Fabs$ and $\Fabs - R$ have a
      $C_t$-factor,
    \item\label{abs-m2-density} $m_2(\Fconn) \leq k/(k-1)$, and
    \item\label{abs-blow-up} $\Fabs$ is a subgraph of $\cG(C_t, v(\Fabs), 0,
      1)$.
  \end{alphenum}
\end{proposition}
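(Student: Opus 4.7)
The three parts have rather different flavours; I would handle each by directly exploiting the fact that $\Fabs$ is built entirely from $t$-cycles glued in a highly restricted way.

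For \ref{abs-two-factors}, I use that every $(s_i,r_i)$-switcher $\Fsw^{(i)}$ admits a $C_t$-factor in both $\Fsw^{(i)}-s_i$ and $\Fsw^{(i)}-r_i$; each such factor can be written down explicitly from the depth-$k$ tree cycles together with an alternating $C_t$-matching across every length-$(2k-1)$ ladder. The union of the $r_i$-omitting factors, over $i \in [t]$, is then a $C_t$-factor of $\Fabs - R$ (each $s_i$ is already covered by its switcher's factor, so the central cycle is not used). Taking instead the central cycle $s_1 \dots s_t$ together with the $s_i$-omitting factor of every switcher gives a $C_t$-factor of $\Fabs$.

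For \ref{abs-m2-density}, the task is to verify $(e(H')-1)/(v(H')-2) \leq k/(k-1)$ for every $H' \subseteq \Fconn$ with at least two edges. After the contractions, $\Fconn$ decomposes into: (a) the central $t$-cycle on $\{s_i\}_{i \in [t]}$; (b) for each $i$, the depth-$k$ $C_t$-tree rooted at $s_i$; (c) for each switcher, its two length-$(2k-1)$ ladders; and (d) for each $i$, the \emph{flower} consisting of the contracted vertex $\tilde r_i$ together with the $(t-1)^k$ depth-$k$ cycles attached through it. My plan is first to check the inequality on every subgraph contained in one of these four gadgets (a handful of explicit computations; the tight configuration is a pair of $t$-cycles sharing a long column inside a ladder, which hits $k/(k-1)=2$ exactly when $k=2$, $t=3$), and then to handle subgraphs spanning several gadgets. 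Adjacent gadgets in $\Fconn$ meet either at a single vertex, in which case gluing preserves the bound by the elementary calculation that $\alpha := k/(k-1) \geq 1$, or at an endpoint column of a ladder, which is identified with a path of $k-1$ or $k$ vertices inside the neighbouring depth-$k$ tree cycle. To handle the latter I would treat each tree cycle together with its two adjacent ladder cycles as one composite block, verify the density bound for all subgraphs of such a composite directly, and then use only single-vertex gluings in the global assembly. This careful bookkeeping across the block structure of $\Fconn$ is the main technical obstacle and where most of the appendix work will lie.

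Finally, for \ref{abs-blow-up}, I would construct an explicit homomorphism $c : V(\Fabs) \to \ZZ/t\ZZ$ realising $\Fabs$ as a subgraph of the $C_t$-blow-up with parts of size $v(\Fabs)$, which is a member of $\cG(C_t, v(\Fabs), 0, 1)$. Setting $c(s_i) = i$, I propagate $c$ through each $C_t$-tree by the rule $c(u_{i+1,(j-1)(t-1)+\ell}) \equiv c(u_{i,j}) + \ell \pmod{t}$; this is well-defined because outside of the prescribed $t$-cycles the tree is acyclic, and each such $t$-cycle then automatically visits all $t$ residues once. I propagate $c$ through each ladder column-by-column, using that every two consecutive columns form a $t$-cycle of length $a_i + a_{i+1} = t$; the column-adjacency graph is a path, so no internal inconsistency can arise. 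The only nontrivial check is that the colouring at the first and last ladder columns matches the one already imposed by the attached tree cycles, which follows directly from the specific parameters $a_1 = k-1$, $b_1 = t-k+1$, $a_2 = t-k$, $b_2 = k$.
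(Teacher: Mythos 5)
Parts \ref{abs-two-factors} and \ref{abs-blow-up} are handled the same way the paper does: explicit $C_t$-factors read off the switchers, and a greedy $\ZZ/t\ZZ$-colouring of $V(\Fabs)$ respecting every $t$-cycle.

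For part \ref{abs-m2-density} your plan diverges from the paper and, as written, has a gap. First, you miss a shortcut that the paper exploits: when $t=2k$, $\Fconn$ is planar with girth $\geq t$, so Euler's formula immediately gives $e(H) \leq \tfrac{t}{t-2}(v(H)-2)$ for every $H \subseteq \Fconn$, and $t/(t-2)=k/(k-1)$. This one line kills the even case. It is precisely because the analogous bound $\tfrac{t}{t-2} = \tfrac{2k-1}{2k-3} > \tfrac{k}{k-1}$ \emph{fails} for $t=2k-1$ that the odd case needs a real argument — your proposal never notes this asymmetry, so it is unclear whether you appreciate where the difficulty actually sits.

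Second, for $t=2k-1$ your outline is close in spirit to what the paper does — reduce $\Fconn$ to pieces glued at cut vertices and bound $d_2$ piece by piece — but the realisation is substantially harder than ``a handful of explicit computations.'' Your composite block (tree cycle + two ladders + flower cycle) is essentially the paper's $\mathrm{CL}_k$; bounding $d_2(F)$ for an \emph{arbitrary} subgraph $F$ of it is not a finite check but needs a structural argument (the paper counts induced cycles: $e(F) = v(F) + c - 1$ and shows $v \geq c(k-1)+2k$). More importantly, a flat ``verify each gadget, then glue at single vertices'' scheme is not what the paper does and is awkward to push through: the paper instead packages a cycle hub plus $t-1$ copies of $\mathrm{CL}_k$ into the larger gadget $\mathrm{CL}_k^+$ and then builds $F_i$ \emph{iteratively} level by level via the recursive Claim stated as Claim~\ref{cl:recursive-density}, whose induction hypothesis is a \emph{rooted} density bound $e(F)/(v(F)-2) \leq k/(k-1)$ for every $F$ containing both marked vertices. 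That rooted strengthening (and its variant with $e(F)+1$ in the numerator for the one-ladder gadget) is exactly what makes the gluing close up at each step; your proposal never introduces it. So while you've identified the tight configuration correctly (two $t$-cycles sharing the long ladder column, tight precisely at $k=2$, $t=3$), the machinery needed to organise the argument — the $t=2k$/$t=2k-1$ split, the rooted density bounds, and the bottom-up recursive build of $F_i$ — is missing.
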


\subsection{From $R$-absorbers to the highly structured graph $A$}

Finally, in order to build the graph $A$ from $R$-absorbers, we rely on a
so-called \emph{template graph}. The first usage of this strategy goes back to
Montgomery~\cite{montgomery2019spanning} and is highly versatile when one has to
absorb several vertices at the same time. We make use of the following
straightforward generalisation of \cite[Lemma~6.1]{mousset2020covering} which is
itself a slight modification of \cite[Lemma~10.7]{montgomery2019spanning} of
Montgomery. It turns out to be a bit more tailored to our needs as opposed to
the original lemma.

\begin{lemma}\label{lem:abs-template}
  There is an integer $m_0$ such that, for every $m \geq m_0$, there exists a
  $t$-partite $t$-uniform hypergraph $B$ on vertex classes $B_1, \dotsc, B_t$,
  with $|B_i| = 2m$ and $\Delta(B) \leq 40^t$, as well as sets $B_i' \subseteq
  B_i$, with $|B_i'| = m$, satisfying the following. For every $Z \subseteq
  \bigcup_{i\in[t]} B_i'$ with $|Z \cap B_1'| = \dotsb = |Z \cap B_t'|$, the
  graph $B - Z$ contains a perfect matching.
\end{lemma}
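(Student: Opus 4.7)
The plan is to construct $B$ by a probabilistic argument that generalises Montgomery's bipartite template to the $t$-partite $t$-uniform setting. The construction has two layers: a deterministic ``backbone'' consisting of two canonical perfect matchings that handles the case $Z=\emptyset$, plus a random ``switcher'' layer providing flexibility to reroute the matching after any balanced deletion. Throughout, the role of Hall's theorem is played by Haxell's hypergraph matching theorem.

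In more detail, I split each $B_i$ into halves $B_i' \cup B_i''$ of size $m$, label both halves by $[m]$, and take the backbone to be the canonical perfect matchings $e_j' = \{x_{1,j}',\dotsc,x_{t,j}'\}$ and $e_j'' = \{x_{1,j}'',\dotsc,x_{t,j}''\}$, $j \in [m]$, aligning labels across parts. On top I superimpose $Dm$ switcher edges, each obtained by picking a vertex of $B_i$ uniformly and independently for every $i$, where $D = D(t)$ is a sufficiently large constant. A standard Chernoff estimate shows that w.h.p.\ every vertex lies in $O(D)$ switcher edges, so after trimming the few offending vertices the resulting hypergraph satisfies $\Delta(B) \leq 40^t$ for an appropriate choice of $D$.

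To verify robustness, fix any balanced $Z$ with $|Z \cap B_i'| = z$. Restricted to $B - Z$, the backbone matching covers all vertices except for exactly $z$ ``orphan'' partners $x_{i,j}''$ in each $B_i''$ (those with $x_{i,j}' \in Z$), so it suffices to match these $tz$ leftover vertices using switcher edges. By Haxell's theorem, such a matching exists provided that for every $i$ and every subset $S$ of the leftover in $B_i$ a Hall-type expansion holds; the probability that a fixed $S$ of size $s$ fails is exponentially small in $Ds$ by independence of the switcher edges. A two-level union bound over at most $\binom{m}{z}^t \cdot \binom{m}{s}^t \leq 2^{2tm}$ pairs $(Z,S)$ then closes whenever $D$ exceeds a suitable constant depending on $t$.

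The main obstacle is the delicate calibration of $D$: it must be large enough in $t$ to make the double union bound close over all balanced $Z$ and all potentially bad $S$, yet small enough to keep $\Delta(B) \leq 40^t$. A secondary technical point is formulating Haxell's criterion (which involves matchings in the link hypergraphs and is noticeably subtler than Hall's bipartite condition) in a form amenable to the random-construction union bound. Both issues are routine once the framework is set up correctly, and together they yield the template $B$ and the subsets $B_i'$ with the claimed properties.
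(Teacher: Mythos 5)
The paper does not give a proof of this lemma: it is cited as a ``straightforward generalisation'' of the bipartite template lemmas of Montgomery~\cite[Lemma~10.7]{montgomery2019spanning} and Mousset, Noever, \v{S}kori\'{c}~\cite[Lemma~6.1]{mousset2020covering}, and the reader is expected either to import that result or to rerun the (by now standard) probabilistic construction in the $t$-partite $t$-uniform setting. Your from-scratch probabilistic attempt is in the right spirit, and the backbone-plus-random-switcher architecture is indeed what underlies Montgomery's bipartite proof, but the orphan accounting contains a genuine error. With the backbone as you describe it --- the two row-matchings $e_j' = \{x_{1,j}', \dots, x_{t,j}'\}$ and $e_j'' = \{x_{1,j}'', \dots, x_{t,j}''\}$ --- every edge $e_j''$ survives the deletion of $Z \subseteq \bigcup_i B_i'$, so \emph{all} of $\bigcup_i B_i''$ remains covered; the uncovered vertices lie in the \emph{primed} halves $B_i'$, not in $B_i''$ as you assert. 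Worse, when the deleted vertices are not aligned to a common index set $J \subseteq [m]$, an edge $e_j'$ is destroyed as soon as a single one of its $t$ vertices falls in $Z$, orphaning the other $t-1$; in the extremal misaligned case this leaves up to $(t-1)z$ uncovered vertices in each $B_i'$, not $z$. Repairing the matching then requires either breaking intact backbone edges (switcher edges generically land on already-covered vertices) and propagating corrections along alternating structures, or building enough slack into the construction to absorb these leftovers directly --- precisely the delicate part of Montgomery's argument, which your appeal to Haxell's theorem plus a two-level union bound over $(Z,S)$ glosses over. A further (smaller) point: the paper's Theorem~\ref{thm:haxell-matching} is stated for hypergraphs of the form $(A\cup B;E)$ with one vertex of each edge in $A$, so it does not directly certify perfect matchings in a $t$-partite $t$-uniform hypergraph, and you would need to spell out the reduction you have in mind. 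Until the orphan structure and the rerouting step are treated carefully, the proposal does not establish the lemma.
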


To connect this to the previous part of the story, the template graph $B$ is
what \emph{strictly prescribes} which $t$-element subsets of the `designated
set' $W = W_1 \cup \dotsb \cup W_t$ need to be roots of an absorber $\Fabs$ in
the following way. Let $B$ be the template graph given by
Lemma~\ref{lem:abs-template} and let $f$ be a bijection mapping vertices of
$B_i$, $i \in [t]$, to $W_i \cup X_i$, where $X_i$'s are some disjoint sets, so
that $W_i \subseteq f(B_i')$. Then for every edge $e = \{b_1,\dotsc,b_t\} \in
E(B)$ construct an $R$-absorber in $G$ for $R = \{f(b_1),\dotsc,f(b_t)\}$. By
the defining property of $B$, for every set $Z \subseteq \bigcup_{i\in[t]} B_i'$
for which $|Z \cap B_1'| = \dotsb = |Z \cap B_t'|$ there is a perfect matching
in $B - Z$. For every edge in this perfect matching take the $C_t$-factor in the
$R$-absorber corresponding to this edge which covers all vertices of $\Fabs$,
and for all other edges take the $C_t$-factor in $\Fabs \setminus R$. The union
of all these copies of $\Fabs$ is then declared to be the graph $A$. This
essentially gives us $W_i$ as sets which we can use in order to `match up' the
$\rho\tilde n$ leftover vertices from Theorem~\ref{thm:klr}. Namely, for each of
$\rho\tilde n$ leftover vertices $v \in V_i$, we find a canonical copy of $C_t$
in $G\big[\{v\} \cup \bigcup_{j \in [t]\setminus\{i\}} W_j\big]$. The remainder
of $W$ is then `absorbed' into a $C_t$-factor using $A$.

\section{Random graphs and expansion}\label{sec:random-graphs}

An invaluable tool in random graph theory is Chernoff's inequality (see,
e.g.~\cite[Corollary~2.3]{janson2011random}).

\begin{lemma}[Chernoff's inequality]\label{lem:chernoff}
  Let $n \in \N$, $p \in [0,1]$, and let $X \sim \Bin(n,p)$. For every $\delta
  \in (0, 3/2)$,
  \[
    \Pr\big[X \notin (1\pm\delta)\E[X]\big] \leq 2e^{-\frac{\delta^2}{3} \E[X]}.
  \]
\end{lemma}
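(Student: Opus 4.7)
The plan is to invoke the classical Cram\'{e}r--Chernoff moment-generating-function argument, treating the upper and lower tails separately and combining them by a union bound. Write $X = \sum_{i=1}^n X_i$ where $X_1, \dotsc, X_n$ are independent Bernoulli$(p)$ indicators, and set $\mu := \E[X] = np$.

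For the upper tail I would apply Markov's inequality to $e^{sX}$ with a parameter $s > 0$ to be chosen later. Using independence together with the inequality $1 - p + p e^s \leq \exp(p(e^s - 1))$ one gets
\[
  \Pr[X \geq (1+\delta)\mu] \;\leq\; e^{-s(1+\delta)\mu}\,\E[e^{sX}] \;\leq\; \exp\!\bigl(-s(1+\delta)\mu + \mu(e^s-1)\bigr).
\]
Optimising over $s$ by the choice $s = \log(1+\delta)$ then yields the standard bound $\bigl(e^\delta/(1+\delta)^{1+\delta}\bigr)^\mu$. For the lower tail an entirely symmetric calculation with $s < 0$ produces $\bigl(e^{-\delta}/(1-\delta)^{1-\delta}\bigr)^\mu$. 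A union bound over the two one-sided tails accounts for the factor of $2$ in the statement.

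What remains is purely analytic: one must verify that both $(1+\delta)\log(1+\delta) - \delta$ and $\delta + (1-\delta)\log(1-\delta)$ are at least $\delta^2/3$ on the interval $\delta \in [0, 3/2]$. This is a routine exercise, done for instance by differentiating twice and comparing the resulting expressions with the constant $2/3$, or by bounding the Taylor remainder of $\log(1 \pm \delta)$. This elementary inequality is the only nontrivial ingredient and is precisely what pins down the admissible range $\delta \in (0, 3/2)$ for the constant $1/3$; if one were willing to worsen the constant, $\delta$ could range more broadly. In practice, as indicated in the excerpt, the lemma is simply cited from \cite{janson2011random} rather than reproved from scratch.
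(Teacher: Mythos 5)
The paper does not prove this lemma; it simply cites \cite[Corollary~2.3]{janson2011random}, so there is no internal proof to compare against. Your Cram\'er--Chernoff argument is the standard proof of exactly that textbook statement and is correct. Two small remarks on the final analytic step: for the lower tail, the function $\delta + (1-\delta)\log(1-\delta)$ is only meaningful for $\delta \in [0,1]$, but for $\delta > 1$ the event $X \leq (1-\delta)\mu$ is impossible (since $X \geq 0$), so the bound holds vacuously there; and it is in fact the \emph{upper} tail inequality $(1+\delta)\log(1+\delta) - \delta \geq \delta^2/3$ that forces the restriction $\delta \leq 3/2$ (the second derivative $\tfrac{1}{1+\delta} - \tfrac{2}{3}$ changes sign at $\delta = 1/2$, and the difference eventually goes negative shortly past $3/2$), whereas the lower-tail function $\delta + (1-\delta)\log(1-\delta) - \delta^2/3$ is convex with vanishing value and derivative at $0$ and hence nonnegative on all of $[0,1)$. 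With those clarifications your argument is complete and matches the cited source.
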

The inequality is also true if $X$ is a geometrically distributed random
variable (instead of binomially), which we use at several places in the proof.

Next, we list a couple couple of properties of random graphs which are no
surprise to experts and can be proven via a standard usage of Chernoff's
inequality and the union bound. First is a bound on the size of the $k$-th
neighbourhood of sets.

\begin{lemma}\label{lem:gnp-k-expansion}
  For every $k \in \N$ and $\nu > 0$, there exists a positive constant $C$ such
  that if $p \geq C\log n/n$ then w.h.p.\ $\Gamma \sim \Gnp$ satisfies the
  following. For every $X \subseteq V(\Gamma)$ of size $|X| \leq
  \nu/(n^{k-1}p^k)$, we have $|N_\Gamma^k(X)| \geq (1-k\nu)|X|(np)^k$.
\end{lemma}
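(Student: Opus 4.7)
The plan is induction on $k$. The base case $k = 1$ reduces to a standard Chernoff bound together with a union bound, and the inductive step bootstraps by applying the $k = 1$ statement to a suitable subset of $N_\Gamma^{k-1}(X)$.

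For the base case, fix $s \in [1, \nu/p]$ and $X \subseteq V(\Gamma)$ with $|X| = s$. The events $\{v \in N_\Gamma(X)\}_{v \notin X}$ are independent Bernoullis with parameter $1 - (1-p)^s \geq sp(1 - sp/2) \geq (1 - \nu/2)sp$, where the last step uses $sp \leq \nu$. Hence $\E|N_\Gamma(X)| \geq (1 - \nu)snp$, and Lemma~\ref{lem:chernoff} gives $|N_\Gamma(X)| \geq (1-\nu)\E|N_\Gamma(X)|$ with failure probability $2\exp(-\Omega(snp))$. Summing this against $\binom{n}{s} \leq (en/s)^s$ choices of $X$ over all $s \in [1, \nu/p]$ produces total failure probability $o(1)$ because $np \geq C\log n$ for a sufficiently large constant $C = C(\nu)$. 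Thus w.h.p.\ $|N_\Gamma(X)| \geq (1-\nu)|X|np$ for every $X \subseteq V(\Gamma)$ with $|X| \leq \nu/p$.

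For the induction step, let $X$ satisfy $|X| \leq \nu/(n^{k-1}p^k)$. Since $np \geq 1$ one also has $|X| \leq \nu/(n^{k-2}p^{k-1})$, so by induction $|N_\Gamma^{k-1}(X)| \geq (1 - (k-1)\nu)|X|(np)^{k-1}$. Pick any $Y \subseteq N_\Gamma^{k-1}(X)$ of size exactly $s := \lfloor (1 - (k-1)\nu)|X|(np)^{k-1} \rfloor$; the assumption on $|X|$ forces $s \leq \nu/p$, so the base case applies and yields $|N_\Gamma(Y)| \geq (1-\nu)snp$. The crucial combinatorial inclusion is
\[
  N_\Gamma^k(X) \supseteq N_\Gamma\bigl(N_\Gamma^{k-1}(X)\bigr) \setminus \bigcup_{j=0}^{k-1} N_\Gamma^j(X),
\]
because any $v$ on the right-hand side has a neighbour $y \in N_\Gamma^{k-1}(X)$, and extending a length-$(k-1)$ path from $X$ to $y$ by the edge $yv$ remains a path, as $v$ lies outside all the nearby neighbourhoods. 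Using the standard Chernoff-based bound $\Delta(\Gamma) \leq 2np$ (which holds w.h.p.), the subtracted set has size at most $|X|\sum_{j=0}^{k-1}(2np)^j = O(|X|(np)^{k-1})$, which is an $O(1/(np))$ fraction of the main term. Combining yields
\[
  |N_\Gamma^k(X)| \geq (1-\nu)(1-(k-1)\nu)|X|(np)^k - O(|X|(np)^{k-1}) \geq (1-k\nu)|X|(np)^k,
\]
provided $C$ is large enough that $1/(np)$ is much smaller than $\nu^2$.

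The only step requiring real care rather than routine calculation is the path-versus-walk bookkeeping in the inclusion above: because the paper's definition of $N_\Gamma^k$ uses paths, one must exclude the shallower neighbourhoods to ensure the extension does not revisit a vertex. With walks in place of paths the induction would essentially be a one-liner. All probabilistic inputs are textbook Chernoff plus union bound, so no genuine obstacle arises.
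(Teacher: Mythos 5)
The paper states this lemma without proof, noting only that it follows from ``a standard usage of Chernoff's inequality and the union bound,'' so there is no paper argument to compare against; your Chernoff-plus-union-bound base case followed by an induction that bootstraps the $k=1$ bound is exactly the argument the authors had in mind, and the path-vs-walk bookkeeping (excluding $\bigcup_{j=0}^{k-1} N_\Gamma^j(X)$ before extending) is handled correctly.

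One small arithmetic slip worth fixing: in the base case you derive $\E|N_\Gamma(X)| \geq (1-\nu)snp$ and then apply Chernoff with deviation $\nu$, which only yields $|N_\Gamma(X)| \geq (1-\nu)^2 snp$, strictly weaker than the claimed $(1-\nu)snp$. The repair is routine: observe instead that $\E|N_\Gamma(X)| \geq (1-s/n)(1-\nu/2)snp \geq (1-3\nu/4)snp$ for $n$ large (since $s/n \leq \nu/(np) \to 0$), and apply Chernoff with deviation $\nu/4$, giving $|N_\Gamma(X)| \geq (1-\nu/4)(1-3\nu/4)snp \geq (1-\nu)snp$. No change to the union-bound computation is needed, since the exponent only shrinks by a constant factor depending on $\nu$, which $C = C(\nu)$ absorbs. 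With this adjustment the inductive step goes through as written: $(1-\nu)(1-(k-1)\nu) = 1-k\nu+(k-1)\nu^2 \geq 1-k\nu$, and the excess $(k-1)\nu^2|X|(np)^k$ comfortably absorbs both the $O_k(|X|(np)^{k-1})$ loss from the excluded lower neighbourhoods and the $O(np)$ loss from the floor, once $np \geq C\log n$ with $C$ large in terms of $k$ and $\nu$.
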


We also need the following property about distribution of edges in random graphs
(see, e.g.~\cite[Corollary~2.3]{krivelevich2006pseudo}).

\begin{proposition}\label{prop:gnp-edges}
  With high probability $\Gamma \sim \Gnp$ satisfies the following for any $p :=
  p(n) \leq 0.99$. For every two (not necessarily disjoint) sets $X, Y \subseteq
  V(\Gamma)$, the number of edges with one endpoint in $X$ and the other in $Y$
  satisfies
  \[
    e_\Gamma(X, Y) \leq |X||Y|p + c\sqrt{|X||Y|np},
  \]
  for some absolute constant $c > 0$.
\end{proposition}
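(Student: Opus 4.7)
This is a standard simultaneous upper-tail concentration bound for edge counts in $\Gnp$; its proof follows the familiar route of applying Chernoff's inequality to a fixed pair of subsets and then taking a union bound.

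Fix $X, Y \subseteq V(\Gamma)$ with $|X| = x$ and $|Y| = y$. Since each potential edge of $\Gamma$ is included independently with probability $p$, the count $e_\Gamma(X, Y)$ is a sum of independent $\mathrm{Ber}(p)$ indicators, one per unordered pair $\{u,v\}$ with $u \in X$, $v \in Y$, $u \neq v$. Hence $e_\Gamma(X, Y) \sim \Bin(N, p)$ for some $N \leq xy$, with mean $\mu \leq xyp$. The Bernstein-form upper-tail Chernoff inequality yields
\[
  \Pr\big[e_\Gamma(X, Y) \geq xyp + t\big] \leq \exp\!\Big(-\tfrac{t^2}{2xyp + 2t/3}\Big)
\]
for every $t > 0$. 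Setting $t = c\sqrt{xynp}$ makes the ratio $t^2/\mu = c^2 n$ independent of $x, y$, and a case split on whether $t \leq xyp$ or $t > xyp$ yields a Chernoff exponent of at least $\min\!\big(c^2 n/4,\ (3c/8)\sqrt{xynp}\big)$.

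I would then take a union bound over all pairs $(X, Y)$, using the standard estimate $\binom{n}{x}\binom{n}{y} \leq (en/x)^x(en/y)^y \leq 4^n$. In the ``dense'' regime $xy \geq c^2 n/p$, the per-pair failure bound $\exp(-c^2 n/4)$ comfortably absorbs the $4^n$ pairs once $c$ is a sufficiently large absolute constant, so these pairs contribute $o(1)$ in total. In the ``sparse'' regime $xy < c^2 n/p$, the Chernoff exponent $(3c/8)\sqrt{xynp}$ must dominate the sharper union-bound cost $x\log(en/x) + y\log(en/y)$.

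The main technical obstacle lies precisely in this sparse regime when $x, y$ are highly imbalanced (for instance $x = O(1)$ and $y$ close to $n/p$), since there the union-bound cost from the large side can rival the Chernoff exponent. I would handle this by peeling off the sub-regime $\mu \leq \log n$, in which $e_\Gamma(X,Y)$ is stochastically dominated by a Binomial of sub-logarithmic mean and so exceeds $c\sqrt{xynp}$ only with probability $n^{-\omega(1)}$ --- enough to absorb any union-bound factor; in the complementary range $\mu \geq \log n$ the slack $\sqrt{xynp} \geq \sqrt{n\mu}$ suffices to dominate $(x+y)\log n$ throughout the remaining parameter range. Choosing $c$ to be a sufficiently large absolute constant then makes the overall failure probability $o(1)$, yielding the claim.
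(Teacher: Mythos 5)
The paper itself offers no proof of Proposition~\ref{prop:gnp-edges} --- it is quoted directly from the survey of Krivelevich and Sudakov --- so what follows assesses your reconstruction on its own terms. Your set-up is the right first attempt, and the dense regime $|X||Y| \geq c^2 n/p$ (Chernoff exponent $\Omega(c^2 n)$ against $4^n$ pairs) is handled correctly. The gap is in the sparse regime, and both of the claims you make there are false. Writing $x = |X| \leq y = |Y|$ and $\mu = xyp$: take $p = \log n/n$, $x = 2$, $y = n/2$, so that $\mu = \log n$ sits exactly at your cut. Your Chernoff exponent is $(3c/8)\sqrt{xynp} = \Theta(\sqrt{n\log n})$, which is indeed $n^{-\omega(1)}$ as a probability, but there are $\binom{n}{2}\binom{n}{n/2} = e^{\Theta(n)}$ such pairs, so ``$n^{-\omega(1)}$ absorbs any union-bound factor'' is simply not true; and $\sqrt{n\mu} = \sqrt{n\log n}$ certainly does not dominate $(x+y)\log n = \Theta(n\log n)$, so the complementary claim fails on the same example. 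Nothing in the remaining parameter range $\log n \leq \mu$, $xy < c^2 n/p$ forces the pair to be balanced.

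More importantly, this cannot be repaired by sharpening the tail estimate: for $X = \{v\}$, $|Y| = n/2$ and $p = 1/n$, the \emph{true} probability that $e_\Gamma(X,Y) \geq c\sqrt{n/2}$ is of order $e^{-\Theta(\sqrt{n}\log n)}$ (it is essentially the probability that $v$ has $c\sqrt{n/2}$ neighbours in $Y$), and multiplying by the $e^{\Theta(n)}$ choices of $Y$ diverges. So \emph{any} argument that fixes both sets and union-bounds over both is doomed here; the events for different $Y$ with the same $X$ are heavily correlated and must be treated together. The missing ingredient is the standard one: for imbalanced pairs bound $e_\Gamma(X,Y) \leq e_\Gamma(X, V(\Gamma)) = \sum_{v\in X}\deg_\Gamma(v)$ and control degree sums (a union bound over $X$ alone, or a maximum-degree bound when $np = O(\log n)$); handle $xy = O(np)$ deterministically via $e_\Gamma(X,Y) \leq 2xy$; and reserve the two-set union bound for pairs where both sets are large relative to the fluctuation, where one needs the multiplicative form $\Pr[\Bin(N,q) \geq a] \leq (eNq/a)^a$ --- the Bernstein form you quote loses the $\log(a/\mu)$ factor in the exponent that this case requires. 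As written, the proof does not close. (A side remark: as literally stated, with no lower bound on $p$, the proposition fails for $n^{-2} \ll p \ll n^{-1}$ already for two adjacent singletons, so the intended regime is $np = \Omega(1)$; this is an issue with the quoted statement rather than with your argument.)
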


The next one comes in handy when wanting to show expansion of sets which is
implied only by a minimum degree condition in subgraphs of $\Gnp$.

\begin{lemma}\label{lem:gnp-min-deg-expansion}
  For every $\mu > 0$, there exists a positive constant $K$ such w.h.p.\ $\Gamma
  \sim \Gnp$ satisfies the following for every $p \in (0,1)$. There are no two
  sets $X, Y \subseteq V(\Gamma)$ with $|X| \geq K/p$, $|Y| \leq \mu n$, and
  $e_\Gamma(X, Y) \geq 2\mu|X|np$.
\end{lemma}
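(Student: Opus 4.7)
My plan is a routine union bound combined with a Chernoff-type tail estimate on $e_\Gamma(X, Y)$; the constant $K$ will be chosen at the end in terms of $\mu$. I would first observe that if $p < K/n$ the claim is vacuous, since then $K/p > n$ and no set $X \subseteq V(\Gamma)$ can have size at least $K/p$. Hence I may assume $p \geq K/n$, which guarantees $|X|np \geq Kn$ for every admissible $X$.

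For fixed $X, Y \subseteq V(\Gamma)$ with $|X| = x$ and $|Y| = y \leq \mu n$ (not necessarily disjoint), I would write $e_\Gamma(X, Y)$ as a sum of independent $\{0,1\}$- and $\{0,2\}$-valued random variables, one per unordered pair of distinct vertices of $V(\Gamma)$; a pair inside $X \cap Y$ contributes with multiplicity two, while all others contribute $0$ or $1$. The mean is then at most $|X||Y|p \leq \mu xnp$. Applying a standard Chernoff/Bernstein bound, either directly or after splitting into the independent contributions from edges inside $X \cap Y$, between $X \setminus Y$ and $Y$, and between $X \cap Y$ and $Y \setminus X$, should yield
\[
  \Pr\big[e_\Gamma(X, Y) \geq 2\mu xnp\big] \leq \exp(-c\mu xnp)
\]
for some absolute constant $c > 0$, since the deviation $\mu xnp$ is at least as large as the mean.

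Next I would union-bound over all quadruples $(x, y, X, Y)$. Using $\binom{n}{x} \leq \exp(x \log(en/x))$ together with the elementary inequality $x \log(en/x) \leq n$ for $1 \leq x \leq n$, and the crude bound $\binom{n}{y} \leq 2^n$, the number of pairs of sets of sizes $x$ and $y$ is at most $\exp(2n)$. Summing over the $O(n^2)$ admissible values of $x$ and $y$, the total failure probability is at most
\[
  O(n^2) \cdot \exp\big(2n - c\mu xnp\big) \leq O(n^2) \cdot \exp\big((2 - c\mu K)n\big),
\]
which is $o(1)$ once $K$ is chosen sufficiently large depending only on $\mu$ (say $K \geq 3/(c\mu)$).

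I do not foresee any substantive obstacle: the threshold $|X| \geq K/p$ is calibrated precisely so that the Chernoff exponential $\exp(-c\mu K n)$ beats the $2^{O(n)}$ factor from the union bound. The only mildly delicate point is that $X$ and $Y$ need not be disjoint, which is handled by the decomposition above (or, more wastefully, by noting that $e_\Gamma(X, Y)$ is bounded by twice a binomial with the right parameters).
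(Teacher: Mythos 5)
The paper states this lemma without proof, grouping it with the facts before Lemma~3.3 that are said to follow from "a standard usage of Chernoff's inequality and the union bound," so there is no textual proof to compare against. Your direct Chernoff-plus-union-bound argument is exactly that standard route and is correct: the vacuity observation for $p < K/n$ rightly reduces to the regime $p \geq K/n$, where $|X|np \geq Kn$ makes the exponential in the tail bound dominate the $e^{O(n)}$ from the union bound, and the non-disjointness of $X$ and $Y$ is handled by the splitting you describe.

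One small imprecision worth flagging. You justify $\Pr\big[e_\Gamma(X,Y) \geq 2\mu|X|np\big] \leq \exp(-c\mu|X|np)$ by saying "the deviation $\mu|X|np$ is at least as large as the mean." Read literally, the symmetric Chernoff bound at a deviation equal to the mean $m := \E[e_\Gamma(X,Y)]$ only gives $\exp(-m/3)$, and $m$ can be far smaller than $\mu|X|np$ when $|Y| \ll n$, so that form does not by itself produce a constant multiple of $\mu|X|np$ in the exponent. The correct one-line justification is the exponential-moment bound at a fixed small $\lambda > 0$: $\Pr[e_\Gamma(X,Y) \geq a] \leq \exp(-\lambda a + (e^{2\lambda}-1)m)$, and since $m \leq a/2$ one may pick $\lambda$ small enough (say $\lambda = 1/4$) so that the exponent is at most $-ca$ for an absolute $c > 0$, no matter how small $m$ is. Your conclusion is right; the heuristic sentence should just be replaced by this observation.

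It is also worth noting a shorter route that uses Proposition~3.4, which the paper already records and is itself proved by exactly the same Chernoff-plus-union-bound technique. If $e_\Gamma(X,Y) \geq 2\mu|X|np$ and $|Y| \leq \mu n$, then
\[
  \mu|X|np \leq e_\Gamma(X,Y) - |X||Y|p \leq c\sqrt{|X||Y|np} \leq c\sqrt{\mu|X|}\,n\sqrt{p},
\]
and squaring and rearranging gives $|X| \leq c^2/(\mu p)$, so $K := c^2/\mu + 1$ works. This is presumably what the author had in mind when omitting the proof, and it avoids re-running the union bound; your from-scratch argument buys self-containedness and is no harder, so either is a perfectly good way to fill the gap.
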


It turns out that the minimum degree assumption for a subgraph $G$ of $\Gamma
\sim \Gnp$ is sufficient to find many disjoint copies of $t$-cycles in $G$,
under certain conditions. For this (and things to come) we make use of a
hypergraph matching condition due to Haxell~\cite{haxell1995condition}.

\begin{theorem}[Haxell's condition~\cite{haxell1995condition}]
  \label{thm:haxell-matching}
  Let $\cH = (A \cup B; E)$ be an $\ell$-uniform hypergraph with $|e \cap A| =
  1$ and $|e \cap B| = \ell-1$ for every edge $e \in E$. Suppose that for every
  subset $A' \subseteq A$ and $B' \subseteq B$ with $|B'| \leq
  (2\ell-3)(|A'|-1)$, there is an edge $e \in E$ intersecting $A'$ but not $B'$.
  Then there is an $A$-saturating matching in $\cH$ (a collection of disjoint
  edges whose union contains $A$).
\end{theorem}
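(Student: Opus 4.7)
The plan is to prove this by contradiction in the spirit of the classical alternating-paths proof of Hall's theorem, iteratively growing a pair $(A', B')$ of sets that witness deeper and deeper attempts to augment a maximum matching. Suppose for contradiction that $M$ is a maximum matching in $\cH$ that fails to saturate some $a_0 \in A$.

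I would build an increasing sequence $A_0 \subsetneq A_1 \subsetneq \dotsb \subseteq A$ starting with $A_0 = \{a_0\}$, together with nested sets $B_0 \subseteq B_1 \subseteq \dotsb \subseteq B$, maintaining three invariants: (i) $|A_i| = i + 1$; (ii) $|B_i| \leq (2\ell - 3) \cdot i$; and (iii) every edge $m \in M$ with $m \cap A_i \neq \emptyset$ satisfies $m \setminus A_i \subseteq B_i$. Crucially, invariants (i) and (ii) exactly match the hypothesis of the theorem for $A' = A_i$, $B' = B_i$, so the Haxell condition can be applied at every stage of the construction.

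At stage $i$, apply the condition to obtain an edge $e \in E$ meeting $A_i$ and disjoint from $B_i$. If $e$ avoids $V(M) \setminus A_i$ altogether, then $e$ can be woven into an augmentation of $M$ by tracing back through the previously chosen edges; this contradicts maximality of $M$. Otherwise, $e$ meets some matching edge $m \in M$ at a vertex $v \in B$; by invariant (iii), the $A$-endpoint $a^\ast$ of $m$ cannot already lie in $A_i$, for then we would have $v \in B_i$. I would then set $A_{i+1} = A_i \cup \{a^\ast\}$ and $B_{i+1} = B_i \cup (e \setminus A_i) \cup (m \setminus \{a^\ast\})$, which adds at most $(\ell - 1) + (\ell - 2) = 2\ell - 3$ new vertices to $B$, preserving (ii). Because $A_i$ strictly grows and is bounded by $|A|$, the process terminates, and at termination the first case must apply, producing a contradiction.

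The main obstacle is the augmentation step: making the bookkeeping precise enough that, when we find an edge $e$ that avoids every previously seen matching edge outside $A_i$, we can explicitly describe the sequence of swaps along our construction that strictly enlarges $M$. This forces us to carry a parent-pointer (tree) structure alongside the sets $(A_i, B_i)$ which records, for each $a \in A_i \setminus \{a_0\}$, the edge $e$ of $E$ that brought $a$ into play and the matching edge it displaced; the augmentation is then read off as an alternating path/tree in this auxiliary structure. The bound $2\ell - 3$ (rather than a looser $2(\ell - 1)$) is what makes the reduction to Hall's condition tight when $\ell = 2$.
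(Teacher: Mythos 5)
The paper does not prove this result---it is quoted directly from Haxell's 1995 paper~\cite{haxell1995condition} and used as a black box---so there is no in-paper proof to compare against. Reviewing your proposal on its own merits: the set-up and the invariants (i)--(iii) are fine, the arithmetic showing $|B_{i+1}\setminus B_i|\leq 2\ell-3$ is correct, and the observation that any matching edge meeting $e$ in $B$ must have its $A$-vertex outside $A_i$ (via invariant (iii)) is exactly right. But the argument has a genuine gap in Case 1, the ``augmentation'' step, and it is not a bookkeeping issue.

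The difficulty is that a hyperedge $e_i$ can intersect up to $\ell-1$ matching edges of $M$, not just one. Your Case~2 records only one such conflict (the matching edge $m$ at vertex $v$), adds only that one $A$-vertex $a^\ast$ to the tree, and moves on. When Case~1 eventually fires with a free edge $e$ for some $a_{j_0}\in A_i$, tracing back along parent pointers produces a path $a_{j_0},a_{j_1},\dots,a_0$ and associated edges; the intended swap is to match each $a_{j_r}$ by the edge $e_{\tau(j_{r-1})}$ after removing $m_{a_{j_r}}$. But $e_{\tau(j_{r-1})}$ may intersect \emph{other} matching edges besides $m_{a_{j_r}}$, and those are untouched by the swap, so $e_{\tau(j_{r-1})}$ cannot be inserted into the matching. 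A concrete instance with $\ell=3$: take $M=\{m_1,m_2\}$ with $m_1=\{a_1,b_1,b_2\}$, $m_2=\{a_2,b_3,b_4\}$ and $a_0$ unmatched; suppose $e_0=\{a_0,b_1,b_3\}$ (conflicts with both $m_1$ and $m_2$), you add $a_1$ to $A_1$, and then $e_1=\{a_1,b_5,b_6\}$ is free. Tracing back wants to use $e_0$ for $a_0$ after replacing $m_1$ by $e_1$, but $e_0$ still collides with $m_2$ at $b_3$, so the matching does not grow. Recording \emph{all} conflicts of $e_i$ (which the arithmetic permits: $s\geq 1$ new conflicts cost at most $(\ell-1)+s(\ell-2)\leq(2\ell-3)s$ new $B$-vertices) does not rescue the argument either, since the same example shows that freeing $e_0$ requires simultaneously rerouting both $a_1$ and $a_2$, but only one free edge has been produced.

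This is exactly why Haxell's theorem is not a corollary of Hall's theorem: for $\ell=2$ each $e_i$ has a single $B$-vertex and hence at most one conflict, so the tree degenerates to an alternating path and your argument closes; for $\ell\geq 3$ the alternating structure branches and a single augmenting chain cannot repair it. Haxell's actual proof does not perform a one-shot augmentation at the end. Instead, when a free edge for $a_{j_0}\neq a_0$ is found, the matching is replaced by $M'=M-m_{a_{j_0}}+e$ (same size, same covered $A$-vertices) and the whole process is \emph{restarted} with $M'$; the theorem then follows from a carefully chosen potential that strictly decreases under this replacement. That potential argument is the real content of the proof, and it is precisely what your proposal leaves open by calling it ``the main obstacle'' and deferring it to bookkeeping.
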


\begin{lemma}\label{lem:gnp-cycles-from-min-deg-disjoint}
  Let $k \geq 2$ and $t \in \{2k, 2k+1\}$. For every $\alpha, \mu > 0$, there
  exists a $\delta > 0$ with the following property. For every $D > 0$ there
  exists a $C > 0$ such that if $p \geq Cn^{-(k-1)/k}$ then w.h.p.\ $\Gamma \sim
  \Gnp$ satisfies the following. Let $G \subseteq \Gamma$ and $X, U \subseteq
  V(G)$ be disjoint sets of size $|U| \geq \mu n$ and $|X| \leq \delta|U|$.
  Assume $\deg_G(v, U) \geq \alpha|U|p$ for all $v \in X \cup U$ and all but
  $D/p$ vertices $u \in U$ satisfy $\deg_G(u, U) \geq (1/2+\alpha)|U|p$. Then
  there is a collection of disjoint $t$-cycles in $G[X \cup U]$ covering all
  vertices of $X$.
\end{lemma}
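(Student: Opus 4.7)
My plan is to apply Haxell's matching condition (Theorem~\ref{thm:haxell-matching}) to the $t$-uniform hypergraph $\cH$ on parts $A := X$ and $B := U$ whose hyperedges are the vertex sets of the copies of $C_t$ in $G$ using exactly one vertex of $X$ and $t-1$ vertices of $U$. An $A$-saturating matching in $\cH$ yields a vertex-disjoint family of $t$-cycles covering $X$, so verifying Haxell's condition reduces to the following: for every $A' \subseteq X$ and $B' \subseteq U$ with $|B'| \leq (2t-3)(|A'|-1)$, some $x \in A'$ is contained in a copy of $C_t$ in $G[\{x\} \cup U']$, where $U' := U \setminus B'$.

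Fix such $A', B'$ and let $U^{*} := \{u \in U : \deg_G(u, U) \geq (1/2+\alpha)|U|p\}$. The hypothesis gives $|U \setminus U^{*}| \leq D/p$ and $|B'| \leq (2t-3)\delta|U|$; taking $\delta$ small enough therefore gives $|U'| \geq |U|/2$. Applying Proposition~\ref{prop:gnp-edges} to the pairs $(A', B')$ and $(A', U \setminus U^{*})$ and averaging produces some $x^{*} \in A'$ whose degree into $B' \cup (U \setminus U^{*})$ is at most $\alpha|U|p/4$, whence $|N_G(x^{*}, U' \cap U^{*})| \geq 3\alpha|U|p/4$.

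A copy of $C_t$ through $x^{*}$ is a pair of edges $x^{*}y, x^{*}z$ (with $y \neq z$) together with two internally-disjoint paths in $G[U' \setminus \{x^{*}\}]$ starting at $y$ and $z$ and connected by a single closing edge, where the combined path-length is $t-3$: lengths $k-1$ and $k-2$ for $t = 2k$, and $k-1$ and $k-1$ for $t = 2k+1$. I would pick $y, z \in N_G(x^{*}, U' \cap U^{*})$ having additionally small degree into $B' \cup (U \setminus U^{*})$ (another averaging step), and grow two vertex-disjoint BFS-trees from them in $G[U' \setminus \{x^{*}\}]$. Expansion of each BFS is controlled by the minimum-degree condition in $G[U]$ combined with Proposition~\ref{prop:gnp-edges} (bounding co-degrees in $\Gamma$): while $|L_i| \leq K/p$ one obtains $|L_{i+1}| = \Omega(np) \cdot |L_i|$, and once $|L_i| \geq K/p$ a single further step yields $|L_{i+1}| = \Omega(n)$ via Lemma~\ref{lem:gnp-min-deg-expansion}. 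Since $p \geq Cn^{-(k-1)/k}$, by depth $k-1$ the two top levels attain sizes at least $\Omega(n^{(k-1)/k})$ and $\Omega(n^{(k-2)/k})$; a final edge-count via Proposition~\ref{prop:gnp-edges} (plus averaging into $U^{*}$) then supplies the desired closing $G$-edge between them.

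The principal obstacle is the sublinear BFS phase. The fixed bad set $B' \cup (U \setminus U^{*})$ can have size up to $\Theta(\delta n + D/p)$, which can dominate the expected neighbourhood $\alpha|U|p$ of a single vertex and so could in principle swallow the whole BFS frontier. The remedy is to iterate the averaging argument \emph{within} the BFS at each step: a uniform averaging over the current frontier shows that only a constant fraction of its vertices have too many edges into the bad set, and after pruning this tail the expansion proceeds with essentially the expected growth. The density threshold $p \geq Cn^{-(k-1)/k}$ is precisely calibrated so that depth-$k$ BFS in $G$ reaches $\Omega(n)$, matching the cycle length $t \in \{2k, 2k+1\}$.
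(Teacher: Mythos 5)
The Haxell reduction is the right first move and matches the paper, but your BFS-from-a-single-vertex strategy has a genuine gap that you partially identify but do not actually close. The problem is in the ``sublinear BFS phase.'' You propose to handle the bad set by pruning the frontier at each level via a uniform averaging argument. But that averaging is powered by Proposition~\ref{prop:gnp-edges}, and when the frontier $L_i$ has size $\ell$ and the bad set $Q := B' \cup (U\setminus U^*)$ has size up to $\Theta(\delta n + D/p)$, the edge bound reads $e_\Gamma(L_i, Q) \leq \ell|Q|p + c\sqrt{\ell|Q|np}$; compared to the total degree budget $\ell\cdot\alpha\mu np$, the fluctuation term contributes a fraction of order $\sqrt{|Q|/(\ell np)} \geq \sqrt{\delta/(\ell p)}$, which is only small when $\ell \gtrsim \delta/p$. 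Along a BFS started from a single vertex $y$, the frontier at depth $i$ has size roughly $(np)^i = n^{i/k}$ (at the threshold $p = n^{-(k-1)/k}$), and this is below $\Theta(1/p) = n^{(k-1)/k}$ for every $i < k-1$. So for all $k \geq 3$ the pruning step has nothing to bite on at the intermediate levels, and an adversarial choice of $B'$ can concentrate the frontier's outgoing edges into the deleted set. (For $k=2$ your calculation is borderline and can be made to work with $C$ large, but the lemma must hold for all $k$.)

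The paper avoids exactly this trap by never placing a small frontier against a linear-sized bad set. It first randomly partitions $U$ into $t-1$ slices $U_1,\dotsc,U_{t-1}$ so that every vertex retains good degree into each slice, and then it expands the whole set $S := \lceil|X'|/4\rceil$ of candidate roots simultaneously; by construction $|U'| \leq (2t-3)|X'| = O(|S|)$, so the subtraction of $|U'|$ from the expanded image $|N_G(S,U_1)| \geq \eps|S|np$ is negligible, and once the image reaches $\Omega(n)$ the subtraction of $|U'| \leq 3t\delta n$ is again negligible. The passage from a set estimate to a single good vertex is done by an \emph{averaging-down} step: having $|N_{G_1}^i(S_i)|$ of linear size, one selects a much smaller $S_{i+1} \subseteq S_i$ whose $(i+1)$-st neighbourhood still has size $2K/p$, and iterates. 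In other words, the small sets are chosen as subsets of the root set $S$ with controlled expansion, not as levels of a vertex-by-vertex BFS. Your proposal needs this idea (or an equivalent way of keeping the expanding object comparable in size to the bad set) to be correct; without it, the argument does not go through for $k \geq 3$.
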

\begin{proof}
  Let $c = c_{\ref{prop:gnp-edges}}$ be the absolute constant from
  Proposition~\ref{prop:gnp-edges}. We choose $\delta = \delta(\alpha,\mu,t) >
  0$ sufficiently small and, given $D > 0$, choose $K = K(\alpha,\mu,t,D)$
  sufficiently large. Next, fix a small $\eps > 0$ and let $C$ be large enough
  with respect to all prior constants. As the conclusion of
  Proposition~\ref{prop:gnp-edges} holds with high probability for $\Gamma$, we
  may condition on this throughout the proof. We need an auxiliary claim first.

  \begin{claim}\label{cl:expansion}
    Let $S, T \subseteq X \cup U$ be disjoint sets with $|T| \geq |U|/t^2$ and
    assume every $v \in S$ satisfies $\deg_G(v, T) \geq \alpha|T|p/2$ and all
    but $K/p$ vertices $u \in S$ satisfy $\deg_G(u, T) \geq (1/2+\alpha/2)|T|p$.
    Then
    \[
      |N_G(S, T)| \geq
      \begin{cases}
        \eps|S|np, & \text{ if $|S| < 2K/p$}, \\
        (1/2+\alpha/4)|T|, & \text{ if $|S| \geq 2K/p$}.
      \end{cases}
    \]
  \end{claim}
  \begin{proof}
    If $|S| < 2K/p$, by setting $Z := N_G(S, T)$ from the minimum degree
    assumption and Proposition~\ref{prop:gnp-edges} we have
    \[
      \frac{\alpha\mu}{2t^2}|S|np \leq \frac{\alpha|S||T|p}{2} \leq e_G(S, Z)
      \leq |S||Z|p + c\sqrt{|S||Z|np} < 2K|Z| + c\sqrt{|S||Z|np},
    \]
    which leads to a contradiction if $|Z| < \eps|S|np$, for $\eps$
    sufficiently small. If $|S| \geq 2K/p$ let $S'$ be the set of vertices with
    degree at least $(1/2+\alpha/2)|T|p$ into $T$ and assume $|Z| <
    (1/2+\alpha/4)|T|$. Then
    \[
      |S'|(1/2+\alpha/2)|T|p \leq e_G(S', Z) \leq |S'||Z|p + c\sqrt{|S'||Z|np}
      < |S'|(1/2+\alpha/4)|T|p + c\sqrt{|S'||Z|np},
    \]
    which again leads to a contradiction as $c\sqrt{|S'||Z|np} <
    (\alpha/4)|S'||T|p$, for $K$ sufficiently large.
  \end{proof}

  Let $\cH$ be an auxiliary $t$-uniform hypergraph on vertex set $X \cup U$ in
  which there is an edge $\{x\} \cup Y$ for $x \in X$ and $Y \subseteq U$, $|Y|
  = t-1$, if and only if there is a $t$-cycle in $G$ induced by $x$ and $Y$.
  Let $X' \subseteq X$ and $U' \subseteq U$, $|U'| \leq (2t-3)|X'|$. By
  Theorem~\ref{thm:haxell-matching} in order to complete the proof it is
  sufficient to show that there is a cycle $C_t$ in $G$ with one vertex in $X'$
  and otherwise completely lying in $U \setminus U'$.

  Let $U_1 \cup \dotsb \cup U_{t-1}$ be a uniformly random equipartition of $U$.
  A simple application of Chernoff's inequality and the union bound shows that
  with high probability all $v \in X \cup U$ satisfy $\deg_G(v, U_i) \geq
  \alpha|U_i|p/2$, and all but $D/p$ vertices $u \in U$ satisfy $\deg_G(u, U_i)
  \geq (1/2+\alpha/2)|U_i|p$, for all $i \in [t-1]$. Fix a choice of such sets
  for the remainder. For a fixed choice of $X'$ and $U'$ as above, let $G_1 :=
  G[X' \cup (U_1 \cup \dotsb \cup U_k) \setminus U']$ and $G_2 := G[X' \cup
  (U_{t-1} \cup \dotsb \cup U_{t-k}) \setminus U']$ ignoring edges with both
  endpoints in some $U_i$.

  Let $S \subseteq X'$ be of size $\ceil{|X'|/4}$. In the following we show that
  there is a $v \in S$ for which $|N_{G_1}^k(v)| \geq (1/2+\alpha/8)|U_k|$.
  First, we argue how this implies what we want, i.e.\ a cycle $C_t$ with one
  vertex in $X'$ and otherwise lying in $U \setminus U'$. As $S$ is arbitrary, we
  conclude there are at least $3|X'|/4$ vertices $u \in X'$ with $|N_{G_1}^k(u)|
  \geq (1/2+\alpha/8)|U_k|$ and analogously at least $3|X'|/4$ vertices $v \in
  X'$ with $|N_{G_2}^k(v)| \geq (1/2+\alpha/8)|U_{t-k}|$. In particular, there
  is a vertex $x \in X'$ with both
  \[
    |N_{G_1}^k(x)| \geq (1/2+\alpha/8)|U_k| \qquad \text{and} \qquad
    |N_{G_2}^k(x)| \geq (1/2+\alpha/8)|U_{t-k}|.
  \]
  If $t = 2k$ this implies there is a cycle containing $x$ and otherwise
  completely in $U \setminus U'$. If $t = 2k+1$, then an edge in $G$ between
  $N_{G_1}^k(x)$ and $N_{G_2}^k(x)$ would again close such a cycle. This edge
  has to exist, as otherwise Claim~\ref{cl:expansion} applied with
  $N_{G_1}^k(v)$ (as $S$) and $U_{t-k} \setminus N_{G_2}^k(v)$ (as $T$)
  implies $U_{t-k} \setminus N_{G_2}^k(v)$ is larger than $|U_{t-k}|/2$, which
  is a contradiction.

  Therefore, we reduced our goal to showing that there is a $v \in S$ with
  $|N_{G_1}^k(v)| \geq (1/2+\alpha/8)|U_k|$. Assume first $|S| \geq 2K/p$. As
  there are at least $K/p$ vertices in $S$ with degree at least
  $(1/2+\alpha/2)|U_1|p$ into $U_1$, Claim~\ref{cl:expansion} applied with $S$
  and $U_1$ (as $T$) gives
  \[
    |N_{G_1}(S)| \geq |N_G(S, U_1)| - |U'| \geq (1/2+\alpha/4)|U_1| - 3t\delta n
    \geq (1/2+\alpha/8)|U_1|,
  \]
  for $\delta$ sufficiently small. By averaging, and as $|U_1| = \Omega(n)$,
  there is a non-empty set $S_1 \subseteq S$ of size
  \[
    |S_1| \leq \frac{|S|2K/p}{|U_1|/2} = O\Big(\frac{1}{np^2}\Big)
  \]
  for which $|N_{G_1}(S_1)| \geq 2K/p$. Repeatedly applying the above principle,
  that is Claim~\ref{cl:expansion} with $N_{G_1}^i(S_i)$ (as $S$) and $U_{i+1}$
  (as $T$) together with subsequent averaging, shows that there is a non-empty
  set $S_{k-1} \subseteq S$ of size
  \[
    |S_{k-1}| = O\Big(\frac{1}{n^{k-1}p^k}\Big).
  \]
  for which $|N_{G_1}^{k-1}(S_{k-1})| \geq 2K/p$. As $p \geq Cn^{-(k-1)/k}$,
  it follows there is a single vertex $v \in S$ for which $|N_{G_1}^{k-1}(v)|
  \geq 2K/p$ and again $|N_{G_1}^k(v)| \geq (1/2+\alpha/8)|U_k|$, as desired.

  Assume now $|S| < 2K/p$ and recall $|U'| \leq 8t|S| = o(|S|np)$. Using
  Claim~\ref{cl:expansion} with $S$ and $U_1$ (as $T$), we get
  \[
    |N_{G_1}(S)| \geq |N_G(S, U_1)| - |U'| \geq \eps|S|np -
    o(|S|np) \geq (\eps/2)|S|np.
  \]
  Let $z$ be the smallest integer for which $|N_{G_1}^z(S)| \geq 2K/p$; in
  particular, $|N_{G_1}^{z-1}(S)| < 2K/p$. Then this same expansion argument can
  be repeated to obtain
  \[
    |N_{G_1}^z(S)| \geq |N_G(N_{G_1}^{z-1}(S), U_z)| - |U'| \geq |S|(\eps
    np/2)^{z-1} \cdot \eps np - o(|S|np) \geq |S|(\eps np/2)^z.
  \]
  Similarly as before, by averaging there is a non-empty $S_z \subseteq S$ of
  size
  \[
    |S_z| \leq \frac{|S|2K/p}{|S|(\eps np/2)^z} =
    O\Big(\frac{1}{n^zp^{z+1}}\Big),
  \]
  for which $|N_{G_1}^z(S_z)| \geq 2K/p$. Again by Claim~\ref{cl:expansion}, we
  have
  \[
    |N_{G_1}^{z+1}(S_z)| \geq (1/2+\alpha/4)|U_{z+1}| - |U'| \geq
    (1/2+\alpha/4)|U_{z+1}| - 3t\delta n \geq
    (1/2+\alpha/8)|U_{z+1}|,
  \]
  for $\delta$ sufficiently small. Now analogously as in the case $|S| \geq
  2K/p$ find a non-empty set $S_{k-1} \subseteq S$ of size $|S_{k-1}| \leq 1$,
  and thus a single vertex $v \in S$, for which $|N_{G_1}^k(v)| \geq
  (1/2+\alpha/8)|U_k|$, as desired. This completes the proof.
\end{proof}

\begin{lemma}\label{lem:gnp-cycles-from-min-deg}
  Let $k \geq 2$ and $t \in \{2k, 2k+1\}$. For every $\alpha, \mu > 0$, there
  exist positive constants $\delta$ and $C$, such that if $p \geq Cn^{-(k-1)/k}$
  then w.h.p.\ $\Gamma \sim \Gnp$ satisfies the following. Let $G \subseteq
  \Gamma$ and $X, U \subseteq V(G)$ be disjoint sets of size $|U| \geq \mu n$
  and $|X| \leq \delta|U|$. Assume $\delta(G[X \cup U]) \geq (1/2+\alpha)|U \cup
  X|p$. Then there is a collection of disjoint $t$-cycles in $G[X \cup U]$
  covering all vertices of $X$.
\end{lemma}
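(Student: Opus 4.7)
The plan is to reduce the statement to Lemma~\ref{lem:gnp-cycles-from-min-deg-disjoint} by verifying its two degree conditions after a mild cleaning of $X \cup U$. Define
\[
  Y := \big\{v \in V(\Gamma) : \deg_\Gamma(v, X) \geq \gamma|U|p\big\}
\]
for a small constant $\gamma = \gamma(\alpha) > 0$. A direct application of Proposition~\ref{prop:gnp-edges} to the pair $(Y, X)$---exactly as in the proof of Claim~\ref{cl:expansion}, using $|X| \leq \delta|U|$ and $|U| \geq \mu n$---yields $|Y| \leq D/p$ for some $D = D(\alpha, \mu)$, provided $\delta$ is chosen sufficiently small relative to $\gamma$.

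For any $v \in (X \cup U) \setminus Y$ the minimum-degree hypothesis combined with $\deg_G(v, X) \leq \deg_\Gamma(v, X) < \gamma|U|p$ gives
\[
  \deg_G(v, U) \geq (1/2 + \alpha)|X \cup U|p - \gamma|U|p \geq (1/2 + \alpha/2)|U|p,
\]
provided $\gamma \leq \alpha/2$. Thus the ``$(1/2+\alpha)$-degree for all but $D/p$ vertices of $U$'' condition of Lemma~\ref{lem:gnp-cycles-from-min-deg-disjoint} holds, with $Y \cap U$ being the exceptions; the uniform bound $\deg_G(v, U) \geq \alpha'|U|p$ for every $v \in X$ to be covered is likewise satisfied for $v \in X \setminus Y$. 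A second application of Proposition~\ref{prop:gnp-edges}, now to the small set $Y \cap U$, is used to control $\deg_G(v, Y \cap U)$ for all but $O(1/p)$ additional vertices, so that the two conditions remain valid after replacing $U$ by $U \setminus Y$.

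The delicate point is handling vertices of $X \cap Y$: for small $p$ one may have $|X| \gg |U|p$, and then the min-degree condition alone does not preclude $\deg_G(v, U) = 0$ for such $v$. I would dispose of these $O(1/p)$ problematic vertices in a short pre-processing phase---constructing, for each $v \in X \cap Y$, a vertex-disjoint $t$-cycle through $v$ inside $G[X \cup U]$ by starting from a good neighbour of $v$ in $(X \cup U) \setminus Y$ and iteratively expanding, following the two-regime scheme of Claim~\ref{cl:expansion}, inside $U \setminus Y$ until enough room remains to close the cycle through a second good neighbour of $v$. Since $|X \cap Y| = O(1/p) = o(|U|)$, the total number of vertices used in this phase is negligible, and after removing them the sets $X \setminus Y$ and the remainder of $U$ satisfy the hypotheses of Lemma~\ref{lem:gnp-cycles-from-min-deg-disjoint} (with $\alpha$ replaced by $\alpha/4$ and $D$ suitably enlarged), so a final application of that lemma covers the rest of $X$. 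The main obstacle is precisely this pre-processing step when $k \geq 3$: there $|Y|$ may substantially exceed the typical degree $np$, so one cannot simply pass through a single good neighbour of $v$ and must instead iterate the expansion several times, essentially replaying the $|S| < 2K/p$ case of the proof of Lemma~\ref{lem:gnp-cycles-from-min-deg-disjoint}.
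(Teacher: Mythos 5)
Your first reduction step (defining $Y$ by degree into $X$, bounding $|Y|=O(1/p)$, and cleaning $U$) is sound and matches the spirit of the paper's first cleaning step. The genuine gap is in the handling of $X \cap Y$, and you have correctly sensed where the difficulty lies but not resolved it. For $k\geq 3$ one can have $|Y|=\Theta(1/p)\gg np$, and since $Y$ is defined in terms of the adversarial graph $G$ and set $X$, a vertex $v\in X\cap Y$ may have \emph{all} of its $G$-neighbours inside $Y$; your pre-processing phase then cannot even begin, as there is no ``good neighbour of $v$ in $(X\cup U)\setminus Y$'' to start the expansion from. Your remark that one must ``iterate the expansion\ldots replaying the $|S|<2K/p$ case'' misidentifies the kind of iteration that is needed: that case of Lemma~\ref{lem:gnp-cycles-from-min-deg-disjoint} already presupposes a uniform degree lower bound into the sets one expands into, which is precisely what a vertex of $X\cap Y$ lacks. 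There is a second, independent problem: even where it can be started, covering the up to $\Theta(1/p)$ vertices of $X\cap Y$ greedily, one cycle at a time, consumes $\Theta(t/p)$ vertices, and for $k\geq 3$ this is $\gg np$, so the vertices used up are \emph{not} negligible at the scale of a single vertex's neighbourhood; a Haxell-type simultaneous argument (as Lemma~\ref{lem:gnp-cycles-from-min-deg-disjoint} in fact provides) is needed rather than a greedy one.

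The paper avoids both problems by iterating a \emph{degree-classification}, not a neighbourhood expansion: it builds a chain $X\supseteq Z_1\supseteq\cdots\supseteq Z_k$ where $Z_i$ consists of those vertices of $Z_{i-1}$ with small degree into $X_{i-1}:=Z_{i-2}\setminus Z_{i-1}$, and uses Lemma~\ref{lem:gnp-min-deg-expansion} and Proposition~\ref{prop:gnp-edges} to show $|Z_i|=O(1/(n^{i-1}p^i))$, whence $Z_k=\varnothing$ for $p\geq Cn^{-(k-1)/k}$. This yields a partition $X=X_1\cup\cdots\cup X_k$ in which every vertex of $X_i$ has large degree into $X_{i-1}$ (and every vertex of $X_1$ into $U$). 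It then chooses $k$ disjoint random slices $U_1,\dotsc,U_k$ of $U\cup\bigcup_{j<i}X_j$, pushes the degree bounds to each slice via Chernoff, and invokes Lemma~\ref{lem:gnp-cycles-from-min-deg-disjoint} once per pair $(X_i,U_i)$. Your proposal amounts to the $i=1$ step of this chain, but the lemma's full strength (and the reason the threshold $p\geq Cn^{-(k-1)/k}$ appears) requires running the chain down to level $k$ so that the bad set vanishes.
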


\begin{proof}
  Let $c = c_{\ref{prop:gnp-edges}}$ be the absolute constant from
  Proposition~\ref{prop:gnp-edges}. Let $K =
  K_{\ref{lem:gnp-min-deg-expansion}}(\alpha\mu/4)$, $\gamma = \alpha\mu/2$,
  $\eps$ be sufficiently small, in particular much smaller than
  $\gamma/2^{k+1}$, and $\delta' =
  \delta_{\ref{lem:gnp-cycles-from-min-deg-disjoint}}(\eps,\mu/k)$. We choose
  $\delta > 0$ sufficiently small and $C \geq 1$ sufficiently large, all
  depending on $\alpha$, $\mu$, and $k$, so that the arguments below go through.
  As the conclusions of Proposition~\ref{prop:gnp-edges},
  Lemma~\ref{lem:gnp-min-deg-expansion}, and
  Lemma~\ref{lem:gnp-cycles-from-min-deg-disjoint} both hold with high
  probability for $\Gamma$, we may condition on this throughout the proof.

  Assume $|U| = \mu n$, as this has no effect on the proof but makes things
  cleaner. Set $Z := \varnothing$ and as long as there is a vertex $u \in U$
  with $\deg_G(u, U \setminus Z) < (1/2+\alpha/2)|U|p$, add it to $Z$. Stop this
  procedure at the first point when $|Z| = \delta|U|$. As then $e_G(Z, X \cup Z)
  \geq \alpha\mu|Z|np/2$ from Lemma~\ref{lem:gnp-min-deg-expansion} with
  $\alpha\mu/4$ (as $\mu$), we get that $|Z| < K/p$. It follows that there is a
  subset $U' \subseteq U$ of size $(1-o(1))|U|$ so that all vertices of $U'$
  have degree at least $(1/2+\alpha/2)|U|p$ into $U'$. Thus, for simplicity, we
  assume that $U$ is already such that $\delta(G[U]) \geq (1/2+\alpha)|U|p$ to
  begin with.

  Our goal is to apply Lemma~\ref{lem:gnp-cycles-from-min-deg-disjoint} to
  certain sets $X' \subseteq X$ and $U' \subseteq X \cup U$ until we cover the
  whole set $X$. For this we need that every vertex of $X'$ has sufficiently
  large degree into $U'$ and that the set of vertices in $U'$ with small degree
  is small.

  Let $Z_1$ be the largest subset of $X$ such that every vertex of $Z_1$ has
  degree less than $(1/2+\alpha/2)|U|p$ into $U$ and set $X_1 := X \setminus
  Z_1$. Then, for every $i \geq 2$, let $Z_i \subseteq Z_{i-1}$ be the largest
  subset such that every vertex of $Z_i$ has degree less than $\gamma
  np/2^{i-1}$ into $X_{i-1}$, and let $X_i := Z_{i-1} \setminus Z_i$. We claim
  that $|Z_i| = O(1/(n^{i-1}p^i))$ for all $i \geq 1$.

  For $i = 1$, observe that every $v \in Z_1$ satisfies
  \[
    \deg_G(v, X) = \deg_G(v, X \cup U) - \deg_G(v, U) \geq (1/2+\alpha)|U|p -
    (1/2+\alpha/2)|U|p \geq \alpha|U|p/2 = \gamma np.
  \]
  Consequently, $e_G(Z_1, X) \geq |Z_1| \alpha\mu np/2$ and by
  Lemma~\ref{lem:gnp-min-deg-expansion} with $\alpha\mu/4$ (as $\mu$), it
  follows that $|Z_1| < K/p$.

  Let $i \geq 2$ and observe that by definition of sets $Z_j$, every $v \in Z_i$
  satisfies
  \[
    \deg_G(v, Z_{i-1}) \geq \deg_G(v, X) - \sum_{j \in [i-1]} \deg_G(v, X_j)
    \geq \gamma np - \sum_{j \in [i-1]} \frac{\gamma np}{2^{j-1}} \geq
    \frac{\gamma np}{2^{i-1}}.
  \]
  By Proposition~\ref{prop:gnp-edges}, and as $|Z_{i-1}| =
  O(1/(n^{i-2}p^{i-1}))$ by induction hypothesis,
  \[
    |Z_i| \cdot \gamma np/2^{i-1} \leq e_G(Z_i,Z_{i-1}) \leq
    2\max\big\{|Z_i||Z_{i-1}|p, c\sqrt{|Z_i||Z_{i-1}|np}\big\} =
    2c\sqrt{|Z_i||Z_{i-1}|np}.
  \]
  Rearranging gives
  \[
    |Z_i| = O\Big(\frac{|Z_{i-1}|}{np}\Big) = O\Big(\frac{1}{n^{i-1}p^i}\Big),
  \]
  as desired.

  Note that, since $p \geq Cn^{-(k-1)/k}$, we have $Z_k = \varnothing$ for $C >
  0$ large enough. In conclusion, there exists a partition $X_1 \cup \dotsb \cup
  X_k = X$ such that
  \begin{enumerate}[label=(\emph{\roman*}), ref=(\emph{\roman*})]
    \item every $v \in X_1$ satisfies $\deg_G(v, U) \geq (1/2+\alpha/2)|U|p$,
    \item for $i \geq 2$, $|X_i| = O(1/(n^{i-2}p^{i-1}))$, and every $v \in X_i$
      satisfies $\deg_G(v, X_{i-1}) \geq \gamma np/2^{i-1}$.
  \end{enumerate}

  For every $i \in [k]$, let
  \[
    U_i \subseteq U \cup \bigcup_{j < i} X_j, \qquad |U_i| = |U|/k,
  \]
  be disjoint sets chosen uniformly at random. Then, by Chernoff's inequality
  and the union bound the following holds with high probability: for every $i
  \in [k]$ and every $v \in X_i$
  \[
    \deg_G(v, U_i) \geq (1-o(1))\deg_G\paren[\big]{v, U \cup \bigcup_{j < i}
    X_j} \cdot \frac{|U_i|}{|U \cup X|} \geq \gamma np/2^k \cdot
    \frac{|U_i|}{(1+\delta)n} \geq \eps|U_i|p,
  \]
  and similarly all but at most $2K/p$ vertices (those in $X_i$'s, $i \geq 2$)
  $u \in U_i$ satisfy $\deg_G(u, U_i) \geq (1/2+\eps)|U_i|p$. Fix such a choice
  of $U_i$'s. This puts us into the setting of
  Lemma~\ref{lem:gnp-cycles-from-min-deg-disjoint} which is applied with $\eps$
  (as $\alpha$), $\mu/k$ (as $\mu$), $2K$ (as $D$), $X_i$ (as $X$), and $U_i$
  (as $U$). We can indeed to this as $|X_i| \leq |X| \leq \delta|U| \leq
  \delta'|U_i|$.
\end{proof}

\subsection{Robustness of expansion in subgraphs of random graphs}

Let $\Gamma \sim \Gnp$ and $G \subseteq \Gamma$. For $k \in \N$, $\alpha, \gamma
> 0$, and disjoint vertex sets $V_1, \dotsc, V_k \subseteq V(G)$, all of size
$\tilde n$, a vertex $v \in V(G)$ is said to be {\em $(\gamma,k)$-expanding}
with respect to $V_1,\dotsc,V_k$, if $|N_G^i(v, V_i)| \geq (1-\gamma)(\tilde
n\alpha p)^i$, for all $i \in [k]$. Of course, to be fully formally correct, the
definition should also include parameters $\tilde n$, $\alpha$, and $p$, but we
omit those as they are always clear from the context and would just introduce
more clutter.

As with many similar properties, expansion is `inherited' to sufficiently large
random subsets.

\begin{lemma}\label{lem:reg-exp-partitioning}
  Let $k \in \N$. For every $\gamma, \delta > 0$ there exists a positive
  constant $\eps$ such that the following holds for sufficiently large $n$ and
  every $p = p(n) \in (0,1)$. Let $G$ be a graph on $n$ vertices, $V_1, \dotsc,
  V_k \subseteq V(G)$ be disjoint sets such that $|V_i| = \dotsb = |V_k| =
  \tilde n$, with $\tilde n \geq \log^2 n/p$, and suppose $\Delta(G[V_i,
  V_{i+1}]) \leq (1+\eps)\tilde np$. Let $U_i \subseteq V_i$ be chosen uniformly
  at random among all subsets of size $\delta\tilde n$. Then, with high
  probability, $\Delta(G[U_i,U_{i+1}]) \leq (1+\gamma)\delta\tilde np$, and
  every vertex that was $(\eps,k)$-expanding with respect to $V_1,\dotsc,V_k$ is
  $(\gamma,k)$-expanding with respect to $U_1,\dotsc,U_k$.
\end{lemma}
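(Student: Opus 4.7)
The plan is a direct concentration argument that exploits the fact that the sets $U_1,\dotsc,U_k$ are sampled independently. Choose $\eps > 0$ sufficiently small in terms of $\gamma$, $\delta$ and $k$ (precise dependencies below), and note that for any fixed $S \subseteq V_i$ the intersection $|S \cap U_i|$ is hypergeometrically distributed with mean $\delta|S|$, to which the Chernoff-type tail bound of Lemma~\ref{lem:chernoff} applies in both tails (this is a standard fact for sampling without replacement).

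For the first conclusion, fix $i$ and a vertex $v \in V_i$. Then $\deg_G(v,U_{i+1}) = |N_G(v,V_{i+1}) \cap U_{i+1}|$ has mean at most $(1+\eps)\delta\tilde np$, so Chernoff gives
\[
  \Pr\bigl[\deg_G(v,U_{i+1}) > (1+\gamma)\delta\tilde np\bigr] \leq \exp\bigl(-c_1(\gamma,\eps)\,\delta\tilde np\bigr).
\]
Since $\tilde np \geq \log^2 n$, this probability is $o(n^{-2})$, and a union bound over the $O(kn)$ pairs $(i,v)$ yields the desired degree upper bound.

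For the second conclusion the crucial observation is that, by the notation conventions of the paper, $N_G^i(v,X)$ only constrains the \emph{endpoint} of a path of length $i$ in $G$ to lie in $X$, not its intermediate vertices. Hence $N_G^i(v,U_i) = N_G^i(v) \cap U_i$, and writing $S_i := N_G^i(v) \cap V_i$ — a set that depends only on $G$ and $v$, not on the sampling of any $U_j$ — we have $|N_G^i(v,U_i)| = |S_i \cap U_i|$, hypergeometric with mean $\delta|S_i| \geq (1-\eps)\delta(\tilde n\alpha p)^i$. The required lower bound is $(1-\gamma)(\delta\tilde n\alpha p)^i = (1-\gamma)\delta^i(\tilde n\alpha p)^i$, so the ratio of the mean to the target equals
\[
  \frac{1-\eps}{1-\gamma}\cdot \delta^{1-i}.
\]
For $i = 1$ this ratio equals $(1-\eps)/(1-\gamma)$, which exceeds $1$ by a constant once $\eps$ is small enough in terms of $\gamma$, and Chernoff gives failure probability $\exp(-c_2 \delta\tilde n\alpha p) = o(n^{-2})$. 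For $i \geq 2$ the ratio is at least $1/\delta$, so even a crude Chernoff bound suffices, with failure probability $\exp\bigl(-c_3\delta(\tilde n\alpha p)^i\bigr) = o(n^{-2})$ because $\tilde np \geq \log^2 n$. A union bound over all expanding $v \in V(G)$ and all $i \in [k]$ finishes the proof.

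The only place that requires care — and I would not really call it an obstacle — is the $i=1$ case of the expansion argument, where the slack between the expected size of $|S_1 \cap U_1|$ and the required lower bound is only a constant factor, forcing the choice $\eps \ll \gamma$. For $i \geq 2$ the additional $\delta^{1-i}$ factor of slack arises because paths in $G$ of length $i$ ending in $V_i$ are not required to traverse $V_1,\dotsc,V_{i-1}$ (let alone $U_1,\dotsc,U_{i-1}$), so the iterated sampling losses one might naively fear simply do not occur.
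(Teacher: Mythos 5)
Your computation is internally consistent, but it proves a weaker statement than the one the paper needs, because you have taken the notation $N_G^i(v,X)$ entirely at face value and so replaced the relevant quantity by $N_G^i(v)\cap U_i$. When the lemma's conclusion says ``$(\gamma,k)$-expanding with respect to $U_1,\dots,U_k$'', what is actually required downstream is a lower bound on the $i$-th neighbourhood of $v$ in the \emph{induced} graph $G[\{v\}\cup U_1\cup\dots\cup U_k]$, i.e.\ paths all of whose intermediate vertices also land in the sampled sets. This is visible from the paper's own proof, which immediately passes to $G':=G[U_1\cup\dots\cup U_k]$ and estimates $|N_{G'}^i(v,U_i)|$, and from how the lemma is invoked: inside Lemma~\ref{lem:absorbing-lemma} it must certify that $G[W_1\cup\dots\cup W_t]$ belongs to $\cGEk(C_t,\xi\tilde n,\gamma,\alpha p)$, and in the proof of Theorem~\ref{thm:main-theorem-res} one later deduces $|N_{G_i}^j(v)|\ge(1-\gamma'')(\cdot)^j$ with $G_i:=G[U_i^1\cup\dots\cup U_i^t]$. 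Neither of these follows from a bound on $|N_G^j(v)\cap U_i^{j+1}|$ alone, since that set may only be reached through vertices that were not sampled.

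Once the correct quantity is used, the slack you leaned on disappears. You observed that for $i\ge 2$ the hypergeometric mean $\delta|S_i|$ exceeds the target $(1-\gamma)\delta^i(\tilde n\alpha p)^i$ by roughly $\delta^{1-i}$ and remarked that the ``iterated sampling losses one might naively fear simply do not occur.'' Those losses are exactly what must be controlled: forcing the path to pass through $U_1,\dots,U_{i-1}$ costs a factor of roughly $\delta$ per level, and the factor $\delta^i$ in the target is there precisely to absorb those $i$ losses, leaving only a $(1\pm o(1))$-margin. The paper's proof therefore has to (and does) proceed level by level: it conditions on $|N_{G'}^j(v,U_j)|\ge(\delta^j-j\rho)|N_G^j(v,V_j)|$ for each $j<i$, uses the maximum-degree hypothesis $\Delta(G[V_j,V_{j+1}])\le(1+\eps)\tilde np$ to bound how much of $N_G^{j+1}(v,V_{j+1})$ can only be reached through the unsampled part of $N_G^j(v,V_j)$, and then applies Chernoff to the sampling of $U_{j+1}$. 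None of that multi-level bookkeeping, nor any use of the maximum-degree hypothesis, appears in your proposal, which is why it does not establish the statement the rest of the paper actually relies on.
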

\begin{proof}
  First, a simple application of Chernoff's inequality and the union bound shows
  that $\Delta(G[U_i,U_{i+1}]) \leq (1+\gamma)\delta\tilde np$, with probability
  at least $1 - e^{-\Omega(\tilde np)}$.

  Write $s := \delta \tilde n$ and let $G' := G[U_1 \cup \dotsb \cup U_k]$. Fix
  $v$ which is $(\eps,k)$-expanding with respect to $V_i$'s and choose $\rho >
  0$ sufficiently small. Let $\cE_i$, for $i \in [k]$, denote the event that
  $|N_{G'}^i(v, U_i)| \geq (\delta^i-i\rho)|N_G^i(v, V_i)|$. We show that, for
  every $i \in [k-1]$, conditioning on $\cE_1 \land \dotsb \land \cE_i$, the
  event $\cE_{i+1}$ holds with probability at least $1 -
  e^{-\Omega(s^{i+1}p^{i+1})}$. This surely holds for $i = 1$ similarly as above
  for the maximum degree.

  Observe first that, for every $i \in [k-1]$, every set $X \subseteq
  N_G^i(v,V_i)$ of size $(\delta^i-i\rho)|N_G^i(v, V_i)|$ deterministically
  satisfies
  \begin{align*}
    |N_G(X, V_{i+1})| &\geq |N_G^{i+1}(v, V_{i+1})| - |N_G^i(v, V_i) \setminus
    X|(1+\eps)\tilde np \\
    &\geq |N_G^{i+1}(v, V_{i+1})| - (1-\delta^i+i\rho)|N_G^i(v,
    V_i)|(1+\eps)\tilde np.
  \end{align*}
  By the fact that $|N_G^i(v)|\tilde np \leq (1+\eps)^i/(1-\eps)|N_G^{i+1}(v)|$,
  this further implies (with room to spare)
  \[
    |N_G(X, V_{i+1})| \geq (\delta^i-i\rho-10k\eps)|N_G^{i+1}(v, V_{i+1})|,
  \]
  for sufficiently small $\eps > 0$. Therefore, as $U_{i+1} \subseteq V_{i+1}$
  is chosen uniformly at random, conditioning on $\cE_i$ and using $N_{G'}^i(v,
  U_i)$ as $X$, by Chernoff's inequality with probability at least
  $1-e^{-\Omega(s^{i+1}p^{i+1})}$ we have
  \[
    |N_{G'}^{i+1}(v, U_{i+1})| \geq (1-o(1)) \cdot \delta
    (\delta^i-i\rho-10k\eps)|N_G^{i+1}(v, V_{i+1})| \geq (1-\gamma)(sp)^{i+1},
  \]
  where we used the fact that we can choose $\eps$ and $\rho$ appropriately
  small depending on $\gamma$, $\delta$, and $k$.

  In conclusion, the probability that $v$ is $(\gamma,k)$-expanding with respect
  to $U_1,\dotsc,U_k$ is at least
  \[
    \prod_{i \in [k]}\big(1 - \Pr[\cE_i]\big) \geq \prod_{i \in [k]}\big(1 -
    e^{-\Omega(s^ip^i)}\big) \geq 1-o(n^{-6}).
  \]
  By the union bound over all vertices $v \in V(G)$ we get that with probability
  at least $1 - o(n^{-5})$ the desired property holds.
\end{proof}

The next couple of lemmas are very similar to each other. In a nutshell, they
all show that in a subgraph $G \subseteq \Gamma$, being $(\gamma,k)$-expanding
with respect to some sets $V_1,\dotsc,V_k$ is \emph{robust}. Namely, even after
the `removal' of a not too large set $Q$ most of the vertices remain
$(\gamma',k)$-expanding with respect to $V_1, \dotsc, V_k$ in $G - \nabla(Q)$,
for a suitable $\gamma'$. The different lemmas cover the different ranges on the
size of $Q$.

\begin{lemma}\label{lem:robust-reg-expansion-large-sets}
  For every $k \geq 1$ and all $\alpha, \gamma > 0$, there exist positive
  constants $\eps$ and $\delta$ with the following property. For every $\mu > 0$
  there exists a $K > 0$ such that for every $p \in (0,1)$ w.h.p.\ $\Gamma \sim
  \Gnp$ satisfies the following. Let $G \subseteq \Gamma$, $\tilde n = \mu n$,
  and let $U, V_1, \dotsc, V_k \subseteq V(G)$ be disjoint sets such that:
  \begin{itemize}
    \item $|V_1| = \dotsb = |V_k| = \tilde n$,
    \item $\deg_G(v, V_{i+1}) \leq (1+\eps)\tilde n\alpha p$, for all $v \in
      V_i$, $i \in [k-1]$, and
    \item every $v \in U$ is $(\eps,k)$-expanding with respect to
      $V_1,\dotsc,V_k$.
  \end{itemize}
  Then for every $Q \subseteq V(G) \setminus U$ of size $|Q| \leq \delta\tilde
  n$, all but $K/p$ vertices $v \in U$ are $(\gamma,k)$-expanding with respect
  to $V_1,\dotsc,V_k$ in $G - \nabla(Q)$.
\end{lemma}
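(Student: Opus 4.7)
The plan is to identify a ``doomed'' set $D \subseteq U$ of size at most $K/p$ and show that every $v \in U \setminus D$ remains $(\gamma, k)$-expanding in $G' := G - \nabla(Q)$. The guiding observation is that the loss $|N_G^j(v, V_j) \setminus N_{G'}^j(v, V_j)|$ at level $j$ is controlled by the number of $G$-edges from the $(j-1)$-th neighbourhood of $v$ into $V_j \cap Q$, and if this is modest at every level the losses compound only by a geometric factor.

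Writing $V_0 := U$ and fixing $\eta := \gamma/(4k)$, I define for every $l \in [k]$
\begin{align*}
  \tilde B_l &:= \set*{w \in V_{l-1} : \deg_G(w, V_l \cap Q) > \eta' \tilde n \alpha p}, \\
  D_l &:= \set*{v \in U : e_G(N_G^{l-1}(v, V_{l-1}), V_l \cap Q) > \eta (\tilde n \alpha p)^l}.
\end{align*}
A standard walk-counting bound, iterating the hypothesis degree bound $(1+\eps)\tilde n \alpha p$ along $V_i \to V_{i+1}$-steps and the $\Gamma$-bound $\deg_\Gamma(u, V_1) \leq (1+o(1))\tilde n p$ on the initial $U \to V_1$-step, gives the uniform upper bound $|N_G^{l-1}(v, V_{l-1})| \leq C(\tilde n \alpha p)^{l-1}$ with $C = O(1/\alpha)$; I then set $\eta' := \eta/(2C)$ and require $\delta \leq \eta' \alpha/4$. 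Lemma~\ref{lem:gnp-min-deg-expansion}, applied to $\Gamma$ with $X = \tilde B_l$, $Y = V_l \cap Q$, and its parameter set to $\eta'\alpha\mu/4$, yields $|\tilde B_l| \leq K_1/p$. For $v \in D_l$, splitting the sum in the definition of $D_l$ according to whether the $V_{l-1}$-endpoint lies in $\tilde B_l$ and using $\eta' \leq \eta/(2C)$ forces $|N_G^{l-1}(v, V_{l-1}) \cap \tilde B_l| \geq \Omega(\eta \alpha)(\tilde n \alpha p)^{l-1}$. Double-counting pairs $(v, s) \in D_l \times \tilde B_l$ with $s \in N_G^{l-1}(v)$ via backward walks from $\tilde B_l$ to $U$ (each step bounded by the universal degree $(1+o(1))np$ in $\Gamma$) then produces $|D_l| = O(|\tilde B_l|) = O(1/p)$, so $D := \bigcup_{l \in [k]} D_l$ satisfies $|D| \leq K/p$.

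For $v \in U \setminus D$, I prove by induction on $j \in \{0, 1, \dotsc, k\}$ that $|N_G^j(v, V_j) \setminus N_{G'}^j(v, V_j)| \leq \tilde\gamma_j (\tilde n \alpha p)^j$, with $\tilde\gamma_0 := 0$ and $\tilde\gamma_j := \eta + (1+\eps) \tilde\gamma_{j-1}$. A ``lost'' vertex $w$ at level $j$ either (i) lies in $Q \cap V_j$, contributing in total at most $e_G(N_G^{j-1}(v), V_j \cap Q) \leq \eta (\tilde n \alpha p)^j$ because $v \notin D_j$, or (ii) lies in $V_j \setminus Q$ with all of its $G$-neighbours in $N_G^{j-1}(v)$ inside $N_G^{j-1}(v) \setminus N_{G'}^{j-1}(v)$, contributing at most $|N_G^{j-1} \setminus N_{G'}^{j-1}|(1+\eps)\tilde n \alpha p \leq (1+\eps)\tilde\gamma_{j-1}(\tilde n \alpha p)^j$ by the hypothesis degree bound and the inductive hypothesis. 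Since $\tilde\gamma_k \leq 2k\eta \leq \gamma/2$ and $|N_G^j(v, V_j)| \geq (1-\eps)(\tilde n \alpha p)^j$ by the $(\eps, k)$-expansion hypothesis, we conclude $|N_{G'}^j(v, V_j)| \geq (1-\gamma)(\tilde n \alpha p)^j$ for every $j \in [k]$. The main obstacle is the first-step asymmetry: the hypothesis degree bound $(1+\eps)\tilde n \alpha p$ does not apply to $U \to V_1$ edges, forcing reliance on the coarser $\Gamma$-bound and the appearance of the constant $C = O(1/\alpha)$ that must be carefully absorbed into $\eta'$ so that Lemma~\ref{lem:gnp-min-deg-expansion} delivers the correct scaling.
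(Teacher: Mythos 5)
Your overall strategy is reasonable, but it takes a genuinely different route from the paper's, and that route has a gap. The paper never tries to bound, for each level $l$ separately, a set of ``doomed'' source vertices in $U$. Instead it runs a backward recursion: $Z_k := \varnothing$ and, for $i = k-1, \dotsc, 0$,
\[
  Z_i := \set[\big]{v \in V_i : \deg_G(v, Q \cup Z_{i+1}) > \eps\tilde n\alpha p},
\]
each bounded by $K/p$ via a single application of Lemma~\ref{lem:gnp-min-deg-expansion} with $X = Z_i$, $Y = Q \cup Z_{i+1}$. Only $Z_0 \subseteq U$ is the exceptional set, and the forward induction for $v \in U \setminus Z_0$ uses \emph{only} the hypothesis degree bounds $\deg_G(\cdot, V_{i+1}) \leq (1+\eps)\tilde n\alpha p$ (which start at $i=1$) plus the definition of $Z_i$; no maximum-degree bound on $\Gamma$ is invoked anywhere.

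The gap in your argument is exactly the point you flagged as ``the main obstacle.'' To pass from $|\tilde B_l| = O(1/p)$ to $|D_l| = O(1/p)$, you double-count pairs $(v,s)$ with $s \in \tilde B_l \cap N_G^{l-1}(v)$ and bound, for each $s \in V_{l-1}$, the number of backward $(l-1)$-walks to $U$ by a product of degrees ``bounded by the universal degree $(1+o(1))np$ in $\Gamma$.'' But the lemma is stated \emph{for every} $p \in (0,1)$, and $\Delta(\Gamma_{n,p}) = (1+o(1))np$ holds w.h.p.\ only when $np = \omega(\log n)$. For $p$ in the range roughly $c/n \leq p \ll \log n / n$ the conclusion ``all but $K/p$ vertices'' is non-vacuous (since $K/p \ll n$ while $|U|$ may be $\Theta(n)$), yet $\Delta(\Gamma)$ can be of order $\log n / \log\log n \gg np$; the backward-walk count then overshoots by a factor $(\log n / np)^{l-1}$, and you lose the bound $|D_l| = O(1/p)$. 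The same issue already infects the first step of your upper bound $|N_G^{l-1}(v)| \leq C(\tilde n\alpha p)^{l-1}$, which relies on $\deg_\Gamma(v) \leq (1+o(1))np$ for $v \in U$; the hypothesis supplies a forward degree bound on $V_i \to V_{i+1}$ but gives nothing on the $U \to V_1$ step nor any backward bound on $V_i \to V_{i-1}$, so there is no way around $\Gamma$ for those degrees. Your forward induction (the recursion $\tilde\gamma_j = \eta + (1+\eps)\tilde\gamma_{j-1}$ and the split into lost-in-$Q$ vs.\ indirectly-lost vertices) is correct and in the same spirit as the paper's; the missing ingredient is a $p$-uniform way to isolate a small exceptional set, which the paper obtains for free from the recursive $Z_i$'s and Lemma~\ref{lem:gnp-min-deg-expansion}.
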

\begin{proof}
  Given $k$, $\alpha$, and $\gamma$, let $\eps$ be sufficiently small for the
  argument below to go through, and let $\delta = \eps\alpha\mu/4$ and $K =
  K_{\ref{lem:gnp-min-deg-expansion}}(\eps\alpha\mu/2)$. Assume that $\Gamma
  \sim \Gnp$ is such that it satisfies the conclusion of
  Lemma~\ref{lem:gnp-min-deg-expansion}, which happens with high probability.

  Write $V_0 := U$, let $Z_k = \varnothing$ and for every $i = k-1, \dotsc, 0$,
  let $Z_i \subseteq V_i$ be defined as
  \[
    Z_i := \big\{v \in V_i : \deg_G(v, Q \cup Z_{i+1}) > \eps\tilde n\alpha
    p\big\}.
  \]
  For convenience, we write $G' := G - (Q \cup \bigcup_{i\in[k]} Z_i)$ and
  for $F \in \{G, G'\}$ and $v \in U$ use $N_F^i(v)$ to mean $N_F^i(v, V_i)$,
  for all $i \in [k]$. We claim that $|Z_i| < K/p$ for every $i \in [k-1]$. This
  readily follows from Lemma~\ref{lem:gnp-min-deg-expansion} with
  $\eps\alpha\mu/2$ (as $\mu$) and $Z_i$ (as $X$). Namely, by letting $Y = Q
  \cup Z_{i+1}$, we have
  \[
    e_\Gamma(Z_i, Y) \geq e_G(Z_i, Y) > |Z_i|\eps\tilde n\alpha p \geq
    \eps\alpha\mu|Z_i|np,
  \]
  and thus $|Y| > (\eps\alpha\mu/2)n = 2\delta\tilde n$---a contradiction with
  the assumption on the size of $Q$. In particular, all but $K/p$ vertices $v
  \in U$ satisfy $|N_{G'}(v)| \geq (1-2\eps)|N_G(v)|$.

  We aim to show that for every $v \in U \setminus Z_0$, $|N_{G'}^i(v)| \geq
  (1-2^i\eps)|N_G^i(v)|$, for all $i \in [k]$, which is sufficient for the lemma
  to hold. Consider $N_{G'}^i(v)$, for some $2 \leq i \leq k$. Let $x_{i-1} \in
  [0,1]$ denote the fraction of vertices in $N_G^{i-1}(v)$ which belong to $Q
  \cap Z_{i-1}$. Then a simple calculation using the bound on the maximum degree
  leads to
  \[
    |N_{G'}^i(v)| \geq |N_G^i(v)| - x_{i-1}|N_G^{i-1}(v)|(1+\eps)\tilde n\alpha
    p - (1-x_{i-1})|N_G^{i-1}(v)|\eps\tilde n\alpha p.
  \]
  Applying the induction hypothesis for $i-1$ we get
  \begin{align*}
    |N_{G'}^i(v)| & \geq |N_G^i(v)| - |N_G^{i-1}(v)|\big(2^{i-1}\eps +
    \eps\big)(1+\eps)\tilde n\alpha p \\
    & = |N_G^i(v)| - |N_G^{i-1}(v)|(1+2^{i-1})\eps(1+\eps)\tilde n\alpha p.
  \end{align*}
  Finally, using that $v$ is $(\eps,k)$-expanding in $G$ and the maximum degree
  bound on every $u \in V_{i-1}$, we have
  \[
    |N_{G'}^i(v)| \geq |N_G^i(v)| -
    (1-\eps)^{-1}(1+\eps)^i(1+2^{i-i})\eps|N_G^i(v)| \geq (1-2^i\eps)|N_G^i(v)|,
  \]
  for $\eps > 0$ sufficiently small. This completes the proof.
\end{proof}

\begin{lemma}\label{lem:robust-reg-expansion-medium-sets}
  For every $k \geq 1$ and all $\alpha, \gamma > 0$, there exists a positive
  constant $\eps$ with the following property. For every $c, \mu > 0$ there
  exists a $d > 0$ such that if $p \geq \log^2 n/n$, then w.h.p.\ $\Gamma \sim
  \Gnp$ satisfies the following. Let $G \subseteq \Gamma$, $\tilde n = \mu n$,
  and let $U, V_1, \dotsc, V_k \subseteq V(G)$ be disjoint sets such that:
  \begin{itemize}
    \item $|V_1| = \dotsb = |V_k| = \tilde n$,
    \item $\deg_G(v, V_{i+1}) \leq (1+\eps)\tilde n\alpha p$, for all $v \in
      V_i$, $i \in [k-1]$, and
    \item every $v \in U$ is $(\eps,k)$-expanding with respect to $V_1,\dotsc,
      V_k$.
  \end{itemize}
  Let $\ell \in [k]$ and suppose $Q \subseteq V(G) \setminus U$ is a subset of
  size $|Q| \leq c/(n^{\ell-1}p^{\ell})$. Then all but $d/(n^{\ell}p^{\ell+1})$
  vertices $v \in U$ are $(\gamma,k)$-expanding with respect to $V_1, \dotsc,
  V_k$ in $G - \nabla(Q)$.
\end{lemma}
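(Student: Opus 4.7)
The plan is to mimic the proof of Lemma~\ref{lem:robust-reg-expansion-large-sets} almost verbatim, replacing the appeal to Lemma~\ref{lem:gnp-min-deg-expansion} (which is the right tool only when $|Q|$ is relatively large) by the sharper edge-count estimate of Proposition~\ref{prop:gnp-edges}. Concretely, I would set $V_0 := U$ and $Z_k := \varnothing$, and recursively define
\[
  Z_i := \{v \in V_i : \deg_G(v, Q \cup Z_{i+1}) > \eps \tilde n \alpha p\}
\]
for $i = k-1, k-2, \ldots, 0$. The computation in Lemma~\ref{lem:robust-reg-expansion-large-sets} which shows $|N_{G'}^i(v, V_i)| \geq (1-2^i\eps)|N_G^i(v, V_i)|$ for every $v \in U \setminus Z_0$, where $G' := G - (Q \cup \bigcup_i Z_i)$ is a subgraph of $G - \nabla(Q)$, only invokes the $Z_i$'s through their defining degree condition, and so transfers verbatim. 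Thus the problem reduces entirely to proving $|Z_0| \leq d/(n^\ell p^{\ell+1})$.

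For this bound I would condition on the high-probability event that $\Gamma \sim \Gnp$ satisfies Proposition~\ref{prop:gnp-edges} and proceed by induction on $i$ from $i = k-1$ down to $i = 0$, showing $|Z_i| = O(|Q|/(np))$ at every level. Writing $Y_i := Q \cup Z_{i+1}$, the defining degree condition of $Z_i$ together with Proposition~\ref{prop:gnp-edges} yields
\[
  |Z_i| \eps \tilde n \alpha p \leq e_\Gamma(Z_i, Y_i) \leq |Z_i||Y_i|p + c_{\ref{prop:gnp-edges}}\sqrt{|Z_i||Y_i|np}.
\]
Since $|Q| \leq c/(n^{\ell-1}p^\ell) = o(n)$ under the assumption $p \geq \log^2 n/n$, and since by induction $|Z_{i+1}| = O(|Q|/(np))$ is even smaller, we get $|Y_i| = o(\tilde n)$, so the first term on the right-hand side is negligible compared to the left. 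Solving the remaining inequality gives $|Z_i| = O(|Y_i|/(np))$, and iterating this $k$ times telescopes to $|Z_0| = O(|Q|/(np)) = O(c/(n^\ell p^{\ell+1}))$. Choosing $d$ sufficiently large in terms of $c, \mu, \alpha, k$ then furnishes the required bound.

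The only genuine difficulty, although essentially technical, is bookkeeping. One has to check that the $|Z_i||Y_i|p$ term really is absorbed into the square-root term at every step of the recursion (which is where $np \geq \log^2 n \to \infty$ is used), and that the multiplicative losses accumulated through the $k$-fold induction do not blow the implicit constant beyond the chosen $d$. Everything else is a direct transcription of the proof of Lemma~\ref{lem:robust-reg-expansion-large-sets}, with the sharper quantitative bound on $|Z_i|$ being the sole novelty.
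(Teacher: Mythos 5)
Your approach is correct and it is genuinely different from the proof in the paper. The paper builds a chain $U = X_0 \supseteq X_1 \supseteq \dotsb \supseteq X_k$, discarding at stage $i$ those $v$ for which $|N_G^i(v) \cap Q| \geq \eps |N_G^i(v)|$, and bounds the number of discarded vertices by combining Lemma~\ref{lem:gnp-k-expansion} (the $k$-fold expansion of small sets in $\Gamma$) with the w.h.p.\ upper bound $|N_\Gamma^i(v)| \leq (1+\nu)(np)^i$; the observation is that each discarded vertex ``uses up'' a $\xi n^i p^i$ fraction of $Q$, and the near-disjointness of the $i$-th neighbourhoods of a small collection of vertices forces $|Z_i| = O(|Q|/(n^i p^i))$. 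Your route instead recycles the recursive set construction from Lemma~\ref{lem:robust-reg-expansion-large-sets} verbatim, which is legitimate since the expansion-propagation argument there only uses the degree threshold defining $Z_{i-1}$ and the maximum-degree hypothesis, and you swap out Lemma~\ref{lem:gnp-min-deg-expansion} for Proposition~\ref{prop:gnp-edges} to exploit the smallness of $Q$. Your one-step bound $|Z_i| \leq B |Y_i|/(np)$ (with $B$ a constant depending on $\eps, \mu, \alpha$ and the absolute constant of Proposition~\ref{prop:gnp-edges}) is correct once one notes $|Y_i| = |Q| + |Z_{i+1}| = o(n)$ so the linear term $|Z_i||Y_i|p$ is dominated, and the $k$-step telescoping indeed gives $|Z_0| \leq 2B|Q|/(np) \leq 2Bc/(n^\ell p^{\ell+1})$ since $np \geq \log^2 n \to \infty$ controls the geometric series. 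Taking $d := 2Bc$ finishes. What your approach buys is that it avoids Lemma~\ref{lem:gnp-k-expansion} entirely, unifying the ``large $Q$'' and ``medium $Q$'' cases under the same $Z_i$-cleaning scheme; what the paper's approach buys is a bound directly on a per-vertex, per-level quantity ($|N_G^j(v) \cap Q|$) that is also a convenient intermediate in the proof of Lemma~\ref{lem:robust-reg-expansion-any-size-sets}.
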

\begin{proof}
  Given $k$, $\alpha$, and $\gamma$, let $\eps$ be sufficiently small for the
  argument below to go through, and additionally given $c, \mu > 0$ let $\nu >
  0$ be much smaller than $\eps(1-\eps)^k\mu^k\alpha^k$. For convenience, we
  write $G' := G - \nabla(Q)$ and for $F \in \{G, G'\}$ use $N_F^i(v)$ to mean
  $N_F^i(v, V_i)$, for all $i \in [k]$ and $v \in U$. Assume that $\Gamma \sim
  \Gnp$ is such that it satisfies the conclusion of
  Lemma~\ref{lem:gnp-k-expansion} and every $v \in V(\Gamma)$ satisfies
  $|N_\Gamma^i(v)| \leq (1+\nu)(np)^i$ for all $i \in [k]$, both of which happen
  with high probability.

  We show that there is a chain of sets $U = X_0 \supseteq X_1 \supseteq X_2
  \supseteq \dotsb \supseteq X_k$ such that for all $i \in [k]$:
  \stepcounter{propcnt}
  \begin{alphenum}
    \item\label{X-chain-size} $|X_i| \geq |U| - O(1/(n^\ell p^{\ell+1}))$, and
    \item\label{X-chain-exp} $|N_G^j(v) \cap Q| < \eps|N_G^j(v)|$ for every $v
      \in X_i$ and $j \in [i]$.
  \end{alphenum}
  This, for $i = k$, gives a set $X_k \subseteq U$ of size $|U| - d/(n^\ell
  p^{\ell+1})$ (for some large $d > 0$) in which all vertices satisfy
  \ref{X-chain-exp}. We then draw the conclusion we need as follows. For every
  $v \in X_k$ and all $j \in [k]$, we have
  \[
    |N_{G'}^j(v)| \geq (1-\eps)|N_G^j(v)| - \big|N_G^{j-1}(v) \setminus
    N_{G'}^{j-1}(v)\big|(1+\eps)\tilde n\alpha p.
  \]
  Telescoping this for any $j \in [k]$ gives
  \[
    |N_{G'}^j(v)| \geq (1-\eps)|N_G^j(v)| - \sum_{1 \leq z \leq j-1}
    \eps|N_G^z(v)| \big((1+\eps)\tilde n\alpha p\big)^{j-z}.
  \]
  Finally, as $v$ is $(\eps,k)$-expanding with respect to $V_1,\dotsc,V_k$, we
  have $|N_G^j(v)| \geq (1-\eps)(\tilde n\alpha p)^j$ and $|N_G^j(v)| \leq
  \big((1+\eps)\tilde n\alpha p\big)^j$ for all $j \in [k]$, and so we obtain
  \[
    |N_{G'}^j(v)| \geq (1-\eps)^2(\tilde n\alpha p)^j -
    (j-1)\eps\big((1+\eps)\tilde n\alpha p\big)^j \geq (1-\gamma)(\tilde n\alpha
    p)^j,
  \]
  as desired, by choosing $\eps > 0$ to be sufficiently small. It remains to
  show that there are sets fulfilling \ref{X-chain-size} and \ref{X-chain-exp}.
  We do this by induction on $i$.

  Consider some $i \in [k]$, a set $X_{i-1}$ which satisfies \ref{X-chain-size}
  and \ref{X-chain-exp} (for start, $X_0$ surely does), and assume first
  $|X_{i-1}| \leq \nu/(n^{i-1}p^i)$. Let $Z_i$ be a set of vertices $x \in
  X_{i-1}$ which violate \ref{X-chain-exp} for $j = i$, that is
  \[
    |N_G^i(x) \cap Q| \geq \eps|N_G^i(x)| \geq \eps(1-\eps)(\tilde n\alpha p)^i
    \geq \xi n^i p^i,
  \]
  for $\xi = \eps(1-\eps)\mu^k\alpha^k$. As $N_G^i(x) \cap Q \subseteq N_G^i(x)
  \subseteq N_\Gamma^i(x)$, we have
  \[
    |Q| \geq |Q \cap V_i| \geq \Big| \bigcup_{x \in Z_i} N_G^i(x) \cap Q \Big|
    \geq \Big| \bigcup_{x \in Z_i} N_\Gamma^i(x) \Big| - \Big| \bigcup_{x \in
    Z_i} N_\Gamma^i(x) \setminus (N_G^i(x) \cap Q) \Big|.
  \]
  Now, as $|X_{i-1}|n^{i-1}p^{i-1} \leq \nu/p$ by assumption, we can apply
  Lemma~\ref{lem:gnp-k-expansion} with $Z_i$ (as $X$) together with the fact
  that $|N_\Gamma^i(x)| \leq (1+\nu)n^ip^i$, to get
  \[
    |Q| \geq |Q \cap V_i| \geq |Z_i|(1-k\nu)n^ip^i - |Z_i|\big((1+\nu)n^ip^i
    - \xi n^ip^i \big) \geq |Z_i|(\xi/2)n^ip^i.
  \]
  Using the bound on the size of $Q$ in the statement of the lemma we conclude
  \[
    |Z_i| \leq \frac{c}{n^{\ell-1}p^{\ell}} \cdot \frac{2}{\xi n^ip^i} =
    O\Big(\frac{1}{n^\ell p^{\ell+1}}\Big).
  \]
  We set $X_i := X_{i-1} \setminus Z_i$, which, by induction hypothesis,
  satisfies \ref{X-chain-size}.

  On the other hand, if $\nu/(n^{i-1}p^i) < |X_{i-1}|$ then, by exactly the same
  argument as above, in every subset of $X_{i-1}$ of size precisely
  $\floor{\nu/(n^{i-1}p^i)}$, taking $Z_i$ to be its subset of vertices not
  satisfying \ref{X-chain-exp} for $j=i$, we get
  \[
    |Z_i| \leq \frac{c}{n^{\ell-1}p^{\ell}} \cdot \frac{2}{\xi n^ip^i} =
    o\Big(\frac{1}{n^{i-1}p^i}\Big),
  \]
  since $n^\ell p^\ell \gg 1$ as $\ell \geq 1$. Thus, with room to spare, all
  but at most $O(1/(n^\ell p^{\ell+1}))$ vertices in $X_{i-1}$ satisfy
  \ref{X-chain-exp}, and we proclaim these to be $X_i$, fulfilling
  \ref{X-chain-size}.
\end{proof}

\begin{lemma}\label{lem:robust-reg-expansion-any-size-sets}
  For every $k \in \N$ and all $c, \alpha, \gamma > 0$, there exist positive
  constants $\eps$ and $\delta$ with the following property. For every $\mu >
  0$, if $p \geq \log^2 n/n$, then w.h.p.\ $\Gamma \sim \Gnp$ satisfies the
  following. Let $G \subseteq \Gamma$, $\tilde n = \mu n$, and let $U, V_1,
  \dotsc, V_k \subseteq V(G)$ be disjoint sets such that:
  \begin{itemize}
    \item $|V_1| = \dotsb = |V_k| = \tilde n$,
    \item $\deg_G(v, V_{i+1}) \leq (1+\eps)\tilde n\alpha p$, for all $v \in
      V_i$, $i \in [k-1]$, and
    \item every $v \in U$ is $(\eps,k)$-expanding with respect to $V_1,\dotsc,
      V_k$.
  \end{itemize}
  Then for every $Q \subseteq V(G) \setminus U$ of size $|Q| \leq \min\{c|U|,
  \delta\tilde n\}$, there are at most $\gamma|U|$ vertices $v \in U$ which are
  not $(\gamma,k)$-expanding with respect to $V_1, \dotsc, V_k$ in $G -
  \nabla(Q)$.
\end{lemma}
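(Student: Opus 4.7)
The plan is to combine Lemmas~\ref{lem:robust-reg-expansion-large-sets} and~\ref{lem:robust-reg-expansion-medium-sets} through a telescoping case analysis on the size of $|U|$. Given $k, c, \alpha, \gamma$, set $\eps$ and $\delta$ to be the minimum of the corresponding constants from those two lemmas (both admit $\eps, \delta$ depending only on $k, \alpha, \gamma$, so the choice is admissible), and condition on the w.h.p.\ conclusions of both for $\Gamma$. Let $B \subseteq U$ denote the set of vertices which are not $(\gamma, k)$-expanding in $G - \nabla(Q)$.

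The first attempt is Lemma~\ref{lem:robust-reg-expansion-large-sets}, which yields $|B| \leq K/p$ with $K = K(\mu)$; this suffices whenever $|U| \geq K/(\gamma p)$. Otherwise the failure forces $|U| < K/(\gamma p)$ and hence $|Q| \leq c|U| < c_1/p$ with $c_1 := cK/\gamma$, placing us in the range of Lemma~\ref{lem:robust-reg-expansion-medium-sets} at level $\ell = 1$ with parameter $c_1$; this gives $|B| \leq d_1/(np^2)$ and suffices once $|U| \geq d_1/(\gamma np^2)$. Iterating, for each $\ell \in \{2, \ldots, k\}$ a failure at level $\ell-1$ gives $|U| < d_{\ell-1}/(\gamma n^{\ell-1}p^\ell)$ and hence $|Q| < c_\ell/(n^{\ell-1}p^\ell)$ with $c_\ell := cd_{\ell-1}/\gamma$, invoking Lemma~\ref{lem:robust-reg-expansion-medium-sets} at level $\ell$ to obtain $|B| \leq d_\ell/(n^\ell p^{\ell+1})$.

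The main obstacle is the terminal case, reached when all stages (large-sets, plus medium-sets at every level $\ell \in [k]$) fail: we then have $|U| < T_k := d_k/(\gamma n^k p^{k+1})$, yet the bound $|B| \leq \gamma T_k$ on its own could still exceed $\gamma|U|$. The idea to close this gap is to trace through the proof of Lemma~\ref{lem:robust-reg-expansion-medium-sets} and extract the refined estimate $|B| = O(|Q|/(np))$, linear in $|Q|$, with hidden constant depending on $\mu, \alpha, \eps, k$ through the quantity $\xi \sim (\mu\alpha)^k$ arising from the double counting against $\Gamma$'s $i$-th neighbourhoods (together with the subset-counting trick that handles large $|X_{i-1}|$, which is admissible because $|Q| \leq \delta\tilde n$). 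Substituting $|Q| \leq c|U|$ then gives $|B| \leq O(c|U|/(np)) \leq \gamma|U|$ for every $n$ sufficiently large, since $np \geq \log^2 n \to \infty$; all of the $\mu$-dependence is absorbed into $K$, the $d_\ell$'s, and the threshold on $n$, while $\eps$ and $\delta$ themselves remain functions only of $k, c, \alpha, \gamma$, as the lemma requires.
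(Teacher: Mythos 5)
Your argument is correct, and at its heart it is the same as the paper's: handle $|U|\geq K/(\gamma p)$ with Lemma~\ref{lem:robust-reg-expansion-large-sets}, and for smaller $|U|$ re-open the chain argument from the proof of Lemma~\ref{lem:robust-reg-expansion-medium-sets} to get a bound on the bad set that is \emph{linear} in $|Q|$, then feed in $|Q|\leq c|U|$ and $np\geq\log^2n$. The paper phrases the second part as ``every $X\subseteq U$ of size $\gamma|U|$ retains an expanding vertex'' and builds the chain $X=X_0\supseteq\dotsb\supseteq X_k$ with $|X_i|\geq|X|-O(|X|/\log n)$; the $\log n$ gain there is exactly your observation that $|Z_i|=O(|Q|/(n^ip^i))=O(c|U|/(np)^i)$ with $np\geq\log^2n$, so the two are the same estimate.

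Two remarks. First, the intermediate iteration through levels $\ell=1,\dots,k$ of Lemma~\ref{lem:robust-reg-expansion-medium-sets} is superfluous: once $|U|<K/(\gamma p)$ you already have $|Q|\leq c|U|=O(1/p)=o(n)$, and the chain argument then yields $|Z_i|=O(|Q|/(n^ip^i))$ at every step (small case directly, large case via the subset-counting trick), hence $|B|=O(|Q|/(np))=o(|U|)$ in one go. You are effectively invoking Lemma~\ref{lem:robust-reg-expansion-medium-sets} as a black box for a range where the refined estimate you ultimately need is just as available; the paper skips straight to the refinement. Second, your justification that the subset-counting trick is ``admissible because $|Q|\leq\delta\tilde n$'' is not quite right: that trick requires $|Q|<\xi\nu n/2$, and since $\xi\nu\sim\mu^{2k}$ while $\delta$ must be chosen independently of $\mu$, the bound $|Q|\leq\delta\mu n$ does not by itself give this for small $\mu$. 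What actually makes it work is the constraint you are already in, namely $|Q|\leq c|U|<cK/(\gamma p)=o(n)$, which dominates $\delta\tilde n$ in this regime and makes the subset-counting comparison go through for all large $n$. With that correction, the argument is sound.
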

\begin{proof}
  Observe that if $|U| > K/(\gamma p)$, for $K =
  K_{\ref{lem:robust-reg-expansion-large-sets}}(\alpha,\gamma,\mu)$, then the
  statement follows from Lemma~\ref{lem:robust-reg-expansion-large-sets} by
  choosing $\delta$ sufficiently small so that $c\delta <
  \delta_{\ref{lem:robust-reg-expansion-large-sets}}(\alpha,\gamma)$.
  Otherwise, if $|U| \leq K/(\gamma p)$, we aim to show that in every $X
  \subseteq U$ of size $\gamma|U|$ there is a vertex which is
  $(\gamma,k)$-expanding with respect to $V_1, \dotsc, V_k$ in $G - \nabla(Q)$.
  We show that there is a chain of sets $U = X_0 \supseteq X_1 \supseteq X_2
  \supseteq \dotsb \supseteq X_k$ such that for all $i \in [k]$:
  \begin{itemize}
    \item $|X_i| \geq |X| - O(|X|/\log n)$, and
    \item $|N_G^j(v) \cap Q| < \eps|N_G^j(v)|$ for every $v \in X_i$ and $j \in
      [i]$.
  \end{itemize}
  The rest of the proof proceeds (almost) identically as the proof of
  Lemma~\ref{lem:robust-reg-expansion-medium-sets}.
\end{proof}

\section{Proof of the blow-up lemma}\label{sec:blow-up}

In this section we give the proof of Theorem~\ref{thm:blow-up-lemma} which
roughly follows the outline given in Section~\ref{sec:absorbing-method}. That
being said, next lemma is the crux of the argument. For some disjoint sets $W_1,
\dotsc, W_t \subseteq V(G)$ and $\cW = (W_1,\dotsc,W_t)$ we say that a graph $A$
is a $\cW$-\emph{absorber} if for every $Z \subseteq \bigcup_{i\in[t]} W_i$, such
that $|Z \cap W_1| = \dotsb = |Z \cap W_t|$, there is a $C_t$-factor in $A - Z$.

\begin{lemma}[Absorbing Lemma]\label{lem:absorbing-lemma}
  Let $k \geq 2$ and $t \in \{2k-1, 2k\}$. For every $\alpha, \gamma > 0$, there
  exist positive constants $\eps$ and $\xi$ with the following property. For
  every $\mu > 0$ there is a $C > 0$ such that if $p \geq Cn^{-(k-1)/k}$, then
  w.h.p.\ $\Gamma \sim \Gnp$ satisfies the following. For every $G \subseteq
  \Gamma$ in $\cGEk(C_t, \tilde n, \eps, \alpha p)$, with $\tilde n \geq \mu
  n$, there are sets $W_i \subseteq V_i$, such that:
  \stepcounter{propcnt}
  \begin{alphenum}
    \item\label{abs-class-exp} The graph $G[W_1 \cup \dotsb \cup W_t]$ belongs
      to $\cGEk(C_t, \xi\tilde n, \gamma, \alpha p)$.
    \item\label{abs-expansion} For all $i \in [t]$ every $v \in V_i$ is
      $(\gamma, k-1)$-expanding with respect to $W_{i+1}, \dotsc, W_{i+(k-1)}$
      and $W_{i-1}, \dotsc, W_{i-(k-1)}$.
    \item\label{abs-existence} There is a $\cW$-absorber $A \subseteq G$, for
      $\cW = (W_1,\dotsc,W_t)$, such that $|V(A) \cap V_i| = |V(A) \cap V_j|
      \leq \gamma\tilde n$.
  \end{alphenum}
\end{lemma}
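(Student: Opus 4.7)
The proof follows the general recipe described in Section~\ref{sec:absorbing-method}. Choose a small $\xi = \xi(\gamma) > 0$ and sample $W_i \subseteq V_i$ and disjoint buffer sets $X_i \subseteq V_i \setminus W_i$, both of size $\xi\tilde n$, uniformly at random; write $W := \bigcup_i W_i$ and $X := \bigcup_i X_i$. Lemma~\ref{lem:reg-exp-partitioning} combined with Chernoff-type inheritance of regularity, lower-regularity, and iterated-neighbourhood sizes delivers (A) and (B) with high probability. Next, apply Lemma~\ref{lem:abs-template} with $m = \xi\tilde n$ to obtain a template hypergraph $B$ on $B_1 \cup \dotsb \cup B_t$ with $|B_i| = 2m$ and $\Delta(B) \leq 40^t$, and fix bijections $f_i \colon B_i \to W_i \cup X_i$ with $f_i(B_i') = W_i$; for $e = \{b_1, \dotsc, b_t\} \in E(B)$ set $R(e) := \{f_1(b_1), \dotsc, f_t(b_t)\}$. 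By Proposition~\ref{prop:absorber-properties}\ref{abs-two-factors} and the defining property of $B$, the union $A := \bigcup_e F_e$ of pairwise (away from roots) disjoint $R(e)$-absorbers $F_e \subseteq G$ with $V(F_e) \cap (W \cup X) = R(e)$ is a $\cW$-absorber, establishing (C). I would produce the family $\{F_e\}$ in one shot by applying Theorem~\ref{thm:haxell-matching} to the $\ell$-uniform hypergraph $\cH$ with $\ell := 1 + v(\Fabs) - t$, vertex classes $\cA := E(B)$ and $\cB := V(G) \setminus (W \cup X)$, and a hyperedge $\{e\} \cup (V(F) \setminus R(e))$ for every $R(e)$-absorber $F \subseteq G$ meeting $W \cup X$ only along $R(e)$.

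Verifying Haxell's condition is the heart of the matter. Given $\cA' \subseteq \cA$ and $\cB' \subseteq \cB$ with $|\cB'| \leq (2\ell-3)(|\cA'|-1)$, I would set $U := \bigcup_{e \in \cA'} R(e)$ and $Q := \cB' \cup ((W \cup X) \setminus U)$. Lemma~\ref{lem:robust-reg-expansion-any-size-sets} then shows that at most $\gamma|U|$ vertices of $U$ fail to remain $(\gamma, k-1)$-expanding in $G - Q$; since every vertex of $B$ lies in at most $\Delta(B) \leq 40^t$ edges, this leaves some $e \in \cA'$ with all of $R(e)$ \emph{expansion-good} in $G - Q$, provided $\gamma$ is small enough relative to $t \cdot 40^t$. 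The task thus reduces to a self-contained sub-claim: if $R = \{r_1, \dotsc, r_t\}$ has every $r_i$ still $(\gamma, k-1)$-expanding in $G' := G - Q - ((W \cup X) \setminus R)$, then there is an $R$-absorber in $G'$. Following the blueprint of the $k=2$ example from Section~\ref{sec:absorbing-method}, I would grow the $C_t$-tree of depth $k-1$ rooted at each $r_i$ inside $G'$ using the surviving expansion, then take one further step via the lower-regularity of the pair $(N_G^{k-1}(r_i, V_{i \pm (k-1)}), V_{i \pm k})$ that is built into $\cGEk$; by Lemma~\ref{lem:slicing-lemma} the resulting candidate sets $S_i \subseteq V_{i \pm k}$ have size $\Omega(\tilde n)$ and inherit $(\gamma, \alpha p)$-regularity from $G$. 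What remains is to embed the outer skeleton $\Fconn$ with its tree-contracted vertices anchored at $R$ and the adjacent neighbourhoods anchored in the $S_i$'s; this is a blow-up problem on a graph of $2$-density at most $k/(k-1)$ by Proposition~\ref{prop:absorber-properties}\ref{abs-m2-density}, sitting inside the canonical $C_t$-blow-up supplied by Proposition~\ref{prop:absorber-properties}\ref{abs-blow-up}. Since $p \geq Cn^{-(k-1)/k} \geq Cn^{-1/m_2(\Fconn)}$, Theorem~\ref{thm:klr} produces the base cycle $s_1, \dotsc, s_t$, the $C_t$-trees rooted at the $s_i$'s, and the ladder connectors, all inside $G'$.

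The step I expect to be most delicate is the bridge from \emph{expansion and lower-regularity out of the $r_i$'s} to \emph{K{\L}R on the outer skeleton}: one has to check that the configurations on the $S_i$'s and on their iterated neighbourhoods satisfy the hypotheses of Theorem~\ref{thm:klr} (density close to $\alpha p$ and the right $\eps$-regularity), that the interface between the pre-grown $r_i$-trees and the K{\L}R-found skeleton is assembled without double-using vertices of $Q$ or $(W \cup X) \setminus R$, and that the parameter degradation introduced through Lemma~\ref{lem:slicing-lemma} and the robust-expansion lemmas of Section~\ref{sec:random-graphs} stays under control. A secondary, more bookkeeping-style obstacle is aligning $\eps$, $\xi$, and $\gamma$ across the random sampling, the template set-up, and the Haxell matching so that expansion-good roots survive removing $W \cup X$ and $\cB'$ simultaneously.
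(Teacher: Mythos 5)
Your plan reproduces the correct high-level architecture (template graph, Haxell matching, K{\L}R on the contracted skeleton $\Fconn$), but it has two genuine gaps that are not merely bookkeeping.

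\textbf{1. Misuse of Lemma~\ref{lem:robust-reg-expansion-any-size-sets} in the Haxell step.} You set $Q := \cB' \cup ((W \cup X) \setminus U)$ and feed it to Lemma~\ref{lem:robust-reg-expansion-any-size-sets} with $U := \bigcup_{e\in\cA'} R(e)$. That lemma requires $|Q| \leq \min\{c|U|, \delta\tilde n\}$, but $|(W \cup X)\setminus U| \geq 2t\xi\tilde n - t|\cA'|$ is $\Theta(\tilde n)$ as soon as $|\cA'|$ is small (and Haxell's condition must be verified for \emph{all} $\cA'$, including singletons). So for small $|\cA'|$ the hypothesis $|Q|\leq c|U|$ is violated by a factor of $\tilde n$, and the lemma gives nothing. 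The paper sidesteps this entirely by first partitioning $V(G)$ into $h = v(\Fabs)$ sets $R_1,\dotsc,R_t,U_1,\dotsc,U_{h-t}$ of size $\lambda\tilde n$, with $W_i,X_i \subseteq R_i$ and the absorbers living in the $U_j$'s; then the set one must remove for Haxell is only $\cB' \subseteq V(G') \setminus \bigcup_i(W_i\cup X_i)$ of size at most $2h|E|$, which \emph{is} comparable to $|S_\ell|$ after restricting to disjoint edges $E'\subseteq E$. With this set-up, the relevant robustness arguments (the paper's Claim~\ref{cl:main-claim-singletones}, a bottom-up cleaning using Lemmas~\ref{lem:robust-reg-expansion-large-sets}--\ref{lem:robust-reg-expansion-any-size-sets}) can actually be invoked.

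\textbf{2. The K{\L}R interface needs the contraction trick, which the proposal does not supply.} Your plan is to grow the $C_t$-trees of depth $k-1$ rooted at $r_1,\dotsc,r_t$ first, and then embed $\Fconn$ ``with its tree-contracted vertices anchored at $R$'' via Theorem~\ref{thm:klr}. But K{\L}R gives you \emph{a} canonical copy of $\Fconn$; it has no rooting power whatsoever, so one cannot prescribe that a particular vertex of $\Fconn$ lands on $r_i$, let alone that the ladders attach to the $k$-th-level vertices of a tree you have already fixed. You flag this as ``delicate,'' but it is not a matter of checking parameters---without a new idea the step simply fails. The paper's resolution is to parametrize over \emph{many} possible $C_t$-trees rooted at each $r_i$: for each $v \in R_e$ one produces a family $\cL(v)$ of $\eta s$ disjoint tuples $\bfm{v}$, each encoding an entire $C_t$-tree of depth $k-1$ rooted at $v$ inside $G'-Q$, and then builds an auxiliary graph $\tilde G$ in which each such $\bfm{v}$ becomes a single \emph{contracted vertex} and edges are inherited from $G'$. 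K{\L}R applied to $\Fconn$ in $\tilde G$ then produces the skeleton together with an unanchored choice of $\bfm{v} \in \bfm{V}_v$ per root, and since every vertex of $\bfm{V}_v$ already corresponds to a tree rooted at $v$, the rooting is achieved for free upon de-contraction. This is precisely the point where $m_2(\Fconn) \leq k/(k-1)$ is used, but only after the trees have been traded in for the contracted graph.

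A secondary issue: part \ref{abs-expansion} of the statement (expansion of $v \in V_i$ into the $W_j$'s) is the wrong direction for growing absorber trees out of the roots $r_i \in W \cup X$; those trees must grow into $V(G) \setminus (W \cup X)$. The paper's prior partition into $R_i$'s and $U_j$'s, together with Lemma~\ref{lem:reg-exp-partitioning}, is what supplies the expansion actually needed.
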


Before we begin, let us establish an important observation. Consider $G \in
\cGEk(C_t, \tilde n, \eps, \alpha p)$ and let $G' = G - \nabla(Q)$, for some $Q
\subseteq V(G)$, $|Q| \leq \tilde n/2$. Then, if $v \in V_1$ is
$(\gamma,k-1)$-expanding in $G'$ for $\eps \ll \gamma < 1/2$ with respect to
$V_{1+i}$ and $V_{t-i+1}$, for $i \in [k-1]$, then there is a canonical copy of
$C_t$ in $G'$ which contains $v$. Indeed, let $N_k := N_{G'}^{k-1}(v, V_k)$ and
$N_{t-k+2} := N_{G'}^{k-1}(v, V_{t-k+2})$. As $v$ is $(\gamma,k-1)$-expanding,
$N_k$ and $N_{t-k+2}$ incorporate a sufficiently large fraction of $N_G^{k-1}(v,
V_k)$ and $N_G^{k-1}(v, V_{t-k+2})$, so that, if $t = 2k$ then $(N_k, V_{k+1})$
and $(N_{t-k+2}, V_{k+1})$ are $(2\eps,p)$-lower-regular in $G'$, and similarly
if $t = 2k-1$ then $(N_k, N_{t-k+2})$ is $(2\eps,p)$-lower-regular in $G'$. In
the former there is then a vertex $u \in V_{k+1}\setminus Q$ which together with
$v$ closes a canonical copy of $C_t$, and in the latter there is an edge $uw \in
G'[N_k, N_{t-k+2}]$ which together with $v$ closes a canonical copy of $C_t$. We
use this several times in the proof and do not mention it explicitly.

\begin{proof}
  Given $k, t, \alpha$, and $\gamma$, let $h := v(\Fabs)$, $c = 4ht40^t$,
  and furthermore
  \[
    \lambda = \frac{1}{2h}, \enskip
    \rho' = \frac{1}{2t}, \enskip
    \rho = \min\{\eps_{\ref{lem:robust-reg-expansion-large-sets}}(\alpha,\rho'),
    \eps_{\ref{lem:robust-reg-expansion-medium-sets}}(\alpha,\rho')\},
    \enskip \text{and} \enskip
    \delta =
    \min\{\delta_{\ref{lem:robust-reg-expansion-large-sets}}(\alpha,\rho'),
    \delta_{\ref{lem:robust-reg-expansion-any-size-sets}}(c,\alpha,\rho')\}.
  \]
  Next, we let
  \[
    \eps' \leq
    \min\{\eps_{\ref{lem:robust-reg-expansion-large-sets}}(\alpha,\rho),
      \eps_{\ref{lem:robust-reg-expansion-medium-sets}}(\alpha,\rho),
      \eps_{\ref{lem:robust-reg-expansion-any-size-sets}}(\alpha,\rho), \eta \cdot
    \eps_{\ref{thm:klr}}(\Fconn,\alpha)\}, \enskip
    \eps \leq
    \min\{\eps_{\ref{lem:reg-exp-partitioning}}(\eps', \lambda),
    \eps_{\ref{lem:reg-exp-partitioning}}(\gamma, \xi)\},
  \]
  where
  \[
    \eta = \frac{\delta}{2t} \quad \text{and} \quad \xi =
    \frac{\lambda}{2c(3t)^k}\min\{\delta, \gamma\}.
  \]
  Additionally, given $\mu$, we take
  \[
    c_{k-1} = K_{\ref{lem:robust-reg-expansion-large-sets}}(\alpha, \rho',
    \lambda\mu)
    \quad \text{and} \quad
    c_i = d_{\ref{lem:robust-reg-expansion-medium-sets}}(2tc_{i+1}, \alpha,
    \rho', \lambda\mu) \text{ for every $i \in [k-2]$.}
  \]
  Lastly, let $C > 0$ be as large as necessary for the arguments below to go
  through; in particular so that all the lemmas can be applied with their
  respective parameters and $(1-\rho)(\lambda\tilde n\alpha p)^{k-1} \gg c_1/p$.

  Assume that $\Gamma \sim \Gnp$ is such that it satisfies:
  \stepcounter{propcnt}
  \begin{alphenum}
    \item\label{g-sparse-embedding} the conclusion of
      Theorem~\ref{thm:klr} applied with $\Fconn$ (as
      $H$) and $\eta\lambda\mu$ (as $\mu$);
    \item\label{g-large-exp} the conclusion of
      Lemma~\ref{lem:robust-reg-expansion-large-sets} applied with $\rho$ (as
      $\gamma$), $\eps'$ (as $\eps$), and $\lambda\mu$ (as $\mu$) as well as
      with $\rho'$ (as $\gamma$), $\rho$ (as $\eps$), and $\lambda\mu$ (as
      $\mu$);
    \item\label{g-medium-exp} the conclusion of
      Lemma~\ref{lem:robust-reg-expansion-medium-sets} applied with $\rho$ (as
      $\gamma$), $\eps'$ (as $\eps$), and $\lambda\mu$ (as $\mu$) as well as
      with $\rho'$ (as $\gamma$), $\rho$ (as $\eps$), and $\lambda\mu$ (as
      $\mu$), for every $2tc_i$ (as $c$), $2 \leq i \leq k-1$;
    \item\label{g-all-exp} the conclusion of
      Lemma~\ref{lem:robust-reg-expansion-any-size-sets} applied with $\rho$ (as
      $\gamma$), $\eps'$ (as $\eps$), and $\lambda\mu$ (as $\mu$);
  \end{alphenum}
  This all happens with high probability and from now on we condition on these
  events.

  Let $s := \lambda \tilde n$, and $\cGEk(\Fabs, s, \eps', \alpha p) \subseteq
  \cG(\Fabs, s, \eps', \alpha p)$ be a class of graphs in which every copy of
  $C_t$ in $\Fabs$ belongs to $\cGEk(C_t, s, \eps', \alpha p)$. We first
  partition the vertex set of $G$ for convenience of embedding an absorber. Let
  $R_1, \dotsc, R_t, U_1, \dotsc, U_{h-t}$, be a collection of disjoint subsets
  of $V_1, \dotsc, V_t$, each of size $s$, such that $R_i \subseteq V_i$ and the
  graph in $G$ induced by them belongs to the class $\cGEk(\Fabs, s, \eps',
  \alpha p)$, with $\{R_1,\dotsc,R_t\}$ as the set $R$ in an $R$-absorber
  $\Fabs$. Let $G'$ denote this graph throughout. Additionally, let $W_i, X_i
  \subseteq R_i$, $i \in [t]$, be disjoint sets with $|W_i| = |X_i| = \xi\tilde
  n$ and suppose $G[W_1 \cup \dotsb \cup W_t]$ belongs to $\cGEk(C_t, \xi\tilde
  n, \gamma, \alpha p)$ and every $v \in V_i$ is $(\gamma, k-1)$-expanding with
  respect to $W_{i+1}, \dotsc, W_{i+(k-1)}$ and $W_{i-1}, \dotsc, W_{i-(k-1)}$,
  where indices are taken so that $t+i = i$ and $1-i = t-i+1$. As both $C_t$ and
  $\Fabs$ are a subgraph of $\cG(C_t, h, 0, 1)$ by
  Proposition~\ref{prop:absorber-properties}~\ref{abs-blow-up}, all of the sets
  as discussed above can be shown to exist by several applications of
  Lemma~\ref{lem:reg-exp-partitioning}. Lastly, set $\cW = (W_1,\dotsc,W_t)$.

  The key part of the proof is to make use of the template graph given by
  Lemma~\ref{lem:abs-template} to construct copies of $\Fabs$ in $G'$. Let $B =
  (B_1,\dotsc,B_t; E_B)$ be the template graph given by
  Lemma~\ref{lem:abs-template} applied for $\xi\tilde n$ (as $m$) and let
  \[
    f \colon V(B) \to \bigcup_{i\in[t]} W_i \cup X_i
  \]
  be a bijection mapping vertices of $B_i$ to $W_i \cup X_i$ such that $W_i
  \subseteq f(B_i')$ for all $i \in [t]$. In the remainder of the proof, for
  every $t$-edge $e = \{b_1,\dotsc,b_t\} \in E_B$ with $b_i \in B_i$ we aim to
  find a copy of $\Fabs$ in $G'$ rooted at vertices $f(b_1),\dotsc,f(b_t)$, so
  that all of these copies are internally disjoint (that is, other than `roots'
  $f(b_1),\dotsc,f(b_t)$). For ease of further reference, we let $R_e$ denote
  this $t$-element set $f(b_1),\dotsc,f(b_t)$ which correspond to an edge $e \in
  E_B$. Let $A$ be the graph obtained as a union of those graphs $\Fabs$.
  In order to see the `absorbing property' of $A$, consider some $Z \subseteq
  \bigcup_{i \in [t]} W_i$ such that $|Z \cap W_1| = \dotsb = |Z \cap W_t|$ and
  its corresponding set $f^{-1}(Z) \subseteq \bigcup_{i \in [t]} B_i'$ in the
  template $B$. Then, by the defining property of $B$ (see
  Lemma~\ref{lem:abs-template}), the hypergraph $B - f^{-1}(Z)$ has a perfect
  matching. For every edge $e$ in this matching, in the $R_e$-absorber $\Fabs$
  take the $C_t$-factor which contains the set $R_e$, and for all other edges
  $e'$ take the $C_t$-factor which does not contain the set $R_{e'}$. This
  assembles the desired $C_t$-factor in $A - Z$.

  In order to construct this disjoint collection of graphs $\Fabs$, we turn to
  Haxell's hypergraph matching theorem (Theorem~\ref{thm:haxell-matching}). Let
  $\cH$ be an (auxiliary) $(h-t+1)$-uniform hypergraph with vertex set
  \[
    V(\cH) = \big\{R_e : e \in E_B\big\} \cup \big(V(G') \setminus \bigcup_{i\in
    [t]} (W_i \cup X_i)\big)
  \]
  as $A_{\cH} \cup B_{\cH}$, and an $(h-t+1)$-edge for every $e \in E_B$ and
  every $Y \subseteq B_{\cH}$ of size $h-t$, for which there is an
  $R_e$-absorber $\Fabs$ in $G'$ whose internal vertices belong completely to
  $Y$. An $A_\cH$-saturating matching in $\cH$ corresponds exactly to what we
  need, that is internally disjoint copies of $R_e$-absorbers $\Fabs$ in $G'$
  for every $e \in E_B$.

  What remains is to verify the condition in Theorem~\ref{thm:haxell-matching}
  holds. In particular, for every $E \subseteq E_B$ and $Q \subseteq V(G')
  \setminus \bigcup_{i\in[t]} (W_i\cup X_i)$ of size $|Q| \leq 2h|E|$, we need
  to find \emph{at least one} edge $e \in E$ so that there is an $R_e$-absorber
  $\Fabs$ in $G' - Q$. Fix sets $E \subseteq E_B$ and $Q$ as above, and let $E'
  \subseteq E$ be a set of pairwise disjoint edges, $|E'| \geq |E|/(2t40^t)$,
  which we can greedily find as $\Delta(B) \leq 40^t$. For $\ell \in [t]$, let
  $S_\ell \subseteq W_\ell \cup X_\ell$ be the vertices which appear in at least
  one edge of $E'$ and note that by construction $|S_1| = \dotsb = |S_t| =
  |E'|$. Recall the labelling of the vertices of the $C_t$-tree (see
  Figure~\ref{fig:C3-tree}) and, with a possible slight abuse of notation, for a
  vertex $v$ in some $S_\ell \subseteq R_\ell$, let
  $\{U_{i,j}^\ell\}_{i\in[k+1],j\in[(t-1)^i]}$ stand for sets in $G'$ which
  together with $R_\ell$ `induce' the $C_t$-tree of depth $k$ rooted at
  $R_\ell$. Then, by our choice of constants
  \begin{equation}
    \label{eq:Q-size-ub}
    |Q| \leq 2h|E| \leq 2t40^t \cdot 2h|E'| = c|S_\ell| \qquad \text{and} \qquad
    |Q| \leq c|S_\ell| \leq c \cdot 2\xi\tilde n \leq
    \frac{\delta}{(3t)^k} s
  \end{equation}
  The following claim is crucial.

  \begin{claim}\label{cl:main-claim-singletones}
    There is a set $S' \subseteq S_\ell$ of size $|S'| \geq
    (1-\frac{1}{t+1})|E'|$ such that for every $v \in S'$ and every choice of
    $(1-\eta)s$ vertices from each of $U_{k,1}^\ell, \dotsc,
    U_{k,(t-1)^k}^\ell$, there is a copy of a $C_t$-tree of depth $k-1$ in $G' -
    Q$ with each $u_{k,j}$, $j \in [(t-1)^k]$, mapped into (exactly) one of the
    chosen sets.
  \end{claim}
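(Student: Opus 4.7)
The plan is to construct the $C_t$-tree from the leaves upward, using the robust expansion lemmas of Section~\ref{sec:random-graphs} to propagate a notion of ``validity'' through the tree levels, and the lower-regularity built into the class $\cGEk$ to close the $C_t$-cycles. Fixing any $v \in S_\ell$ and any adversary subsets $T_{k,j} \subseteq U_{k,j}^\ell$ of size $(1-\eta)s$, I absorb the forbidden set and the adversarial removals into one obstruction $Q' := Q \cup \bigcup_j(U_{k,j}^\ell \setminus T_{k,j})$ of size $|Q'| = O(\eta s)$. I inductively define sets $L_{i,j} \subseteq U_{i,j}^\ell \setminus Q$ by setting $L_{k,j} := T_{k,j} \setminus Q$ and, for $i < k$, letting $L_{i,j}$ consist of those $u$ from which a $C_t$ can be closed in $G - \nabla(Q')$ using children in $L_{i+1,(j-1)(t-1)+1}, \ldots, L_{i+1,(j-1)(t-1)+t-1}$.

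The key claim is $|L_{i,j}| \geq (1-\rho')s$ for every $1 \leq i \leq k-1$, which I would prove by downward induction on $i$. By Lemma~\ref{lem:reg-exp-partitioning}, the $U$-sets inherit the cycle-expansion and regularity properties of the $V_m$'s; then Lemmas~\ref{lem:robust-reg-expansion-large-sets}--\ref{lem:robust-reg-expansion-any-size-sets} applied with $Q'$ as the obstruction imply that all but $o(s)$ vertices $u \in U_{i,j}^\ell$ remain $(\rho, k-1)$-expanding around the tree-cycle at $(i,j)$ in $G - \nabla(Q')$. For such $u$, the $(k-1)$-step forward and backward neighbourhoods along the cycle stay large inside the $L_{i+1,\cdot}$'s (themselves large by induction), and the $\cGEk$ lower-regularity supplies the closing edge (when $t = 2k-1$) or closing middle vertex (when $t = 2k$), so $u \in L_{i,j}$. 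Applying the same argument once more at level $0$ produces $S' \subseteq S_\ell$ of size $\geq (1-\rho')|S_\ell|$, and since $\rho' = 1/(2t) < 1/(t+1)$ by our choice of constants, this yields $|S'| \geq (1-1/(t+1))|E'|$ as required.

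The central delicate point is that $S'$ must serve for \emph{every} adversary choice. This uniformity is handled by observing that the exceptional vertices in each application of Lemmas~\ref{lem:robust-reg-expansion-large-sets}--\ref{lem:robust-reg-expansion-any-size-sets} are governed by a structural condition depending only on $G$ (essentially, having too-small $(k-1)$-step $G$-neighbourhoods into the relevant cycle sets), and so form a universal bad set of size $O(K/p)$, well within the claimed slack of $|S_\ell|/(t+1)$. A secondary subtlety is the closing step at level $k-1$: in the sparse regime $p \sim n^{-(k-1)/k}$ the adversary can fully annihilate $u$'s direct neighbourhood into any single leaf set, so closing by a single edge is impossible; it is precisely the $(k-1)$-step neighbourhoods together with the lower-regularity encoded in $\cGEk$ that make the closure robust at this density.
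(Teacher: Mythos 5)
Your high-level strategy — bottom-up cleaning using the robustness lemmas of Section~\ref{sec:random-graphs} plus the lower-regularity built into $\cGEk$ for the cycle closure — is the same as the paper's. You also correctly identify the two genuinely delicate points: uniformity of $S'$ over adversary choices, and why the closure at level $k-1$ must go through $(k-1)$-step neighbourhoods rather than direct edges. However, your treatment of the uniformity point contains a real gap.

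You merge $Q$ and the adversary's leaf deletions into a single obstruction $Q'$ and then run one bottom-up cleaning phase, invoking Lemma~\ref{lem:robust-reg-expansion-any-size-sets} at level $0$ to get a small exceptional subset of $S_\ell$. But that exceptional set depends on $Q'$, hence on the adversary: in all three robustness lemmas the exceptional vertices are exactly those whose iterated neighbourhoods intersect the removed set too heavily (e.g.\ $Z_i := \{v : \deg_G(v, Q \cup Z_{i+1}) > \eps\tilde n\alpha p\}$ in the proof of Lemma~\ref{lem:robust-reg-expansion-large-sets}), so different adversary choices give genuinely different exceptional sets. Your claim that these are ``governed by a structural condition depending only on $G$ \dots{} and so form a universal bad set of size $O(K/p)$'' is not what the lemmas say, and moreover the $O(K/p)$ bound is not always the right one: when $|S_\ell|$ is small the relevant bound from Lemma~\ref{lem:robust-reg-expansion-any-size-sets} is $\gamma|S_\ell|$, and $K/p$ could exceed $|S_\ell|$ entirely.

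The paper closes this gap with a two-phase structure. Phase one cleans with $Q$ \emph{alone}, producing sets $U_{i,j}'$ and the set $S'$ of $(\rho,k-1)$-expanding vertices; this is adversary-independent. Phase two fixes a $v \in S'$ and an adversary, runs a second bottom-up cleaning with the adversary's $\eta s$ deletions per leaf, and tracks the resulting losses as $O(c_i/(n^{k-1-i}p^{k-i}))$ at level $i$. The crucial final step (around equation~\eqref{eq:size-nij}) is an explicit size comparison at level~$1$: the residual loss $O(c_1/(n^{k-2}p^{k-1}))$ is $o\bigl((s\alpha p)^j\bigr)$ for $p \geq Cn^{-(k-1)/k}$, so it cannot meaningfully damage the neighbourhoods $N_{1,j}'$ of any already-expanding $v$. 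That argument does not produce a new exceptional set at level $0$ — it shows \emph{every} $v \in S'$ survives \emph{every} adversary. If you want to keep your single-phase formulation, you would need to replace the appeal to an exceptional-set lemma at level $0$ by exactly this direct computation; as written, the universality claim is unsupported.
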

  \begin{proof}
    For simplicity of notation we drop the index $\ell$ and write just $S$,
    $U_{i,j}$. We refer to the sets $U_{i,1},\dotsc,U_{i,(t-1)^i}$ as the $i$-th
    level. Moreover, whenever we say that a vertex $v \in U_{i,j}$ (or $v \in
    S$) is $(\cdot,k-1)$-expanding, we mean it with respect to both groups of
    sets on the level below, namely
    $U_{i+1,(j-1)(t-1)+1},\dotsc,U_{i+1,(j-1)(t-1)+k-1}$ and
    $U_{i+1,(j-1)(t-1)-1},\dotsc,U_{i+1,(j-1)(t-1)-(k-1)}$. The proof is a
    tedious and technical cleaning procedure of the vertex sets representing a
    $C_t$-tree and relies on multiple applications of properties
    \ref{g-large-exp}--\ref{g-all-exp}. On a high level we proceed as follows.
    Consider $G'[U_{k-1,1} \cup U_{k,1} \cup \dotsb \cup U_{k,t-1}]$ and recall
    that it belongs to the class $\cGEk(C_t, s, \eps', \alpha p)$. By
    \ref{g-large-exp} and \ref{g-medium-exp}, all but $O(|Q|)$ vertices in
    $U_{k-1,1} \setminus Q$ are still $(\rho,k-1)$-expanding in $G' -
    \nabla(Q)$. Adding the non-expanding $O(|Q|)$ vertices to $Q$, and
    proceeding in a bottom-up fashion, we clean all the sets $U_{i,j}$ so that
    in the end there are at least $(1-\rho)|E'|$ expanding vertices remaining in
    $S$, which we group into $S'$. Furthermore, while doing this we also ensure
    that all the vertices remaining in each $U_{i,j}$ are
    $(\rho,k-1)$-expanding. The second part of the proof is almost
    analogous---we fix a vertex $v \in S'$, remove additionally an arbitrary set
    of $\eta s$ vertices from each $U_{k,j}$, $j \in [(t-1)^k]$, and proceed
    with cleaning in a bottom-up fashion using \ref{g-large-exp} and
    \ref{g-medium-exp}. It is time to roll up our sleeves and start to grind.

    We claim that, for all $i \in [k]$ and all $j \in [(t-1)^i]$, there is a set
    $U_{i,j}' \subseteq U_{i,j} \setminus Q$ such that
    \begin{itemize}
      \item $|U_{i,j}'| \geq s - (3t)^{k-i}|Q|$, and
      \item every $v \in U_{i,j}'$ is $(\rho,k-1)$-expanding.
    \end{itemize}
    This clearly holds for $i = k$ (ignoring the expanding part which is not
    needed), so consider some $i \in [k-1]$ and $G'[U_{i,1} \cup U_{i+1,1} \cup
    \dotsb \cup U_{i+1,t-1}]$ which belongs to $\cGEk(C_t, s, \eps', \alpha p)$.
    Let $Q' := \bigcup_{j \in [t-1]} (U_{i+1,j} \setminus U_{i+1,j}')$. By
    induction hypothesis and \eqref{eq:Q-size-ub}
    \[
      |Q'| \leq t \cdot (3t)^{k-(i+1)} |Q| \leq \delta s.
    \]
    If $c_{k-1}/p \leq |Q'|$ we use \ref{g-large-exp} and if
    $c_{k-1-z}/(n^zp^{z+1}) \leq |Q'| < c_{k-z}/(n^{z-1}p^z)$ for some $z \in
    [k-1]$, then we use \ref{g-medium-exp} with $c_{k-z}$ (as $c$) and $z$ (as
    $\ell$). In both cases, we get a set $U_{1,i}' \subseteq U_{1,i} \setminus
    Q$ of size $s - 3|Q'| \geq s - (3t)^{k-i}|Q|$, with the property that every
    $v \in U_{i,1}'$ is $(\rho, k-1)$-expanding. Since there is nothing special
    about $U_{i,1}$ nor $U_{i+1,1}, \dotsc, U_{i+1,t-1}$, we come to the same
    conclusion for every $U_{i,j}$, $j \in [(t-1)^i]$.

    Next, consider $S$ and recall that $S \subseteq R$ and $G'[R \cup U_{1,1}
    \cup \dotsb \cup U_{1,t-1}]$ belongs to the class $\cGEk(C_t, s, \eps',
    \alpha p)$. At this point, we use \ref{g-all-exp} with $S$ (as $U$) and
    $\bigcup_{j\in[t-1]} (U_{1,j}\setminus U_{1,j}')$ (as $Q$).
    Since by the prior cleaning procedure
    \[
      \Big|\bigcup_{j\in[t-1]} (U_{1,j} \setminus U_{1,j}')\Big| \leq t \cdot
      (3t)^{k-1}|Q| \osref{\eqref{eq:Q-size-ub}}\leq \min\{c|S|, \delta s\}
    \]
    we may indeed do so. In conclusion, there are at least $(1-\rho)|S| >
    (1-\frac{1}{t+1})|S|$ vertices $S' \subseteq S$ which are
    $(\rho,k-1)$-expanding, completing the first part of the proof.

    The second phase is slightly trickier but of very similar flavour. Fix some
    $v \in S'$ and let $G'' := G' - \nabla(Q')$ where $Q'$ is a union of all
    $U_{i,j} \setminus U_{i,j}'$, that is $Q$ and all the iteratively removed
    non-expanding vertices in the prior procedure. We use that the maximum
    degree of $G''[U_{i,j},U_{i,j+1}]$ is bounded by $(1+\rho)\tilde n\alpha p$
    throughout, which is required whenever using properties \ref{g-large-exp} or
    \ref{g-medium-exp}. Choose arbitrary $\eta s$ vertices in each $U_{k,j}'$,
    $j \in [(t-1)^k]$, and remove them to obtain sets $U_{k,j}''$. To establish
    the claim, it is sufficient to find a copy of a $C_t$-tree of depth $k-1$
    rooted at $v$ with each $u_{k,j} \in U_{k,j}''$.

    Consider sets $U_{k-1,1}', U_{k,1}', \dotsc, U_{k,t-1}'$ and let $Q'' :=
    \bigcup_{j \in [t-1]} (U_{k,j}' \setminus U_{k,j}'')$. Note that $|Q''| \leq
    t \cdot \eta s \leq \delta s$ by our choice of constants. We can hence use
    \ref{g-large-exp} to obtain a set $U_{k-1,1}'' \subseteq U_{k-1,1}'$ of size
    $|U_{k-1,1}'| - 2c_{k-1}/p$ with the property that every $u \in U_{k-1,1}''$
    is $(\rho',k-1)$-expanding in $G'' - \nabla(Q'')$. In particular, by our
    observation from the beginning of this section, every $u \in U_{k-1,1}''$
    belongs to a canonical copy of $C_t$ in $G''[U_{k-1,1}'' \cup U_{k,1}'' \cup
    \dotsb \cup U_{k,t-1}'']$.

    We show the following by induction on $i = k-1, \dotsc, 1$: for all $j \in
    [(t-1)^i]$, there is a set $U_{i,j}'' \subseteq U_{i,j}'$ of size
    $|U_{i,j}'| - 2c_i/(n^{k-i-1}p^{k-i})$ with the property that every $u \in
    U_{i,j}''$ belongs to a canonical copy of $C_t$ in
    \[
      G'[U_{i,j}'' \cup U_{i+1,(j-1)(t-1)+1}'' \cup \dotsb \cup
      U_{i+1,(j-1)(t-1)+t-1}''].
    \]
    Clearly, by what we just proved, this is true for $i = k-1$.

    Let now $1 \leq i < k-1$. Let $Q'' := \bigcup_{j \in [t-1]}(U_{i+1,j}'
    \setminus U_{i+1,j}'')$ and observe that by induction hypothesis
    \[
      |Q''| \leq \frac{2tc_{i+1}}{n^{k-1-(i+1)}p^{k-(i+1)}}.
    \]
    Hence, it follows by using \ref{g-medium-exp} (with $\ell = k-(i+1)$),
    that there is a set $U_{i,1}'' \subseteq U_{i,1}'$ of size
    \[
      |U_{i,1}''| \geq |U_{i,1}'| - \frac{2c_i}{n^{k-1-i}p^{k-i}}
    \]
    with the property that every $u \in U_{i,1}''$ is $(\rho',k-1)$-expanding in
    $G'' - \nabla(Q'')$, and thus belongs to a canonical copy of $C_t$ in
    $G''[U_{i,1}'' \cup U_{i+1,1}'' \cup \dotsb \cup U_{i+1,t-1}'']$. This can
    analogously be shown to hold for all $U_{i,j}'$, $j \in [(t-1)^i]$, and its
    corresponding sets on level $i+1$.

    Finally, consider $v$ and let $N_{1,j}' \subseteq U_{1,j}'$ and $N_{1,t-j}'
    \subseteq U_{1,t-j}'$, for $j \in [k-1]$, be the $j$-th neighbourhoods of
    $v$ in $U_{1,j}'$ and $U_{1,t-j}'$. Recall that $|N_{1,j}'|, |N_{1,t-j}'|
    \geq (1-\rho)(s\alpha p)^j$, for all $j \in [k-1]$. Let $N_{1,j}'' :=
    N_{1,j}' \cap U_{1,j}''$. From what we previously showed, we can conclude
    that
    \begin{equation}\label{eq:size-nij}
      |N_{1,j}''| \geq |N_{1,j}'| - \frac{2c_1}{n^{k-2}p^{k-1}} \geq
      (1-o(1))|N_{1,j}'| \quad \text{and} \quad |N_{1,t-j}''| \geq
      (1-o(1))|N_{1,t-j}'|
    \end{equation}
    for all $j \in [k-1]$, as $p \geq Cn^{-(k-1)/k}$ and by choosing $C$
    sufficiently large. What remains is to show that $v$ belongs to a canonical
    copy of $C_t$ in $\tilde G := G''\big[\{v\} \cup \bigcup_{j \in [t-1]}
    N_{1,j}''\big]$, where we ignore the edges with both endpoints in
    $N_{1,j}''$. It is sufficient to prove that $|N_{\tilde G}^{k-1}(v)| \geq
    (1/2)|N_{G'}^{k-1}(v, U_{1,k-1})|$, once again by the observation from the
    beginning of this section.

    Recall that every $u \in U_{1,j}$ satisfies $\deg_G(u, U_{1,j+1}) \leq
    (1+\eps')s\alpha p$ for all $j \in [k-1]$, and so we have
    \[
      |N_{\tilde G}^i(v)| \geq |N_{1,i}''| - |N_{1,i-1}' \setminus
      N_{\tilde G}^{i-1}(v)|(1+\eps')s\alpha p,
    \]
    for all $1 < i \leq k-1$. Telescoping for $i = k-1$, and using the fact that
    $|N_{1,i}' \setminus N_{1,i}''| = o(|N_{1,i}'|)$ from \eqref{eq:size-nij},
    gives
    \[
      |N_{\tilde G}^{k-1}(v)| \geq |N_{1,k-1}''| - \sum_{1 \leq j \leq k-2}
      o\big((s\alpha p)^j\big) \cdot \big((1+\eps')s\alpha p\big)^{k-1-j}.
    \]
    Since $|N_{1,k-1}''| \geq (3/4)(s\alpha p)^{k-1}$ by \eqref{eq:size-nij} we
    obtain
    \[
      |N_{\tilde G}^{k-1}(v)| \geq (3/4)(s\alpha p)^{k-1} - o\big((s\alpha
      p)^{k-1}\big) \geq (1/2)|N_{G'}^{k-1}(v, U_{1,k-1})|,
    \]
    where the last inequality follows from the fact that $|N_{G'}^{k-1}(v)| \leq
    \big((1+\eps')(s\alpha p)\big)^{k-1}$ and our choice of constants.
  \end{proof}

  The claim that $|S_i'| \geq (1-\frac{1}{t+1})|E'|$ for each $i \in [t]$
  implies by pigeonhole principle that there exists $e = \{b_1,\dotsc,b_t\} \in
  E'$ for which $f(b_i) \in S_i'$. Fix the corresponding $R_e$ for the rest of
  the proof. For $v \in R_e$ let $\cL(v)$ be the family of $\eta s$ disjoint
  tuples $\bfm{v} = (v_1,\dotsc,v_{(t-1)^k})$ for which there is a $C_t$-tree in
  $G' - Q$ with $v$ as the root and vertices of the $k$-th level (see
  Figure~\ref{fig:C3-tree}) bijectively mapped into $v_1,\dotsc,v_{(t-1)^k}$.

  Recall, the sets $R_1,\dotsc,R_t,U_1,\dotsc,U_{h-t}$ `induce' a copy of
  $\Fabs$ in $G'$, and let $r_1,\dotsc,r_t,u_1,\dotsc,u_{h-t}$ be the
  corresponding vertices of $\Fabs$. Let $\tilde G$ be a graph obtained from
  $G'$ by the following `contraction' process (we remark that this idea is
  inspired by a procedure from \cite{ferber2020almost} which was further refined
  in \cite{ferber2020dirac}). Start with $G'[\bigcup U_x']$, where $U_x'
  \subseteq U_x \setminus Q$, $|U_x'| = \eta s$, for which $u_x \in
  V(\Fabs^{-})$, that is $U_x$ does not correspond to any of the vertices of a
  $C_t$-tree of depth $k-1$ rooted at any $r_i$ in $\Fabs$ (see
  Section~\ref{sec:absorbing-method}). Additionally, for every $v \in R_e$ and
  $\bfm{v} \in \cL(v)$ add a new vertex $\bfm{v}$ to $\tilde G$. Denote the set
  of $\bfm{v}$ originating from the same $v \in R_e$ by $\bfm{V}_v$, and note
  that this adds a total of $t \cdot \eta s$ new vertices. Lastly, for every $y
  \in U_x'$, $u_x \in V(\Fabs^{-})$, add an edge $\bfm{v}y$ to $\tilde G$ if and
  only if $yz$ is an edge of $G'$ for some $z \in \bfm{v}$.

  This finally enables us to complete the proof. As all $U_x'$ and $\bfm{V}_v$
  as above are of size exactly $\eta s$, and all edges between corresponding
  sets are transferred from $G'$ to $\tilde G$, Lemma~\ref{lem:slicing-lemma}
  implies that the graph $\tilde G$ belongs to $\cG(\Fconn, \eta s, \eps'/\eta,
  \alpha p)$. Since by
  Proposition~\ref{prop:absorber-properties}~\ref{abs-m2-density}, we have
  $m_2(\Fconn) \leq k/(k-1)$, from \ref{g-sparse-embedding} we conclude that
  there is a canonical copy of $\Fconn$ in $\tilde G$. Lastly, as every $\bfm{v}
  \in \bfm{V}_v$ corresponds to a $C_t$-tree rooted at $v \in R_e$ in $G'$, and
  the remaining edges exist in $G'$ already, we can reverse the contraction
  operation at each $\bfm{v}$ and deduce that such a copy of $\Fconn$ completes
  a copy of an $R_e$-absorber $\Fabs$ in $G'- Q$ as desired.

  Note that $|W_i \cup X_i| = 2\xi\tilde n$, every $v \in W_i \cup X_i$ belongs
  to at most $40^t$ distinct $R$-absorbers by the maximum degree bound on the
  template graph $B$ (see Lemma~\ref{lem:abs-template}), and each $R$-absorber
  is of size $h$. If the collection of these graphs does not intersect each
  $V_i$ in exactly the same number of vertices, we can just repeat the whole
  construction in a cyclic way for all $i$ and thus we get
  \[
    |V_i \cap V(A)| = |V_j \cap V(A)| \leq t^2 \cdot 2\xi\tilde n \cdot 40^t
    \cdot h \leq \gamma \tilde n,
  \]
  as promised.
\end{proof}

The proof of Theorem~\ref{thm:blow-up-lemma} is now but a formality.

\begin{proof}[Proof of Theorem~\ref{thm:blow-up-lemma}]
  Given $k, t$, and $\alpha$, let $c = 2t^2$, $\gamma =
  \eps_{\ref{lem:robust-reg-expansion-any-size-sets}}(c,\alpha,1/2)$, $\xi =
  \xi_{\ref{lem:absorbing-lemma}}(\alpha,\gamma)$, $\delta =
  (\xi/2)\delta_{\ref{lem:robust-reg-expansion-any-size-sets}}(c,\alpha,1/2)$,
  $\rho = \delta\xi/t$, and $\eps \leq
  (\rho/2)\eps_{\ref{thm:klr}}(C_t,\alpha)$. Let $C$ be
  sufficiently large, in particular $C \geq
  \max\{C_{\ref{thm:klr}}(C_t,\alpha,\rho\mu/2),
  C_{\ref{lem:robust-reg-expansion-any-size-sets}}(c,\alpha,1/2,\xi\mu),
  C_{\ref{lem:absorbing-lemma}}(\alpha, \gamma, \mu)\}$. Assume $\Gamma \sim
  \Gnp$ is such that it satisfies the conclusion of
  Theorem~\ref{thm:klr} applied with $C_t$ (as $H$) and
  $\rho\mu/2$ (as $\mu$), Lemma~\ref{lem:robust-reg-expansion-any-size-sets}
  applied with $1/2$ (as $\gamma$) and $\xi\mu$ (as $\mu$), and
  Lemma~\ref{lem:absorbing-lemma}. This happens with high probability and from
  now on we condition on these three events.

  Let $A$ be the $\cW$-absorber given by an application of
  Lemma~\ref{lem:absorbing-lemma} with $\cW = (W_1,\dotsc,W_t)$ and $W_i
  \subseteq V_i$, each $W_i$ of size precisely $\xi\tilde n$. Let $U_i := V_i
  \setminus V(A)$, and so $s := |U_i| \geq (1-\gamma)\tilde n$ by
  \ref{abs-existence}. Lastly, by \ref{abs-expansion}, each $u \in U_i$ is
  $(\gamma,k-1)$-expanding with respect to both $W_{i+1}, \dotsc, W_{i+(k-1)}$
  and $W_{i-1}, \dotsc, W_{i-(k-1)}$, where indices are taken so that $t+i = i$
  and $1-i = t-i+1$.

  By Lemma~\ref{lem:slicing-lemma}, sets $U_i$ induce in $G$ a graph which
  belongs to $\cG(C_t, s, 2\eps, \alpha p)$. Therefore, we can repeatedly apply
  Theorem~\ref{thm:klr} to find a family of disjoint canonical copies of $C_t$
  in $G[\bigcup_{i \in [t]} U_i]$, covering all but precisely $\rho\tilde n$
  vertices in each $U_i$. Denote these leftover vertices by $Z_i$.

  Next, we make use of Haxell's matching theorem to match vertices of each $Z_i$
  with some vertices in $W_1 \cup \dotsb \cup W_t$ into copies of $C_t$.
  Consider an auxiliary $t$-uniform hypergraph $\cH$ with vertex set $\bigcup_{i
  \in [t]} Z_i \cup \bigcup_{i \in [t]} W_i$ and add a $t$-edge to $\cH$ for
  every $v \in Z_i$ and $Y \subseteq \bigcup_{j \in [t] \setminus \{i\}} W_j$
  with $|Y \cap W_j| = 1$, for which there is a copy of $C_t$ in $G$ induced by
  $\{v\} \cup Y$. Now, if for every $Z \subseteq \bigcup_{i\in[t]}Z_i$ and every
  $Q \subseteq \bigcup_{i\in[t]} W_i$, $|Q| \leq 2t|Z|$, there is a canonical
  copy of $C_t$ with one vertex in $Z \cap U_i$, for some $i \in [t]$, and the
  remaining $t-1$ vertices in $\bigcup_{j\in[t]\setminus\{i\}} W_j \setminus Q$,
  then there is a $Z$-saturating matching in $\cH$ by
  Theorem~\ref{thm:haxell-matching}. This immediately gives a family of $t \cdot
  \rho\tilde n$ disjoint canonical copies of $C_t$ in $G[\bigcup_{i\in[t]} Z_i
  \cup W_i]$ which in particular cover all the vertices of $Z_i$'s and
  \emph{exactly} $(t-1)\rho\tilde n < \xi\tilde n$ vertices in each $W_i$. At
  this point it is not too difficult to see that this is indeed the case. Fix
  sets $Z$ and $Q$ as above. Assume without loss of generality $Z \cap V_1$ is
  largest among $Z \cap V_i$, $i \in [t]$. Recall that, every $v \in V_1$ is
  $(\gamma,k-1)$-expanding with respect to $W_2, \dotsc, W_k$ by
  \ref{abs-expansion} and $\deg_G(u, W_{i+1}) \leq (1+\gamma)\xi\tilde n\alpha
  p$, for all $u \in W_i$, $i \in [k-1]$, by \ref{abs-class-exp}. Moreover,
  $|Z_1| \leq \rho\tilde n \leq \delta\xi\tilde n$ and $|Q| \leq 2t|Z| \leq
  2t^2|Z_1| = c|Z_1|$. Hence, we can apply
  Lemma~\ref{lem:robust-reg-expansion-any-size-sets} with $\xi\mu$ (as $\mu$),
  $Z \cap V_1$ (as $U$) and $W_2,\dotsc,W_k$ (as $V_1,\dotsc,V_{k-1}$) to obtain
  a vertex $v \in Z \cap V_1$ which is $(1/2,k-1)$-expanding with respect to
  both $W_2 \setminus Q, \dotsc, W_k \setminus Q$ and $W_t \setminus Q, \dotsc,
  W_{t-(k-1)} \setminus Q$. In particular, $v$ belongs to a cycle $C_t$ which
  does not intersect $Q$.

  Denote by $Q_i$ the used vertices in each $W_i$, that is the ones belonging to
  all the previously found cycles $C_t$ used to cover $Z_i$'s. Finally, by
  definition of a $\cW$-absorber $A$ and as the previously found cycles
  intersect each $W_i$ in exactly $(t-1)\rho\tilde n$ vertices, there is a
  family of disjoint copies of $C_t$ covering all the vertices of $G -
  \bigcup_{i \in [t]} Q_i$, completing the proof.
\end{proof}

\section{Resilience of cycle factors in random graphs}\label{sec:resilience}

To give a proof of Theorem~\ref{thm:main-theorem-res} we need some standard
concepts first. For an $n$-vertex graph $G$, a partition of $V(G)$ into sets
$(V_i)_{i=0}^{\ell}$ is said to be $(\eps, p)$-regular if $|V_0| \leq \eps n$,
$|V_1| = \dotsb = |V_\ell|$, and at most $\eps \ell^2$ pairs $(V_i,V_j)$ are not
$(\eps, p)$-regular. An $(\eps, \alpha, p)$-reduced graph $R$ of a partition
$(V_i)_{i=0}^{\ell}$ is a graph on vertex set $[\ell]$ where $ij \in E(R)$ if
and only if $(V_i,V_j)$ is $(\eps, p)$-regular (in $G$) with density $d(V_i,V_j)
\geq \alpha p$. We make use of the `minimum degree variant' of the sparse
regularity lemma for random graphs (see, e.g.~\cite{noever2017local}).

\begin{theorem}
  \label{thm:sparse-reg-lemma}
  For every $d, \eps > 0$ and $\ell_0 \in \N$, there exists an $L > 0$ such that
  for every $\alpha \in (0,1)$, if $p \gg 1/n$, then w.h.p.\ $\Gamma \sim \Gnp$
  satisfies the following. Every spanning subgraph $G \subseteq \Gamma$ with
  minimum degree $\delta(G) \geq d np$ admits an $(\eps,p)$-regular partition
  $(V_i)_{i=0}^{\ell}$ with $\ell_0 \leq \ell \leq L$ whose
  $(\eps,\alpha,p)$-reduced graph $R$ is of minimum degree $\delta(R) \geq
  (d-\alpha-\eps)|R|$.
\end{theorem}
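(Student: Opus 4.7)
The plan is to invoke the classical sparse regularity lemma of Kohayakawa and R\"{o}dl to obtain an $(\eps_0, p)$-regular partition of $G$ for some auxiliary $\eps_0 \ll \eps$, and then to upgrade the hypothesis $\delta(G) \geq dnp$ into the claimed minimum-degree lower bound on the reduced graph by a standard double-counting argument. The only probabilistic input needed is that $\Gamma$ is well-distributed, i.e.\ the edge-upper-bound of Proposition~\ref{prop:gnp-edges}: this ensures both that $\Gamma$ (and hence any $G \subseteq \Gamma$) satisfies the ``upper-uniformity'' hypothesis needed to apply the Kohayakawa--R\"{o}dl lemma, and that pairs of linear-sized vertex sets carry at most $(1+o(1))m^2 p$ edges (where $m$ is the common part size).

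Concretely, I would first choose $\eps_0 = \eps_0(\eps) > 0$ so small that $\eps_0 + 2\sqrt{\eps_0} \leq \eps/2$, then apply the sparse regularity lemma with parameters $\eps_0, \ell_0$ to obtain a partition $V(G) = V_0 \cup V_1 \cup \dotsb \cup V_\ell$ with $|V_0| \leq \eps_0 n$, $|V_1| = \dotsb = |V_\ell| =: m$, $\ell_0 \leq \ell \leq L$, and at most $\eps_0 \ell^2$ pairs $(V_i, V_j)$ that are not $(\eps_0, p)$-regular. Letting $B_i$ denote the number of $j \neq i$ such that $(V_i, V_j)$ is irregular, averaging gives $\sum_i B_i \leq 2\eps_0 \ell^2$, so at most $\sqrt{\eps_0}\,\ell$ indices $i$ satisfy $B_i \geq 2\sqrt{\eps_0}\,\ell$. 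I would move all such ``bad'' indices into $V_0$; the new exceptional set has size at most $(\eps_0 + \sqrt{\eps_0})n \leq \eps n$, and on the surviving $\ell' = (1 - o(1))\ell$ indices each $i$ has at most $2\sqrt{\eps_0}\,\ell$ irregular partners. Let $R$ be the $(\eps_0, \alpha, p)$-reduced graph on the surviving indices.

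For each surviving $i$, the hypothesis gives $e_G(V_i, V(G)) \geq m \cdot dnp$. I would split the edges incident to $V_i$ into: (i) edges to $V_0$, contributing at most $m|V_0| \leq \eps m n$; (ii) edges inside $V_i$, at most $O(m^2 p)$ by Proposition~\ref{prop:gnp-edges}; (iii) edges to each $V_j$ with $ij \in E(R)$, at most $(1+o(1))m^2 p$ per such $j$; (iv) edges to irregular partners, at most $2\sqrt{\eps_0}\,\ell \cdot (1+o(1))m^2 p$ in total; and (v) edges to regular partners of density $< \alpha p$, at most $\alpha p m^2$ per such $j$. Using $\ell m \geq (1-\eps)n$ and dividing through by $m^2 p$, and using the fact that $p \gg 1/n$ makes the quadratic term $m^2 p$ dominate the square-root error $cm\sqrt{np}$ from Proposition~\ref{prop:gnp-edges}, I would obtain
\[
  \deg_R(i) + 2\sqrt{\eps_0}\,\ell + \big(\ell - 1 - \deg_R(i) - B_i\big)\alpha \geq (d - o(1))\ell,
\]
which rearranges to $\deg_R(i) \geq (d - \alpha - 2\sqrt{\eps_0} - \eps_0 - o(1))\ell \geq (d - \alpha - \eps)|R|$, uniformly in $i$.

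The only real obstacle is the bookkeeping of constants: $\eps_0$ must be chosen small enough so that both (a) the ``bad'' indices with too many irregular partners fit into $V_0$ without violating $|V_0| \leq \eps n$, and (b) the slack coming from the $\alpha$-density threshold and the $\eps$-slack in the target minimum-degree jointly absorb the losses from irregular pairs and from pairs with density just below $\alpha p$. Every other step is either black-box (the sparse regularity lemma, whose upper-uniformity hypothesis is discharged by Proposition~\ref{prop:gnp-edges}) or routine edge-counting, with the assumption $p \gg 1/n$ used only to guarantee the $o(1)$ error terms above.
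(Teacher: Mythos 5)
The paper does not actually prove this statement: it is quoted as a known ``minimum degree variant'' of the sparse regularity lemma with a citation (to Noever--Steger and its predecessors), so there is no in-paper proof to compare against. Your derivation --- apply the Kohayakawa--R\"{o}dl sparse regularity lemma with a smaller auxiliary parameter $\eps_0$, discard the classes with more than $2\sqrt{\eps_0}\,\ell$ irregular partners into the exceptional set, and then convert $\delta(G) \geq dnp$ into $\delta(R) \geq (d-\alpha-\eps)|R|$ by partitioning the edges leaving a class $V_i$ and using the upper-uniformity of $\Gamma$ to cap each pair's contribution at $(1+o(1))m^2p$ --- is exactly the standard route by which this variant is obtained in the literature, and the outline is correct. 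Your use of Proposition~\ref{prop:gnp-edges} both to discharge the upper-uniformity hypothesis of the Kohayakawa--R\"{o}dl lemma and to control the per-pair edge counts is the right probabilistic input, and the observation that $\ell \leq L = O(1)$ is what makes the $c\sqrt{|X||Y|np}$ error term negligible against $m^2p$ is the correct reason the argument closes for all $p \gg 1/n$.

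Two small points to tighten. First, in item (i) you bound the edges from $V_i$ to $V_0$ by the trivial $m|V_0| \leq \eps mn$; after dividing by $m^2p$ this contributes $\eps n/(mp) \approx \eps\ell/p$, which is not $O(\ell)$ when $p = o(1)$. You need Proposition~\ref{prop:gnp-edges} here as well, giving $e_G(V_i, V_0) \leq (1+o(1))m|V_0|p \leq 2\eps_0 mnp$ and hence a contribution of $O(\eps_0\ell)$ --- the same tool you already invoke for items (ii)--(iv), so this is a slip in the write-up rather than a gap. Second, discarding up to $\sqrt{\eps_0}\,\ell$ classes can push the number of surviving parts slightly below $\ell_0$; apply the regularity lemma with $2\ell_0$ in place of $\ell_0$ (and note that since $\eps_0 \leq \eps$, every $(\eps_0,p)$-regular pair is $(\eps,p)$-regular, so the final partition and reduced graph meet the $(\eps,p)$-specification of the statement). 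With these repairs the proof is complete.
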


\subsection{Expansion within sparse regular pairs}\label{sec:regularity}

In an attempt to keep notation more concise, we first introduce a definition.
For a graph $G \in \cG(C_t, n, \eps, p)$, we say that a vertex $v \in V_i$ is
$(\eps,k)$-\emph{typical} if:
\begin{itemize}
  \item $v$ is $(\eps,k-1)$-expanding with respect to both
    $V_{i+1},\dotsc,V_{i+{k-1}}$ and $V_{i-1},\dotsc,V_{i-(k-1)}$,
  \item if $t = 2k - 1$, its $(k-1)$-st neighbourhoods into $V_{i+(k-1)}$ and
    $V_{i-(k-1)}$ form an $(\eps,p)$-lower-regular pair;
  \item if $t = 2k$, its $(k-1)$-st neighbourhoods into $V_{i+(k-1)}$ and
    $V_{i-(k-i)}$ form an $(\eps,p)$-lower-regular pair each with $V_{i+k} =
    V_{i-k}$.
\end{itemize}
Recall, in the definition of $\cGEk(C_t, n, \eps, p)$ this is exactly what
\emph{every vertex} satisfies, namely every $v$ is $(\eps,k)$-typical. As it
turns out, an overwhelming majority of graphs in $\cG(C_t, n, \eps, p)$ are such
that, for a suitable choice of constants, all but $\gamma n$ vertices in each
$V_i$ are $(\gamma,k)$-typical to begin with.

In order to capture this formally, we unfortunately need another definition. For
$m \in \N$, the class $\cG(C_t, n, m, \eps, p)$ consists of all graphs on vertex
set $V_1 \cup \dotsb \cup V_t$, each $V_i$ of size $n$, and where every $G[V_i,
V_{i+1}]$ is $(\eps, p)$-regular with exactly $m$ edges. The following statement
is a modification of \cite[Lemma~5.9]{gerke2007small}; as such, the proof can be
read off from the proof of \cite[Lemma~5.9]{gerke2007small}, but we nevertheless
spell out (most of) the details in Appendix~\ref{sec:appendix-expanders}.

\begin{proposition}\label{prop:almost-all-expanders}
  Let $k \geq 2$ and $t \in \{2k-1,2k\}$. For every $\beta, \gamma > 0$ there
  exist positive constants $\eps_0$ and $C$, such that for all $0 < \eps \leq
  \eps_0$ and $Cn^{-(k-1)/k} \leq p \ll n^{-(k-2)/(k-1)}$, the number of graphs
  in $\cG(C_t, n, m, \eps, p)$, with more than $\gamma|V_1|$ vertices in $V_1$
  which are not $(\gamma,k)$-typical, is at most
  \[
    \beta^m \binom{n^2}{m}^t,
  \]
  for all $m \geq n^2p$.
\end{proposition}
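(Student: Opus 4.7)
The plan is to adapt the Gerke--Steger counting template from \cite[Lemma~5.9]{gerke2007small}: first reduce to a per-vertex exponential bound via a Markov-type argument, then establish the per-vertex bound by iterating a counting version of the regularity inheritance lemma. Specifically, if for each fixed $v \in V_1$ the number of graphs in $\cG(C_t, n, m, \eps, p)$ in which $v$ is not $(\gamma,k)$-typical is at most $\beta_0^m \binom{n^2}{m}^t$, then summing over $v$ and using that any graph with more than $\gamma n$ non-typical vertices contributes at least $\gamma n$ such incidences gives a total count of at most $(1/\gamma)\beta_0^m \binom{n^2}{m}^t \leq \beta^m \binom{n^2}{m}^t$ for $\beta_0$ chosen sufficiently small.

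The per-vertex bound splits into a constant number of sub-events indexed by the way $v$ may fail $(\gamma,k)$-typicality: either $|N^j(v)|$ falls below $(1-\gamma)(np)^j$ for some $j \in [k-1]$ in some direction, or the terminal pair (two $(k-1)$-st neighbourhoods when $t=2k-1$; or each $(k-1)$-st neighbourhood with the opposite $V_i$ when $t=2k$) fails $(\gamma,p)$-lower-regularity. For each sub-event I would prove a bound of $\beta_1^m \binom{n^2}{m}^t$ on the contributing graphs. In the size case this runs by induction on the layer $j$: the base case $j = 1$ follows from degree concentration in $(\eps,p)$-regular bipartite graphs with $m$ prescribed edges, and the induction step combines the Gerke--Steger estimate that, outside an exponentially small count of graphs, $(N^{j-1}(v), V_{1+j})$ inherits $(\gamma',p)$-regularity from $(V_j, V_{j+1})$, with a direct count of how many edge-realisations into $V_{j+1}$ can yield a deviant $|N^j(v)|$. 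The terminal lower-regularity failure is handled analogously by a single application of the inheritance estimate. The arithmetic hypothesis $p \geq Cn^{-(k-1)/k}$ enters precisely at the last level to guarantee $|N^{k-1}(v)| = \Theta((np)^{k-1}) \gg 1/p$, which is the size threshold above which regularity inheritance holds; the upper bound $p \ll n^{-(k-2)/(k-1)}$ keeps $|N^j(v)|$ sub-linear for $j \leq k-1$ so that the iterative neighbourhood counts are non-trivial and most edges out of $N^{j-1}(v)$ land in distinct vertices of $V_{j+1}$.

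The main obstacle I expect is that the edges of a graph drawn from $\cG(C_t, n, m, \eps, p)$ are correlated through the regularity constraint, so each concentration step must be rephrased as a counting bound of the form ``all but a $\beta_1^m$ fraction of graphs in the class satisfy $X$'', and these bounds must be chained through the $k-1$ expansion levels while keeping track of the worsening regularity parameters. Fortunately, the $t$ bipartite pairs $(V_i, V_{i+1})$ are chosen independently, preserving the factorisation $\binom{n^2}{m}^t$ throughout, so the combinatorial accounting across pairs is clean; lining up the constants $\beta_0, \beta_1, \gamma_j, \ldots$ across the $O(k)$ sub-events and the $k-1$ induction levels is then tedious but mechanical once the Gerke--Steger machinery is in place.
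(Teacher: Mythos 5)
Your reduction to a per-vertex bound fails because that bound is false, and by a wide margin. You want to show that for each fixed $v \in V_1$, the number of graphs in $\cG(C_t, n, m, \eps, p)$ in which $v$ is not $(\gamma,k)$-typical is at most $\beta_0^m\binom{n^2}{m}^t$, i.e., an exponentially small (in $m$) fraction of the class. But already at the first layer $j=1$: the fraction of $m$-edge bipartite graphs $G[V_1,V_2]$ in which a fixed $v$ has $\deg(v) \leq (1-\gamma)m/n$ is $\exp(-\Theta(m/n)) = \exp(-\Theta(np))$, whereas $\beta_0^m = \exp(-\Theta(m)) = \exp(-\Theta(n^2p))$. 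Since $n^2 p \gg np$, the per-vertex failure fraction is astronomically larger than what your Markov argument requires, and restricting to $(\eps,p)$-regular graphs changes nothing (regularity constrains set averages, not individual degrees, and essentially all $m$-edge graphs are regular). The underlying reason is that whether a fixed vertex is typical involves only the $O(np)$ edges near it, not all $m$ edges, so fixing one vertex to be bad costs only $\exp(-O(np))$, never $\beta^m$.

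The argument must instead count directly the graphs in which a constant fraction of vertices — or a disjoint family of sets of total size $\Omega(n)$ — are simultaneously bad: forcing $\gamma n$ vertices to be bad commits $\Theta(\gamma m)$ edges to constrained positions, and only then does the loss factor $\beta^m$ legitimately emerge. This is exactly the role of Lemma~\ref{lem:bad-extensions-pairs} in the paper, a two-sided counting lemma bounding graphs admitting a disjoint family $(X_i, Y_i)$ with $\sum_i \min\{|X_i|,|Y_i|\} \geq \delta s$ all having marked neighborhood pairs, and of Claim~\ref{cl:inductive-expansion}, whose induction along the path tracks \emph{pairs of sets} from the two ends rather than single vertices. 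Your identification of the Gerke--Steger inheritance machinery, the iteration across $k-1$ levels, and the purpose of the two density bounds is all accurate; but the outer wrapper cannot be a per-vertex Markov bound and must be restructured into an incidence count over set families embedded in the main counting lemma.
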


We point out that, even though the upper bound on $p$ seems artificial, the
reason we introduced it is to at all times have $(np)^{k-2} \ll 1/p$; we are
confident this can be avoided but would introduce additional technicalities both
in the definitions and the proofs. As for our application it makes no
difference, we opted for a simpler proof, but slightly less pleasing to the eye
statement.

It is a straightforward first moment calculation then to show that w.h.p.\ none
of the `bad graphs' above appear as a subgraph of the random graph $\Gnp$.

\begin{proposition}\label{prop:gnp-reg-expanders}
  Let $k \geq 2$ and $t \in \{2k-1,2k\}$. For every $\alpha, \gamma > 0$ there
  exists a positive constant $\eps$ with the following property. For every $\mu
  > 0$ there exists a $C > 0$ such that if $Cn^{-(k-1)/k} \leq p \ll
  n^{-(k-2)/(k-1)}$, then w.h.p.\ $\Gamma \sim \Gnp$ satisfies the following.
  Let $G \subseteq \Gamma$ belong to $\cG(C_t, \tilde n, \eps, \alpha p)$, with
  $\tilde n \geq \mu n$. Then there are most $\gamma\tilde n$ vertices $v \in
  V_1$ which are not $(\gamma,k)$-typical. \qed
\end{proposition}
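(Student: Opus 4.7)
The plan is a straightforward first-moment argument, with all the hard combinatorial work sold wholesale by Proposition~\ref{prop:almost-all-expanders}. Call a labelled graph $G$ on a vertex set $V_1 \cup \dotsb \cup V_t \subseteq V(\Gamma)$ \emph{bad} if it belongs to $\cG(C_t, \tilde n, m, \eps, \alpha p)$ for some $\tilde n \geq \mu n$ and some $m$, and if more than $\gamma \tilde n$ vertices of $V_1$ fail to be $(\gamma,k)$-typical. By Markov's inequality it is enough to show that the expected number of bad subgraphs of $\Gamma \sim \Gnp$ tends to zero, which forces the desired property to hold w.h.p.

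For the parameters: given $\alpha$ and $\gamma$, first pick a small $\beta > 0$ to be specified below and invoke Proposition~\ref{prop:almost-all-expanders} with $\beta$ and $\gamma$ to obtain a threshold $\eps_0$ and constant $C_0$; set $\eps = \eps_0$ and $C \geq C_0$ large enough (depending on $\alpha$ and $\mu$) so that the calculation below goes through.

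For the first moment, fix $\tilde n \in [\mu n, n/t]$, an admissible $m$ in the window $[(1-\eps)\alpha p \tilde n^2, (1+\eps)\alpha p \tilde n^2]$, and an ordered partition $V_1, \dotsc, V_t$ of a $t\tilde n$-subset of $V(\Gamma)$. Proposition~\ref{prop:almost-all-expanders} (applied with density $\alpha p$ in place of $p$) bounds the number of bad $G$ with this data by $\beta^m \binom{\tilde n^2}{m}^t$, and each such $G$ has exactly $tm$ edges and hence appears in $\Gnp$ with probability $p^{tm}$. Summing over the $O(n^3 \cdot n^{t\tilde n})$ choices of $(\tilde n, m, V_1, \dotsc, V_t)$, and using the standard bound $\binom{\tilde n^2}{m} \leq (e\tilde n^2/m)^m$ together with $m \geq (1-\eps)\alpha p \tilde n^2$, the expected number of bad graphs is at most
\[
  O(n^3) \cdot n^{t\tilde n} \cdot \bigl(\beta \cdot (e/((1-\eps)\alpha))^t\bigr)^m.
\]
Since $p \geq Cn^{-(k-1)/k}$ and $\tilde n \geq \mu n$, we have $m/\tilde n \geq (1-\eps)\alpha\mu C n^{1/k}$, so $t\tilde n \log n / m = O(\log n / n^{1/k}) = o(1)$ as soon as $k \geq 2$. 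Choosing $\beta$ small enough that $\log(1/\beta) > t \log(2e/\alpha)$ thus makes the bound $o(1)$ uniformly in $\tilde n$ and $m$, finishing the argument.

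I do not expect any genuine obstacle here: the combinatorial heart of the argument is done by Proposition~\ref{prop:almost-all-expanders}, and what remains is a textbook first-moment / union-bound exercise. The only point requiring a bit of care is the order of quantifiers: $\beta$ must be chosen \emph{after} $\alpha$ and $t$ are fixed but \emph{before} Proposition~\ref{prop:almost-all-expanders} is applied to obtain $\eps_0$, and $C$ is chosen last so that $Cn^{1/k}$ dominates $\log n$ by a factor absorbing the polynomial prefactors from the union bound.
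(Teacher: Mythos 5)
Your first-moment strategy is the one the paper has in mind (the paper calls it ``a straightforward first moment calculation'' and omits the details), and the endgame estimate --- $m/\tilde n = \Omega(n^{1/k})$ so that $n^{t\tilde n}\zeta^m = o(1)$ for any fixed $\zeta<1$ --- is correct. There is, however, a gap in the enumeration of bad subgraphs: you sum only over graphs in $\cG(C_t,\tilde n,m,\eps,\alpha p)$ for a single $m$, that is, graphs in which \emph{every} pair $(V_i,V_{i+1})$ carries \emph{exactly} $m$ edges. The proposition, on the other hand, is stated for $G\in\cG(C_t,\tilde n,\eps,\alpha p)$, where the $t$ pairs may carry $t$ different edge counts $m_1,\dotsc,m_t$, each in the window $(1\pm\eps)\alpha p\,\tilde n^2$ but not equal. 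Such a $G$ lies in none of the fixed-$m$ classes and is missed by your union bound; and since ``at most $\gamma\tilde n$ non-typical vertices'' is not a monotone property of $G$ (lower-regularity of the $(k-1)$-st neighbourhoods can be destroyed by adding as well as removing edges), you cannot simply pass to an exact-$m$ subgraph as one would for the K{\L}R conjecture itself.

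The fix is standard but needs to be spelled out: sum instead over tuples $(m_1,\dotsc,m_t)$ of admissible edge counts (polynomially many), and replace Proposition~\ref{prop:almost-all-expanders} by its natural generalisation to per-pair edge counts, with a bound of the form $\beta^{\min_i m_i}\prod_i\binom{\tilde n^2}{m_i}$. That generalisation does hold and the proof in Appendix~\ref{sec:appendix-expanders} adapts --- Lemma~\ref{lem:bad-extensions-pairs} already allows asymmetric edge counts $m_1\neq m_\ell$ --- but it is not the literal statement you can cite. Once restated, multiplying by $p^{\sum_i m_i}$, using $\binom{\tilde n^2}{m_i}p^{m_i}\leq(e/((1-\eps)\alpha))^{m_i}$, and noting $\min_i m_i\geq\tfrac{1-\eps}{1+\eps}\max_i m_i$ recovers your geometric decay, and the rest of your calculation is unchanged.
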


At this point, we can utilise the lemmas about robustness of expansion from
Section~\ref{sec:random-graphs} to show that in $\Gnp$ one can easily convert a
graph $G \in \cG(C_t, \tilde n, \eps, \alpha p)$ into a member of $\cGEk(C_t, s,
\gamma, \alpha p)$, for a suitable choice of constants. Moreover, this is done
without `losing' too many vertices, that is $s = (1-o(1))\tilde n$. Basically,
this strengthens the point of view that restricting ourselves only to the class
$\cGEk(C_t, s, \gamma, \alpha p)$ for the blow-up lemma in $\Gnp$ is not such a
huge deal---the two classes are practically the same up to a minor difference in
size of the sets within.

\begin{lemma}\label{lem:gnp-all-reg-expander-properties}
  For every $k \geq 2$, $t \in \{2k-1, 2k\}$, and all $\alpha, \gamma > 0$ there
  exists a positive constant $\eps$ with the following property. For every $\mu
  > 0$ there exists a $C > 0$ such that if $Cn^{-(k-1)/k} \leq p \ll
  n^{-(k-2)/(k-1)}$, then w.h.p.\ $\Gamma \sim \Gnp$ satisfies the following.
  Every $G \subseteq \Gamma$ which belongs to $\cG(C_t, \tilde n, \eps, \alpha
  p)$, with $\tilde n \geq \mu n$, contains a subgraph $G' \subseteq G$ which
  belongs to $\cGEk(C_t, s, \gamma, \alpha p)$, for some $s \geq
  (1-\gamma)\tilde n$.
\end{lemma}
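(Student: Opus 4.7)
The plan is to identify the small set of vertices in each class $V_i$ that obstruct $\cGEk$-membership, prune them, and use the robustness lemmas from Section~\ref{sec:random-graphs} to show that on the remaining vertices the required expansion and lower-regularity properties carry over. Proposition~\ref{prop:gnp-reg-expanders} is the main engine: for a suitable $\gamma' \ll \gamma$ it guarantees that in $G \in \cG(C_t,\tilde n,\eps,\alpha p)$ all but at most $\gamma'\tilde n$ vertices of each $V_i$ are already $(\gamma',k)$-typical; collect the exceptional ones in $Q_i^{(1)}$.

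Concretely, choose $\eps \ll \gamma' \ll \gamma\alpha^{k-1}$. A standard regularity argument shows that for each $i$, at most $O(\eps)\tilde n$ vertices $v \in V_i$ satisfy $\deg_G(v,V_{i\pm 1}) > (1+\gamma')\tilde n\alpha p$; collect these in $Q_i^{(2)}$, and put $Q := \bigcup_i (Q_i^{(1)} \cup Q_i^{(2)})$, so $|Q| = O((\eps+\gamma')\tilde n)$. The reduced classes $V_i' := V_i \setminus Q_i^{(2)}$ now meet the max-degree hypothesis of Lemma~\ref{lem:robust-reg-expansion-large-sets}; moreover, by an averaging argument (for all but $o(\tilde n)$ typical $v$, at most an $\eps^{1/2}$ fraction of $N_G^j(v,V_{i+j})$ lies in $Q_{i+j}^{(2)}$), typicality with respect to the $V_i$'s transfers to typicality with respect to the $V_i'$'s at the cost of another $o(\tilde n)$ vertices per class.

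Now apply Lemma~\ref{lem:robust-reg-expansion-large-sets} for each $i \in [t]$ and each direction along the cycle, with $U$ the remaining typical vertices in $V_i'$ and $Q^{(1)}$ as the removed set; this produces at most $K/p$ further exceptional vertices per class. Absorb these into $Q$, set $W_i := V_i \setminus Q_i$, balance sizes down to $s\geq(1-\gamma)\tilde n$, and let $G' := G[W_1 \cup \dotsb \cup W_t]$. Since $G - \nabla(Q)$ retains no edge touching $Q$, the expansion output of the lemma with respect to the $V_i'$'s is equivalently expansion in $G'$ with respect to the $W_i$'s. To verify $G' \in \cGEk(C_t, s, \gamma, \alpha p)$, the slicing lemma (Lemma~\ref{lem:slicing-lemma}) gives $(\gamma,p)$-regularity of each $(W_i,W_{i+1})$ at density $(1\pm\gamma)\alpha p$; the per-vertex degree $\deg_{G'}(v,W_{i\pm 1}) = (1\pm\gamma)s\alpha p$ combines the upper bound from $v\notin Q_i^{(2)}$ with the lower bound from expansion; and the intermediate neighbourhood bounds $|N_{G'}^j(v,W_{i+j})|\geq(1-\gamma)(s\alpha p)^j$ for $j \in [k-1]$ are precisely the robust expansion.

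The main obstacle is the final property: $(\gamma,p)$-lower-regularity of the $(k-1)$-st neighbourhoods in $G'$. For this I would use that $N_{G'}^{k-1}(v,W_{i\pm(k-1)})$ has size at least $(1-\gamma)(s\alpha p)^{k-1}$ by robust expansion, while the ambient $N_G^{k-1}(v,V_{i\pm(k-1)})$ has size at most $(1+o(1))(\tilde n p)^{k-1}$ by standard $\Gnp$ neighbourhood concentration, so the former is at least an $\approx \alpha^{k-1}$ fraction of the latter. Together with $|W_{i\pm k}|\geq(1-\gamma)|V_{i\pm k}|$, the $(\gamma',p)$-lower-regularity of the original pair (given by typicality) then passes to the restricted pair directly from the definition, with parameter degradation by a factor $\approx \alpha^{-(k-1)}$; the initial choice $\gamma' \leq \gamma\alpha^{k-1}$ ensures that the resulting parameter stays below $\gamma$.
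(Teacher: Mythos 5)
Your overall strategy (prune the non-typical vertices via Proposition~\ref{prop:gnp-reg-expanders}, prune the degree-heavy vertices, invoke Lemma~\ref{lem:robust-reg-expansion-large-sets} to control the damage to expansion, and finish with Lemma~\ref{lem:slicing-lemma} for the lower-regularity) is the right one, and matches the paper's in spirit. But there is a genuine gap in the step where you ``apply Lemma~\ref{lem:robust-reg-expansion-large-sets} \dots; this produces at most $K/p$ further exceptional vertices per class. Absorb these into $Q$.'' Lemma~\ref{lem:robust-reg-expansion-large-sets} certifies expansion \emph{after removing $Q^{(1)}$}, not after removing the larger set $Q$ you then form. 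Once the $K/p$ newly-failing vertices are absorbed into $Q$, the removed sets in every class have grown, and removing them can in turn destroy expansion for further vertices. Removing $K/p$ vertices from $V_{i+1}$ can knock out up to $K/p$ of the neighbours of a vertex $v \in V_i$, a fraction roughly $K/(\tilde n \alpha p^2)$ of $|N_G(v)|$; for $k \geq 3$ (hence $p \ll n^{-1/2}$) this fraction is not small, so the cascade is not negligible and your sentence ``the expansion output of the lemma with respect to the $V_i'$'s is equivalently expansion in $G'$ with respect to the $W_i$'s'' does not follow.

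The paper closes this with an explicitly iterative pruning process together with a monotonicity argument: as long as a vertex of $V_i \setminus Q_i$ fails $(\gamma,k-1)$-expansion with respect to the current $V_{i\pm j}\setminus Q_{i\pm j}$, add it to $Q_i$; then argue by contradiction that no $Q_i$ ever reaches $2\delta\tilde n$. The key point is monotonicity of failure: a vertex added at an earlier stage still fails expansion with respect to the final (larger) removed sets, so at the first moment $|Q_i| \geq 2\delta\tilde n$ there would be $\Omega(\tilde n)$ initially-typical vertices of $V_i$ failing $(\gamma,k-1)$-expansion against removed sets of size at most $2\delta\tilde n$, contradicting the $K/p$ bound from Lemma~\ref{lem:robust-reg-expansion-large-sets}. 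Your proposal needs this (or an equivalent) fixed-point argument; a single application plus ``absorb'' is not enough. A secondary, non-fatal issue: in the lower-regularity step you compare $N_{G'}^{k-1}(v)$ against the ambient $\Gamma$-neighbourhood $(\tilde n p)^{k-1}$, incurring an $\alpha^{-(k-1)}$ slicing degradation; the correct comparison is with $N_G^{k-1}(v) \approx (\tilde n\alpha p)^{k-1}$, which is larger only by a factor $1+O(\gamma)$, so the slicing loss is $O(1)$ and the preemptive scaling $\gamma' \le \gamma\alpha^{k-1}$ is unnecessary (but harmless).
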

\begin{proof}
  Given $\alpha, \gamma$, let $\delta \leq (1/4)\min\{\gamma,
  \delta_{\ref{lem:robust-reg-expansion-large-sets}}(\alpha,\gamma),
  \eps_{\ref{lem:robust-reg-expansion-large-sets}}(\alpha,\gamma)\}$, and let
  $\eps > 0$ be sufficiently small and $C > 0$ sufficiently large for the
  arguments below to go through. We identify indices $t+i$ with $i$ and $1-i$
  with $t-i+1$. Assume that $\Gamma \sim \Gnp$ is such that it satisfies the
  conclusion of Proposition~\ref{prop:gnp-reg-expanders} applied with $\delta$
  (as $\gamma$) and Lemma~\ref{lem:robust-reg-expansion-large-sets} which
  happens with high probability.

  Observe that, by definition of regular pairs, in every $V_i$ there are at most
  $2\eps\tilde n$ vertices which have more than $(1+\eps)\tilde n\alpha p$
  neighbours in either $V_{i-1}$ or $V_{i+1}$. By removing all these vertices,
  we get sets $V_i'$ of size at least $(1-2\eps)\tilde n$, each $v \in V_i'$
  having $\deg_G(v, V_{i+1}') \leq (1+\eps)\tilde n\alpha p \leq
  (1+4\eps)|V_{i+1}'|\alpha p$, and by Lemma~\ref{lem:slicing-lemma}, every
  $(V_i', V_{i+1}')$ is $(4\eps,\alpha p)$-regular (the same holds for
  $V_{i-1}'$). So, for simplicity, we may assume that all $v \in V_i$ are of
  bounded degree to neighbouring sets to begin with.

  We first apply Proposition~\ref{prop:gnp-reg-expanders} with $\delta$ (as
  $\gamma$) to get that for every $i \in [t]$ there is a set $Q_i \subseteq V_i$
  of at most $\delta\tilde n$ vertices which are not $(\delta,k)$-typical in
  $G$. We repeat the following process for all $i \in [t]$: if there is a vertex
  $v \in V_i \setminus Q_i$ which is not $(\gamma,k-1)$-expanding with respect
  to $V_{i+1} \setminus Q_{i+1}, \dotsc, V_{i+(k-1)} \setminus Q_{i+(k-1)}$ or
  $V_{i-1} \setminus Q_{i-1}, \dotsc, V_{i-(k-1)} \setminus Q_{i-(k-1)}$, add it
  to $Q_i$. Suppose towards contradiction there is a point at which some $|Q_i|
  \geq 2\delta\tilde n$. In particular, this means there are at least
  $(\delta/2)\tilde n$ vertices in $Q_i$ which are $(\delta,k-1)$-expanding with
  respect to, say, $V_{i+1},\dotsc,V_{i+(k-1)}$, but not
  $(\gamma,k-1)$-expanding with respect to $V_{i+1}\setminus Q_{i+1}, \dotsc,
  V_{i+(k-1)} \setminus Q_{i+(k-1)}$. As all $Q_{i+1},\dotsc,Q_{i+(k-1)}$ are of
  size at most $2\delta\tilde n$ and $2\delta \leq
  \delta_{\ref{lem:robust-reg-expansion-large-sets}}(\alpha,\gamma)$, this is a
  contradiction with the conclusion of
  Lemma~\ref{lem:robust-reg-expansion-large-sets}. To establish that these
  vertices are also typical, that is their $(k-1)$-st neighbourhoods are
  $(\gamma,\alpha p)$-lower-regular with necessary sets (see above), we just
  appeal to Lemma~\ref{lem:slicing-lemma}.

  Let $s = (1-2\delta)\tilde n$ and assume (by removing additional vertices if
  needed or taking random subsets) that all $|V_i \setminus Q_i| = s$. Thus, for
  every $v \in V_i \setminus Q_i$,
  \[
    \deg_G(v, V_{i+1} \setminus Q_{i+1}) \leq \deg_G(v, V_{i+1}) \leq
    (1+\eps)\tilde n\alpha p = \frac{1+\eps}{1-2\delta} s\alpha p \leq
    (1+\gamma)s\alpha p,
  \]
  and so $G[V_1 \setminus Q_1 \cup \dotsb \cup V_t \setminus Q_t]$ belongs
  to $\cGEk(C_t, s, \gamma, \alpha p)$, as desired.
\end{proof}

\subsection{Proof of Theorem~\ref{thm:main-theorem-res}}

From here on the proof follows a usual structure for a strategy based on the
regularity method. Think of $t$ being even. After applying the sparse regularity
lemma (Theorem~\ref{thm:sparse-reg-lemma}) to a subgraph $G \subseteq \Gamma$
with $\delta(G) \geq (1/2+\alpha)np$, the minimum degree in the
$(\eps,\alpha,p)$-reduced graph $R$ of the obtained $(\eps,p)$-regular partition
is sufficiently large for it to contain a Hamilton cycle on vertices
$1,\dotsc,2\ell$. We first clean-up all the sets $V_i$, moving some vertices to
$V_0$ along the way. The goal here is to, for every $i \in [\ell]$, find as
large sets $V_i^1, V_i^3, \dotsc, V_i^{t-1} \subseteq V_{2i-1}$ and $V_i^2,
V_i^4, \dotsc, V_i^t \subseteq V_{2i}$, such that $G[V_i^1 \cup V_i^2 \cup
\dotsb \cup V_i^{t-1} \cup V_i^t]$ belongs to $\cGEk(C_t, \tilde n, \gamma,
\alpha p)$, with $\tilde n = \Omega(n)$. Then we handle the `garbage' $V_0$ by
finding a collection of disjoint $t$-cycles covering all of its vertices. As we
unfortunately have no control over these, we need to avoid using up all the
vertices from some set $V_i$ while doing the former. This is easily accomplished
by taking an appropriately sized random subset of $V(G) \setminus V_0$ and using
it to find this collection. Another problem that arises after covering all the
vertices of $V_0$, is that the remaining sets $\tilde V_i^j \subseteq V_i^j$, $j
\in [t]$, can be of different sizes, making the blow-up lemma
(Theorem~\ref{thm:blow-up-lemma}) inapplicable for them. This is dealt with by
several usages of the (resolution of) K{\L}R-conjecture (Theorem~\ref{thm:klr})
and is strongly inspired by a similar procedure from \cite{balogh2012corradi}.
Lastly, all this has to be done so that the initially established
$(\gamma,k)$-expansion property is not damaged too heavily in the process, so,
everything is happening within randomly selected subsets before in the end
applying the blow-up lemma to whatever remains and covering the majority of
$V(G)$ with $t$-cycles provided by it.

\begin{proof}[Proof of Theorem~\ref{thm:main-theorem-res}]
  For a cleaner exposition, we focus only on the case when $t$ is even; the case
  when $t$ is odd is very similar and at the end of the proof we point out the
  main differences. For given $k$ and $\alpha$ let $\gamma =
  \eps_{\ref{thm:blow-up-lemma}}(\alpha)$, and choose $\gamma''$, $\delta_w$,
  and $\delta_x$ to be sufficiently small for the arguments below to go through;
  in particular, $(1-\gamma'')(1-\delta_w-\delta_x)^k \geq 1-\gamma$,
  $(1+\gamma'')/(1-\delta_w-\delta_x) \leq 1+\gamma$, and $\delta_w \leq
  \alpha\delta_x/(32t^2)$. Next, for $\ell_0 \in \N$ large enough, let
  \begin{gather*}
    \eps' \leq \delta_x/(4t), \quad \delta' =
    \delta_{\ref{lem:gnp-cycles-from-min-deg}}(\alpha/2, \delta_w/2), \quad
    \gamma' \leq \min\{\delta'\delta_w/4,
    \eps_{\ref{lem:reg-exp-partitioning}}(\alpha,\gamma'',\delta_w),
    \alpha\delta_w/20\},
    \\
    \eps \leq \frac{\eps'}{2t^2}\min\{\alpha/4, \gamma/4,
    \eps_{\ref{thm:klr}}(C_t, \alpha),
    \eps_{\ref{lem:gnp-all-reg-expander-properties}}(\alpha,\gamma')\}, \quad L
    = L_{\ref{thm:sparse-reg-lemma}}(1/2+2\alpha, \eps, \ell_0), \quad
    \text{and} \quad \mu = \frac{1-\eps}{Lt^2}.
  \end{gather*}
  Finally, let $C^\star =
  C_{\ref{lem:gnp-all-reg-expander-properties}}(\alpha,\gamma,\mu)$ and choose
  $C > 0$ sufficiently large, in particular such that $C \geq \max\{2C^\star,
  C_{\ref{thm:blow-up-lemma}}(\alpha,\gamma,\eps'\mu),
  C_{\ref{thm:klr}}(C_t,\alpha,\eps'\mu),
  C_{\ref{lem:gnp-cycles-from-min-deg}}(\alpha/2,\delta_w/2)\}$.

  Assume that $\Gamma \sim \Gnp$ is such that $\delta(\Gamma) \geq
  (1-\alpha)np$, and it satisfies the conclusion of
  Theorem~\ref{thm:blow-up-lemma} applied with $\gamma$ (as $\eps)$ and
  $\eps'\mu$ (as $\mu$), Theorem~\ref{thm:klr} applied with $C_t$ (as $H$),
  Lemma~\ref{lem:gnp-cycles-from-min-deg} applied with $\alpha/2$ (as $\alpha)$
  and $\delta_w/2$ (as $\mu$), Theorem~\ref{thm:sparse-reg-lemma} applied with
  $1/2+2\alpha$ (as $d$), and Lemma~\ref{lem:gnp-all-reg-expander-properties}
  applied with $\gamma'$ (as $\gamma$). This happens with high probability.

  As $\delta(\Gamma) \geq (1-\alpha)np$, we have $\delta(G) \geq
  (1/2+2\alpha)np$ (we are cheating here for simplicity of notation a bit and
  assuming $(1/\chi(C_t)-3\alpha)$-resilience). Let $(V_i)_{i = 0}^{2\ell}$ be
  an $(\eps,p)$-regular partition obtained after applying the sparse regularity
  lemma (Theorem~\ref{thm:sparse-reg-lemma}) with $1/2+2\alpha$ (as $d$) to $G$,
  and let $R$ be its $(\eps, \alpha, p)$-reduced graph. As $\delta(R) \geq
  (1/2+\alpha-\eps)|R| \geq (1/2+\alpha/2)|R|$, there is a Hamilton cycle in
  $R$, which is without loss of generality given by vertices $1,\dotsc,2\ell$
  and let $e_i = \{2i-1,2i\}$, for $i \in [\ell]$. Let $\tilde n :=
  2(1-\gamma')|V_i|/t$. An important thing to keep in mind is that for any edge
  $ij \in E(R)$ and any choice of pairwise disjoint sets $S_1,\dotsc,S_{t/2}
  \subseteq V_i$ and $T_1,\dotsc,T_{t/2} \subseteq V_j$, with $|S_i|, |T_i| \geq
  \eps'\tilde n$, as these sets inherit regularity by
  Lemma~\ref{lem:slicing-lemma}, we can apply Theorem~\ref{thm:klr} to $G[S_1
  \cup T_1 \cup S_2 \cup \dotsb \cup S_{t/2} \cup T_{t/2}]$ and find a canonical
  copy of $C_t$ in it. We use this observation several times throughout the
  proof without explicitly mentioning which sets we use.

  Let $S_1 \cup \dotsb \cup S_{t/2} = V_{2i-1}$ and $T_1 \cup \dotsb \cup
  T_{t/2} = V_{2i}$ be equipartitions such that every $(S_i, T_j)$ is
  $(t\eps/\alpha, \alpha q)$-regular with density \emph{precisely} $\alpha q$,
  where $q = C^\star n^{-(k-1)/k}$. (This is a standard way of controlling the
  density between regular pairs; see, e.g.~\cite[Lemma~4.3]{gerke2005sparse}, or
  simply think of taking a random subset of edges.) As $t\eps/\alpha \leq
  \eps_{\ref{lem:gnp-all-reg-expander-properties}}(\alpha, \gamma')$, we can
  apply Lemma~\ref{lem:gnp-all-reg-expander-properties} with $\gamma'$ (as
  $\gamma$) to these sets to conclude that there exist sets $V_i^1, V_i^3,
  \dotsc, V_i^{t-1} \subseteq V_{2i-1}$ and $V_i^2, V_i^4, \dotsc, V_i^t
  \subseteq V_{2i}$ such that $G[V_i^1 \cup \dotsb \cup V_i^t]$ belongs to the
  class $\cGEk(C_t, \tilde n, \gamma', \alpha q)$. Let $V_0' := V(G) \setminus
  \bigcup_{i \in [\ell], j \in [t]} V_i^j$, and note that $V_0 \subseteq V_0'$
  and $|V_0'| \leq 2\gamma' n$. The first mini-goal is to find a collection of
  disjoint $t$-cycles covering all vertices of $V_0'$, without hurting the
  $(\gamma',k)$-typical property of vertices in $G[V_i^1 \cup \dotsb \cup
  V_i^t]$ drastically.

  Let $W_i^j \cup X_i^j \cup U_i^j = V_i^j$ be a partition of each $V_i^j$
  chosen uniformly at random such that
  \[
    |W_i^j| = \delta_w\tilde n, \qquad |X_i^j| = \delta_x\tilde n, \qquad
    \text{and} \qquad |U_i^j| = (1-\delta_w-\delta_x)\tilde n,
  \]
  all cardinalities divisible by $t$; in particular $|W_i^j| \ll |X_i^j| \ll
  |U_i^j|$. Let $W := \bigcup_{i \in [\ell], j \in [t]} W_i^j$ and note $|W|
  \geq (\delta_w/2)n$. By Lemma~\ref{lem:reg-exp-partitioning} applied with
  $\gamma''$ (as $\gamma$) and $\delta_w$ (as $\delta$), w.h.p.\ for every $i
  \in [\ell]$, $j \in [t]$ we have:
  \stepcounter{propcnt}
  \begin{enumerate}[label=\normalfont{(\Alph{propcnt}{\arabic*})},leftmargin=3em]
    \item\label{main-exp-inh} every $v \in V_i^j$ is $(\gamma'',k)$-expanding
      (with $\alpha q$ as $p$) with respect to $U_i^{j+1} \cup \dotsb \cup
      U_i^{j+k}$ and $U_i^{j-1} \cup \dotsb \cup U_i^{j-k}$.
  \end{enumerate}
  Observe that every $v \in V(G)$ has either $\deg_G(v, V(G) \setminus V_0')
  \geq (1/2+\alpha)np$ or $\deg(v, V_0') \geq \alpha np \geq (1/2+\alpha)|W \cup
  V_0'|p$. Hence, as a consequence of Chernoff's inequality and the union bound,
  w.h.p.\
  \[
    \delta(G[W \cup V_0']) \geq (1-o(1))(1/2+\alpha)|W|p \geq (1/2+\alpha/2)|W
    \cup V_0'|p,
  \]
  where the last inequality follows from $|V_0'| \leq 2\gamma' n$ and $\gamma'$
  being small enough with respect to $\delta_w$ and $\alpha$. We fix a partition
  $W_i^j \cup X_i^j \cup U_i^j$ of each $V_i^j$ satisfying all of the above.
  This puts us in the setting of Lemma~\ref{lem:gnp-cycles-from-min-deg} which
  is applied with $\alpha/2$ (as $\alpha$), $\delta_w/2$ (as $\mu$), $W$ (as
  $U$), $V_0'$ (as $X$), and we conclude that there is a collection of disjoint
  $t$-cycles covering all vertices of $V_0'$ in $G$ and some vertices of $W$.
  This can be done as $|V_0'| \leq 2\gamma'n \leq \delta'|W|$ by our choice of
  constants.

  Let $\tilde X_i^j$ be sets obtained by pushing the unused vertices for the
  previously found collection from each $W_i^j$ into $X_i^j$. At this point we
  would ideally use our blow-up lemma (Theorem~\ref{thm:blow-up-lemma}) for
  every $G[(\tilde X_i^1 \cup U_i^1) \cup \dotsb \cup (\tilde X_i^t \cup
  U_i^t)]$ to cover all the remaining vertices, however, the sets $\tilde X_i^1,
  \dotsc, \tilde X_i^t$ are not necessarily balanced any more, i.e.\ we only
  know that $\big||\tilde X_i^{j_1}|-|\tilde X_i^{j_2}|\big| \leq \delta_w\tilde
  n$, for all $1 \leq j_1 < j_2 \leq t$. The remainder of the proof consists of
  balancing these sets and then applying the blow-up lemma. It is convenient to
  do so when the cardinality of $\tilde X_i^j \cup U_i^j$ is divisible by $t$ so
  we first make sure this is the case.

  The idea is to find a set $Q$ so that $G[Q]$ contains a $C_t$-factor and the
  cardinality of each $\tilde X_i^j \setminus Q$ is divisible by $t$. We do so
  iteratively, for every $i = 1, \dotsc, \ell$, by adding some collection of
  $t$-cycles to the set $Q$ in every step of the way. Recall, $\tilde X_i^{j_1}
  \subseteq V_{2i-1}$ for odd $j_1 \in [t]$ and $\tilde X_i^{j_2} \subseteq
  V_{2i}$ for even $j_2 \in [t]$. If for all $j \in [t]$ the cardinality of
  $X_i^j$ is divisible by $t$, continue to the next index $i$. Suppose $|\tilde
  X_i^1| \bmod t = x$, for some $1 \leq x \leq t-1$. We apply
  Theorem~\ref{thm:klr} to $G[\tilde X_i^1 \cup \tilde X_i^2 \cup \tilde
  X_{i+1}^3 \cup \tilde X_i^4 \cup \tilde X_{i+1}^5 \cup \dotsb \cup \tilde
  X_i^t]$ to find $x$ canonical copies of $C_t$ and then to $G[\tilde X_{i+1}^1
  \cup \tilde X_i^2 \cup \tilde X_{i+1}^3 \cup \dotsb \cup \tilde X_i^t]$ to
  find $t - x$ canonical copies of $C_t$, whose vertices we all add to $Q$. In
  particular, these cycles are such that
  \[
    |\tilde X_i^1 \cap Q| = x, \quad |\tilde X_{i+1}^1 \cap Q| = t-x, \quad
    \text{and} \quad |\tilde X_i^{j_2} \cap Q| = |\tilde X_{i+1}^{j_1} \cap Q| =
    t,
  \]
  for all odd $j_1 \in [t] \setminus \{1\}$ and even $j_2 \in [t]$. This can be
  done as $|\tilde X_i^j| \geq \delta_x\tilde n \geq 2\eps'\tilde n$. We can
  repeat this in a similar fashion for all $\tilde X_i^j$, $j \in [t]$, which
  ensures the number of remaining vertices in $\tilde X_i^j$ are each divisible
  by $t$. While `sliding' the divisibility issue across the sets $X_1^\star,
  \dotsc, X_\ell^\star$, analogously as above, we construct a set $Q$ of
  constant size (at most $t^3\ell$), such that $G[Q]$ has a $C_t$-factor, and
  \[
    |\tilde X_\ell^{j_1} \setminus Q| \bmod t = y_{j_1} \quad \text{and} \quad
    |\tilde X_\ell^{j_2} \setminus Q| \bmod t = y_{j_2},
  \]
  for odd $j_1 \in [t]$ and even $j_2 \in [t]$. Let $y_1 = \sum_{} y_{j_1}$ and
  $y_2 = \sum_{} y_{j_2}$ and note that $(y_1 + y_2) \bmod t = 0$. Assume
  without loss of generality that $0 \leq y_1 \leq y_2 \leq t$. Otherwise, we
  can just apply Theorem~\ref{thm:klr} to subsets of $\tilde X_\ell^1, \tilde
  X_\ell^2, \dotsc, \tilde X_\ell^{t-1}, \tilde X_\ell^t$ to find several copies
  of $C_t$ until this is the case. Let now $z \in [\ell-1]$ be an index so that
  either $\{2\ell-1, 2\ell, 2z-1\}$ or $\{2\ell-1, 2\ell, 2z\}$ is a triangle in
  $R$; this index exists as $\delta(R) \geq (1/2+\alpha/2)|R|$ and assume
  $\{2\ell-1, 2\ell, 2z-1\}$ is this triangle. Thus,
  $V_{2\ell-1},V_{2\ell},V_{2z-1}$ are pairwise $(\eps,p)$-regular with density
  at least $\alpha p$, and we can use a similar trick as above, this time with
  sets $\tilde X_\ell^1, \dotsc, \tilde X_\ell^t, \tilde X_z^1$, to find a set
  $Q'$ disjoint from $Q$ of constant size so that $G[Q']$ has a $C_t$-factor. In
  particular, these cycles are such that
  \[
    |V_{2\ell-1} \cap Q'| = y_1, \quad |V_{2\ell} \cap Q'| = y_2, \quad
    \text{and} \quad |\tilde X_z^1 \cap Q'| = 2t - (y_1 + y_2).
  \]
  More importantly, the cardinalities of sets $\tilde X_\ell^{j_1} \setminus (Q
  \cup Q')$ and $\tilde X_\ell^{j_2} \setminus (Q \cup Q')$ are all divisible by
  $t$. As this whole procedure removes only a constant number of vertices from
  each $\tilde X_i^j$, we may as well assume that every $\tilde X_i^j$ is such
  that $|\tilde X_i^j| \bmod t = 0$ to begin with.

  We proceed with the balancing procedure. Let $\phi$ be a function $\phi
  \colon [2\ell] \to [\ell]$ such that
  \begin{enumerate}[label=(\emph{\roman*}), ref=(\emph{\roman*})]
    \item\label{phi-reg} $\{i\} \cup e_{\phi(i)}$ is a triangle in $R$, for all
      $i \in [2\ell]$,
    \item\label{phi-reverse} $|\phi^{-1}(z)| \leq 2/\alpha$ for all $z \in
      [\ell]$.
  \end{enumerate}
  Clearly as $\delta(R) \geq (1/2+\alpha/2)|R|$, fulfilling \ref{phi-reg} is
  trivial. For \ref{phi-reverse}, let $\hbar_i$ be the number of indices $z \in
  [\ell]$ for which \ref{phi-reg} holds for a fixed $i$ by setting $\phi(i) :=
  z$. Then again by the minimum degree of $R$ we have $2\hbar_i + (\ell-\hbar_i)
  \geq (1+\alpha)\ell$, giving $\hbar_i \geq \alpha\ell$. Thus, there exists an
  assignment satisfying \ref{phi-reg} so that every index in $[\ell]$ is chosen
  at most $2\ell/(\alpha\ell) = 2/\alpha$ times.

  The goal is to construct a set $Q''$ such that $G[Q'']$ has a $C_t$-factor and
  $|\tilde X_i^1 \setminus Q''| = \dotsb = |\tilde X_i^t \setminus Q''|$ for all
  $i \in [\ell]$. We do so iteratively (greedily), at the beginning having $Q''$
  as an empty set. We let (with slight abuse of notation perhaps) $\tilde X_i^j
  := \tilde X_i^j \setminus Q''$ throughout the process. The edge $e_i$ in $R$
  is said to be \emph{balanced} if the underlying sets $\tilde X_i^j$ are of
  equal size. Assume we have so far balanced all the edges $e_1, \dotsc,
  e_{i-1}$, and let us balance the edge $e_i$. Without loss of generality,
  $|\tilde X_i^1| \geq \dotsb \geq |\tilde X_i^t|$ and $|\tilde X_i^1| - |\tilde
  X_i^t| = \delta\tilde n$, for some $1 \leq \delta \leq \delta_w$. As $|\tilde
  X_i^1|$ and $|\tilde X_i^t|$ are divisible by $t$, it follows that
  $\delta\tilde n \bmod t = 0$. Let $\phi(2i-1) = z$, so by \ref{phi-reg} we
  have that $V_{2i-1}, V_{2z-1}, V_{2z}$ are pairwise $(\eps, p)$-regular with
  density at least $\alpha p$ in $G$. Importantly, as we establish later,
  $\tilde X_i^j \setminus Q''$ are throughout the rebalancing process of size at
  least $\delta_x\tilde n/2 \geq \eps'\tilde n$ so that Theorem~\ref{thm:klr}
  can be applied.

  That being said, we apply Theorem~\ref{thm:klr} to $G[(\tilde X_i^1 \setminus
  Q'') \cup (\tilde X_z^2 \setminus Q'') \cup \dotsb \cup (\tilde X_z^t
  \setminus Q'')]$, to find $\delta\tilde n/t$ canonical copies of $C_t$ whose
  vertices we add to $Q''$. Repeat this $t-1$ more times, where the $j$-th time
  the set $\tilde X_z^j$ is the one left out, and then once more where $\tilde
  X_i^1$ is the one left out. This moves exactly $\delta\tilde n$ vertices of
  $\tilde X_i^1$ to $Q''$ and while using some vertices of $\tilde X_z^1,
  \dotsc, \tilde X_z^t$, their number is exactly the same and divisible by
  $t$---namely it is $(t-1)\delta\tilde n/t + \delta\tilde n/t$ in each. By
  proceeding in the same way with $\tilde X_i^2, \dotsc, \tilde X_i^{t-1}$ we
  balance the edge $e_i$.

  We now give the promised bound on the size of the set $Q''$ throughout the
  process. For every $i \in [\ell]$ we add at most $\delta_w\tilde n$ new
  vertices to $Q''$ from $\tilde X_i^j$. Additionally, by~\ref{phi-reverse}, at
  most $(2/\alpha)t\delta_w\tilde n$ vertices from it are used for balancing
  other edges. Hence, $|\tilde X_i^j \setminus Q''| \geq \delta_x\tilde n -
  (2t/\alpha + 1)\delta_w\tilde n \geq \delta_x\tilde n/2$ as promised, by our
  choice of constants.

  Finally, let $\tilde V_i^j$ denote the set of vertices obtained by adding the
  remaining vertices of each $\tilde X_i^j$ back into $U_i^j$. Write $G_i :=
  G[U_i^1 \cup \dotsb \cup U_i^t]$ and $\tilde G_i := G[\tilde V_i^1 \cup \dotsb
  \cup \tilde V_i^t]$. We claim that every $v \in V(\tilde G_i)$ is
  $(\gamma,k)$-typical in $\tilde G_i$. Using \ref{main-exp-inh} for every $v
  \in \tilde V_i^1$ and $j \in [k-1]$, we have
  \[
    |N_{\tilde G_i}^j(v)| \geq |N_{G_i}^j(v)| \osref{\ref{main-exp-inh}}\geq
    (1-\gamma'')\big((1-\delta_w-\delta_x)\tilde n\alpha q\big)^j \geq
    (1-\gamma)(\tilde n\alpha q)^j.
  \]
  Moreover, as $|N_{\tilde G_i}^{k-1}(v)| \geq |N_{G_i}^{k-1}(v)|/2$, it follows
  that $(N_{\tilde G_i}^{k-1}(v, \tilde V_i^k), \tilde V_i^{k+1})$ is $(\gamma,
  p)$-lower-regular. Lastly, as $G[V_i^1 \cup \dotsb \cup V_i^t] \in \cGEk(C_t,
  \tilde n, \gamma', \alpha q)$,
  \[
    \deg_{\tilde G_i}(v, \tilde V_i^{j+1}) \leq \deg_{G_i}(v, V_i^{j+1}) \leq
    (1+\gamma')\tilde n\alpha q \leq (1+\gamma)|\tilde V_i^{j+1}|\alpha q,
  \]
  for every $v \in \tilde V_i^j$ and $j \in [t]$. For every $i \in [\ell]$ let
  $s_i := |\tilde V_i^1| = \dotsb = |\tilde V_i^t|$. So, each $\tilde G_i$
  belongs to the class $\cGEk(C_t, s_i, \gamma, \alpha q)$ and we apply the
  blow-up lemma (Theorem~\ref{thm:blow-up-lemma}) with $\mu/2$ (as $\mu$) to
  find a $C_t$-factor in each $\tilde G_i$ and complete the proof.

  In order to make this whole thing work for an odd $t$, instead of a Hamilton
  cycle one would first find the square of a Hamilton cycle in $R$. Then,
  instead of working with edges $e_i$ throughout one would work with triangles.
  Lastly, the minimum degree of $R$ is then $(2/3+\alpha/2)|R|$, so for the
  balancing procedure one can use copies of $K_4$ each triangle belongs to. The
  rest of the proof remains basically identical.
\end{proof}

\paragraph*{Acknowledgements.} The author would like to thank Rajko Nenadov with
whom the topic of cycle factor resilience had been discussed a couple of years
ago. A big thanks goes to Kalina Petrova for carefully reading a prior version
of the manuscript, which greatly helped in improving the exposition.

{\small \bibliographystyle{abbrv} \bibliography{references}}

\appendix
\section{The missing technical proofs}
\label{sec:appendix}

Here we provide the missing proofs of Proposition~\ref{prop:absorber-properties}
and Proposition~\ref{prop:almost-all-expanders}.

\subsection{Proof of Proposition~\ref{prop:absorber-properties}}
  Property \ref{abs-two-factors} should be clear from construction and
  \ref{abs-blow-up} is trivial: starting from cycles of length $t$ containing
  $r_1,\dotsc,r_t$ greedily assign labels $1, \dotsc, t$ to vertices of every copy
  of $C_t$ in $\Fabs$ such that each $r_i$ receives a different label and every
  copy of $C_t$ has all labels represented. Then embed all the vertices with label
  $i$ into class $V_i$ of $\cG(C_t, v(\Fabs), 0, 1)$.

  We prove~\ref{abs-m2-density} in the remainder. For a graph $H$ with $e(H)
  \geq 2$, let $d_2(H) := (e(H)-1)/(v(H)-2)$; then $m_2(H) = \max_{H' \subseteq
  H} d_2(H')$. The proof for $t = 2k$ is almost trivial. By construction
  $\Fconn$ has girth at least $t$ and is planar. It is well known (and easy to
  prove using Euler's formula) that every planar graph $H$ with girth at least
  $t$ satisfies $e(H) \leq \frac{t}{t-2}(v(H)-2)$. Therefore,
  \[
    m_2(\Fconn) \leq \frac{\tfrac{t}{t-2}(v(H)-2) - 1}{v(H)-2} < \frac{t}{t-2} =
    \frac{k}{k-1},
  \]
  as desired.

  The proof for $t = 2k-1$ is much more cumbersome. We extensively and without
  referencing make use of the fact that for $a, b, c, d, q > 0$, $a/b \leq q$ and
  $c/d \leq q$ implies $(a+c)/(b+d) \leq q$. The following observation is very
  useful.

  \begin{claim}\label{cl:2-density-intersection}
    Two connected graphs $H_1$ and $H_2$ which intersect in a vertex and have no
    edges between them satisfy $m_2(H_1 \cup H_2) \leq \max\{m_2(H_1),
    m_2(H_2)\}$. \qed
  \end{claim}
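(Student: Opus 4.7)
The plan is to set $q := \max\{m_2(H_1), m_2(H_2)\}$ and to verify the stronger inequality $d_2(H') \leq q$ for every subgraph $H' \subseteq H_1 \cup H_2$ with $e(H') \geq 2$. The key ingredient making the case analysis close is that since each $H_i$ is connected with at least two edges, one has $e(H_i) \geq v(H_i) - 1$ and therefore $d_2(H_i) = (e(H_i)-1)/(v(H_i)-2) \geq 1$; consequently $q \geq 1$, which is precisely what the subsequent bookkeeping is calibrated against.

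Given such $H'$, I would first discard any isolated vertices, as removing them only increases $d_2(H')$. Writing $v$ for the vertex shared by $H_1$ and $H_2$, the hypothesis of no edges between $V(H_1) \setminus \{v\}$ and $V(H_2) \setminus \{v\}$ says that every edge of $H'$ lies entirely inside $V(H_1)$ or entirely inside $V(H_2)$. Letting $H_i'$ be the subgraph of $H'$ induced on $V(H') \cap V(H_i)$, we therefore have $e(H') = e(H_1') + e(H_2')$, and $v(H') = v(H_1') + v(H_2')$ or $v(H_1') + v(H_2') - 1$ according to whether $v \notin V(H')$ or $v \in V(H')$.

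If $e(H_i') = 0$ for some $i$, the no-isolated-vertex assumption forces $V(H_i') \subseteq \{v\}$, so $H' \subseteq H_{3-i}$ and the bound is immediate from $d_2(H') \leq m_2(H_{3-i}) \leq q$. Otherwise $e(H_1'), e(H_2') \geq 1$, and I claim the pointwise inequality $e(H_i') - 1 \leq q(v(H_i') - 2)$ holds for each $i$: when $e(H_i') \geq 2$ this is literally $m_2(H_i) \leq q$ applied to $H_i' \subseteq H_i$, while when $e(H_i') = 1$ the absence of isolated vertices forces $v(H_i') = 2$ and both sides vanish.

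Summing these two inequalities yields $e(H') - 2 \leq q\bigl(v(H_1') + v(H_2') - 4\bigr)$, and the final step is to promote the right-hand side to $q(v(H') - 2)$. When $v \in V(H')$ we have $v(H_1') + v(H_2') - 4 = v(H') - 3$ and the needed upgrade costs exactly an additive $+1$, which is absorbed using $q \geq 1$; when $v \notin V(H')$ we have $v(H_1') + v(H_2') - 4 = v(H') - 4$ and the required inequality reduces to $1 \leq 2q$, using only the weaker $q \geq 1/2$. Rearranging gives $d_2(H') \leq q$, and since $H'$ was arbitrary this proves $m_2(H_1 \cup H_2) \leq q$. The only mild obstacle I foresee is cleanly handling the $e(H_i') = 1$ boundary case, which arises because $m_2$ is only defined over subgraphs with at least two edges; everything else is straightforward bookkeeping with the vertex and edge counts.
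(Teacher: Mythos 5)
Your proof is correct. The paper does not actually supply an argument for this claim (it is asserted with a terminal $\qed$ and used freely afterwards), so there is no authorial proof to compare against; yours is a clean mediant-style verification and would serve well as the missing justification.

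Two small remarks for precision. First, when $e(H_i')=1$ you assert that the no-isolated-vertex reduction forces $v(H_i')=2$; this is not quite right, since the shared vertex $v$ may lie in $H'$ while having all its incident $H'$-edges on the other side, making it isolated inside $H_i'$ alone and giving $v(H_i')=3$. The inequality $e(H_i')-1\le q\,(v(H_i')-2)$ still holds (the left side is $0$ and the right side is $q\ge 0$), so nothing breaks, but ``both sides vanish'' is the wrong reason. Second, you invoke $q\ge 1$ via $d_2(H_i)\ge 1$, which requires each $H_i$ to have at least two edges; a connected $H_i$ with $0$ or $1$ edges has $m_2$ defined over an empty range. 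This degenerate case never arises in the paper's applications (the pieces being glued are cycles, ladders, and $C_t$-trees), but for a self-contained lemma you might dispatch it separately by noting that attaching a pendant vertex or a lone vertex to $H_{3-i}$ cannot increase $d_2$ of any subgraph.
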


  Let $F_i$ be the graphs obtained by removing the edges of the $t$-cycle
  $s_1,\dotsc,s_t$ from $\Fconn$. Since $m_2(C_t) = (t-1)/(t-2) \leq k/(k-1)$, by
  Claim~\ref{cl:2-density-intersection} it is sufficient to show that $m_2(F_i)
  \leq k/(k-1)$. We do this by iteratively applying the next claim.

  \begin{claim}\label{cl:recursive-density}
    Let $v_1,\dotsc,v_{t-1}, u$ be vertices and let $H_1,\dotsc,H_{t-1}$ be
    graphs with $v_i, u \in V(H_i)$ and which otherwise are pairwise disjoint.
    Suppose $e(F)/(v(F)-2) \leq k/(k-1)$ for every $F \subseteq H_i$ which
    contains $v_i, u$, and $m_2(H_i) \leq k/(k-1)$. Let $H$ be a graph obtained
    by adding a vertex $v$ to $\bigcup_i H_i$ and adding a copy of $C_t$ on $v,
    v_1, \dotsc, v_{t-1}$. Then $e(F)/(v(F)-2) \leq k/(k-1)$ for every $F
    \subseteq H$ which contains $v, u$ and $m_2(H) \leq k/(k-1)$.
  \end{claim}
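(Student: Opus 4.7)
The plan is to set up a per-$H_i$ slack-counting argument. For any subgraph $F \subseteq H$, write $F_i := F \cap H_i$ for its restriction to $H_i$ (inheriting vertices $V(F) \cap V(H_i)$ and edges $E(F) \cap E(H_i)$) and let $E_c(F)$ denote the set of edges of the new $t$-cycle $v, v_1, \dotsc, v_{t-1}$ contained in $F$. I would prove the two assertions in turn.

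\emph{First assertion.} Assume $u, v \in V(F)$, so $u \in V(F_i)$ for every $i$. Then $v(F) - 2 = \sum_i (v(F_i) - 1)$ and $e(F) = |E_c(F)| + \sum_i e(F_i)$, so the bound $e(F) \leq \frac{k}{k-1}(v(F)-2)$ reduces to $\sum_i s_i \geq |E_c(F)|$, where $s_i := \frac{k}{k-1}(v(F_i)-1) - e(F_i)$. I would estimate $s_i$ by cases: when $v_i \in V(F_i)$, the hypothesis applied to $F_i$ yields $s_i \geq \frac{k}{k-1}$; when $v_i \notin V(F_i)$ and $v(F_i) \geq 2$, either $e(F_i) \leq 1$ holds trivially or $m_2(H_i) \leq k/(k-1)$ gives $e(F_i) \leq \frac{k}{k-1}(v(F_i)-2) + 1$, both leading to $s_i \geq \frac{1}{k-1}$; when $v(F_i) = 1$ (only $u$), $s_i = 0$. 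Setting $a := |\{i : v_i \in V(F_i)\}|$, this yields $\sum_i s_i \geq \frac{ak}{k-1}$. On the other hand, the $t$-cycle restricted to $\{v\} \cup \{v_i : i \in I\}$ has at most $a$ edges when $a < t-1$ (a disjoint union of paths on $a+1$ vertices), and at most $t = 2k-1$ edges when $a = t - 1 = 2k-2$. In the first regime $\frac{ak}{k-1} \geq a \geq |E_c(F)|$, and in the second $\frac{(2k-2)k}{k-1} = 2k \geq 2k-1 \geq |E_c(F)|$, so the first assertion follows.

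\emph{Second assertion.} For $m_2(H) \leq k/(k-1)$, take any $F' \subseteq H$ with $e(F') \geq 2$ and split into three cases. If $v \notin V(F')$, then $F' \subseteq \bigcup_i H_i$, and iterating Claim~\ref{cl:2-density-intersection} through the shared vertex $u$ yields $m_2(\bigcup_i H_i) \leq \max_i m_2(H_i) \leq k/(k-1)$. If both $u$ and $v$ lie in $V(F')$, the first assertion gives the even stronger bound $e(F') \leq \frac{k}{k-1}(v(F')-2)$. Finally, when $v \in V(F')$ but $u \notin V(F')$, I would rerun the slack computation on $\tilde F := F' \cup \{u\}$ (adjoining $u$ as an isolated vertex). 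Every nonempty $F_i' := F' \cap H_i$ now contributes slack at least $\frac{k}{k-1}$ (the sub-case $v(F_i') = 1$ gives exactly $\frac{k}{k-1} v(F_i') = \frac{k}{k-1}$, while the other sub-cases give $\geq \frac{k+1}{k-1}$). Since $e(F') \geq 2$ forces at least one non-empty $F_i'$, a direct check analogous to the boundary-case verification above yields $\sum_i s_i' \geq |E_c(F')| + \frac{1}{k-1}$, which is equivalent to $e(F') \leq \frac{k}{k-1}(v(F')-2) + 1$, as required.

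The main obstacle is the tightness at the boundary case $a = t - 1$, where $|E_c(F)|$ jumps from $a$ to $t = a+1$; covering this slot demands the stronger per-$H_i$ slack of $\frac{k}{k-1}$ rather than merely $\frac{1}{k-1}$, which is precisely what the sharp hypothesis $e(F)/(v(F)-2) \leq k/(k-1)$ for $F \ni v_i, u$ provides beyond the plain $m_2(H_i) \leq k/(k-1)$, and similarly explains why the stronger first assertion is inductively maintained through the recursive construction.
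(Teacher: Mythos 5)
Your argument follows the same per-$H_i$ slack-counting approach that the paper uses, but you are noticeably more careful with the boundary cases the paper elides. The paper simply writes $e_i \leq \frac{k}{k-1}(v_i-2)$ and substitutes; this bound requires $F_i$ to actually contain both $v_i$ and $u$, which is not guaranteed (and fails, for instance, when $F_i = \{u\}$ alone). Your observation that a missing $v_i$ costs you both the hypothesis bound (forcing you back to the weaker $m_2$ bound, slack only $\tfrac{1}{k-1}$) and at least one edge of the new $C_t$ (so $|E_c(F)|$ drops in lockstep) is exactly the compensation needed, and your boundary analysis at $a = t-1$ correctly identifies the one place where the sharper hypothesis is essential. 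The paper likewise dismisses the $v\in F,\, u\notin F$ case with a single word ("similarly"), which you instead work out via the $\tilde F := F' \cup \{u\}$ device.

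There is, however, a small error in the second assertion. Your claim that \emph{every} nonempty $F_i'$ contributes slack at least $\tfrac{k}{k-1}$ (with $\geq \tfrac{k+1}{k-1}$ outside $v(F_i')=1$) is not right in general. When $v_i \notin V(F_i')$ and $e(F_i') \geq 2$, only the $m_2(H_i) \leq \tfrac{k}{k-1}$ hypothesis is available, and the resulting bound is $s_i' \geq \tfrac{1}{k-1}$, not $\tfrac{k}{k-1}$; and even when $v_i \in V(F_i')$ and $v(F_i') \geq 3$, the hypothesis can be tight (e.g.\ $k=4$, $F_i'$ a $4$-cycle through $v_i$ gives $s_i' = \tfrac{k}{k-1}$ exactly), so $\geq \tfrac{k+1}{k-1}$ is also overclaimed. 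The fix is cheap: the slack from the $a$ indices with $v_i \in V(F_i')$ alone already gives $\sum s_i' \geq a\tfrac{k}{k-1} \geq a + \tfrac{1}{k-1}$ once $a \geq 1$ (and $\geq 2k \geq t + \tfrac{1}{k-1}$ when $a = t-1$, using $k\geq 2$), while the $a = 0$ case needs only one nonempty $F_i'$ contributing $\geq \tfrac{1}{k-1}$ against $|E_c(F')| = 0$. So your conclusion stands, but the intermediate bound as stated should be weakened to $\tfrac{1}{k-1}$ for the $v_i \notin F_i'$ terms.
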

  \begin{proof}
    Consider $F \subseteq H$ which contains $v, u$ and let $F_i := F \cap H_i$,
    $e_i := e(F_i)$, and $v_i := v(F_i)$. Then
    \[
      \frac{e(F)}{v(F)-2} \leq \frac{\sum_{i=1}^{t-1}e_i + t}{\sum_{i=1}^{t-1}v_i
      - (t-2) + 1 - 2} = \frac{\sum_{i=1}^{t-1}e_i + t}{\sum_{i=1}^{t-1}(v_i-1)}.
    \]
    Using the assumption $e_i \leq \frac{k}{k-1} (v_i-2)$, the above can further
    be bounded by
    \[
      \frac{\frac{k}{k-1}\sum_{i=1}^{t-1}(v_i-2) + t}{\sum_{i=1}^{t-1}(v_i-1)} =
      \frac{\frac{k}{k-1}\sum_{i=1}^{t-1}(v_i-1) - \frac{k}{k-1}(t-1) +
      t}{\sum_{i=1}^{t-1}(v_i-1)}.
    \]
    The conclusion then follows as $k(t-1) \geq t(k-1)$.

    For the second part, if $F \subseteq H$ contains both $v, u$ then $d_2(F) \leq
    k/(k-1)$ by the above. Similarly, if $F$ contains at most one of $v, u$ then
    $d_2(F) \leq k/(k-1)$. Lastly, if $F$ contains neither $v$ nor $u$, then
    $d_2(F) \leq k/(k-1)$ follows from $m_2(H_i) \leq k/(k-1)$ and
    Claim~\ref{cl:2-density-intersection}.
  \end{proof}

  For a definition of ladders and $\Fconn$ we refer the reader to
  Section~\ref{sec:absorbing-method} and in particular Figure~\ref{fig:ladder}
  and Figure~\ref{fig:C3-switcher}. Let $\mathrm{CL}_k$ stand for a graph
  consisting of two $t$-cycles which are $k$-ladder-connected, with $x$ and $y$
  denoting the vertices $v_1$ and $u_1$ and let $\mathrm{L}_k$ stand for a
  $(k-1,k)$-ladder of length $2k-1$ (just `ladder' in what is to come), with $a$
  and $b$ denoting the vertices $w_{1,1}$ and $w_{2k-1,1}$. Let
  $\mathrm{CL}_k^+$ be a graph obtained by starting from two cycles of length
  $t$ on vertices $\{v, x_1,\dotsc,x_{t-1}\}$ and $\{u, y_1,\dotsc,y_{t-1}\}$,
  and adding disjoint copies of $\mathrm{CL}_k$ between each pair $x_i, y_i$.
  For a better visual representation, see Figure~\ref{fig:density}.

  \begin{figure}[!htbp]
    \centering
    \includegraphics[scale=0.7]{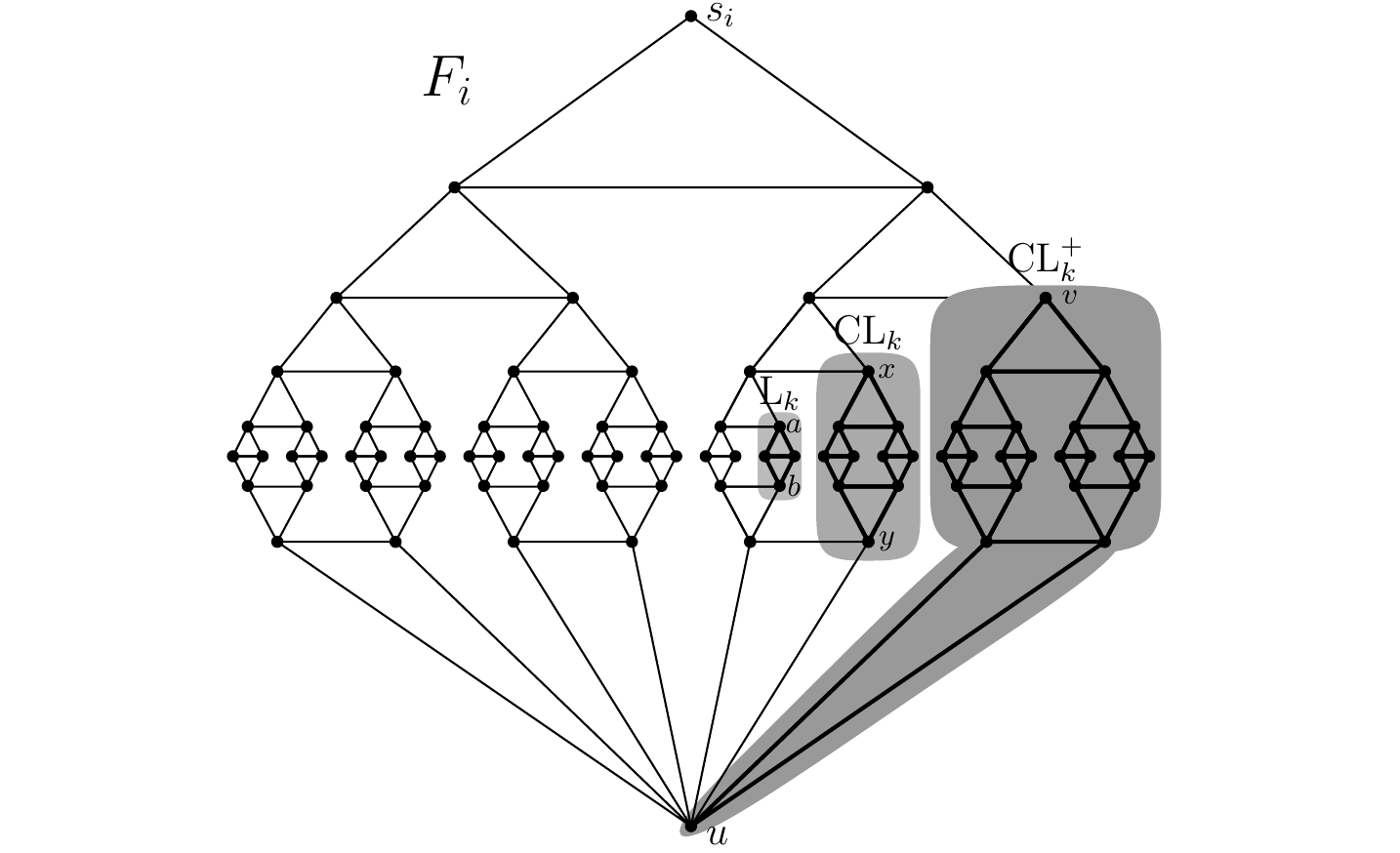}
    \caption{An example of $F_i$ and its subgraphs defined above for $k = 2$ and
    $t = 3$}
    \label{fig:density}
  \end{figure}

  Crucially, observe that $F_i$ can be obtained by an iterative procedure: set
  $H := \mathrm{CL}_k^+$ and take $t-1$ copies of $H$ which share the vertex $u$
  and are otherwise disjoint, let $v_i$ stand for the vertex $v$ of the $i$-th
  copy of $H$, and add a vertex $v_t$; add a $t$-cycle on $v_1,\dotsc,v_{t-1},
  v_t$; redeclare the newly obtained graph to be $H$, set $v := v_t$, and
  continue the process $k$ times until $H = F_i$, i.e.\ until $v = s_i$ is
  `reached'. Therefore, by Claim~\ref{cl:recursive-density} in order to complete
  the proof we need to show that $e(F)/(v(F)-2) \leq k/(k-1)$ for every $F
  \subseteq \mathrm{CL}_k^+$ which contains $v, u$, and $m_2(\mathrm{CL}_k^+)
  \leq k/(k-1)$. We work our way from the ground up.

  \begin{claim}\label{cl:rooted-extended-ladder}
    Let $H$ be a graph obtained by removing one of the ladders from
    $\mathrm{CL}_k$. Then $\frac{e(F)+1}{v(F)-2} \leq \frac{k}{k-1}$ for every $F
    \subseteq H$ with $x, y \in V(F)$.
  \end{claim}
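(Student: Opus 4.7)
The plan is to recast the target inequality as the potential bound
\[
  \psi(F) := k \cdot v(F) - (k-1) e(F) \geq 3k - 1,
\]
which is equivalent to $(e(F)+1)/(v(F)-2) \leq k/(k-1)$, and verify it by tracking $\psi$ as subgraphs of $H$ are built up cell by cell. Throughout I restrict attention to connected $F \subseteq H$ containing an $x$-$y$ path, which is the case relevant to how \ref{cl:rooted-extended-ladder} feeds into the outer induction via \ref{cl:recursive-density}.

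First I would verify $\psi(H) = 3k - 1$ exactly. The graph $H$ decomposes as a chain of $2k$ cycles of length $t = 2k - 1$: the cycles $C_v$, $C_u$ and the $2k - 2$ cells of the remaining ladder $L'$. Consecutive cycles share a path (a row of $L'$) whose length alternates between $k - 2$ and $k - 1$ depending on row parity. Adding a $t$-cycle to the existing structure along a shared path of length $p$ changes the potential by $\Delta \psi = t - p - k \in \{0, 1\}$; summing these starting from $\psi(C_v) = t$ yields $\psi(H) = (2k - 1) + k = 3k - 1$ exactly (the $+k$ comes from exactly $k$ of the $2k-1$ insertions contributing $+1$, corresponding to shares along odd-indexed rows of $L'$).

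For connected $F$ containing an $x$-$y$ path I rewrite the target via the cycle rank $\beta(F) = e(F) - v(F) + 1$ as $v(F) \geq 2k + (k-1)\beta(F)$. The base case $\beta(F) = 0$ is immediate: any spanning tree of $F$ through $x, y$ must contain the shortest $x$-$y$ path in $H$, which has length $2k$ (one edge of $C_v$, the full right rail of $L'$, and one edge of $C_u$), so $v(F) \geq 2k + 1$. For $\beta(F) \geq 1$ I proceed by induction along a specific ear decomposition: start from a shortest $x$-$y$ path; close $C_v$ (contributing $\Delta v = 2k - 3$ new vertices); then close each ladder cell in chain order (each contributing $0$, $k - 2$, or $k - 1$ new vertices depending on which rows are already spanned); and finally close $C_u$.

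\textbf{The main obstacle} is the cumulative vertex accounting: individual ears may add as few as $0$ new vertices (for example when closing the last ladder cell after both $C_u$ and the penultimate cell are already present), so naive per-ear bounds of the form ``$\Delta v \geq k-1$'' fail. Instead I track a running slack $v(F_i) - 2k - (k-1)\beta(F_i)$, which starts at $1$ after the initial path, jumps up by $k - 2$ when $C_v$ is closed, and thereafter moves by $\Delta v - (k-1)$ at each ear insertion. A careful case analysis of the slack trajectory, organised by the parity of the row shared between a newly closed cell and the current $F_i$, confirms that the excess from $C_v$ (and symmetrically $C_u$) offsets the later per-cell deficits, that the slack stays non-negative throughout, and that it vanishes exactly at $F = H$. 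This establishes the inequality at every intermediate stage, and hence for every connected $F \subseteq H$ containing an $x$-$y$ path, by applying it at the stage corresponding to the maximal subchain of cells present in $F$.
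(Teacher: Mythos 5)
Your reformulation—$\psi(F) := k\,v(F) - (k-1)\,e(F) \geq 3k-1$, equivalently $v(F) \geq 2k + (k-1)\beta(F)$ for connected $F$—is exactly the reformulation the paper uses (there written as $v \geq c(k-1)+2k$ with $c = e - v + 1$), and your base case $\beta(F) = 0$, the verification $\psi(H) = 3k-1$, and the per-ear bookkeeping $\Delta\psi = t - p - k$ are all correct. Where you diverge is the inductive step: the paper bounds $v(F)$ directly (a shortest $xy$-path plus a parity-dependent count of vertices needed to close $c$ cycles), whereas you propose tracking slack along a fixed ear decomposition of $H$.

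The gap is in the final reduction. Tracking slack along one fixed chain $F_0 \subset F_1 \subset \dotsb \subset H$ only establishes $\psi(F_i) \geq 3k-1$ for those particular stages, and a general connected $F \subseteq H$ with $x,y \in V(F)$ is not one of your stages: its bounded faces can be non-contiguous unions of cells, cells can be only partially present, and a stage $F_i$ with $\beta(F_i) = \beta(F)$ has different $v$ and $e$ than $F$. The sentence "hence for every connected $F$ \dots by applying it at the stage corresponding to the maximal subchain of cells present in $F$" therefore does not follow; it would require a separate argument that, for each $c$, the minimum of $v(F)$ over admissible $F$ with $\beta(F)=c$ is attained at one of your stages—and the very obstacle you flag (ears contributing $0$ new vertices in alternative orderings) is exactly what such a reduction must control. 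In addition, the "careful case analysis of the slack trajectory" is promised rather than carried out, so even the prefix case is not fully settled. Until the reduction from arbitrary $F$ to your stages is supplied and the case analysis written, this is a plan rather than a proof.
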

  \begin{proof}
    Let $e := e(F)$, $v := v(F)$, and let $c$ denote the number of induced cycles
    in $F$. It is not too difficult to see that $e = v + c - 1$. In order to show
    $(e+1)/(v-2) \leq k/(k-1)$ it is thus sufficient to establish $v \geq c(k-1) +
    2k$. If $c = 0$, $F$ is a tree and trivially $v/(v-2) \leq k/(k-1)$ as $v
    \geq 2k+1$. If $c \geq 1$ is odd, then the number of vertices in $F$ is
    at least: $2k+1$ for an $xy$-path and $(c+1)/2 \cdot (t-2)$ to close $c$
    cycles. So,
    \[
      v \geq 2k+1 + \frac{c+1}{2}(t-2) = 2k + c(k-1) + k - \frac{1}{2} -
      \frac{c}{2}.
    \]
    On the other hand, if $c \geq 2$ is even, then the number of vertices in $F$
    is at least: $2k+1$ for an $xy$-path and $c/2 \cdot (t-2) + k - 2$ to close
    $c$ cycles. So,
    \[
      v \geq 2k+1 + \frac{c}{2}(t-2) + k-2 = 2k + c(k-1) + k - 1 - \frac{c}{2}.
    \]
    As $c \leq t-1 = 2(k-1)$ for even $c$ and $c \leq t-2 = 2(k-1)-1$ for odd $c$,
    the above in both cases gives $v \geq 2k+c(k-1)$ as desired.
  \end{proof}

  \begin{claim}\label{cl:rooted-ladder}
    $\frac{e(F)}{v(F)-2} \leq \frac{k}{k-1}$ for every $F \subseteq \mathrm{L}_k$
    with $a, b \in V(F)$.
  \end{claim}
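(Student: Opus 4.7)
The plan is to mimic the argument of Claim~\ref{cl:rooted-extended-ladder}, but without the ``$+1$'' slack, so a finer count is needed. I would first reduce to connected $F$: a disconnected $F$ either has a component disjoint from $\{a,b\}$ (which one can discard using a standard $m_2$-bound on subgraphs of $\mathrm{L}_k$) or splits $a$ and $b$ into different components (each of which is handled by an analogous but easier bound). For connected $F$, writing $c := e(F) - v(F) + 1$ for the cyclomatic number, the target inequality $e(F)/(v(F)-2) \leq k/(k-1)$ rearranges to $v(F) \geq (k-1) c + k + 1$. The case $c = 0$ is immediate, since any $ab$-path in $\mathrm{L}_k$ uses at least $2k-1$ vertices, and $2k-1 \geq k+1$ for $k \geq 2$.

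For $c \geq 1$, I would exploit the planar structure of $\mathrm{L}_k$: it is planar with exactly $2k - 2$ bounded faces, each a cycle of length $t = 2k-1$, and its girth is also $2k-1$. In the planar embedding of $F$ inherited from $\mathrm{L}_k$ every bounded face therefore has length at least $2k-1$. Double-counting edge-face incidences and substituting into Euler's formula gives
\[
  (2k-1)\, c \,+\, \ell_\infty(F) \;\leq\; 2\, e(F),
\]
where $\ell_\infty(F)$ is the length of the outer face of $F$. Eliminating $e(F)$ via $c = e(F) - v(F) + 1$ and simplifying reduces the target to the outer-face lower bound
\[
  (k-1)\,\ell_\infty(F) \;\geq\; v(F) + 2k^2 - 3k - 1.
\]
A direct check shows this becomes an equality at $F = \mathrm{L}_k$ with $k = 3$, which is a useful consistency test for the constants.

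The main obstacle will be verifying this outer-face lower bound for general $F$. I would handle it by a case analysis (or induction) on which of the $2k-2$ cells of $\mathrm{L}_k$ are fully contained in $F$: every cell that is missing from $F$ inflates the outer face of $F$ by roughly the length of that cell's boundary (minus the side edges already counted), while simultaneously removing a controlled number of vertices, so the outer face grows fast enough to dominate $v(F)$ at the required rate. When most cells are present, $F$ is close to $\mathrm{L}_k$ itself and the inequality follows from a direct calculation extending the extremal check. This ``dual path'' structural argument together with the tree-case base closes the proof.
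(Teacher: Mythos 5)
Your route is genuinely different from what the paper does. The paper declares the proof ``almost identical'' to that of Claim~\ref{cl:rooted-extended-ladder}, which sets $c := e(F)-v(F)+1$, rewrites the target as a lower bound $v(F) \ge (k-1)c + \text{const}$, and then \emph{counts vertices directly}: first an $ab$-path, then, case-split on the parity of $c$, the extra vertices needed to close $c$ cycles inside the ladder, ending with an a priori bound on $c$. You instead refine the planarity-plus-girth argument (which the paper reserves for the easier $t = 2k$ case of Proposition~\ref{prop:absorber-properties}, where $e \le \tfrac{t}{t-2}(v-2)$ already yields $k/(k-1)$). For odd $t = 2k-1$ that naive bound is too weak, and your idea to recover the loss by keeping the outer-face term in the face-length sum is sound: the algebra correctly reduces the claim to the inequality $(k-1)\ell_\infty(F) \ge v(F) + 2k^2 - 3k - 1$, and your $k=3$ equality check of the constants is right. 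This is a legitimate, structurally attractive alternative to the paper's counting.

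However, the proposal is incomplete exactly where the content is: the outer-face lower bound is asserted but not proven, and the heuristic offered for it is off. Deleting an \emph{interior} step of $\mathrm{L}_k$ merges two adjacent bounded faces and does not change $\ell_\infty$ at all; the gain there comes from a drop in $c$, not growth of $\ell_\infty$, so the intended case analysis must track both effects together rather than argue that ``the outer face grows''. Bridges and pendant subtrees also need care (they are traversed twice in the outer-face walk, inflating both $\ell_\infty$ and $v$), and when $a,b$ fall in different components the reduction actually needs the stronger per-component bound $e_i/(v_i-1) \le k/(k-1)$, not the one in the claim. Finally, extend your consistency check to $k=2$: there $\mathrm{L}_2$ is $K_4$ minus an edge, so $e/(v-2) = 5/2 > 2$, and the slack polynomial $2k^2-9k+9$ is negative, so both the stated claim and your outer-face inequality fail. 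That is a real issue worth flagging regardless of which method you adopt.
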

  \begin{proof}
    The proof is almost identical to that of the previous claim.
  \end{proof}

  \begin{claim}\label{cl:rooted-cycle-ladder}
    $\frac{e(F)}{v(F)-2} \leq \frac{k}{k-1}$ for every $F \subseteq \mathrm{CL}_k$
    with $x, y \in V(F)$.
  \end{claim}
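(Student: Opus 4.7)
Following the strategy of the preceding two claims, I would reduce to the case when $F$ is connected, in which case $e(F) = v(F) + c - 1$, where $c$ is the number of (independent) cycles of $F$. The desired inequality is then equivalent to $v(F) \geq (k-1)c + k + 1$.

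The natural move is to split $F$ according to the two ladders of $\mathrm{CL}_k$: writing the two ladders as $L_1, L_2$, I set $F_1 = F \cap (C_v \cup C_u \cup L_1)$ and $F_2 = F \cap L_2$; these two subgraphs share only cycle vertices in $\{v_{k+1}, \dotsc, v_t\} \cup \{u_{k+1}, \dotsc, u_t\}$ and share no edges except those already accounted for on rows $1$ and $2k-1$ of $L_2$. Since $x, y \in V(F_1)$, Claim~\ref{cl:rooted-extended-ladder} applied to $F_1$ supplies the strong bound $v(F_1) \geq (k-1)c(F_1) + 2k$. For $F_2 \subseteq L_2$, Claim~\ref{cl:rooted-ladder} applied componentwise (using a pair of attaching cycle vertices of each non-trivial component in place of the designated corners $a, b$) yields $v(F_2) \geq (k-1)c(F_2) + 2$ per non-trivial component. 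Let $s = |V(F_1) \cap V(F_2)|$ and $r$ be the number of non-trivial components of $F_2$; identifying the shared vertices creates at most $s - r$ new independent cycles, so $c(F) \leq c(F_1) + c(F_2) + (s - r)$. Combining the two bounds via $v(F) = v(F_1) + v(F_2) - s$ and doing elementary arithmetic yields $v(F) \geq (k-1)c(F) + k + 1$, as the slack $2k - (k+1) = k - 1$ produced by Claim~\ref{cl:rooted-extended-ladder} is exactly what is needed to absorb the merging correction. The trivial case $F_2 = \emptyset$ reduces directly to Claim~\ref{cl:rooted-extended-ladder}.

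The main obstacle will be the careful bookkeeping in the combination step, since the final bound is tight (equality is witnessed by $F = \mathrm{CL}_k$ itself, for which $v = 4(k-1)^2 + 4k - 2$, $e = 4k^2$, and $c = 4k - 1$) and leaves no room for sloppy constants. A secondary subtlety is that Claim~\ref{cl:rooted-ladder} is stated for subgraphs containing the specific corner vertices $a, b$ of $\mathrm{L}_k$, whereas a component of $F_2$ might attach to the cycles at other vertices; this is resolved by observing that any non-trivial component of $F_2$ is contained in the sub-ladder spanned between two of its attaching points on $C_v \cup C_u$, to which the claim applies verbatim after relabelling.
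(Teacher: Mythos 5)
Your decomposition is genuinely different from the paper's, and unfortunately the bookkeeping does not close. The paper splits $\mathrm{CL}_k$ into $H_1$ and $H_2$ where $H_i$ consists of the ladder $L_i$ together with $x$ and $y$ and the two connecting edges, so that $V(H_1) \cap V(H_2) = \{x,y\}$, $E(H_1) \cap E(H_2) = \varnothing$, and $\mathrm{CL}_k = H_1 + H_2 + a_1a_2 + b_1b_2$. With this choice, $e(F) \leq e(F_1) + e(F_2) + 2 = (e(F_1)+1) + (e(F_2)+1)$ and $v(F) - 2 = (v(F_1)-2) + (v(F_2)-2)$, so Claim~\ref{cl:rooted-extended-ladder} applied to each $F_i$ gives the conclusion by a one-line averaging argument, with no cycle-rank accounting whatsoever. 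Your decomposition $F_1 = F \cap (C_v \cup C_u \cup L_1)$, $F_2 = F \cap L_2$ makes the vertex overlap $s = |V(F_1) \cap V(F_2)|$ as large as $2(t-k)$, and this large overlap generates a merging correction that the available slack cannot absorb.

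Concretely, the identity is $c(F) = c(F_1) + c(F_2) + s - e_0 - \rho$ (with $e_0$ the number of shared edges and $\rho$ the number of components of $F_2$), so the target $v(F) \geq (k-1)c(F) + k + 1$ together with $v(F) = v(F_1) + v(F_2) - s$ and the (optimistic) bound $v(F_1) \geq (k-1)c(F_1) + 2k$ from Claim~\ref{cl:rooted-extended-ladder} forces the requirement
\[
  v(F_2) \;\geq\; (k-1)c(F_2) + (k-1)(s - e_0 - \rho) + s - (k-1).
\]
The best per-component bound one can extract from Claim~\ref{cl:rooted-ladder} is $v(F_2) \geq (k-1)c(F_2) + k + 1$ for a single connected $F_2$ (note: this is $+(k+1)$, not the $+2$ you write --- translating $e/(v-2) \leq k/(k-1)$ with $e = v + c - 1$ gives $v \geq (k-1)c + k + 1$). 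Plugging this in, one would need $2k \geq (k-1)(s - e_0 - \rho) + s$, which fails already for $k = 3$: take $F$ to be $C_v$ minus $v_4v_5$, $C_u$ minus $u_4u_5$, all of $L_1$, and a spanning tree of $L_2$ using only side edges and one rung. Then $c(F_2) = 0$, $\rho = r = 1$, $e_0 = 0$, $s = 4$, and the right-hand side is $2 \cdot 3 + 4 = 10 > 6 = 2k$. Your lower bound on $v(F)$ comes out to $14$ while the target is $18$; the true value is $25$, so the inequality to be proved holds, but your argument does not prove it.

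Two further issues. First, Claim~\ref{cl:rooted-extended-ladder} is stated and proved for $H = L_1 \cup \{x,y\}$, not for $C_v \cup C_u \cup L_1$; your $F_1$ contains the cycle vertices $v_{k+1}, \dotsc, v_t$ which lie outside $H$, so you cannot invoke that claim for $F_1$ without a separate argument (the paper's proof of Claim~\ref{cl:rooted-extended-ladder} explicitly bounds $c \leq t-1$, which fails for $C_v \cup C_u \cup L_1$). Second, the ``relabelling'' of Claim~\ref{cl:rooted-ladder} to a component attaching at an arbitrary pair of cycle vertices (possibly in the same row, or only one vertex) is not a verbatim application and would need justification. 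All of this is avoided by the paper's choice of a decomposition whose only shared vertices are the two roots $x$ and $y$.
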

  \begin{proof}
    Let $H_1$ be the graph containing $x, y$ obtained by removing one of the
    ladders from $\mathrm{CL}_k$, and $H_2$ defined similarly by removing the
    other. In particular, $V(H_1) \cap V(H_2) = \{x,y\}$, $E(H_1) \cap E(H_2) =
    \varnothing$, and $\mathrm{CL}_k = H_1 + H_2 + a_1a_2 + b_1b_2$. Consider some
    $F \subseteq \mathrm{CL}_k$ which contains $x, y$ and let $F_i := F \cap H_i$,
    $e_i := e(F_i)$, and $v_i := v(F_i)$. As $e(F) \leq e_1 + e_2 + 2$ and $v(F)-2
    \geq v_1 + v_2 - 4$, and by Claim~\ref{cl:rooted-extended-ladder}
    $(e_i+1)/(v_i-2)\leq k/(k-1)$ for every $F_i$ containing $x$ and $y$, the
    desired conclusion follows.
  \end{proof}

  \begin{claim}
    $\frac{e(F)}{v(F)-2} \leq \frac{k}{k-1}$ for every $F \subseteq
    \mathrm{CL}_k^+$ with $v, u \in V(F)$.
  \end{claim}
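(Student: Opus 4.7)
The plan is to decompose $F$ along the structural pieces of $\mathrm{CL}_k^+ = C_v \cup C_u \cup G_1 \cup \cdots \cup G_{t-1}$, where $C_v, C_u$ are the two outer $t$-cycles through $v$ and $u$ and each $G_i$ is the channel (a copy of $\mathrm{CL}_k$) between the roots $x_i \in V(C_v)$ and $y_i \in V(C_u)$. Set $F_v := F \cap C_v$, $F_u := F \cap C_u$, $F_i := F \cap G_i$, and introduce indicators $\alpha_i := \mathbb{1}[x_i \in V(F)]$, $\beta_i := \mathbb{1}[y_i \in V(F)]$; let $m_i$ count the internal vertices of $G_i$ present in $F$. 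Since the only possible vertex overlaps between these pieces are at $v$, $u$, and the roots present in $V(F)$, we have
\[
  e(F) = e(F_v) + e(F_u) + \sum_i e(F_i) \quad \text{and} \quad v(F) = 2 + \sum_i(\alpha_i + \beta_i + m_i).
\]

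The main tools are Claim~\ref{cl:rooted-cycle-ladder} (for channels with both roots, giving $e(F_i) \leq \frac{k}{k-1}m_i$), the elementary cycle bound $e(F_v) \leq v(F_v)$ with equality only when $F_v = C_v$ (similarly for $F_u$), and the critical inequality $(t-1)/(t-2) \leq k/(k-1)$, which holds since $t \in \{2k-1, 2k\}$ and $k \geq 2$. In the base case where every channel used by $F$ contains both of its roots, summing these bounds and collecting terms reduces the desired $(k-1)e(F) \leq k(v(F)-2)$ to the elementary arithmetic $2t(k-1) \leq 2k(t-1)$, equivalently $t \geq k$, which holds by our assumption on $t$.

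For a channel $F_i$ missing at least one root, the intersection $V(F_i) \cap V(F \setminus F_i)$ consists of at most a single vertex, so we peel it off: $e(F) = e(F\setminus F_i) + e(F_i)$ and $v(F)-2 = (v(F\setminus F_i)-2) + v(F_i) - (\alpha_i + \beta_i)$. Combining the bound $e(F_i) \leq \frac{k}{k-1}(v(F_i)-2)+1$---which comes from $m_2(\mathrm{CL}_k) \leq k/(k-1)$, established by the same decomposition strategy as in Claims~\ref{cl:rooted-extended-ladder}--\ref{cl:rooted-cycle-ladder} together with Claim~\ref{cl:2-density-intersection} to handle subgraphs missing one or both roots---with an inductive density bound on $F \setminus F_i$ (a subgraph of $\mathrm{CL}_k^+$ with one fewer active channel) closes the argument, because the ``$+1$'' slack is compensated by the fact that $\alpha_i + \beta_i \leq 1$ trims $v(F)-2$ by one less than in the both-rooted case. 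The main obstacle is the bookkeeping when several channels simultaneously fail to contain both roots; this is handled uniformly by the peeling-plus-induction scheme, with strict slack remaining at each step, reducing everything to the both-rooted base case already verified.
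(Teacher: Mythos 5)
Your decomposition is exactly the paper's: the paper writes $\frac{e(F)}{v(F)-2}\le\frac{\sum_ie_i+2t}{\sum_iv_i}$, applies Claim~\ref{cl:rooted-cycle-ladder} to each channel restriction $F_i$, and simplifies to the arithmetic $k(t-1)>t(k-1)$, which is your base case verbatim. Where you go beyond the paper is in noticing that Claim~\ref{cl:rooted-cycle-ladder} is only applicable to channels that contain both $x_i$ and $y_i$; the paper applies it to every $F_i$ without comment, and the resulting inequality $\sum_ie_i\le\frac{k}{k-1}\sum_i(v_i-2)$ is literally false as soon as some $F_i$ has $v(F_i)\le 1$ or misses a root. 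Your peeling-plus-induction is the right repair, and the arithmetic does close: deleting the internal channel-$i$ vertices costs at most $\alpha_i+\beta_i\le 1$ in the vertex balance, so the ``$+1$'' of slack from the $m_2$ bound is absorbed with room to spare.

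Two places where the details as written are not quite airtight. First, the claim that the base case ``reduces to the elementary arithmetic $2t(k-1)\le 2k(t-1)$'' is only correct when all $t-1$ channels are used with both roots; when some channels are absent or truncated you must invoke the sharper $e(F_v)\le v(F_v)-1$ (valid whenever $F_v\ne C_v$) and a short case analysis---you mention ``equality only when $F_v=C_v$'' but the argument needs this used, not just noted. Second, the bound $e(F_i)\le\frac{k}{k-1}(v(F_i)-2)+1$ via $m_2(\mathrm{CL}_k)\le\frac{k}{k-1}$ presupposes $v(F_i)\ge 3$; for $v(F_i)\in\{1,2\}$ (a single orphaned internal vertex, or a root plus one internal vertex) the stated bound is vacuous or false and those channels must be peeled directly, which is easy but not done explicitly. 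A minor structural point: $m_2(\mathrm{CL}_k)\le\frac{k}{k-1}$ is obtained from Claim~\ref{cl:rooted-cycle-ladder} together with Claim~\ref{cl:2-density-joint-ladders} and Claim~\ref{cl:2-density-intersection}, the latter two of which the paper proves after this claim, so your argument requires a reordering of the appendix (no circularity, just a logistical note).

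Overall the approach is the paper's approach plus a genuinely needed case distinction that the paper's terse proof elides; with the two corner cases tightened, your argument is correct and in fact more complete than the paper's written version.
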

  \begin{proof}
    Let $F_i$ denote the subgraph of $\mathrm{CL}_k$ between $x_i, y_i$ which
    belongs to $F$, and let $e_i := e(F_i)$ and $v_i := v(F_i)$. Then
    \[
      \frac{e(F)}{v(F)-2} \leq \frac{\sum_{i=1}^{t-1} e_i + 2t}{\sum_{i=1}^{t-1}
      v_i}.
    \]
    By Claim~\ref{cl:rooted-cycle-ladder} we have $\sum_{i=1}^{t-1} e_i \leq
    \frac{k}{k-1}\sum_{i=1}^{t-1}(v_i-2)$. Plugging this into the estimate above
    gives
    \[
      \frac{e(F)}{v(F)-2} \leq \frac{\frac{k}{k-1}\sum_{i=1}^{t-1} v_i -
      \frac{2k(t-1)}{k-1} + 2t}{\sum_{i=1}^{t-1}v_i} < \frac{k}{k-1},
    \]
    where the last inequality follows from $k(t-1) > t(k-1)$.
  \end{proof}

  Observe that this shows $d_2(F) \leq \frac{k}{k-1}$ for every $F \subseteq
  \mathrm{CL}_k^+$ which contains both $v, u$, and similarly which contains at
  least one of $v, u$. It remains to show $d_2(F) \leq \frac{k}{k-1}$ for every $F
  \subseteq \mathrm{CL}_k^+$ which does not contain $v, u$. We again go from the
  ground up.

  \begin{claim}\label{cl:2-density-ladder}
    $m_2(\mathrm{L}_k) \leq \frac{k}{k-1}$.
  \end{claim}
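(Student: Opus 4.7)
The argument follows the same scheme as Claim~\ref{cl:rooted-ladder}. First, using Claim~\ref{cl:2-density-intersection} iteratively over the block decomposition of any $F \subseteq \mathrm{L}_k$, and noting that $d_2$ of a disjoint union is bounded by the maximum of $d_2$ over its components (since $k/(k-1) \geq 1$), I reduce to the case where $F$ is $2$-connected. Writing $e(F) = v(F) + c - 1$ with $c$ the cyclomatic number, the target inequality $(e(F) - 1)/(v(F) - 2) \leq k/(k-1)$ is equivalent to $v(F) \geq (k-1)c + 2$. The case $c \leq 1$ is immediate from the girth bound: any cycle in $\mathrm{L}_k$ has length at least $2k-1 \geq k+1$.

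For $c \geq 2$ the plan is to exploit the planar cell structure of $\mathrm{L}_k$. Its $2k-2$ bounded faces (the ``cells'') are $(2k-1)$-cycles arranged linearly, with consecutive cells sharing a level path; since the dual of this arrangement is a path, every bounded face of $F$'s induced planar embedding encloses a contiguous interval of cells. This produces $c$ pairwise-disjoint intervals $[a_1, b_1], \dotsc, [a_c, b_c]$, and the boundary cycle of the $i$-th interval lies in $F$ and passes through $a_{a_i} + a_{b_i+1} + 2(b_i - a_i)$ distinct vertices (full level paths at the two end-levels, two rail endpoints at each intermediate level). Summing and subtracting the $c-1$ overlaps between adjacent intervals (each a single shared level of size at least $k-1$) yields the lower bound
\[
  v(F) \;\geq\; (c+1)(k-1) + 2(T - c),
\]
where $T = \sum_i (b_i - a_i + 1) \geq c$ is the total number of enclosed cells.

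For $k \geq 3$ this immediately gives $v(F) \geq (c+1)(k-1) \geq (k-1)c + 2$, as required. Non-adjacent interval configurations only make things better, since connectivity of $F$ then forces additional rail vertices into each gap, so the adjacent case is genuinely the worst. The remaining case $k = 2$ is handled by direct inspection: $\mathrm{L}_2 \cong K_4 - e$, and a quick check of its finitely many subgraphs gives $m_2(\mathrm{L}_2) = 2 = k/(k-1)$. The main structural input to justify is the claim that every bounded face of $F$'s induced embedding corresponds to an interval of $\mathrm{L}_k$-cells, which I would prove via the linear dual: two consecutive cells can share a face of $F$ only when all edges of their shared level path are absent from $F$, so each face is a maximal contiguous block of such merged cells.
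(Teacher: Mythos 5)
Your argument takes a genuinely different route from the paper's. After the shared reduction to $2$-connected $F$ (via Claim~\ref{cl:2-density-intersection}), the paper asserts that every such $F$ arises from a cycle of length at least $t$ by iteratively attaching $c$ ears of length at least $k$, and computes $d_2(F) \leq (t-1+ck)/(t-2+c(k-1)) \leq k/(k-1)$ directly; this handles all $k \geq 2$ in one stroke with no case split. You instead pass to the cyclomatic number, rewrite the target as $v(F) \geq (k-1)c + 2$, and extract a vertex lower bound from the planar cell structure of the ladder, which forces the separate $\mathrm{L}_2 \cong K_4 - e$ check since $(c+1)(k-1)$ only dominates $(k-1)c + 2$ when $k \geq 3$. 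The two proofs are comparably (in)complete at their structural cores --- both lean on an unproved assertion about $2$-connected subgraphs of the ladder --- and your arithmetic is correct. What you would still need to nail down is the face/interval correspondence: that a $2$-connected subgraph of the ladder, with the inherited embedding, has each bounded face enclosing a \emph{contiguous} interval of cells. The observation about absent shared-level edges explains when two adjacent cells merge into one face, but one must also argue that the outer boundary of $F$ cannot enclose a non-contiguous set of cells; this follows because a cycle in a planar graph whose bounded-face dual is a path separates a connected, hence contiguous, block of faces. Once that is in place, your remark about non-adjacent interval configurations becomes unnecessary, as they cannot occur for $2$-connected $F$. On balance your version buys a cleaner geometric picture via the linear dual at the cost of the small-$k$ case and a bit of extra planar bookkeeping; the paper's ear-decomposition formula is tighter and uniform.
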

  \begin{proof}
    Note that any subgraph that maximises the $2$-density has to be $2$-connected.
    Now, every such subgraph $F$ of a ladder can be obtained by starting from one
    copy of a cycle of length $\ell \geq t$ and iteratively attaching $c \geq 0$
    paths of length at least $k$ by their endpoints. So
    \[
      d_2(F) = \frac{t - 1 + ck}{t - 2 + c(k-1)} \leq \frac{k}{k-1},
    \]
    which holds as $(t-1)/(t-2) \leq k/(k-1)$ for $k \geq 2$. One easily checks
    that starting with a cycle longer than $t$ or adding paths longer than $k$
    gives an even smaller density estimate.
  \end{proof}

  \begin{claim}\label{cl:2-density-joint-ladders}
    Let $H$ be the graph obtained by removing $x$ and $y$ from $\mathrm{CL}_k$.
    Then $m_2(H) \leq \frac{k}{k-1}$.
  \end{claim}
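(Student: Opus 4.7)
The plan is to decompose any $F \subseteq H$ along the two vertex-disjoint ladders $L_1, L_2$ that make up $\mathrm{CL}_k$, together with the two leftover cycle edges $e_1 := v_kv_{k+1}$ and $e_2 := u_ku_{k+1}$ remaining after removing $v_1, u_1$. Writing $F_i := F \cap L_i$ and letting $r \in \{0,1,2\}$ count how many of $e_1, e_2$ lie in $F$, we have $e(F) = e(F_1) + e(F_2) + r$ and $v(F) = v(F_1) + v(F_2)$, and the goal is to establish $(k-1)(e(F) - 1) \leq k(v(F) - 2)$.

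When $r \leq 1$, the bound $m_2(L_i) \leq k/(k-1)$ from Claim~\ref{cl:2-density-ladder} gives $(k-1)e(F_i) \leq k v(F_i) - k - 1$ for each $i$ (the corner cases where $F_i$ has at most one edge being trivial). Summing and adding the contribution $(k-1)r \leq k-1$ from the extra edge yields $(k-1)e(F) \leq k v(F) - k - 3 \leq k v(F) - k - 1$, as required.

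The sole delicate case is $r = 2$, where the above naive summation is short by an additive $(k-1)$ whenever $k \geq 4$. The remedy is that both extra edges being present forces $v_k, u_k \in V(F_1)$ and $v_{k+1}, u_{k+1} \in V(F_2)$, so a rooted inequality is available. The $(k-1,k)$-ladder possesses an evident left-right reflection symmetry, namely the involution $w_{i,j} \mapsto w_{i, a_i - j + 1}$, which interchanges the straight left column and the zig-zag right column while preserving all edges. Under this symmetry Claim~\ref{cl:rooted-ladder} transports to the pair of roots $v_k = w_{1, k-1}$ and $u_k = w_{2k-1, k-1}$ of $L_1$ (and analogously for $L_2$), delivering the upgraded bound $(k-1)e(F_i) \leq k(v(F_i) - 2)$. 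Summing these and adding the two extra edges then yields $(k-1)e(F) \leq k v(F) - 4k + 2(k-1) = k v(F) - 2k - 2 \leq k v(F) - k - 1$.

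The main obstacle is precisely the $r = 2$ case: the sum of the two $m_2(L_i)$ inequalities leaves no slack for a single extra edge, let alone two, so one must locate the rooted statement (Claim~\ref{cl:rooted-ladder}) and invoke the column-swapping symmetry to obtain it at the roots $(v_k, u_k)$ and $(v_{k+1}, u_{k+1})$ rather than $(v_2, u_2)$ and $(v_{k+1}, u_t)$. Degenerate cases with very small $F_i$ pose no difficulty since $v_k$ and $u_k$ are non-adjacent in $L_1$ (the ladder has girth $2k-1$), so $v(F_i) = 2$ forces $e(F_i) = 0$.
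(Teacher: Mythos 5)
Your proof is correct and follows essentially the same structure as the paper's: split $F$ along the two vertex-disjoint ladders and the two connecting edges, handle $r \leq 1$ via $m_2(\mathrm{L}_k) \leq k/(k-1)$ from Claim~\ref{cl:2-density-ladder}, and handle $r = 2$ by summing the rooted bound of Claim~\ref{cl:rooted-ladder} over both ladders and adding the two extra edges. You also make a worthwhile observation that the paper leaves implicit: for one of the two ladders the connecting-edge endpoints lie on the rail opposite to the roots $a = w_{1,1}$, $b = w_{2k-1,1}$ named in Claim~\ref{cl:rooted-ladder}, so the left--right reflection automorphism of the $(k-1,k)$-ladder (or, equivalently, a rerun of that claim's proof from the other rail) is needed to transport the rooted bound to those vertices.
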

  \begin{proof}
    Let $H_1, H_2$ be the copies of ladders. Consider $F \subseteq H$ and let
    $F_i := F \cap H_i$, $e_i := e(F_i)$, and $v_i := v(F_i)$. If $F$ contains
    at most one of the edges $a_1a_2$ and $b_1b_2$ then by
    Claim~\ref{cl:2-density-ladder} and Claim~\ref{cl:2-density-intersection}
    $d_2(F) \leq k/(k-1)$. Otherwise,
    \[
      \frac{e(F)-1}{v(F)-2} \leq \frac{e_1+e_2+2-1}{v_1+v_2-2} = \frac{(e_1+1) +
      e_2}{(v_1-1) + (v_2-1)}.
    \]
    Using Claim~\ref{cl:rooted-ladder} and the fact that $e/(v-2) \geq
    (e+1)/(v-1)$ for every connected graph, we have $(e_1+1)/(v_1-1) \leq
    e_1/(v-2) \leq k/(k-1)$ and trivially $e_2/(v_2-1) \leq e_2/(v_2-2) \leq
    k/(k-1)$. The conclusion then follows.
  \end{proof}

  \begin{claim}
    Let $H$ be the graph obtained by removing $u$ and $v$ from
    $\mathrm{CL}_k^+$. Then $m_2(H) \leq \frac{k}{k-1}$.
  \end{claim}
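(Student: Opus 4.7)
The plan is to imitate the argument of Claim~\ref{cl:2-density-joint-ladders} on the larger structure $H$, obtained by gluing the $t-1$ copies of $\mathrm{CL}_k$ rooted at consecutive pairs $(x_i, y_i)$ via the two paths $x_1, \dotsc, x_{t-1}$ and $y_1, \dotsc, y_{t-1}$. Given $F \subseteq H$, I would decompose it as $F = \bigcup_i F_i \cup P_x \cup P_y$, where $F_i := F \cap H_i$ (the $i$-th copy of $\mathrm{CL}_k$ rooted at $(x_i,y_i)$), and $P_x, P_y$ are the path-edges of $F$ on the $x$- and $y$-paths. Since the $H_i$ are pairwise vertex-disjoint, $v(F) = \sum_i v(F_i)$ and $e(F) = \sum_i e(F_i) + |P_x| + |P_y|$. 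Let $n_x := |\{i : x_i \in V(F)\}|$ and $n_y := |\{i : y_i \in V(F)\}|$, and for each $i$ let $r_i \in \{0,1,2\}$ count the roots of $H_i$ missing from $F_i$; then $\sum_i r_i = 2(t-1) - n_x - n_y$, while $|P_x| \leq \max(n_x - 1, 0)$ and $|P_y| \leq \max(n_y - 1, 0)$.

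The chunk-wise estimate is: for each $i$, adjoining the missing roots of $H_i$ to $F_i$ as isolated vertices yields a graph $F_i^{+} \subseteq \mathrm{CL}_k$ containing both roots, and Claim~\ref{cl:rooted-cycle-ladder} gives
\[
  e(F_i) \;=\; e(F_i^{+}) \;\leq\; \tfrac{k}{k-1}\bigl(v(F_i) + r_i - 2\bigr).
\]
Summing and substituting yields $e(F) \leq \tfrac{k}{k-1}\bigl(v(F) - n_x - n_y\bigr) + |P_x| + |P_y|$. In the generic case $n_x, n_y \geq 1$ one has $|P_x| + |P_y| \leq n_x + n_y - 2$, and a short rearrangement delivers the desired $e(F) - 1 \leq \tfrac{k}{k-1}(v(F) - 2)$ for every $k \geq 2$.

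The remaining cases are $n_x + n_y \in \{0, 1\}$, in which $F$ has no path edges at all. If $n_x = n_y = 0$ then $F$ lies inside the vertex-disjoint union $\bigcup_i (\mathrm{CL}_k - \{x_i, y_i\})$, so Claim~\ref{cl:2-density-joint-ladders} applied component-wise together with the observation that combining disjoint graphs of $2$-density at most $k/(k-1) \geq 1/2$ preserves the bound handles it. If $n_x + n_y = 1$, then exactly one $F_i$ is a subgraph of $\mathrm{CL}_k$ touching only one of its roots, while every other $F_j$ sits inside $\mathrm{CL}_k - \{x_j, y_j\}$; the cleanest way to finish is to first establish the auxiliary bound $m_2(\mathrm{CL}_k) \leq k/(k-1)$, which can be extracted by combining Claim~\ref{cl:rooted-cycle-ladder} (for connected components meeting both roots), Claim~\ref{cl:2-density-joint-ladders} (for components meeting neither), and a short check for components meeting exactly one root via the same root-adjoining trick.

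I expect the main obstacle to be precisely this $n_x + n_y \leq 1$ corner, where the slack coming from missing roots no longer absorbs the $-1$ in $d_2$; bypassing it requires the auxiliary statement $m_2(\mathrm{CL}_k) \leq k/(k-1)$ and one extra layer of case analysis on the connected components of $F$. In contrast, the algebra of the generic case is essentially mechanical once the identity $\sum_i r_i = 2(t-1) - n_x - n_y$ is in hand, and it is this identity together with the bound $|P_x| + |P_y| \leq n_x + n_y - 2$ that makes everything cancel cleanly.
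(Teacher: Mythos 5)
Your decomposition (adjoin the missing roots $r_i$ of each $H_i$ as isolated vertices so that Claim~\ref{cl:rooted-cycle-ladder} always applies, then track $n_x$, $n_y$ and the path edges $|P_x|$, $|P_y|$ separately) is a genuinely different and, in my view, more transparent route than the paper's. The paper only carries out the summation in the case where $F$ contains \emph{every} $x_i$ and $y_i$, and dispatches the remaining cases in a single terse sentence citing Claims~\ref{cl:2-density-joint-ladders} and~\ref{cl:2-density-intersection}. Your identity $\sum_i r_i = 2(t-1) - n_x - n_y$ closes the bound directly for all $n_x + n_y \geq 2$, which is a much larger range. One expository slip: the split ``$n_x, n_y \geq 1$'' versus ``$n_x + n_y \in \{0,1\}$'' leaves out $n_x = 0$, $n_y \geq 2$ (and its mirror), but those close by the same algebra since then $|P_x| + |P_y| \leq n_y - 1$ and $(2 - n_y)/(k-1) \leq 0$; it just needs to be said.

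The genuine gap is in the corner $n_x + n_y = 1$. There the unique nontrivial component $F_j \subseteq \mathrm{CL}_k$ meets exactly one root, say $x_j$, and you propose to bound $d_2(F_j)$ by adjoining $y_j$ as an isolated vertex and invoking Claim~\ref{cl:rooted-cycle-ladder}. This only yields $e(F_j) \leq \tfrac{k}{k-1}\bigl(v(F_j) - 1\bigr)$, whence
\[
  d_2(F_j) \;\leq\; \frac{k}{k-1} \;+\; \frac{1}{(k-1)\bigl(v(F_j) - 2\bigr)},
\]
which strictly overshoots $\tfrac{k}{k-1}$. There is no slack to absorb the excess: for $k = 2$ the triangle $\{x, v_2, v_t\} \subseteq \mathrm{CL}_2$ meets exactly one root and has $d_2 = 2 = \tfrac{k}{k-1}$, so one-root subgraphs genuinely realise equality in $m_2(\mathrm{CL}_k) = \tfrac{k}{k-1}$. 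The fallback of deleting $x$ (degree $2$, so $1$ vertex and $2$ edges) and invoking Claim~\ref{cl:2-density-joint-ladders} for $F_j - x$ also overshoots, by $\tfrac{(k-2)/(k-1)}{v(F_j)-1}$, so it fails for $k \geq 3$. In short, the ``short check via the same root-adjoining trick'' does not close; proving $d_2(F_j) \leq \tfrac{k}{k-1}$ for one-root subgraphs of $\mathrm{CL}_k$ needs a separate, more structural argument, e.g.\ reusing the two-half decomposition of $\mathrm{CL}_k$ employed in the proofs of Claims~\ref{cl:rooted-cycle-ladder} and~\ref{cl:2-density-joint-ladders} with a finer case distinction on the bridge edges, or the ``$+1$''-strengthened Claim~\ref{cl:rooted-extended-ladder} together with such a decomposition.
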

  \begin{proof}
    Let $H_i$, $i \leq t-1$, be the copy of $\mathrm{CL}_k$ between $x_i$ and
    $y_i$. Consider $F \subseteq H$ and let $F_i := F \cap H_i$, $e_i :=
    e(F_i)$, and $v_i := v(F_i)$. If $F$ contains all of the vertices
    $x_1,\dotsc,x_{t-1}$ and $y_1,\dotsc,y_{t-1}$, then as $e_i \leq
    \frac{k}{k-1}(v_i-2)$ by Claim~\ref{cl:rooted-cycle-ladder}
    \begin{align*}
      \frac{e(F)-1}{v(F)-2}
      & \leq \frac{\sum_{i=1}^{t-1}e_i + 2(t-2) - 1}{\sum_{i=1}^{t-1}v_i - 2} \leq
      \frac{\frac{k}{k-1}\sum_{i=1}^{t-1}(v_i-2) +
      2(t-2)-1}{\sum_{i=1}^{t-1}v_i-2} \\
      & = \frac{\frac{k}{k-1}(\sum_{i=1}^{t-1}v_i - 2) - \frac{2k(t-2)}{k-1} +
      2(t-2)-1}{\sum_{i=1}^{t-1}v_i-2} < \frac{k}{k-1}.
    \end{align*}
    Otherwise, if $F$ does not contain some $x_i$ or $y_i$, then
    Claim~\ref{cl:2-density-joint-ladders} and
    Claim~\ref{cl:2-density-intersection} give the same result.
  \end{proof}

\subsection{Proof of Proposition~\ref{prop:almost-all-expanders}}
\label{sec:appendix-expanders}

We first list a couple of lemmas from \cite{gerke2007small} which are used as
tools in the proof, namely \cite[Lemma~3.1]{gerke2007small} and
\cite[Corollary~3.8]{gerke2007small}

\begin{lemma}\label{lem:reg-small-sets-expand}
  For all $\beta, \lambda > 0$ there exists a positive $\eps_0 = \eps_0(\beta,
  \gamma)$ such that for all $\eps \leq \eps_0$, $p > 0$, and $\tilde q
  \leq \lambda/p$, every $(\eps, p)$-lower-regular graph $G(V_1 \cup V_2, E)$
  satisfies that, for any $q \geq \tilde q$, the number of sets $Q \subseteq
  V_1$ of size $q$ with $|N_G(Q)| < (1-3\lambda)\tilde q|V_2|p$ is at most
  \[
    \beta^q \binom{|V_1|}{q}.
  \]
\end{lemma}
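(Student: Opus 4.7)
The plan is a classical double-counting argument exploiting that a bad set $Q$ must leave a large portion of $V_2$ unused. Write $X := (1-3\lambda)\tilde q|V_2|p$ for the neighbourhood threshold and $\cB_q$ for the family of bad $q$-subsets, i.e.\ $Q \subseteq V_1$ with $|N_G(Q)| < X$. From $\tilde q p \leq \lambda$ we immediately get $X \leq \lambda|V_2|$, so every $Q \in \cB_q$ satisfies $|V_2 \setminus N_G(Q)| \geq (1-\lambda)|V_2|$.

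First I would isolate the following deterministic consequence of $(\eps,p)$-lower-regularity: for any $S \subseteq V_2$ with $|S| \geq \eps|V_2|$, the set $U(S) := \{v \in V_1 : N_G(v) \cap S = \emptyset\}$ satisfies $|U(S)| < \eps|V_1|$. Indeed, if $|U(S)| \geq \eps|V_1|$, the pair $(U(S), S)$ would simultaneously have $d(U(S), S) = 0$ (no edges cross) and $d(U(S), S) \geq d(V_1, V_2) - \eps p$, contradicting the standing Gerke--Steger assumption that the ambient density $d(V_1, V_2)$ exceeds a positive multiple of $p$ (this is the only regime in which the lemma is meaningful).

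The next step is to double-count pairs $(Q, S)$ with $Q \in \cB_q$ and $S \subseteq V_2 \setminus N_G(Q)$ of size $s := \ceil{\eps|V_2|}$. Each $Q \in \cB_q$ admits at least $\binom{(1-\lambda)|V_2|}{s}$ such $S$; conversely, for each fixed $S$ of size $s$ every compatible $Q$ satisfies $Q \subseteq U(S)$, so there are at most $\binom{\eps|V_1|}{q} \leq \eps^q\binom{|V_1|}{q}$ bad sets associated with it. Rearranging,
\[
  |\cB_q| \;\leq\; \frac{\binom{|V_2|}{s}}{\binom{(1-\lambda)|V_2|}{s}} \cdot \eps^q \binom{|V_1|}{q} \;\leq\; (1-\lambda)^{-s}\, \eps^q \binom{|V_1|}{q},
\]
using the elementary estimate $\binom{n}{s}/\binom{(1-\lambda)n}{s} \leq (1-\lambda)^{-s}$ valid for $s \leq (1-\lambda)n$.

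The main obstacle is the final quantitative step: we must ensure $(1-\lambda)^{-s}\eps^q \leq \beta^q$ uniformly for every $q \geq \tilde q$. Logarithmically this reads $q \log(\beta/\eps) \geq s \log\bigl(1/(1-\lambda)\bigr)$; with $s \approx \eps|V_2|$ it reduces to a condition of the shape $q \geq C(\beta,\lambda)\,\eps|V_2|$. Here the hypothesis $\tilde q \leq \lambda/p$ interacts with the structural control on $|V_2|p$ coming from the intended application (where the lemma is invoked for regular pairs sitting inside a random host graph), and one closes the estimate by choosing $\eps_0 = \eps_0(\beta,\lambda)$ small enough that $\eps^q/\beta^q$ comfortably absorbs the $(1-\lambda)^{-s}$ prefactor at $q = \tilde q$.
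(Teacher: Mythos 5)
First, note that the paper does not prove this lemma at all---it is imported verbatim as Lemma~3.1 of \cite{gerke2007small} (with the implicit convention, which you correctly identify, that $p$ is essentially the density of the pair; as literally stated the empty graph is a counterexample). So your argument has to stand on its own, and it does not: the final quantitative step is a genuine gap, not a loose end. Your double count gives $|\cB_q| \leq \binom{|V_2|}{s}\binom{(1-\lambda)|V_2|}{s}^{-1}\eps^q\binom{|V_1|}{q}$ with $s = \ceil{\eps|V_2|}$, and the prefactor is of order $e^{\Theta(\lambda s)} = e^{\Theta(\lambda\eps|V_2|)}$, i.e.\ exponential in $|V_2|$. (Incidentally, your ``elementary estimate'' points the wrong way: $\binom{n}{s}/\binom{(1-\lambda)n}{s} \geq (1-\lambda)^{-s}$, with an upper bound of order $(1-\lambda-\eps)^{-s}$; this does not change the order of magnitude.) Absorbing this into $\beta^q$ requires $q\log(\beta/\eps) \gtrsim \lambda\eps|V_2|$, i.e.\ $q = \Omega(\eps|V_2|)$. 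But the lemma must hold for every $q \geq \tilde q$, and $\tilde q \leq \lambda/p \ll \eps|V_2|$ as soon as $|V_2|p \gg 1/\eps$---which is exactly the regime in which the lemma is invoked (in Claim~\ref{cl:inductive-expansion} one takes $\tilde q$ of order $(np)^{k-i}$, while $np \to \infty$). At $q = \tilde q$ your requirement reads $\log(\beta/\eps) \gtrsim \eps|V_2|p$, and no constant $\eps_0 = \eps_0(\beta,\lambda)$ chosen before $n$ can satisfy this uniformly, since the right-hand side is unbounded. The structural defect is that your loss term is global in $n$ (you pay $\binom{|V_2|}{s}$ for choosing $S$) while your gain $\eps^q$ is only exponential in $q$.

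The argument that works charges the losses per element of $Q$ rather than globally. Count ordered tuples $(v_1,\dotsc,v_q)$ with $|N_G(\{v_1,\dotsc,v_q\})| < (1-3\lambda)\tilde q|V_2|p \leq \lambda|V_2|$. Call an index $i$ \emph{useful} if $v_i$ contributes at least $(1-2\lambda)|V_2|p$ new neighbours; since $(1-\lambda)(1-2\lambda) > 1-3\lambda$, fewer than $(1-\lambda)\tilde q \leq (1-\lambda)q$ indices are useful, so at least $\lambda q$ are not. For a useless index, $v_i$ has fewer than $(1-2\lambda)|V_2|p \leq (d - \eps p)|V_2'|$ neighbours in $V_2' := V_2 \setminus N_G(\{v_1,\dotsc,v_{i-1}\})$, a set of size at least $(1-\lambda)|V_2| \geq \eps|V_2|$, so lower-regularity leaves at most $\eps|V_1|$ choices for $v_i$. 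Summing over the at most $2^q$ positions of the useless indices gives at most $2^q\eps^{\lambda q}|V_1|^q/q! \leq (4\eps^{\lambda})^q\binom{|V_1|}{q}$ bad sets, and $\eps_0 = (\beta/4)^{1/\lambda}$ closes the proof with no $n$-dependent prefactor. Your first step (that $|U(S)| < \eps|V_1|$ for $|S|\geq\eps|V_2|$) is correct and is essentially the same regularity input, but it must be applied once per useless element of $Q$, not once per auxiliary set $S$.
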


\begin{lemma}\label{lem:reg-small-set-inheritance}
  For all $\beta, \gamma > 0$ there exist positive $\eps_0 = \eps_0(\beta,
  \gamma)$ and $D = D(\gamma)$, such that for all $0 < \eps \leq \eps_0$ and $0
  < p < 1$, the following holds. Let $G(V_1 \cup V_2, E)$ be an $(\eps,
  p)$-lower-regular graph and suppose $q_1, q_2 \geq Dp^{-1}$. Then the number
  of pairs $(Q_1, Q_2)$ with $Q_i \subseteq V_i$ and $|Q_i| = q_i$ ($i = 1,2$)
  which do not form a $(\gamma, p)$-lower-regular graph is at most
  \[
    \beta^{\min\{q_1,q_2\}} \binom{|V_1|}{q_1} \binom{|V_2|}{q_2}.
  \]
\end{lemma}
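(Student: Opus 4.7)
The approach follows the standard template from the sparse regularity literature: reformulate non-regularity as the existence of a density-deficient witness sub-pair, discretize over witness sizes, and bound the witness count at each scale via concentration.

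\textbf{Reformulation and discretization.} Write $d = d(V_1, V_2)$ and $q = \min\{q_1, q_2\}$. By definition, $(Q_1, Q_2)$ fails $(\gamma, p)$-lower-regularity iff there exist $S_i \subseteq Q_i$ with $|S_i| \geq \gamma q_i$ and $d(S_1, S_2) < d(Q_1, Q_2) - \gamma p$. Splitting on whether $d(Q_1, Q_2) \geq d - \gamma p/2$, one obtains in either case a sub-pair $(S_1, S_2) \subseteq (Q_1, Q_2)$ with $|S_i| \geq \gamma q_i$ satisfying the \emph{absolute} bound $d(S_1, S_2) < d - \gamma p/2$ (taking $(S_1, S_2) = (Q_1, Q_2)$ in the first case); call such an $(S_1, S_2)$ a \emph{deficit witness}. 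Dyadically discretize the allowed sizes $(s_1, s_2) = (|S_1|, |S_2|) \in [\gamma q_1, q_1] \times [\gamma q_2, q_2]$, losing only a $\polylog(n)$ factor. Each deficit witness extends to $\binom{|V_1|-s_1}{q_1-s_1}\binom{|V_2|-s_2}{q_2-s_2}$ pairs $(Q_1, Q_2)$; using $\binom{|V_i|}{s_i}\binom{|V_i|-s_i}{q_i-s_i} = \binom{|V_i|}{q_i}\binom{q_i}{s_i}$, the task reduces to showing that, at each scale,
\[
  \frac{N(s_1, s_2)}{\binom{|V_1|}{s_1}\binom{|V_2|}{s_2}} \binom{q_1}{s_1}\binom{q_2}{s_2} \leq \beta^q / \polylog(n),
\]
where $N(s_1, s_2)$ counts deficit witnesses of sizes $(s_1, s_2)$ in $(V_1, V_2)$.

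\textbf{Bounding $N(s_1, s_2)$.} If $s_i \geq \eps|V_i|$ for both $i$, the $(\eps, p)$-lower-regularity of $(V_1, V_2)$ itself rules out a deficit witness (for $\eps \leq \gamma/2$), so $N = 0$. Otherwise declare $S_1 \subseteq V_1$ of size $s_1$ \emph{atypical} if $e(S_1, V_2) < (d - \gamma p/4) s_1 |V_2|$; an edge-count analogue of Lemma~\ref{lem:reg-small-sets-expand}, proved by running the same Chernoff-type argument on edges rather than on neighbourhood indicators, bounds the number of atypical $S_1$ by $\beta_0^{s_1}\binom{|V_1|}{s_1}$ for a suitable $\beta_0 = \beta_0(\beta, \gamma)$. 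For each \emph{typical} $S_1$, a hypergeometric Bernstein estimate applied to the random $S_2 \subseteq V_2$ bounds the number of $S_2$ yielding a deficit witness by $\beta_0^{s_2}\binom{|V_2|}{s_2}$, provided $s_2 \geq D/p$ with $D = D(\gamma)$ large enough; typicality of $S_1$ prevents high-degree vertices of $V_2$ from dominating the variance. Combining the two contributions, $N(s_1, s_2) \leq 2\beta_0^{\min(s_1,s_2)}\binom{|V_1|}{s_1}\binom{|V_2|}{s_2}$.

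\textbf{Main obstacle.} The subtle step is balancing the extension factors $\binom{q_i}{s_i} \leq (e/\gamma)^{s_i}$ against the savings $\beta_0^{\min(s_1,s_2)}$ after plugging the $N$-bound into the reduction. Assuming WLOG $q_1 \leq q_2$, one case-splits on $s_1 \leq s_2$ versus $s_2 \leq s_1$; in each case choosing $\beta_0$ sufficiently small in $\beta$ and $\gamma$ and $D$ sufficiently large---so that the Bernstein exponent of order $\Omega(p s_1 s_2)$ beats $s_2$ (using $p s_1 \geq \gamma D$)---produces the needed bound. The hardest technical piece is the edge-count variant of Lemma~\ref{lem:reg-small-sets-expand} above, which demands a Bernstein-type argument exploiting the $(\eps, p)$-lower-regularity of $(V_1, V_2)$ to control the second moment of $e(S_1, V_2)$; plain Hoeffding fails here because lower-regularity alone permits atypically high-degree vertices in $V_2$.
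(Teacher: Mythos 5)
First, a framing point: the paper does not prove this lemma at all --- it is imported verbatim as \cite[Corollary~3.8]{gerke2007small}, so the relevant benchmark is the Gerke--Kohayakawa--R\"odl--Steger proof, which is a long, delicate argument structured completely differently from your sketch (a hierarchical induction over scales, counting bad \emph{graphs} rather than union-bounding over witness sub-pairs inside one fixed graph). Your proposal has a repairable gap and a fatal one. The repairable gap is in the reformulation: when $d(Q_1,Q_2)\geq d-\gamma p/2$, a relative witness only satisfies $d(S_1,S_2)<d(Q_1,Q_2)-\gamma p$, and this implies your absolute bound $d(S_1,S_2)<d-\gamma p/2$ only if additionally $d(Q_1,Q_2)\leq d+\gamma p/2$. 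Pairs whose own density exceeds $d+\gamma p/2$ escape your notion of deficit witness, and counting them is an upper-tail problem that lower-regularity (which places no upper bound on sub-pair densities) does not hand you for free.

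The fatal issue is the claimed concentration exponent $\Omega(p s_1 s_2)$. The graph is deterministic; the only randomness is in the vertex set $S_2$, so $e(S_1,S_2)=\sum_{v\in S_2}\deg(v,S_1)$ is a sum of $s_2$ summands of typical size $ps_1\geq\gamma D\gg 1$, and a relative lower deviation of order $\gamma$ for such a sum has probability $e^{-\Theta(\gamma^2 s_2)}$ in the worst case (e.g.\ when the degrees $\deg(v,S_1)$ split between $0$ and $2ps_1$, which lower-regularity of $(V_1,V_2)$ cannot rule out for a set $S_1$ of size far below $\eps|V_1|$, and which your ``typicality'' of $S_1$ --- a bound on the single number $e(S_1,V_2)$ --- does not exclude either). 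The exponent $\Omega(ps_1s_2)$ would be correct only if the edges themselves were independent Bernoulli$(p)$, i.e.\ if $G$ were a random graph rather than an arbitrary lower-regular one. With the true rate $e^{-\Theta(\gamma^2 s_2)}$ your balancing step collapses twice over: it cannot absorb the witness entropy $\binom{q_2}{s_2}\approx(e/\gamma)^{s_2}$ for small $\gamma$, and it can never yield $\beta^{\min\{q_1,q_2\}}$ for \emph{arbitrarily small} $\beta$, because the rate constant is fixed by $\gamma$ alone. (A further, independent problem: even granting your bound on $N(s_1,s_2)$, the saving $\beta_0^{\min\{s_1,s_2\}}$ cannot control $\binom{q_1}{s_1}\binom{q_2}{s_2}$ when $q_2/q_1$ is unbounded.) This union-bound-over-witnesses obstruction is exactly why the lemma is a genuinely hard theorem and not a routine Chernoff exercise; any correct proof has to follow something like the GKRS scheme rather than the route you outline.
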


Next lemma is a two-sided version of \cite[Lemma~5.8]{gerke2007small} and its
proof follows exactly the same steps.

\begin{lemma}\label{lem:bad-extensions-pairs}
  Let $c \geq 1$ and let $\beta, \delta > 0$. Then there exists a positive
  $\gamma = \gamma(\beta, \delta)$ such that the following holds. Let $V_1, V_2$
  be sets of size $|V_i| = n$, such that for all $q_1,q_2 \geq c$ at most
  $\gamma^{\min\{q_1,q_2\}}\binom{n}{q_1}\binom{n}{q_2}$ pairs $(Q_1, Q_2)$,
  with $Q_i \subseteq V_i$ and $|Q_i| = q_i$, are marked. Then there are at most
  \[
    \beta^m \binom{n s}{m_1} \binom{n s}{m_2}
  \]
  graphs $G$ on vertex set $V_1 \cup V_2 \cup S_1 \cup S_2$ with $|S_i| = s$,
  $m/2 \leq m_i \leq m$ edges in $G[V_i,S_i]$, and $m \geq 4s\log(ns)$, for
  which there exist pairwise disjoint pairs of sets $(X_1,Y_1), (X_2,Y_2),
  \dotsc$ such that $X_i \subseteq S_1$, $Y_i \subseteq S_2$, with $\sum_i
  \min\{|X_i|, |Y_i|\} \geq \delta s$, and for each $i$, $|N_G(X_i)| \geq
  \max\{|X_i|m_1/(2n), c\}$, $|N_G(Y_i)| \geq \max\{|Y_i|m_2/(2n), c\}$, and
  $(N_G(X_i), N_G(Y_i))$ is a marked pair.
\end{lemma}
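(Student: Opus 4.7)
The plan is to adapt the proof of Lemma~5.8 in \cite{gerke2007small} to the two-sided setting by carefully encoding each bad graph via a canonical "bad structure" and counting bad graphs one type at a time.

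First, I fix $m_1, m_2 \in [m/2, m]$. For each bad graph $G$ I greedily (in a fixed ordering of subsets of $S_1 \cup S_2$) pick the sequence of disjoint pairs $(X_i, Y_i)_{i \geq 1}$ provided by the badness assumption, and define the \emph{type} $\tau(G) := \bigl((x_i, y_i, q_1^i, q_2^i)\bigr)_i$ where $x_i = |X_i|$, $y_i = |Y_i|$, $q_1^i = |N_G(X_i)|$, and $q_2^i = |N_G(Y_i)|$. Since $\sum_i x_i, \sum_i y_i \leq s$ and each $q_j^i \in \{c, \dotsc, n\}$, the total number of types is at most $(ns)^{O(s)}$, which is harmless given the assumption $m \geq 4s\log(ns)$.

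Second, I count bad graphs of a fixed type $\tau$. The number of ways to pick the disjoint $X_i \subseteq S_1$ and $Y_i \subseteq S_2$ with the prescribed sizes is at most $\binom{s}{x_1, x_2, \dotsc, \ast} \binom{s}{y_1, y_2, \dotsc, \ast}$. For each $i$, the number of marked pairs $(N_G(X_i), N_G(Y_i))$ with the prescribed sizes $(q_1^i, q_2^i)$ is, by hypothesis, at most $\gamma^{\min\{q_1^i, q_2^i\}} \binom{n}{q_1^i}\binom{n}{q_2^i}$ (valid because $q_j^i \geq c$). For the edges themselves, the $m_1$ edges of $G[V_1, S_1]$ split between those incident to $\bigcup_i X_i$ (each forced to lie in $X_i \times N_G(X_i)$, of total size $\sum_i x_i q_1^i$) and those incident to $S_1 \setminus \bigcup_i X_i$ (free within a set of size $(s - \sum_i x_i)n$); the Vandermonde identity yields at most $\binom{ns - \sum_i x_i(n - q_1^i)}{m_1}$ edge sets, and symmetrically $\binom{ns - \sum_i y_i(n - q_2^i)}{m_2}$ on the other side.

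Third, the heart of the argument is to bound the ratio of the resulting count to $\binom{ns}{m_1}\binom{ns}{m_2}$. Using $\binom{ns - t}{m_j}/\binom{ns}{m_j} \leq \exp(-tm_j/(2ns))$ together with $\binom{s}{x_i} \leq (es/x_i)^{x_i}$ and $\binom{n}{q_j^i} \leq (en/q_j^i)^{q_j^i}$, one shows that each per-$i$ factor is bounded by $(\beta^*)^{\min\{x_i, y_i\}}$ for some constant $\beta^* = \beta^*(\gamma)$ which tends to $0$ as $\gamma \to 0$. This splits into cases: when $q_j^i \geq n/2$, the markedness factor $\gamma^{q_j^i}$ alone already overwhelms all binomial coefficients; when $q_j^i$ is small, the edge-saving factor is exponential in $x_i m_1 (n - q_j^i)/(ns) = \Omega(x_i m/s)$, which by $m \geq 4s\log(ns)$ swamps both $\binom{s}{x_i}$ and $\binom{n}{q_j^i}$. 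Taking the product over $i$ and using $\sum_i \min\{x_i, y_i\} \geq \delta s$ yields an overall saving of $(\beta^*)^{\delta s}$; choosing $\gamma$ sufficiently small in terms of $\beta$ and $\delta$ converts this into the desired $\beta^m$ bound after absorbing the $(ns)^{O(s)}$ enumeration over types.

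The main obstacle will be the case analysis in the per-$i$ estimate, in particular when $q_1^i$ and $q_2^i$ are very asymmetric (one close to $n$, the other close to $c$): we receive only the weaker markedness factor $\gamma^{\min\{q_1^i, q_2^i\}}$ but must pay for \emph{both} binomial coefficients $\binom{n}{q_1^i}\binom{n}{q_2^i}$. Applying the two lower bounds $q_1^i \geq x_i m/(4n)$ and $q_2^i \geq y_i m/(4n)$ separately to the two edge-saving factors, and balancing each against its corresponding $\binom{n}{q_j^i}$, is what makes the estimate go through.
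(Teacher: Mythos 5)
Your overall skeleton (enumerate the size data as a ``type'', count marked neighbourhood pairs, count edge placements, compare against $\binom{ns}{m_1}\binom{ns}{m_2}$) matches the paper's, but the quantitative heart of your argument does not close, in two ways. First, your final saving $(\beta^*)^{\delta s}$ is exponential in $s$, whereas the target is $\beta^m$ with $m \geq 4s\log(ns)$, so $s = o(m)$; no constant $\beta^*$ (i.e.\ no choice of $\gamma$ independent of $n,s,m$) turns $(\beta^*)^{\delta s}$ into $\beta^m$, let alone after also paying the $(ns)^{O(s)}$ type enumeration. The markedness hypothesis actually yields $\gamma^{\sum_i \min\{q_1^i,q_2^i\}}$ with $\min\{q_1^i,q_2^i\} \geq \min\{x_i,y_i\}\,m/(4n)$, so the exponent is at least $\delta s m/(4n)$; it is precisely this extra factor of $m/n$ that must be retained (the paper records the saving as $\gamma^{\delta m/4}$), and your per-$i$ bound $(\beta^*)^{\min\{x_i,y_i\}}$ discards it.

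Second, and more structurally: collapsing the edge count into a single $\binom{ns-\sum_i x_i(n-q_1^i)}{m_1}$ via Vandermonde \emph{before} paying for the neighbourhood choices loses the essential coupling between $|N_G(X_i)|$ and the number of edges $X_i$ sends into $V_1$. The ratio bound $\binom{ns-t}{m_1}/\binom{ns}{m_1}\le e^{-tm_1/(2ns)}$ does not dominate $\prod_i\binom{n}{q_1^i}$: already for $q_1^i$ at its minimum value $x_im_1/(2n)$ the cost is roughly $\exp\big(q_1^i\log(en/q_1^i)\big)$ while the corresponding saving is only $\exp\big(-x_i(n-q_1^i)m_1/(2ns)\big)\approx\exp\big(-x_im_1/(2s)\big)$, and these differ by an unbounded factor of order $\log(en/q_1^i)$ in the exponent when $s\asymp n$ (the regime of the application). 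The paper avoids this by keeping, for each $i$, the number $m_x(i)\ge d_x(i)$ of edges between $X_i$ and $V_1$ and applying $\binom{n}{b}\binom{ab}{c}\le e^{b+c}\binom{na}{c}$ to the pair $\binom{n}{d_x(i)}\binom{x_id_x(i)}{m_x(i)}$ \emph{before} recombining with Vandermonde; this caps the entire cost of specifying all the neighbourhoods at $e^{2m_1+2m_2}$, which the genuine markedness saving $\gamma^{\delta m/4}$ then absorbs upon choosing $\gamma=(\beta/(4e^4))^{4/\delta}$. You would need to import this step (or an equivalent coupling of the neighbourhood-choice cost with the forced-edge placement) for your count to go through.
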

\begin{proof}
  Firstly, we select pairwise disjoint sets $X_1, X_2, \dots$ and $Y_1, Y_2,
  \dots$ for which there are $s^{2s} \leq 2^m$ choices, as there are at most $s$
  sets $X_i$ and likewise $Y_i$. Secondly, for each $i$, we select the sizes of
  neighbourhoods $d_x(i) := |N_G(X_i)|$, $d_y(i) := |N_G(Y_i)|$, and the number
  of edges $m_x(i)$ between $X_i$ and $V_1$ and $m_y(i)$ between $Y_i$ and
  $V_2$. This can be done in at most
  \[
    n^{2s} \cdot m_1^s \cdot m_2^s \leq 2^m
  \]
  ways. Thirdly, for each $i$, we select sets $Q_x$ of size $d_x(i)$ in $V_1$
  and $Q_y$ of size $d_y(i)$ in $V_2$ such that $(Q_x, Q_y)$ is a marked pair,
  and select edges between $X_i, Y_i$ and the chosen sets $Q_x, Q_y$. As $X_i$
  and $Y_i$ are all disjoint, writing $x_i := |X_i|$ and $y_i := |Y_i|$, for
  every $i$ there are at most
  \[
    \gamma^{\min\{d_x(i), d_y(i)\}}\binom{n}{d_x(i)} \binom{n}{d_y(i)} \cdot
    \binom{x_i d_x(i)}{m_x(i)} \binom{y_i d_y(i)}{m_y(i)}
  \]
  choices in total. Lastly, we select the edges in $G[V_1, S_1 \setminus
  \bigcup_i X_i]$ and $G[V_2, S_2 \setminus \bigcup_i Y_i]$. There are at most
  \[
    \binom{n (s-x)}{m_1 - \tilde m_1} \binom{n (s-y)}{m_2 - \tilde m_2},
  \]
  ways to do this, where $\tilde m_1 = \sum_i m_x(i)$, $\tilde m_2 = \sum_i
  m_y(i)$, and $x = \sum_i |X_i|$ and $y = \sum_i |Y_i|$. In total, after
  selecting sets $X_1, X_2, \dotsc$ and $Y_1, Y_2, \dotsc$, sizes of the
  neighbourhoods of the sets, and the number of edges between $X_i, V_1$
  and $Y_i,V_2$, there are at most
  \begin{equation}
    \label{eq:bad-pairs-graph-count}
      \binom{n (s-x)}{m_1 - \tilde m_1} \binom{n (s-y)}{m_2 - \tilde m_2}
      \bigg(\prod_i \gamma^{\min\{d_x(i), d_y(i)\}}\binom{n}{d_x(i)}
      \binom{n}{d_y(i)} \binom{x_i d_x(i)}{m_x(i)} \binom{y_i
      d_y(i)}{m_y(i)}\bigg)
  \end{equation}
  undesired graphs. It remains to show that \eqref{eq:bad-pairs-graph-count} is
  at most
  \[
    e^{4m} \gamma^{\delta m/4} \binom{n s}{m_1} \binom{n s}{m_2},
  \]
  as we can then choose $\gamma = (\beta/(4e^4))^{4/\delta}$ and the using fact
  that there are at most $4^m$ choices of sets $X_i, Y_i$, sizes of their
  neighbourhoods, and edges fixed above, we draw the desired conclusion.

  By using standard bounds on the binomial coefficients, we have
  \[
    \binom{n}{b} \binom{a b}{c} \leq \Big(\frac{n e}{b}\Big)^b \Big(\frac{e
    ab}{c}\Big)^c = \Big(\frac{n a}{c}\Big)^{c} \cdot \frac{e^{c + b}
    b^{c-b}}{n^{c - b}} \leq e^{c+b} \binom{n a}{c}.
  \]
  Hence
  \[
    \binom{n}{d_x(i)} \binom{x_i d_x(i)}{m_x(i)} \leq e^{2m_x(i)} \binom{n
    x_i}{m_x(i)} \qquad \text{and} \qquad \binom{n}{d_y(i)} \binom{y_i
    d_y(i)}{m_y(i)} \leq e^{2m_y(i)} \binom{n y_i}{m_y(i)}.
  \]
  From Vandermonde's identity in the form $\binom{a}{c} \binom{b}{d} \leq
  \sum_{k=0}^{c+d} \binom{a}{k} \binom{b}{c+d-k} = \binom{a+b}{c+d}$,
  \eqref{eq:bad-pairs-graph-count} can be bounded by
  \[
    e^{2(m_1 + m_2)} \gamma^{\sum_i \min\{d_x(i), d_y(i)\}}
    \binom{n s}{m_1} \binom{n s}{m_2}.
  \]
  Observing that
  \[
    \sum_i \min\{d_x(i), d_y(i)\} \geq \sum_i
    \min\set[\Big]{\frac{|X_i|m_1}{2n}, \frac{|Y_i|m_2}{2n}} \geq
    \frac{\delta}{4} m
  \]
  completes the proof.
\end{proof}

Note that if in a graph $G$ there are disjoint sets $V_1, V_2$ in which all
families of disjoint pairs $(X_i, Y_i)$ with $X_i \subseteq V_1$ and $Y_i
\subseteq V_2$ that satisfy some bad property are such that $\sum_i
\min\{|X_i|,|Y_i|\} < \delta n$ then one can delete at most $\delta n$ vertices
in each of $V_1, V_2$ and none of the remaining pairs satisfy the bad property.

\begin{proof}[Proof of Proposition~\ref{prop:almost-all-expanders}]
  Given $k, \beta, \gamma$, we choose several constants so that the arguments
  below follow through. Let $\rho, \delta, \lambda > 0$ be such
  that
  \[
    (1-\rho)^{k-1} \geq 1-\gamma, \quad \delta \leq \min\{\gamma/2, 1/4\}, \quad
    \text{and} \quad (1-3\lambda)(1-\delta) \geq 1-\rho.
  \]
  Next, let $\beta_{k-1} = \beta/2$, and for every $i = k-2, \dotsc, 1$, set
  $\beta_i = \beta_{i+1}/2$. Having fixed these,
  let
  \[
    \tilde\gamma \leq \min_{1 \leq i \leq
      k-1}\{\gamma_{\ref{lem:bad-extensions-pairs}}(\beta_i/2,\delta),
      \gamma/2\} \quad \text{and} \quad \eps_0 \leq \min\{\lambda/4,
      {\eps_0}_{\ref{lem:reg-small-set-inheritance}}(\tilde\gamma,\rho),
    {\eps_0}_{\ref{lem:reg-small-sets-expand}}(\tilde\gamma/2,\lambda)\}.
  \]
  Finally, let $D = D_{\ref{lem:reg-small-set-inheritance}}(\rho)$, and choose $C$
  such that $((1-\rho)np)^{k-1} > D/p$. We present the proof in detail only for
  $t = 2k-1$. The case $t = 2k$ is similar and even easier, and we mention how
  to deduce it in the end.

  Let $i \in [k-1]$, $\ell = 2i$, and let $G$ belong to $\cG(P_\ell, n, m, \eps,
  p)$. We say that a pair of sets $(Q_1,Q_\ell)$ with $Q_1 \subseteq V_1$ and
  $Q_\ell \subseteq V_\ell$ is $(\rho,\lambda)$-expanding if for all $j \in
  [i-1]$:
  \[
    |N_G^j(Q_1)| \geq \min\{|Q_1|(1-\rho)^j(m/n)^j, \lambda n/2\} \quad
    \text{and} \quad |N_G^j(Q_\ell)| \geq \min\{|Q_\ell|(1-\rho)^j(m/n)^j,
    \lambda n/2\},
  \]
  and $\big(N_G^{i-1}(Q_1), N_G^{i-1}(Q_\ell)\big)$ is $(\rho,p)$-lower-regular.
  Observe that, for our choice of $\lambda$ and $\eps$, if $(Q_1, Q_\ell)$
  satisfy $|N_G^j(Q_1)|, |N_G^j(Q_\ell)| \geq \lambda n/2$ for some $0 \leq j
  \leq i-1$, then $|N_G^{j'}(Q_1)|, |N_G^{j'}(Q_\ell)| \geq \lambda n/2$ (with
  room to spare) for all $j' > j$, by Lemma~\ref{lem:slicing-lemma}.

  Next claim is the crux of the argument.

  \begin{claim}\label{cl:inductive-expansion}
    Let $i \in [k-1]$ and $\ell = 2i$. Then all but at most $\beta_i^m
    \binom{n^2}{m}^{\ell-1}$ graphs $G \in \cG(P_\ell, n, m, \eps, p)$ satisfy
    the following. There are sets $X_1 \subseteq V_1$ and $X_\ell \subseteq
    V_\ell$ with $|X_1|, |X_\ell| \leq \delta n$, such that for all $q_1, q_\ell
    \geq (1-\rho)^{k-i}(m/n)^{k-i}$ all but at most
    $\tilde\gamma^{\min\{q_1,q_\ell\}} \binom{n}{q_1} \binom{n}{q_\ell}$ pairs
    $(Q_1, Q_\ell) \in \binom{V_1 \setminus X_1}{q_1} \times \binom{V_\ell
    \setminus X_\ell}{q_\ell}$ are $(\rho,\lambda)$-expanding in $G$.
  \end{claim}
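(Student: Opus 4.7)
I would prove the claim by induction on $i$, using Lemma~\ref{lem:reg-small-set-inheritance} as the base and Lemma~\ref{lem:bad-extensions-pairs} together with Lemma~\ref{lem:reg-small-sets-expand} to execute the inductive step.

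\textbf{Base case $i=1$.} Here $\ell=2$ and the $(\rho,\lambda)$-expansion condition simplifies to requiring that $(Q_1,Q_2)$ itself is $(\rho,p)$-lower-regular (the $0$-th neighbourhood is the set itself, and there are no $j\in[i-1]=\varnothing$ conditions to verify). For every $q_1,q_2 \ge (1-\rho)^{k-1}(m/n)^{k-1} \ge D/p$ (by the choice of $C$), Lemma~\ref{lem:reg-small-set-inheritance} applied with $\tilde\gamma$ (as $\beta$) and $\rho$ (as $\gamma$) bounds the number of pairs that fail to be $(\rho,p)$-lower-regular by $\tilde\gamma^{\min\{q_1,q_2\}}\binom{n}{q_1}\binom{n}{q_2}$. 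Taking $X_1=X_2=\varnothing$ settles this case; note that the bound $\beta_1^m\binom{n^2}{m}$ need not yet be used, as we always exhibit the sets $X_1,X_\ell$ deterministically at this level.

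\textbf{Inductive step $i \to i+1$.} Given $G \in \cG(P_{2i+2}, n, m, \eps, p)$, restrict to the middle path on $V_2,\dots,V_{2i+1}$, which belongs to $\cG(P_{2i}, n, m, \eps, p)$. By the inductive hypothesis, excluding at most $\beta_i^m\binom{n^2}{m}^{2i-1}$ such middle graphs, there are sets $X_2\subseteq V_2$, $X_{2i+1}\subseteq V_{2i+1}$ with $|X_2|,|X_{2i+1}| \le \delta n$ so that the family $\mathcal M$ of pairs $(Q_2,Q_{2i+1})$ inside $(V_2\setminus X_2)\times(V_{2i+1}\setminus X_{2i+1})$ that fail $(\rho,\lambda)$-expansion in the middle path is small: at most $\tilde\gamma^{\min\{q_2,q_{2i+1}\}}\binom{n}{q_2}\binom{n}{q_{2i+1}}$ for each pair of sizes $q_2,q_{2i+1} \ge (1-\rho)^{k-i}(m/n)^{k-i}$. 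I now mark precisely the pairs in $\mathcal M$ and apply Lemma~\ref{lem:bad-extensions-pairs} twice, interpreting $V_1$ (of the lemma) as $V_2$, $V_2$ (of the lemma) as $V_{2i+1}$, and $S_1,S_2$ as the outer sets $V_1, V_{2i+2}$ (so $s=n$). The lemma yields: outside an exceptional family of at most $\beta_i^m\binom{n^2}{m}^2/2^m$ extensions into $V_1\cup V_{2i+2}$, one cannot find disjoint pairs $(X_j,Y_j)$ in the outer sets whose union has size $\ge \delta n$ such that every $(N_G(X_j),N_G(Y_j))$ is in $\mathcal M$ and the neighbourhoods are adequately large. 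By the remark following that lemma, for every such graph I can delete at most $\delta n$ vertices from each of $V_1$ and $V_{2i+2}$ to obtain sets $X_1,X_{2i+2}$ so that no single pair outside them produces bad neighbourhoods.

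\textbf{Completing the step.} For an arbitrary pair $(Q_1,Q_{2i+2})$ with $q_1,q_{2i+2} \ge (1-\rho)^{k-i-1}(m/n)^{k-i-1}$ outside $X_1,X_{2i+2}$, I need three things: (a) $|N_G(Q_1)|,|N_G(Q_{2i+2})|$ are at least $(1-3\lambda)q_\cdot(m/n)$, (b) these neighbourhoods miss $X_2,X_{2i+1}$ sufficiently, and (c) $(N_G(Q_1),N_G(Q_{2i+2}))$ is not in $\mathcal M$. Point (a) is guaranteed by Lemma~\ref{lem:reg-small-sets-expand} (applied once to each side; a negligible further exceptional set of graphs is absorbed), point (c) is guaranteed by the construction of $X_1,X_{2i+2}$, and point (b) follows by absorbing into $X_1$ (resp.\ $X_{2i+2}$) the $O(|X_2|/p) \le \eps n$ many vertices of $V_1$ whose neighbourhood into $V_2$ meets $X_2$ in more than $\eps np$ vertices (these vertices are few by the $(\eps,p)$-regularity between $V_1$ and $V_2$, and similarly on the other side). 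The $(1-3\lambda)(1-\delta)\ge 1-\rho$ choice of constants, together with $(N_G(Q_1),N_G(Q_{2i+2}))$ being $(\rho,\lambda)$-expanding in the middle path, yields $(\rho,\lambda)$-expansion of $(Q_1,Q_{2i+2})$ in $G$, since the $j$-th neighbourhood of $Q_1$ in $G$ is the $(j-1)$-st neighbourhood of $N_G(Q_1)$ in the middle path (and similarly from the other end), and the terminal $(i-1)$-st neighbourhoods are $(\rho,p)$-lower-regular.

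\textbf{Main obstacle.} The delicate part is the bookkeeping of constants across the induction: the counting loss is geometric ($\beta_{i+1}=2\beta_i$), so one must ensure that (i) the number of exceptional graphs introduced at each level---from the middle induction, from the application of Lemma~\ref{lem:bad-extensions-pairs}, and from Lemma~\ref{lem:reg-small-sets-expand}---all fit within the allotted budget $\beta_{i+1}^m \binom{n^2}{m}^{2i+1}$ after summing over all choices of query sizes $(q_1,q_{2i+2})$; and (ii) the lower bound on $q_1,q_{2i+2}$ at level $i+1$ is compatible, after one expansion step, with the hypothesis threshold at level $i$. The constants $\rho,\delta,\lambda,\tilde\gamma,\beta_i,\eps_0$ have already been chosen with exactly these compounding losses in mind, so the bookkeeping is painful but mechanical.
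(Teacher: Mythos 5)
The proposal follows the same overall architecture as the paper---induction on $i$, with Lemma~\ref{lem:reg-small-set-inheritance} giving the base case and Lemma~\ref{lem:bad-extensions-pairs} together with Lemma~\ref{lem:reg-small-sets-expand} driving the inductive step---so the high-level plan is sound and matches the paper.

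However, your inductive step has a genuine gap. You apply Lemma~\ref{lem:bad-extensions-pairs} interpreting its inner sets as the \emph{full} classes $V_2,V_{2i+1}$ (with $S_1,S_2$ being $V_1,V_{2i+2}$), and then try to deal with the fact that neighbourhoods may leak into $X_2,X_{2i+1}$ by absorbing into $X_1$ the vertices of $V_1$ whose neighbourhood meets $X_2$ in more than $\eps np$ vertices, asserting there are $O(|X_2|/p)\le\eps n$ of them. This bound is simply false in general: $|X_2|$ can be as large as $\delta n$ and $p=o(1)$, so $|X_2|/p\gg n$, and $(\eps,p)$-regularity controls densities on sets of size $\ge\eps n$, not the number of vertices with large degree into a set of size $\delta n$. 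Moreover, once the neighbourhoods $N_G(X_j),N_G(Y_j)$ in the lemma may land partly in $X_2,X_{2i+1}$, matching them against the marked family $\mathcal M\subseteq\binom{V_2\setminus X_2}{\cdot}\times\binom{V_{2i+1}\setminus X_{2i+1}}{\cdot}$ does not directly apply. The paper sidesteps the whole issue: it applies Lemma~\ref{lem:bad-extensions-pairs} with $V_2\setminus X_2$ and $V_{\ell-1}\setminus X_{\ell-1}$ as the inner sets (so all neighbourhoods considered already avoid $X_2,X_{\ell-1}$), accounts for the edges into $X_2,X_{\ell-1}$ afterwards via Vandermonde's identity, and then invokes Lemma~\ref{lem:reg-small-sets-expand} on the $(2\eps,m/n^2)$-lower-regular graph $G[V_1,V_2\setminus X_2]$ to control first-step expansion. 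If you apply the lemma with the trimmed inner sets from the start, your extra ``absorbing'' step becomes unnecessary and the incorrect bound disappears; the rest of your argument then lines up with the paper's.
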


  This is sufficient for the proposition to hold as we show next. By the claim
  applied for $i = k-1$, all but at most
  \begin{equation}\label{eq:num-bad-graphs-claim}
    \beta_{k-1}^m \binom{n^2}{m}^{2k-3} \leq \Big(\frac{\beta}{2}\Big)^m
    \binom{n^2}{m}^{t-2}
  \end{equation}
  graphs $G \in \cG(P_{t-1}, n, m, \eps, p)$ on vertex set $V_2 \cup \dotsb \cup
  V_t$ contain sets $X_2 \subseteq V_2$ and $X_t \subseteq V_t$ with $|X_2|,
  |X_t| \leq \delta n$, such that for $q_2, q_t \geq (1-\rho)(m/n)$
  all but at most $\tilde\gamma^{\min\{q_2,q_t\}} \binom{n}{q_2} \binom{n}{q_t}$
  pairs $(Q_2, Q_t) \in \binom{V_2 \setminus X_2}{q_2} \times \binom{V_t
  \setminus Q_t}{q_t}$ are $(\rho,\lambda)$-expanding. It remains to bound the
  number of $(\eps,p)$-regular graphs with $m$ edges $G[V_1,V_2]$ and
  $G[V_1,V_t]$ which have more than $\gamma n$ vertices in $V_1$ whose
  neighbourhoods into $V_2, V_t$ are of size at least $(1-\rho)(m/n)$ and do not
  fall within expanding pairs. This computation is identical to, e.g.,
  \cite[Lemma~3.2]{noever2017local} and shows that there are at most
  \[
    \Big(\frac{\beta}{2}\Big)^m \binom{n^2}{m}^2
  \]
  such bad choices for $G[V_1,V_2]$ and $G[V_1,V_t]$. Combining it with
  \eqref{eq:num-bad-graphs-claim} and the fact that there are at most
  $\binom{n^2}{m}$ choices for a graph with $m$ edges between two sets of size
  $n$, shows that there are at most
  \[
    \Big(\frac{\beta}{2}\Big)^m \binom{n^2}{m}^{t-2} \binom{n^2}{m}^2 +
    \Big(\frac{\beta}{2}\Big)^m \binom{n^2}{m}^2 \binom{n^2}{m}^{t-2} \leq
    \beta^m \binom{n^2}{m}^t
  \]
  `bad' graphs in $\cG(C_t, n, m, \eps, p)$ as desired.

  \begin{proof}[Proof of Claim~\ref{cl:inductive-expansion}]
    The proof is by induction on $i$. For $i = 1$ it follows by applying
    Lemma~\ref{lem:reg-small-set-inheritance} with $\tilde\gamma$ (as $\beta$),
    $\rho$ (as $\gamma$), and $m/n^2$ (as $p$) since $G[V_{k-1},V_k]$ is
    $(\eps,p)$-regular with $m \geq n^2p$ edges, and thus
    $(\eps,m/n^2)$-lower-regular, and $(1-\rho)^{k-1}(m/n)^{k-1} \geq Dn^2/m$ by
    the bound on $p$ from the statement of the proposition; we even have $X_1 =
    X_2 = \varnothing$.

    We want to show that it holds for $2 \leq i \leq k-1$ assuming it holds for
    $i-1$. By induction hypothesis all but at most
    \[
      \beta_{i-1}^m \binom{n^2}{m}^{\ell-3} \leq \Big(\frac{\beta_i}{2}\Big)^m
      \binom{n^2}{m}^{\ell-3}
    \]
    graphs in $\cG(P_{\ell-2}, n, m, \eps, p)$, on vertex set $V_2 \cup \dotsb
    \cup V_{\ell-1}$, are in the set $\cS$ of `expanding' graphs. In particular,
    every graph in $\cS$ contains sets $X_2 \subseteq V_2$ and $X_{\ell-1}
    \subseteq V_{\ell-1}$ of size $|X_2|, |X_{\ell-1}| \leq \delta n$, such that
    for all $q_2, q_{\ell-1} \geq (1-\rho)^{k-(i-1)}(m/n)^{k-(i-1)}$ at most
    $\tilde\gamma^{\min\{q_2,q_{\ell-1}\}}\binom{n}{q_2}\binom{n}{q_{\ell-1}}$
    pairs $(Q_2,Q_{\ell-1}) \in \binom{V_2 \setminus X_2}{q_2} \times
    \binom{V_{\ell-1} \setminus X_{\ell-1}}{q_{\ell-1}}$ are not
    $(\rho,\lambda)$-expanding.

    We count in how many ways we can `extend' a graph from $\cS$ to obtain a
    `non-expanding' graph. Since the graphs $G[V_1,V_2]$ and
    $G[V_\ell,V_{\ell-1}]$ should be $(\eps,p)$-regular with $m$ edges, it
    follows that both $G[V_1, V_2 \setminus X_2]$ and $G[V_\ell, V_{\ell-1}
    \setminus X_{\ell-1}]$ must contain between $m$ and
    $(1-\eps)(m/n^2)(1-\delta)n^2 \geq m/2$ edges. For each graph in $\cS$ we
    apply Lemma~\ref{lem:bad-extensions-pairs} with
    $(1-\rho)^{k-(i-1)}(m/n)^{k-(i-1)}$ (as $c$), $\beta_i/2$ (as $\beta$),
    $V_1$, $V_\ell$ (as $S_1$, $S_2$), $V_2 \setminus X_2$, $V_{\ell-1}
    \setminus X_{\ell-1}$ (as $V_1$, $V_2)$, and with all pairs
    $(Q_2,Q_{\ell-1})$ as above marked, to conclude that there are at most
    \[
      \Big(\frac{\beta_i}{2}\Big)^m \binom{(1-\delta)n^2}{m_1}
      \binom{(1-\delta)n^2}{m_\ell}
    \]
    `non-expanding extensions', that is graphs $G[V_1, V_2 \setminus X_2]$ and
    $G[V_\ell, V_{\ell-1} \setminus X_{\ell-1}]$, with $m_1$ and $m_\ell$ edges
    with the following property: there is no choice of sets $X_1 \subseteq V_1$
    and $X_\ell \subseteq V_\ell$ of size $|X_1|, |X_\ell| \leq \delta n$, for
    which all pairs $(Q_1, Q_\ell)$ with $Q_1 \subseteq V_1 \setminus X_1$ and
    $Q_\ell \subseteq V_\ell \setminus X_\ell$ that satisfy
    \[
      |N_G(Q_j)| \geq \max\set[\Big]{|Q_j|\frac{m_j}{2n},
      \Big((1-\rho)\frac{m}{n}\Big)^{k-(i-1)}} \enskip \text{for $j \in
      \{1,\ell\}$},
    \]
    have $(N_G(Q_1), N_G(Q_\ell))$ which is $(\rho,\lambda)$-expanding. In
    particular, if $|Q_j| \geq (1-\rho)^{k-i}(m/n)^{k-i}$ and $|N_G(Q_j)| \geq
    (1-\rho)|Q_j|(m/n) \geq |Q_j|m/(2n)$ such pairs $(Q_1,Q_\ell)$ are also
    $(\rho,\lambda)$-expanding.

    As $m \geq 10n\log n$, by Vandermonde's identity, i.e.\ the fact that
    $\binom{a+b}{c} = \sum_{k} \binom{a}{k} \binom{b}{c-k}$, there are at most
    \begin{multline*}
      \Big(\frac{\beta_i}{2}\Big)^m \sum_{m_1 \geq m/2}
      \binom{(1-\delta)n^2}{m_1} \binom{\delta n^2}{m - m_1} \times \sum_{m_\ell
      \geq m/2} \binom{(1-\delta)n^2}{m_\ell} \binom{\delta n^2}{m - m_\ell}
      \leq
      \Big(\frac{\beta_i}{2}\Big)^m \binom{n^2}{m}^2
    \end{multline*}
    `non-expanding extensions' $G[V_1,V_2]$ and $G[V_\ell,V_{\ell-1}]$. Therefore,
    in total there are at most
    \[
      \Big(\frac{\beta_i}{2}\Big)^m \binom{n^2}{m}^{\ell-3} \binom{n^2}{m}^2 +
      \Big(\frac{\beta_i}{2}\Big)^m \binom{n^2}{m}^2 \binom{n^2}{m}^{\ell-3}
      \leq \beta_i^m \binom{n^2}{m}^{\ell-1}
    \]
    graphs $G \in \cG(P_\ell, n, m, \eps, p)$ such that either $G[V_2 \cup \dotsb
    \cup V_{\ell-2}]$ is not in $\cS$ or it is in $\cS$ but its extension is
    `non-expanding'.

    It remains to show that we counted all bad graphs in $\cG(P_\ell, n, m,
    \eps, p)$ or in other words, to show that all remaining graphs contain sets
    $X_1 \subseteq V_1$ and $X_\ell \subseteq V_\ell$, of size $|X_1|, |X_\ell|
    \leq \delta n$, such that for all $q_1, q_\ell \geq
    (1-\rho)^{k-i}(m/n)^{k-i}$ there are at most
    $\tilde\gamma^{\min\{q_1,q_\ell\}} \binom{n}{q_1} \binom{n}{q_\ell}$ pairs
    $(Q_1,Q_\ell) \in \binom{V_1 \setminus X_1}{q_1} \times \binom{V_\ell
    \setminus X_\ell}{q_\ell}$ for which either $|N_G(Q_1)| <
    (1-\rho)|Q_1|(m/n)$ or $|N_G(Q_\ell)| < (1-\rho)|Q_\ell|(m/n)$.

    By Lemma~\ref{lem:slicing-lemma} the graph $G[V_1, V_2 \setminus X_2]$ is
    $(2\eps, m/n^2)$-lower-regular. Hence, from
    Lemma~\ref{lem:reg-small-sets-expand}, applied with $\tilde\gamma/2$ (as
    $\beta$) and $m/n^2$ (as $p$), it follows that for all
    $(1-\rho)^{k-i}(m/n)^{k-i} \leq q \leq \lambda n^2/m$, all but at most
    $(\tilde\gamma/2)^q \binom{n}{q}$ sets $Q \subseteq V_1$, $|Q| = q$, satisfy
    \[
      |N_G(Q, V_2 \setminus X_2)| \geq (1-3\lambda)q|V_2 \setminus X_2|m/n^2
      \geq (1-3\lambda)q(1-\delta)(m/n) \geq (1-\rho)q(m/n).
    \]
    On the other hand, if $q > \lambda n^2/m$, then a set of size $q$ does not
    have a neighbourhood of size at least $(1-\rho)\lambda n$ only if all of its
    subsets of size exactly $\lambda n^2/m$ do not have a neighbourhood of size
    $(1-\rho)\lambda n$, and there are at most $(\tilde\gamma/2)^q\binom{n}{q}$
    of those (this is a simple counting argument, for a proof see, e.g., proof
    of \cite[Theorem~3.6]{gerke2007small}).

    As analogously there are at most $(\tilde\gamma/2)^q\binom{n}{q}$ `bad' sets
    $Q \subseteq V_\ell$, in total there are at most
    \[
      \Big(\frac{\tilde\gamma}{2}\Big)^{q_1} \binom{n}{q_1}\binom{n}{q_\ell} +
      \Big(\frac{\tilde\gamma}{2}\Big)^{q_\ell} \binom{n}{q_\ell} \binom{n}{q_1}
    \]
    `bad' pairs $(Q_1,Q_\ell)$ as desired.
  \end{proof}

  In order to prove the proposition for $t = 2k$ one would first fix $V_{k+1}$,
  and show that there are at most $(\beta/2)^m \binom{n^2}{m}^k$ graphs on
  $G[V_1 \cup V_2 \cup \dotsb \cup V_{k+1}]$ which have more than $\delta n$
  vertices in $V_1$ which are not $(\gamma,k-1)$-expanding or whose $(k-1)$-st
  neighbourhood does not form a $(\gamma,p)$-lower-regular pair with $V_{k+1}$.
  In the same way there are at most $(\beta/2)^m \binom{n^2}{m}^k$ graphs on
  $G[V_1 \cup V_t \cup \dotsb \cup V_{k+1}]$ which have more than $\delta n$
  vertices in $V_1$ which are not $(\gamma,k-1)$-expanding or whose $(k-1)$-st
  neighbourhood does not form a $(\gamma,p)$-lower-regular pair with $V_{k+1}$.
  Combining the two completes the proof.
\end{proof}

\end{document}